\newenvironment{proofsketch}{%
  \proof}{\endproof}
\newcommand{\I}{\mathcal I}
\newcommand{\R}{\mathbb{R}}
\DeclareMathOperator*{\E}{\mathbb{E}}
\DeclareMathOperator*{\median}{median}
\newcommand{\Var}{\text{Var}}
\newcommand{\sign}{\text{sign}}
\newcommand{\eps}{\varepsilon}
\newcommand{\1}{\mathbbm{1}}
\newcommand{\abs}[1]{|#1| }
\newcommand{\clip}{\text{clip}}
\newcommand{\sym}{\text{sym}}
\newcommand{\wh}{\widehat}
\renewcommand{\hat}{\wh}
\newcommand{\wt}{\widetilde}
\newcommand{\from}{\leftarrow}
\renewcommand{\d}{\mathrm{d}}
\DeclarePairedDelimiterX{\kl}[2]{D_{KL}(}{)}{%
  #1\;\delimsize\|\;#2%
}
\newcommand{\new}[1]{\textcolor{red}{#1}}
\renewcommand{\new}[1]{#1}
\title[Finite-Sample Symmetric Mean Estimation with Fisher Information Rate]{Finite-Sample Symmetric Mean Estimation\\ with Fisher Information Rate}
\begin{document}

\maketitle

\begin{abstract}%
  The mean of an unknown variance-$\sigma^2$ distribution $f$ can be estimated from $n$ samples with variance $\frac{\sigma^2}{n}$ and nearly corresponding subgaussian rate.  When $f$ is known up to translation, this can be improved asymptotically to $\frac{1}{n\I}$, where $\I$ is the Fisher information of the distribution.  Such an improvement is not possible for general unknown $f$, but~\cite{stone1975adaptive} showed that this asymptotic convergence \emph{is} possible if $f$ is \new{\emph{symmetric}} about its mean.  Stone's bound is asymptotic, however: the $n$ required for convergence depends in an unspecified way on the distribution $f$ and failure probability $\delta$. 
  In this paper we give finite-sample guarantees for symmetric mean estimation in terms of Fisher information.  For every $f, n, \delta$ with $n > \log \frac{1}{\delta}$, we get convergence close to a subgaussian with variance $\frac{1}{n \I_r}$, where $\I_r$ is the $r$-\emph{smoothed} Fisher information with smoothing radius $r$ that decays polynomially in $n$.  Such a bound essentially matches the finite-sample guarantees in the known-$f$ setting.
\end{abstract}

\begin{keywords}%
  Cram\'er-Rao; Fisher Information; Kernel Density Estimation%
\end{keywords}

\section{Introduction}

% Mean estimation: empirical mean has variance $\frac{\sigma^2}{n}$,
% substantial work (XXX) has given estimators with the corresponding
% subgaussian tail.

Mean estimation is a fundamental problem in statistics.
For a distribution with variance $\sigma^2$, the empirical mean over $n$ samples has variance $\frac{\sigma^2}{n}$ and enjoys central limit behavior, asymptotically yielding error $\sigma \sqrt{2\log\frac{1}{\delta}/n}$ with failure probability $\delta$.
Substantial work~\cite{catoni,Devroye:2016,lee_valiant} has led to an estimator with a corresponding \emph{finite-sample} guarantee, achieving the same error up to a $1+o(1)$ factor.

On the other hand, consider the related problem of location estimation: if we know the exact shape of the distribution, except for an unknown translation parameter, the (asymptotic) estimation accuracy is characterized by the Fisher information.
More formally, suppose $x \sim f^{\lambda}(x) = f(x-\lambda)$ for some known $f$ but some unknown parameter $\lambda$.
The Fisher information of $f$ is defined as $\I := \E_{x \sim f}[s(x)^2]$ where $s(x)$ is the ``score'' $s(x) := f'(x)/f(x)$.
The \emph{maximum likelihood estimate} (MLE) is asymptotically normal with variance $\frac{1}{n \I}$, which is at most $\frac{\sigma^2}{n}$; and
asymptotically, the standard Cram\'er-Rao bound~\cite{cramer,rao} shows that this is optimal.

% For a \emph{known} distribution of unknown location, i.e.,
% $x \sim p(x) = f(x - \mu)$ for a known mean-zero distribution $f$ and
% unknown mean $\mu$, the accuracy is characterized by the Fisher
% information of $f$: $\I := \E_{x \sim f}[s(x)^2]$ where $s(x)$ is the
% ``score'' $s(x) := f'(x)/f(x)$.  The MLE is asymptotically normal with
% variance $\frac{1}{n \I}$, which is at most $\frac{\sigma^2}{n}$; and
% asymptotically, the Cram\'er-Rao bound shows this is optimal.
% (XXX different variable names for centered and uncentered $x$?)

For example, the Laplace distribution has Fisher information
$\frac{2}{\sigma^2}$, and the MLE for the Laplace is the
\new{empirical} median.  Thus, for the Laplace, the \new{empirical}
median has half the asymptotic variance of the empirical mean, so it
needs half as many samples to achieve the same accuracy.  The Fisher
information can \new{sometimes} be \emph{much} larger than
$1/\sigma^2$: consider Figure~\ref{fig:gaussianmixture}, a 50-50 mixture of two Gaussians
$\frac{1}{2}N(\mu_1, \sigma_1^2) + \frac{1}{2}N(\mu_2, \sigma_2^2)$
with means $\mu_1, \mu_2 \in [-1, 1]$ and variances
$\sigma_2^2 \gg 1 \gg \sigma_1^2$.  This has variance
$\Theta(\sigma_2^2)$ and Fisher information
$\Theta(\frac{1}{\sigma_1^2})$.  Thus, the empirical mean has accuracy
proportional to the \emph{larger} standard deviation, while the MLE
has accuracy proportional to the \emph{smaller} standard deviation.
\new{In summary}, for a \emph{known} $f$ at an unknown offset $\mu$,
one can achieve an accuracy based on Fisher information, which is
never worse than the generic $\sigma^2$-dependence but can be much
better.

\begin{figure}
%  \floatconts%
  {\caption{Example distributions}\label{fig:instance}}%
 {%
    \subfigure[The Laplace distribution has twice the Fisher information
        of a Gaussian with the same variance, so it can be estimated
        with asymptotically half the variance.]{%
      \includegraphics[width=.47\textwidth]{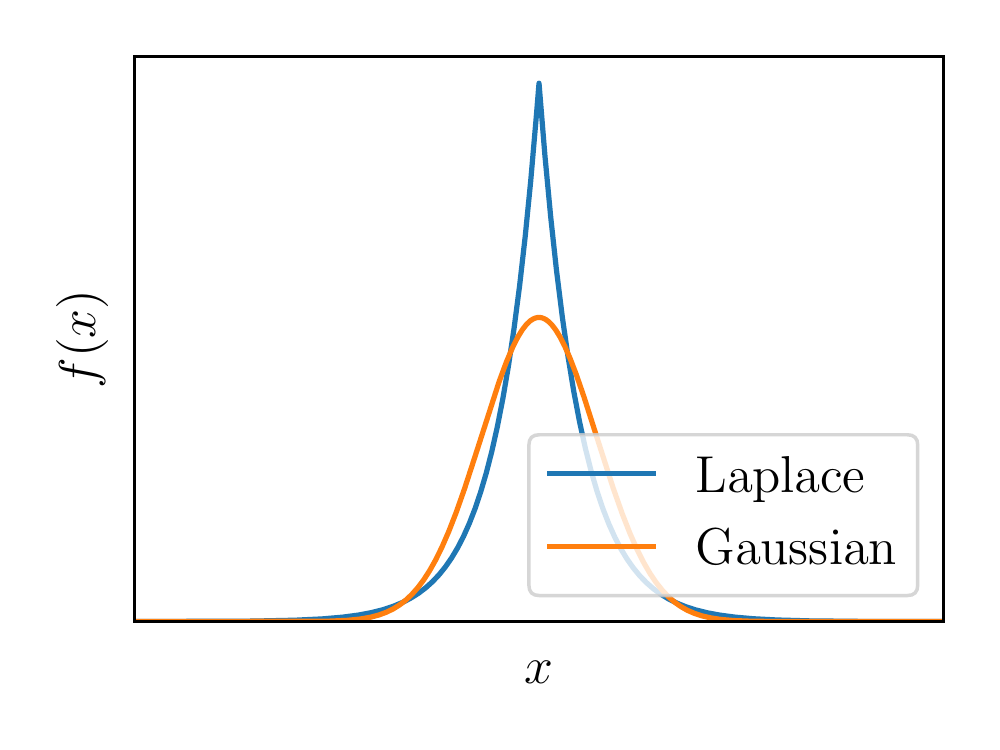}
      \label{fig:gaussianlaplace}
    }\hfill
    \subfigure[When estimating a mixture of a wide and narrow Gaussian,
       it is easier to estimate the mean of the narrow Gaussian.  When the
       distribution is known up to location, estimating this mean suffices.]{%
      \includegraphics[width=.47\textwidth]{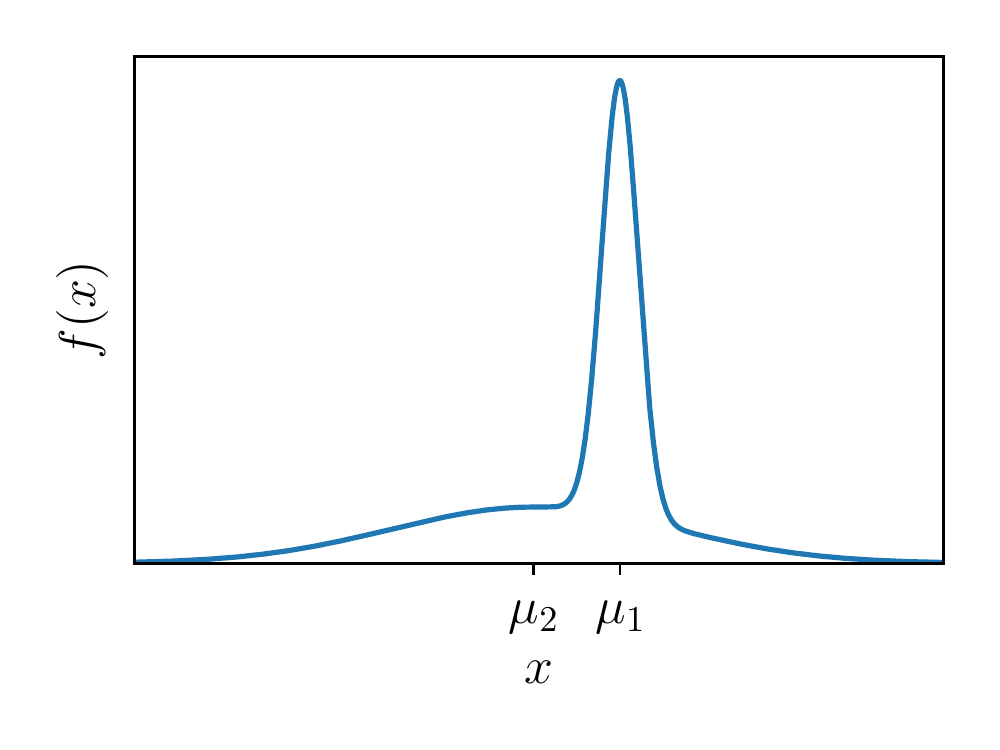}
      \label{fig:gaussianmixture}
    }
 }
\end{figure}

This poses a natural question: can we get Fisher-information--style
improvements for \emph{unknown} $f$?  Unfortunately, the answer is no.
In the mixture of Gaussians example of Figure~\ref{fig:gaussianmixture}, in the known-distribution
case we are given $\mu_2 - \mu_1$ so it suffices to estimate $\mu_1$.
This can be done with variance $\Theta(\frac{\sigma_1^2}{n})$.  In the
unknown-distribution case we need both $\mu_1$ and $\mu_2$, and estimating
$\mu_2$ induces variance $\Theta(\frac{\sigma_2^2}{n})$.
In fact, recent work has shown~\citep{Lee:2023instance} that the variance-based subgaussian error bounds are essentially instance-optimal: for \emph{every}
distribution $p$ of variance $\sigma^2$, and any $n, \delta$ with
$n \gg \log\frac{1}{\delta}$, there exists a distribution $q$ of
variance $\Theta(\sigma^2)$ where
$|\mu_p - \mu_q| = \Omega(\sigma\sqrt{\log\frac{1}{\delta}/n})$, yet
$p$ and $q$ are not distinguishable using $n$ samples with probability
$1-\delta$.

% In fact, recent work has shown that the variance-based subgaussian error bounds are essentially instance-optimal: for every distribution $p$ of variance $\sigma^2$, and any $n, \delta$ with $n \gg \log\frac{1}{\delta}$, there exists a distribution $q$ of variance $\Theta(\sigma^2)$ where $|\mu_p - \mu_q| = \Omega(\sigma\sqrt{\log\frac{1}{\delta}/n})$, yet $p$ and $q$ are indistinguishable using $n$ samples with probability $1-\delta$.

In this paper, we consider a restriction that allows for the Fisher
information benefit in mean estimation: \emph{symmetry}.  We give an
estimator that, for every \emph{symmetric} distribution $f$, estimates
its mean with an accuracy related to Fisher information.

\paragraph{Smoothed Fisher information.} To state our results, we need
the notion of \emph{smoothed} Fisher information.  One issue with the
aforementioned Fisher information results is that they are asymptotic:
the $n$ required for convergence depends on the distribution in a
possibly arbitrary way.
%\jnote{Again, not really ``asymptotic", see abstract comment too, we should discuss precise wording}
As one simple example, if
$f(x) = (1-\eps)N(0, 1) + \eps \delta_0$, the Fisher information is
infinite (if we see the same real-valued sample twice, that is the
exact mean) but with fewer than $1/\eps$
samples we probably only see the $N(0, 1)$ samples; here the best
estimator is the empirical mean, with error $N(0, \frac{1}{n})$.  Thus,
for finite $n$, one cannot hope for accuracy approaching the true
inverse Fisher information of a general distribution.

Recent work by~\cite{finite_sample_mle,GLP23} has given finite-$n$ bounds for the
known-distribution case in terms of the ``smoothed Fisher
information.'' For a distribution $f$, the $r$-smoothed Fisher
information $\I_r$ is the Fisher information of $f$ convolved with a
Gaussian of variance $r^2$.  In these results, $r \to 0$ as
$n \to \infty$, capturing the asymptotic behavior but giving bounds
that still apply when $f$ and $n$ vary together.

Figure~\ref{fig:gauss-sawtooth} shows an example based on adding tall but narrow ``teeth" to a standard Gaussian.  These teeth are useful for alignment \emph{within} the correct tooth, but not very useful for alignment errors that are integer multiples of the tooth width.  As a result, if the tooth width is $w$, the optimal estimator exhibits a phase
transition in its variance, with about $\frac{1}{n}$ variance for $n \ll \frac{1}{w^2}$ and $\frac{1}{n\I}$  variance for $n \gg \frac{1}{w^2}$ (see~\cite{finite_sample_mle}).
 Such a phase transition is captured by the smoothed Fisher information, which transitions at $r \approx w$.

\begin{figure}
%  \floatconts%
  {\caption{Gaussian + Sawtooth}\label{fig:gauss-sawtooth}}%
  {%
    \subfigure[In the ``Gaussian+sawtooth'' example, we add tall but
    narrow ``teeth'' to a standard Gaussian.  Smoothing by radius
    larger than the width returns the distribution to nearly
    Gaussian.]{%
      \includegraphics[width=.47\textwidth]{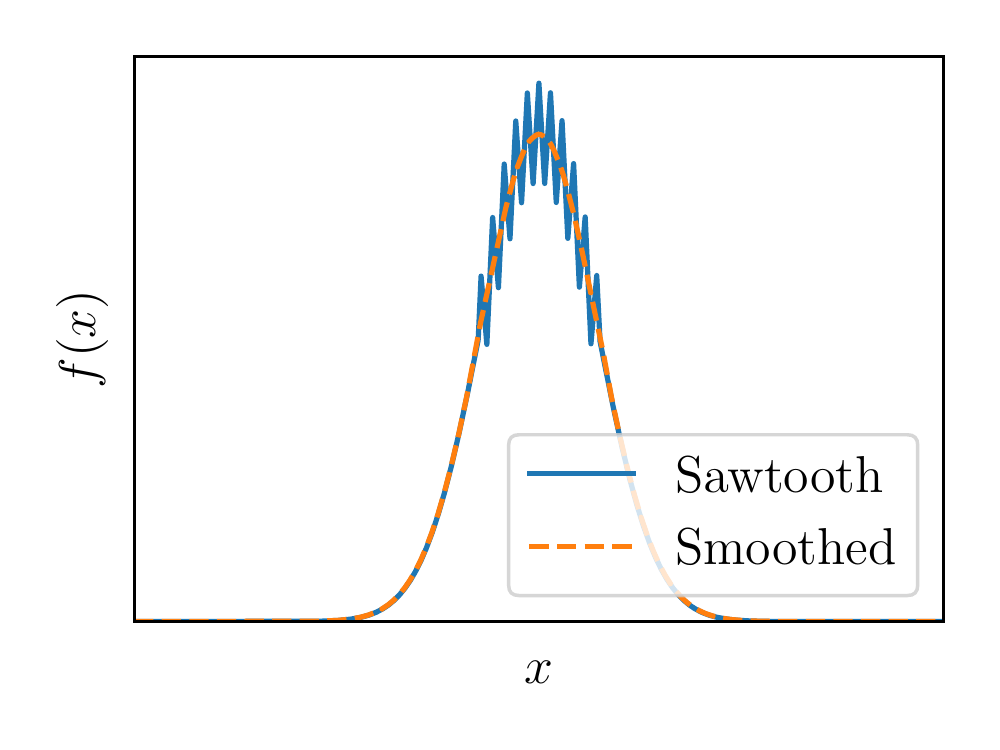}
      \label{fig:gaussian_teeth}
    }\hfill \subfigure[The smoothed Fisher information has a phase
    transition, from a large value when $r$ is small, to the standard Gaussian's 1
    when $r$ is larger than the tooth width.]{%
      \includegraphics[width=.47\textwidth]{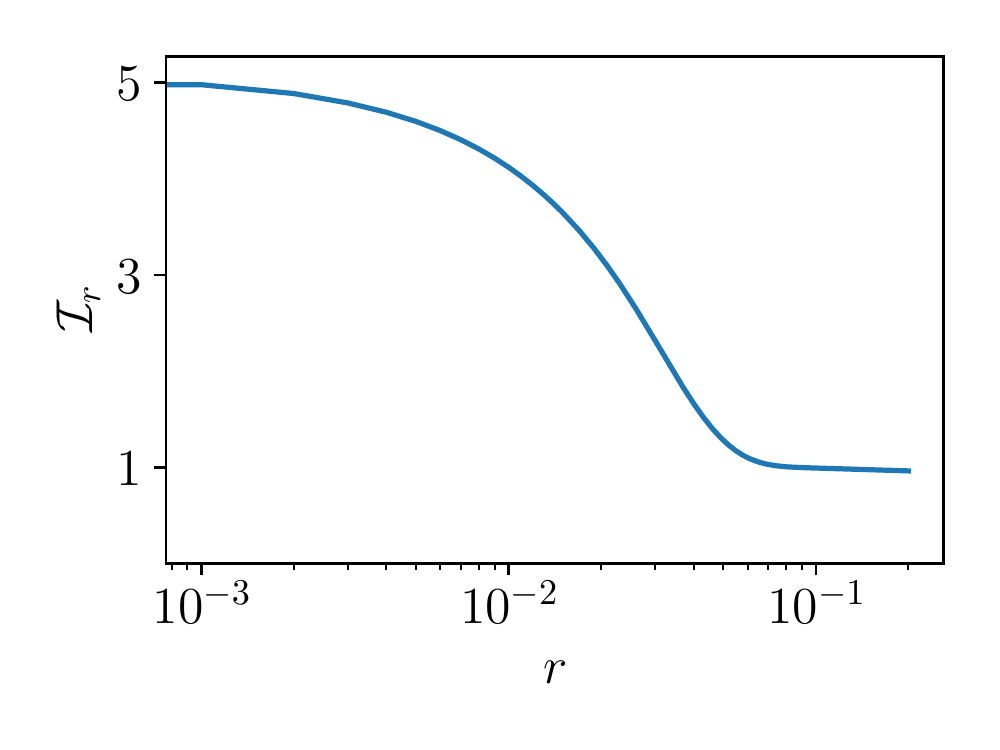}
      \label{fig:gaussian_teeth_I}
    }
 }
\end{figure}

\paragraph{Our result.}  Our main theorem is the following:

\begin{restatable}{theorem}{maintheorem}\label{thm:maintheorem}
  Let $\eta = (\frac{\log \frac{1}{\delta}}{n})^{\frac{1}{13}} < 1$, and let $\log \frac{1}{\delta} \le n/C$ for sufficiently large constant $C > 1$.  Let $f^*$ be an arbitrary symmetric distribution with variance $\sigma^2$ and mean $\mu$.  For $\eta \sigma\le r \le \sigma$, we have that the output of our estimator $\wh \mu$ satisfies
  \[
  \abs{\wh{\mu} - \mu} \leq (1 + \eta) \sqrt{\frac{2 \log \frac{2}{\delta}}{n \I_r}}
  \]
  with probability $1-\delta$.
\end{restatable}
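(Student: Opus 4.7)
The plan is to build a three-stage estimator that mimics the known-distribution estimators of \cite{finite_sample_mle,GLP23} but learns the unknown density from data using symmetry. Splitting the $n$ samples into three roughly equal batches, I would first apply a standard sub-Gaussian estimator (Catoni/Lee--Valiant) on the first batch to obtain a preliminary $\mu_0$ with $\abs{\mu_0 - \mu} = O(\sigma \sqrt{\log(1/\delta)/n})$ with probability $1 - \delta/3$; second, construct a symmetrized kernel density estimate of $f^*$ around $\mu_0$ from the second batch; and third, perform a one-step Newton-style refinement using the learned score function on the third batch.

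For the second stage, I would apply a Gaussian KDE with bandwidth $r$ to the second batch to obtain $\hat f$, and symmetrize it about $\mu_0$: $\hat f_{\sym}(x) := \tfrac{1}{2}(\hat f(x) + \hat f(2\mu_0 - x))$. Because $f^*$ is symmetric about $\mu$, this is close to $f^* \ast G_r$ up to a shift of size $\abs{\mu - \mu_0}$. The real object to control is the score function $\hat s := \hat f_{\sym}'/\hat f_{\sym}$, whose $L^2(f^*)$ closeness to the true $r$-smoothed score $s_r$ will determine an effective Fisher information $\hat \I$ that approximates $\I_r$ to a $1+o(1)$ factor.

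For the third stage, I would use the update $\hat \mu = \mu_0 - \frac{1}{n \hat \I} \sum_i \hat s(x_i - \mu_0)$, with $\hat s$ clipped in the tails to control heavy summands. Symmetry of $\hat f_{\sym}$ about $\mu_0$ makes $\E[\hat s(X - \mu_0)] = 0$ under the ansatz $\mu = \mu_0$; a Taylor expansion in $\mu_0 - \mu$ gives a leading linear term with slope approximately $\hat \I$, so the $1/(n\hat \I)$ normalization produces a residual of the form $\frac{1}{n \hat \I} \sum_i \hat s(x_i - \mu)$ plus lower-order error. Sub-Gaussian concentration of this clipped, mean-zero sum (with variance approximately $\hat \I$) then yields the claimed $(1+\eta)\sqrt{2\log(2/\delta)/(n\I_r)}$ tail bound.

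The main obstacle is controlling the score-function estimation error in Stage 2 tightly enough to lose only the minuscule $(1 + \eta)$ factor, with $\eta = (\log(1/\delta)/n)^{1/13}$. This requires (a) balancing KDE bias and variance at the \emph{score} level, which is more delicate than at the density level since dividing by a possibly small $\hat f_{\sym}$ amplifies error in the tails --- motivating the clipping in Stage 3; (b) showing that symmetrizing around $\mu_0 \neq \mu$ contributes only lower-order error, using the regularity inherited from Gaussian convolution (with derivatives of $f^* \ast G_r$ bounded by powers of $1/r$); and (c) propagating the $L^2(f^*)$ error $\|\hat s - s_r\|$ into only a multiplicative $1 + o(1)$ loss in both the normalizing constant $\hat \I$ and the per-sample variance of $\hat s(X - \mu)$. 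The exponent $1/13$ most likely arises from jointly optimizing the several bias/variance/concentration exponents across these three stages.
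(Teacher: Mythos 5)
Your high-level architecture (initial estimate, symmetrized KDE score, clipping, one Newton-style refinement) is essentially the paper's, and your choice to symmetrize the \emph{density} rather than copy over the clipped score, and to use Catoni/Lee--Valiant rather than median-of-pairwise-means for $\mu_0$, are workable variants. But there are two concrete gaps that would prevent the proposal, as written, from yielding the stated bound. First, splitting the $n$ samples into \emph{three roughly equal batches} is fatal to the $(1+\eta)$ constant: the final concentration term scales as $\sqrt{2\log\frac{2}{\delta}/(n_3 \I_r)}$ where $n_3$ is the size of the last batch, so equal thirds give a $\sqrt{3}$ multiplicative loss, not $1+\eta$. The paper spends only $n/\xi = \eta n$ samples on each of the first two stages and runs the Newton step on the remaining $(1-2\eta)n$ samples, which is exactly what makes the loss $1+O(\eta)$; your plan needs the same asymmetric allocation, and then your KDE error analysis must be sharp enough to work with only $\eta n$ samples.

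Second, your update $\hat\mu = \mu_0 - \frac{1}{n\hat\I}\sum_i \hat s(x_i-\mu_0)$ evaluates the learned score on the \emph{raw} samples $x_i \sim f^*$, but $\hat s$ approximates the score $s_r$ of the smoothed density $f_r = f^* * \mathcal N(0,r^2)$. The identities that justify normalizing by $\hat\I \approx \I_r$ and claiming per-sample variance $\approx \I_r$ --- namely $\E[-g'(x)] = \E[g(x)^2] \approx \I_r$ for $g \approx s_r$ --- hold under $x \sim f_r$, not under $f^*$ (which may not even have a density); under $f^*$ the Taylor slope $\E_{f^*}[\hat s']$ and the variance $\E_{f^*}[\hat s^2]$ need not be close to $\I_r$ or to each other, so both the bias of the Newton step and your claimed subgaussian variance are unjustified. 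The paper resolves this by adding fresh independent $\mathcal N(0,r^2)$ noise to each third-stage sample so that $x_i' \sim f_r$; your proposal needs this step (or a consistent replacement). Beyond these two points, the heart of the argument --- an $L^2(f_r)$ bound of the form $\E_{f_r}[(\hat s - s_r)^2] \lesssim \I_r/\gamma$ for the clipped, symmetrized KDE score, proved via pointwise KDE density/derivative bounds, a binning/union-bound argument over the high-density typical region, and separate handling of low-density and atypical regions, plus a matching estimate $\hat\I_r$ computed under $\hat f_r$ --- is only gestured at in item (a)--(c) of your obstacle list, and that is where essentially all of the quantitative work (and the exponent $1/13$) actually lives.
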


For ``nice" distributions like the Laplace, $1/\I_r \approx 1/\I + O(r^2)$, so Theorem~\ref{thm:maintheorem} gives an error bound within $(1 + O((\frac{\log \frac{1}{\delta}}{n})^{1/13}))$ of the instance-optimal Cram\'er-Rao bound.  For other distributions, like the Gaussian+sawtooth example of Figure~\ref{fig:gauss-sawtooth}, $\I_r$ exhibits a phase transition and the error does not approach $\I$ until $n$ grows larger than some distribution-dependent quantity; in the sawtooth example, $n/\log\frac{1}{\delta}$ must be at least $O(1/w^{13})$.  As discussed above, this is qualitatively correct but with a suboptimal polynomial.

This theorem has the same form as~\cite{GLP23}, except our theorem
applies to unknown symmetric $f^*$ while theirs applies to
known, possibly asymmetric $f^*$.  The differences are (I) their
$\eps$ is a better polynomial,
$C(\frac{\log \frac{1}{\delta}}{n})^{1/10}$; and (II) their theorem
sets $r$ in terms of the interquartile range instead of standard
deviation, and so applies to infinite-variance distributions.  

Since $\frac{1}{\I_r} \leq \sigma^2 + r^2$, for appropriately chosen
$r$ our bound is never more than a $(1 + O(\eps))$-factor worse than
the subgaussian tail with variance $\frac{\sigma^2}{n}$.  This is
comparable to the results of~\cite{catoni}, although with a \new{(slightly)} weaker
convergence rate (Catoni has rate
$\eps = \frac{\log \frac{1}{\delta}}{n}$).
\new{However}, since
Theorem~\ref{thm:maintheorem} depends on the Fisher information, it can be
much better: for example, it gives a factor \new{of} $\new{2-O(\eps)}$
improvement when estimating a Laplace distribution, and variance
$\Theta\left(\frac{\min(\sigma_1^2, \sigma_2^2)}{n}\right)$ rather
than $\Theta\left(\frac{\max(\sigma_1^2, \sigma_2^2)}{n}\right)$ when
$f$ is a Gaussian mixture
$\frac{1}{2}(N(\mu, \sigma_1^2) + N(\mu, \sigma_2^2))$.

Theorem~\ref{thm:maintheorem} assumes that \new{we} are given $r$; to choose $r$ in general,
\new{we} would want a (constant-factor) estimate of $\sigma$, which can be
done if $f^*$ has bounded kurtosis.  Avoiding this dependence is an
interesting open question.

Our estimator is based on using a small fraction of samples to
construct a kernel density estimate (KDE) of $f$, then finding a
variant of the maximum likelihood estimate (MLE).  A similar approach
was used in~\cite{stone1975adaptive} to get an asymptotic bound in terms of $\I$; our
contribution is an effective bound for finite $n$ that applies to any distribution,%\jnote{Stone doesn't need regularity either, because he smooths too?}
as well as high-probability bounds.

% Our second result is a \emph{two-sample} test for asymmetric
% distributions.  For two distributions, $f$ and $f'(x) = f(x - \mu)$,
% given samples $x_1, \dotsc, x_n \sim f$ and
% $x'_1, \dotsc, x'_n \sim f'$, the goal is to estimate $\mu$.

% XXX why would you know it's a location shift?  or that the
% distributions are the same?

% At a technical level, the issue with asymmetric distributions is that
% the KDE isn't exactly right, so the computed MLE has a bias; symmetry
% means the bias is zero, and for two-sample the bias cancels out when
% we subtract.

\section{Related Work}

One dimensional mean estimation is one of the most fundamental problems in statistics.
Under the assumption of finite variance, the celebrated Central Limit Theorem states that the distribution of the sample mean asymptotically convergences to a Gaussian with variance $\sigma^2/n$.
For finite-sample performance, \cite{Nemirovsky:1983,Jerrum:1986,Alon:1999} independently invented the Median-of-Means estimator, which achieves the same subgaussian concentration up to a constant factor.
A decade ago, the seminal work of \cite{catoni} initiated the search for a finite-sample subgaussian estimator with a tight multiplicative constant.
Subsequent improvements by \cite{Devroye:2016} and \cite{lee_valiant} showed how to construct a subgaussian estimator tight up to a $1+o(1)$ multiplicative factor.

This work, by contrast, assumes symmetry of the distribution about its mean.
\cite{stone1975adaptive} showed that asymptotically, the performance of mean estimation for symmetric distributions is controlled by the Fisher information instead of the variance.
Our approach is inspired by that of Stone: construct a kernel density estimate (KDE) of the underlying distribution, and perform maximum likelihood estimation (MLE) based on the KDE.
On the other hand, our bounds are explicit finite-sample bounds, and characterize the performance in terms of \emph{smoothed} Fisher information, with a smoothing radius $r$ that vanishes as $n/\log\frac{1}{\delta} \to \infty$.

Fisher information also characterizes the asymptotic error in the closely-related problem of location estimation---a parametric variant of mean estimation---under the much stronger assumption that we know the shape of the entire distribution up to some unknown translation~\cite{vanDerVaart:2000asymptotic}.
The recent works by Gupta et al.~\citeyearpar{finite_sample_mle,GLP23} developed a finite-sample theory of location estimation with error in terms of the smoothed Fisher information, up to a $1+o(1)$ factor.
Our algorithm also draws from the techniques in this line of work.
In particular, instead of finding the maximum of the empirical log-likelihood function, they perform a single step of Newton's method to approximate a root of the derivative. 
This modification both simplifies the algorithmic implementation and yields analysis advantages.
Our algorithm and analysis crucially leverage the same simplified view of the MLE.

Under the assumption of symmetry, therefore, our estimator is almost as good as the best estimator for \emph{known} distributions.  
In the classical asymptotic theory, this is referred to as an ``adaptive estimator"~\cite{bickel_adaptive_estimation}.  Such estimators are known for other semi-parametric problems, such as linear regression with symmetric errors, estimation of the parameters of an elliptical distribution, and two-sample comparison of a known density. A natural question for future work is to get a finite-sample theory for these problems.

The statistics and computer science communities have also been actively studying the high-dimensional mean estimation problem.
\cite{Lugosi:2019subgaussian} proposed the first subgaussian high-dimensional mean estimator up to a multiplicative constant, but with exponential time.
\cite{Hopkins:2020} and \cite{cherapanamjeri19b} later improved the result to take quadratic time.  A tight constant factor was achieved by~\cite{Lee:2022VeryHigh} in the ``very high-dimensional" regime, but it remains an open problem to achieve a subgaussian estimator with tight constants in general.

Recent years have seen a surge of interest in using maximum likelihood in theoretical computer science, as a generic algorithm that can give efficient guarantees.  Such papers include, for example, 
profile maximum likelihood for distribution testing and functional estimation~\cite{Acharya:2011competitive,Acharya:2017PML,Hao:2019broad,Pavlichin:2019approximatePML,Charikar:2019efficient,Anari:2020instance}; 
space-efficient streaming algorithms~\cite{pettie2021information}; and other statistical estimation problems~\cite{daskalakis2018efficient,vinayak2019maximum,awasthi:2022trimmed}.

% \new{The general topics of maximum likelihood, Fisher information and related techniques have also received renewed interest in the theoretical computer science community, for a variety of problems.
% See, for example,~\cite{awasthi:2022trimmed}.
% }

The result of this work has an ``instance optimal" flavor: for each distribution, the error bounds are phrased in terms of the (smoothed) Fisher information.
The Cram\'{e}r-Rao bound shows that, even if we knew the distribution shape, we cannot hope to do better than the Fisher information bounds.
Instance optimality and related notions have also been studied in the context of other statistical problems, for example, identity testing~\cite{Valiant:2017}, learning discrete distributions~\cite{Valiant:2016}, mean estimation without symmetry~\cite{Lee:2023instance} and differentially-private mean estimation~\cite{Asi:2020Arxiv,Asi:2020NeurIPS,Huang:2021}.

\section{Proof Sketch}\label{sec:proofsketch}

In this section we give a very high-level overview of our proof
approach; for a more detailed quantitative overview, see
Section~\ref{sec:proofdetails}.
\new{Here,} we will describe how to use $(1 + O(\eta)) n$ samples
to get accuracy
$(1 + O(\eta)) \sqrt{\frac{2 \log \frac{2}{\delta}}{n}}$ with
probability $1-\delta$, for $\eta = (\log\frac{1}{\delta}/n)^{1/13}$; rescaling parameters gives the result.

Our algorithm proceeds in two phases.  In the first phase, we use a
small number of samples (namely $\eta n$) to produce an initial estimate
$\mu_1$ of $\mu$, and an approximation $\wh{f}_r$ to $f_r$.  Since $f$
is symmetric, we can use the median of pairwise means
estimator~\cite{mintonprice}:
$\mu_1 = \median_{i \in [\eta n / 2]} \frac{x_{2i-1} +
  x_{2i}}{2}$. This has subgaussian tails corresponding to the
variance of $f$:
\[
  \eps := \mu_1 - \mu \text{~~~satisfies~~~}  \abs{\eps} \lesssim  \sqrt{\frac{\sigma^2 \log \frac{2}{\delta}}{\eta n}}
\]
with probability $1-\delta$, for every $\delta > 0$.  In the second
stage, we want to refine this estimate to $(1 + O(\eta))\sqrt{\frac{\log \frac{2}{\delta}}{n\I_r}}$ error, which is a small polynomial factor better
(by at least $\sqrt{\eta}$, but perhaps even better, like $(\frac{n}{\log \frac{1}{\delta}})^{0.1}$).  We do so with,
essentially, one step of Newton's method.

\paragraph{Background: known distribution case.}  Suppose we knew the
distribution of $f_r$, except for the location shift.
We \new{consider} centering the distribution at $\mu_1 = \mu + \eps$, \new{i.e.~define}
$\wt{f}_r(x) = f_r(x - \eps)$, \new{in order to estimate $\eps$}. 
\new{To do so,} take the score
function
\[
  \wt{s}_r(x) := \frac{\wt{f}'_r(x)}{\wt{f}_r(x)}
\]
which satisfies $\wt{s}_r(x+\eps) = s_r(x)$, where $s_r$ is the score
function of $f$.  Therefore, by standard properties of the score
function,
\begin{align*}
  \E_{x \sim f_r}[\wt{s}_r(x + \eps)] &= 0\\
  \E_{x \sim f_r}[-\tilde{s}_r'(x+\eps)] = \E_{x \sim f_r}[\wt{s}_r^2(x+\eps)] &= \I_r.
\end{align*}
Since we know $\wt{s}_r$, we can take our $n$ samples $x_i$, add fresh
independent noise $w_i \sim N(0, r^2)$ to get $x_i + w_i \sim f_r$,
and compute the empirical average
\begin{align*}
  \hat{\E}[\wt{s}_r(x_i + w_i)] := \frac{1}{n} \sum_{i=1}^n \wt{s}_r(x_i + w_i) % \approx \E_{x \sim f_r} \wt{s}_r(x)\label{eq:samplingapprox}
\end{align*}
One can show that this concentrates, so by a Taylor approximation
\begin{align}
  \hat{\E}[\wt{s}_r(x_i + w_i)] \approx \E_{x \sim f_r}[\wt{s}_r(x)] \approx \E_{x \sim f_r}[\wt{s}_r(x + \eps) - \eps \tilde{s}_r'(x+\eps)] = - \eps \E_{x \sim f_r}[\tilde{s}_r'(x+\eps)] = \eps \I_r\label{eq:knowndist}
\end{align}
Thus we can estimate $\mu$ as
\[
  \wh{\mu} := \mu_1 - \I_r^{-1} \hat{\E}[\wt{s}_r(x_i + w_i)] \approx \mu_1 - \eps = \mu .
\]
The new estimate $\wh{\mu}$ has error only from the two approximations
in~\eqref{eq:knowndist}: (I) how well the empirical average score
\new{concentrates to} the true average, and (II) the Taylor approximation.

At $\eps = 0$, error (II) is zero and error (I) has variance
$\frac{1}{n}\Var(s_r(x)) = \frac{\I_r}{n}$.  Since $\wh{\mu}$ rescales
by $\I_r^{-1}$, this means $\wh{\mu}$ has variance $\frac{1}{n\I_r}$
at $\eps = 0$---precisely the Cram\'er-Rao bound we want to achieve.
It was shown \new{by}~\cite{finite_sample_mle} that the same bound holds to within a $1+o(1)$ factor as long as $\eps$ is
small relative to $r$ (namely, $\abs{\eps} \ll r^2 \sqrt{\I_r}$), and that
the error satisfies a subgaussian tail bound matching this variance.

\paragraph{Our setting: unknown distribution case.}  The above
algorithm \new{for the known-distribution case} uses knowledge of the distribution in two ways: to compute
$\wt{s}$, and to estimate $\I_r$ to rescale it.  But what happens if
we use some function $g(x)$ other than the score?  If $g$ is
antisymmetric about $\mu_1$, we still have
$\E_{x \sim f_r}[g(x+\eps)] = 0$, \new{and so}
\begin{align}
  \hat{\E}[g(x_i + w_i)] \approx \E_{x \sim f_r}[g(x)] \approx \E_{x \sim f_r}[g(x+\eps) - \eps g'(x)] = -\eps \E_{x \sim f_r}[g'(x)]\label{eq:pickg}
\end{align}
If $g$ is reasonably smooth and $\eps$ is small relative to $r$, the Taylor approximation will be quite good, in which case
\begin{align}\label{eq:gerr}
  \wh{\eps} := -\frac{\hat{\E}[g(x_i + w_i)]}{\E_{x \sim f_r}[g'(x)]}
\end{align}
is a low-bias estimator of $\eps$.  For small $\eps$, we expect this
estimator to have variance close to the variance at $\eps =0$, which
is
\begin{align}
\Var(\wh{\mu}) \approx \frac{1}{n} \frac{\E_{\new{f_r}}[g^2(y)]}{\E_{\new{f_r}}[g'(y)]^2}.\label{eq:newvariance}
\end{align}

% \[
% \int f(y) g'(y) dy =  f(y) g(y) |_{-\infty}^{\infty} - \int f'(y) g(y) dy = - \int f'(y) g(y) dy 
% \]
% by integration by parts and antisymmetry.  Then
% \[
%    \int f'(y) g(y) dy =  \int (f'(y)/f) f g(y) dy = \E_{f}[s_r(y) g(y)]
% \]
% which means
% \[
% \E_{f}[g'(y)]^2 = \E_{f}[s_r(y) g(y)]^2 \leq \E_f[s_r(y)^2]\E_f[g(y)]^2 = \I_r \E_f[g(y)]^2
% \]

One can show that this variance is at least $\new{\frac{1}{n\I_r}}$,
with minimum achieved when $g$ is the score function
$s_r(x) = \frac{f_r'(x)}{f_r(x)}$, matching the Cram\'er-Rao bound; see Proposition~\ref{prop:score_is_best_g} at the end of the section.
But this argument is fairly robust: we just need $g(x)$ to be an
antisymmetric function that approximates $s_r$ well under these two
expectations, and that is robust to perturbations $\eps \ll r$.

Our algorithm then is: using our initial set of $\eta n$ samples in
the first stage, we compute the kernel density estimate (KDE)
\[
  \wh{f}_r(x) := \frac{1}{n}\sum_{i=1}^n \phi\left(\frac{x - x_i}{r}\right)
\]
where $\phi(t)$ is the Gaussian density
$\phi(t) = \frac{1}{\sqrt{2 \pi}}e^{-t^2/2}$.  This has corresponding
score function $\wh{s}_r(x) = \frac{\wh{f}'_r(x)}{\wh{f}_r(x)}$.  We first
clip the score function to have magnitude at most
$T \approx \frac{\sqrt{\log n}}{r}$, and then antisymmetrize this
score function about $\mu_1$ by just copying the right side over: setting
$\wh{s}^{sym}_r(x) = \wh{s}_r^{clip}(2 \mu_1 - x)$ for $x \leq \mu_1$.
This produces the antisymmetric function $\wh{s}^{sym}_r$ we use as
$g$ in the above proof outline.

The final step in our algorithm is that, in order to estimate our
target via~\eqref{eq:gerr}, we need to approximate
$ \E_{x \sim f_r}[\wh{s}^{sym'}(x)]$.  Since $\wh{s}^{sym}$ is close to the true score $s_r(x)$, this value is close (within $1 + O(\eta)$) to $\I_r$.  
Thus, we can just make an estimate $\wh{\I}_r$ of $\I_r$ using the distribution $\wh{f}_r$.

% (XXX
% and that $\wh{s}^{sym}_r \approx s_r$, so in fact
% \[
% \]

Our sources of error are the following: (I) the empirical
\new{concentration} to the expectation of $\wh{s}^{sym}(x)$; (II) the Taylor
approximation in~\eqref{eq:pickg}; (III) the increase in
variance~\eqref{eq:newvariance} due to $\wh{s}^{sym}$ not being the
exact score; and (IV) error from approximating
$\E_{x \sim f_r}[\wh{s}^{sym'}(x)]$ by
$\wh{\I}_r$ in~\eqref{eq:gerr}.

Unlike in the known-distribution case, clipping is necessary for bounding
error (I).  The true score concentrates in expectation because
$s_r(x)$ is subgamma over $x \sim f_r$.  However, $\wh{s}_r(x)$ may not be
so concentrated.  %Figure~\ref{fig:badclip} shows such an example,
Consider the example
$f(x) = (1 - \frac{2}{n})N(0, 1) + \frac{1}{n} \delta_{-\sqrt{n}} +
\frac{1}{n} \delta_{\sqrt{n}}$.  In this example, $x \sim f$ is usually constant but has a
$\Theta(\frac{1}{n})$ chance of being quite large; in this case, it is
likely that the large points will not appear for the KDE but will
appear exactly once for the second stage.  The KDE then gives them
large scores (about $\sqrt{n} / r$), leading to excessive final error
($\Theta(\frac{1}{r \sqrt{n}})$ not $\Theta(\frac{1}{\sqrt{n}})$).
% \jnote{I'm a bit confused by the prose: we are clipping at $\sqrt{\log n}/r$, so the analogous error for that 1 ``outlier" sample is like $O(\sqrt{\log n}/(r n))$ right? But we haven't really mentioned what $r$ is at this point, so we can't claim that $r \ge \tilde{\Omega}(1/\sqrt{n})$ }
Once the scores are clipped, however, we can bound the error (I) with
high probability via Bernstein's inequality.  The clipping threshold $T$ is large enough to have negligible effect on the expectations (III-IV); since the true score is subgaussian, with high probability it is not clipped.
\new{Specifically, in the Gaussian + Symmetric Dirac Deltas example above , the ``excess error" in the above constant probability event is now $O(\sqrt{\log \eta n}/(r n))$.
Recalling that $r \approx 1/n^{1/13}$ in Theorem~\ref{thm:maintheorem}, the excess error after clipping is $\ll O(1/\sqrt{n})$.}

Error (II) is bounded when $\eps$ is small in a similar manner to
previous work \new{in the known distribution case}.  For errors (III) and (IV), we just need to show that,
with high probability, our KDE $\wh{f}_r \approx f_r$ and
$\wh{s}^{sym}_r \approx s_r$, in different metrics but all in expectation
over $\wh{f}$.

\paragraph{Comparison to~\cite{stone1975adaptive}.}  Our approach, of taking an initial estimate and KDE and refining it with one Newton step, is similar to~\cite{stone1975adaptive}.  The main difference is that our work needs more careful bounds:~\cite{stone1975adaptive} shows convergence in probability to $N(0, \frac{1}{n\I})$, which requires fixing the distribution $f$ and failure probability $\delta$ before sending $n \to \infty$.   By separating the distribution dependence into $\I_r$, we can express and prove bounds for any $f, n, \delta$.

We end this section with a short proof relating~\eqref{eq:newvariance} to the score and Fisher information.
\begin{proposition}
\label{prop:score_is_best_g}
For every antisymmetric function $g$ that is continuously differentiable and whose derivative $g'$ is integrable under $f_r$, we have
\[\frac{\E_{{f_r}}[g^2(y)]}{\E_{{f_r}}[g'(y)]^2} \ge \frac{1}{\I_r} \]
with equality achieved when $g(y) = s_r(y) = f'_r(y)/f_r(y)$.
% \jnote{Do we technically need that $g'(y)$ decays fast with respect to $f_r(y)$?}
\end{proposition}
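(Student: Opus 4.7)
The proposition is essentially a Cauchy–Schwarz / Cramér–Rao style inequality, and the antisymmetry of $g$ plays no real role in the argument itself (it is merely the class of functions we care about for the algorithm). The plan is to (i) use integration by parts to rewrite $\E_{f_r}[g'(y)]$ as an inner product against the score $s_r$ under $f_r$, and then (ii) apply Cauchy–Schwarz against the $L^2(f_r)$ inner product to sandwich that inner product between $\sqrt{\E_{f_r}[g^2]}$ and $\sqrt{\E_{f_r}[s_r^2]} = \sqrt{\I_r}$.

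Concretely, I would first write
\[
\E_{f_r}[g'(y)] \;=\; \int g'(y)\, f_r(y)\,\d y \;=\; -\int g(y)\, f_r'(y)\,\d y \;=\; -\E_{f_r}[\, g(y)\, s_r(y)\,],
\]
using integration by parts, where $s_r(y) = f_r'(y)/f_r(y)$. The boundary terms vanish because $f_r$ is a Gaussian convolution of $f$ and hence decays faster than any polynomial at $\pm \infty$, so $g(y) f_r(y) \to 0$ provided $\E_{f_r}[g^2]$ is finite (otherwise the right-hand side of the claimed inequality is infinite and the bound is trivial). Then by Cauchy–Schwarz in $L^2(f_r)$,
\[
\E_{f_r}[g'(y)]^2 \;=\; \E_{f_r}[\,g(y)\, s_r(y)\,]^2 \;\le\; \E_{f_r}[g^2(y)]\cdot \E_{f_r}[s_r^2(y)] \;=\; \E_{f_r}[g^2(y)]\cdot \I_r,
\]
and dividing through yields the claimed lower bound $\E_{f_r}[g^2]/\E_{f_r}[g']^2 \ge 1/\I_r$.

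For the equality case, substituting $g = s_r$ gives $\E_{f_r}[g^2] = \I_r$ and, by the same integration by parts (or by the classical identity $\E_{f_r}[s_r'(y)] = -\I_r$), $\E_{f_r}[g'(y)] = -\I_r$, so the ratio equals $\I_r / \I_r^2 = 1/\I_r$. Note $s_r$ is itself antisymmetric about $\mu$ because $f_r$ is symmetric about $\mu$ (so $f_r'$ is antisymmetric and their ratio is antisymmetric), confirming that the optimizer lies within the stated class.

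The only mild obstacle is justifying the integration by parts cleanly, but this is routine: the assumption that $g$ is continuously differentiable with $g'$ integrable against $f_r$, combined with the superpolynomial decay of $f_r$ coming from the Gaussian smoothing, suffices to kill the boundary terms. The main content of the proposition is really just Cauchy–Schwarz, so no serious technical work is required beyond that.
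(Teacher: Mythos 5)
Your proof is correct and takes essentially the same route as the paper: integration by parts rewrites $\E_{f_r}[g'(y)]$ as $-\E_{f_r}[g(y)s_r(y)]$, and Cauchy--Schwarz in $L^2(f_r)$ then yields $\E_{f_r}[g'(y)]^2 \le \I_r\,\E_{f_r}[g^2(y)]$, with equality at $g=s_r$. One small caveat: your justification of the vanishing boundary term via ``superpolynomial decay of $f_r$'' is not valid in general, since Gaussian smoothing of a merely finite-variance (possibly heavy-tailed) $f^*$ does not produce superpolynomial tail decay; the paper instead invokes the symmetry of $f_r$ and antisymmetry of $g$, and in either case this boundary issue is a minor technicality treated at the same informal level as in the paper, so it does not affect the substance of the argument.
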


\begin{proof}
First, observe that by integration by parts, we have
\[
\E_{f_r}[g'(y)] = \int_{\R} f_r(y) g'(y) \, dy =  [f_r(y) g(y)]_{-\infty}^{\infty} - \int_{\R} f'_r(y) g(y) \, dy = - \int_{\R} f'_r(y) g(y) \, dy 
\]
where the last equality is by the symmetry of $f$ and antisymmetry of $g$.  
Furthermore,
\[
   \int_{\R} f'_r(y) g(y) \, dy =  \int_{\R} \frac{f'_r(y)}{f_r(y)} g(y) f_r(y) \, dy = \E_{f_r}[s_r(y) g(y)]
\]
which means
\[
\E_{f_r}[g'(y)]^2 = \E_{f_r}[s_r(y) g(y)]^2 \leq \E_{f_r}[s_r(y)^2]\E_{f_r}[g(y)]^2 = \I_r \E_{f_r}[g(y)]^2
\]
by Cauchy-Schwarz, with equality achieved when $g(y) = s_r(y)$.
\end{proof}

\section{Key Steps in Proof}\label{sec:proofdetails}
Here, we highlight the key steps of our proof. For the full proofs, see the Appendix.
\paragraph{Notation.} Let $f^*$ be an arbitrary symmetric distribution with mean $\mu$ and variance $\sigma^2$, and let $f_r$ be the $r$-smoothed version of $f^*$. Let $s_r$ be the score function of $f_r$, so that $s_r(x) = \frac{f_r'(x)}{f_r(x)}$. Let $\I_r = \E_{x \sim f_r}\left[ s_r(x)^2\right] = -\E_{x \sim f_r}\left[s_r'(x) \right]$ be the Fisher information of $f_r$.

Let $w_r$ be the density function of $\mathcal N(0, r^2)$. Then, the Kernel Density Estimate (KDE) $\wh f_r$ from $N$ samples $Y_1, \dots Y_N \sim f^*$ is given by
\begin{equation}
    \label{eq:kde_def_main_body} 
    \wh f_r(x) = \frac{1}{N} \sum_{i=1}^N w_r(x - Y_i)
\end{equation}
It has score function $\wh s_r$ with $\wh s_r(x) = \frac{\wh f_r'(x)}{\wh f_r(x)}$. Let $\wh s_r^\clip$ be the clipped KDE score from $N$ samples with associated failure probability $\delta$, given by
\begin{equation}
    \label{eq:clipped_kde_def_main_body}
    \wh s_r^\clip(x) = \sign(\wh s_r(x)) \cdot \min\left(|\wh s_r(x)|, \frac{2}{r} \sqrt{\log \frac{N}{\log \frac{1}{\delta}}} \right)
\end{equation}
Define  the symmetrized clipped KDE score $\wh s_r^\sym$ from $N$ samples, symmetrized around $y$, as
\begin{equation}
    \label{eq:sym_kde_def_main_body}
    \wh s_r^\sym(x) =  
    \begin{cases}
        \wh s_r^\clip(x) & x \ge y\\
        -\wh s_r^\clip(2y - x) & x < y
    \end{cases}
\end{equation}

In what follows, we first analyze the clipped KDE score (Section~\ref{sec:clipped}), before showing that symmetrizing it at a $\mu+\eps$ for small $|\eps|$ does not add too much error (Section~\ref{sec:sym}).
Using similar techniques, we prove that $\I_r$ can be computed directly from the KDE (Section~\ref{sec:Ir}).
Section~\ref{sec:local} then analyzes the Newton step of the estimation, and finally Section~\ref{sec:global} assembles all the guarantees into Lemma~\ref{lem:global_estimation_empirical_mean}, from which our main result Theorem~\ref{thm:maintheorem} follows as a corollary.

\subsection{Clipped KDE Score}
\label{sec:clipped}

We first show that the clipped KDE score $\wh s_r^\clip$ approximates the true score $s_r$ in an $\ell_2$ sense:
\begin{restatable}[Clipped KDE score error]{lemma}{clippedkdeerror}
\label{lem:clipped_kde_error}
    Let $\wh s_r^\clip$ be the clipped Kernel Density estimate from $N$ samples, defined in \eqref{eq:clipped_kde_def}.
     Let $\gamma > C$ be a parameter, for large enough constant $C \ge 1$. Then for any $r \le \sigma$ and $\frac{N}{\log \frac{1}{\delta}} \ge \left(\frac{\gamma^{5/12} \sigma }{r} \right)^{6 + \beta}$ for $\beta > 0$, with probability $1 - \delta$, we have that,
    \[
        \E_{x \sim f_r}\left[ (\wh s_r^\clip(x) - s_r(x))^2\right] \lesssim \frac{\I_r}{\gamma}
    \]
    This holds even for asymmetric $f^*$ and $f_r$.
\end{restatable}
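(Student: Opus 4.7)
The plan is to decompose the squared error as
\[
(\hat s_r^{\clip} - s_r)^2 \leq 2(\hat s_r^{\clip} - s_r^{\clip})^2 + 2(s_r^{\clip} - s_r)^2,
\]
separating the clipping bias on the true score from the KDE estimation error. The first term is then handled by a KDE concentration argument, while the second is controlled by tail bounds on $s_r$.

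For the clipping bias, I would use that the score of a Gaussian-smoothed distribution is $1/r$-subgaussian under $f_r$. Concretely, by Tweedie's formula, $s_r(x) = -\E[Z \mid Y+Z = x]/r^2$ for $Y \sim f^*$ and $Z \sim \mathcal{N}(0,r^2)$, and Jensen on the moment generating function yields $\E_{f_r}[e^{\lambda s_r}] \leq \E[e^{-\lambda Z/r^2}] = e^{\lambda^2/(2r^2)}$. The clipping threshold $T = (2/r)\sqrt{\log(N/\log\frac{1}{\delta})}$ sits deep in this tail, so a direct calculation gives $\E_{f_r}[(s_r^{\clip} - s_r)^2] \lesssim (\log N)/r^2 \cdot (\log\frac{1}{\delta}/N)^2$, which is safely below $\I_r/\gamma$ under the sample-complexity hypothesis.

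For the KDE-estimation term, I would work pointwise. On the good event $G(x) = \{\hat f_r(x) \geq f_r(x)/2\}$, the identity
\[
\hat s_r(x) - s_r(x) = \frac{\hat f_r'(x) - f_r'(x)}{\hat f_r(x)} - s_r(x) \cdot \frac{\hat f_r(x) - f_r(x)}{\hat f_r(x)}
\]
gives $(\hat s_r(x) - s_r(x))^2 \lesssim (\hat f_r'(x) - f_r'(x))^2 / f_r(x)^2 + s_r(x)^2 \cdot (\hat f_r(x) - f_r(x))^2 / f_r(x)^2$. Off $G(x)$, clipping forces $(\hat s_r^{\clip}(x) - s_r^{\clip}(x))^2 \leq 4T^2$, and Bernstein gives $\P_{\mathrm{KDE}}[G(x)^c] \lesssim \exp(-cNrf_r(x))$. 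The standard pointwise variance bounds for the Gaussian kernel, $\Var(\hat f_r(x)) \lesssim f_r(x)/(Nr)$ and $\Var(\hat f_r'(x)) \lesssim f_r(x)/(Nr^3)$ (from $\|w_r\|_\infty \lesssim 1/r$ and $\|t \cdot w_r(t)\|_\infty \lesssim 1$), then bound the on-$G$ contribution in expectation, while the off-$G$ contribution is exponentially damped.

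To upgrade from expectation to a high-probability bound, I would apply uniform concentration of the KDE. On a truncated domain $|x - \mu| \leq R$, Bernstein plus a union bound over a $\mathrm{poly}(N)$-sized $\eps$-net (justified by the $O(1/r^2)$-Lipschitz property of $\hat f_r$) yields $|\hat f_r(x) - f_r(x)| \lesssim \sqrt{f_r(x)\log(N/\delta)/(Nr)}$, and analogously for $\hat f_r'$, with probability $1-\delta$. Outside $|x - \mu| > R$, Chebyshev on $f^*$ bounds the probability mass by $\sigma^2/R^2$, multiplied by the $4T^2$ clipping bound on the integrand. The main obstacle is the careful balancing of $R$, the clipping threshold $T \sim \sqrt{\log N}/r$, and the integrated KDE noise ($\sim R/(Nr^3)$, together with a term like $T^2 R/(Nr)$ after substituting $|s_r| \le T$ on the interior): requiring each of the tail term, the clipping bias, and the interior noise to fall below $\I_r/\gamma \leq 1/(\gamma r^2)$ is what produces the stated polynomial sample complexity $N/\log\frac{1}{\delta} \gtrsim (\gamma^{5/12}\sigma/r)^{6+\beta}$.
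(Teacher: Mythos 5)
Your outer decomposition (clipping bias on the true score, handled by the MGF/Jensen subgaussianity of $s_r$, plus a KDE-estimation term; far tail $|x-\mu|>R$ killed by Chebyshev times the $4T^2$ clipping bound) is sound and parallels the paper's treatment of its atypical region. The genuine gap is in how you handle the \emph{low-density part of the truncated domain} and in the passage from an in-expectation bound over the KDE samples to a probability-$1-\delta$ bound. Your claimed uniform concentration $|\wh f_r(x)-f_r(x)|\lesssim \sqrt{f_r(x)\log(N/\delta)/(Nr)}$ for all $|x-\mu|\le R$ is not provable: Bernstein's inequality carries an additive term of order $\log(N/\delta)/(Nr)$ which dominates whenever $f_r(x)\lesssim \log(N/\delta)/(Nr)$, and there is no useful lower bound on $f_r$ inside the truncated region (e.g., for a symmetric mixture of two narrow, well-separated components, $f_r(\mu)=e^{-\Omega(\sigma^2/r^2)}$). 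At such points the on-$G$ integrands $(\wh f_r'-f_r')^2/f_r$ and $s_r^2(\wh f_r-f_r)^2/f_r$ are not integrable under any pointwise deviation bound, and the off-$G$ ``exponential damping'' $e^{-cNrf_r(x)}\approx 1$ is vacuous. The off-$G$ term is indeed small \emph{in expectation over the KDE draw} (since $z e^{-cNrz}\le 1/(ecNr)$ gives your $T^2R/(Nr)$ term), but the lemma requires a high-probability statement over the KDE samples; Markov costs a factor $1/\delta$, which is unaffordable at the exponentially small $\delta$ the theorem allows, and you have given no other mechanism (the uniform-concentration route is exactly what fails here).

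The fix is the extra ingredient the paper uses: introduce a density threshold $\alpha\approx 1/(t^3\sigma_r)$ and treat the ``typical but low-density'' set $\{|x-\mu|\le t\sigma_r,\ f_r(x)<\alpha\}$ \emph{deterministically}, exactly like the far tail. Its $f_r$-mass is at most $\alpha\cdot t\sigma_r=1/t^2$, and on any set $S$ with $\Pr_{f_r}[S]\le 1/t^2$ one has $\E[(\wh s_r^{\clip})^2\1_{S}]\le T^2/t^2$ and, by integrating the subgaussian tail of $s_r$, $\E[s_r^2\1_S]\lesssim t^{-2}r^{-2}\log t^2$, both $O(\I_r/\gamma)$ with no randomness of the KDE involved. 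The relative-error concentration with the binning/net union bound (your $\eps$-net step, the paper's bins of width $\sqrt{r/(\alpha N)}$ together with ``close-by scores are close'' Lipschitz-type lemmas) is then applied only on the high-density region $f_r\ge\alpha$, where the Bernstein additive term is subdominant and where one also checks $|s_r|\le T$ so that clipping can only decrease the error. With that restructuring your outline matches the paper's proof; as written, the low-density region and the expectation-to-high-probability step are unproved and the stated bounds there are false.
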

\begin{proofsketch}
   We refer to the radius $t\sigma_r$ region around the true mean of $f^*$ as the ``typical region'', and to the region with density at least $\alpha= \frac{1}{t^3 \sigma_r}$ as the ``large density region''.
    We break up the expectation above into $3$ parts: (I) the typical, large density region, (II) the typical, small density region, and (III) the atypical region. We then bound the expectation in each of these regions individually. 

    To bound the expectation in regions (II) and (III), observe that both regions (II) and (III) have total probability at most $O\left(\frac{1}{t^2} \right)$. For our clipping threshold, we show that the expectation of $\wh s_r^\clip(x)^2$ and $s_r$ on a region with this probability is bounded by $O\left(\frac{\I_r}{\gamma} \right)$. 
    
    For region (I), we employ a binning argument. We first show that if we fix $x$ with $f_r(x) \ge \alpha$, then, for small enough $\eps$ and for all $|\zeta| \le |\eps|$, with probability $1-\delta$, $\wh s_r(x+\zeta)$ approximates $s_r(x+\zeta)$ up to error depending on $\eps, \delta, \alpha$ and $N$. That is, our KDE score approximates the true score well within bins of size $\eps$ with probability $1 - \delta$. Then, by union bounding over $O\left(\frac{t \sigma_r}{\eps}\right)$ bins, for appropriately chosen $\eps$, we show that, with probability $1 - \delta$ for all $x$ in region (I), $|\wh s_r^\clip(x) - s_r(x)| \lesssim \sqrt{\frac{\I_r}{\gamma}}$ so that the expectation of $(\wh s_r^\clip(x) - s_r(x))^2$ in region (I) is bounded by $O\left(\frac{\I_r}{\gamma} \right)$.

    Putting our bounds together then gives the claim.
\end{proofsketch}

\subsection{Symmetrization}
\label{sec:sym}
This section shows that $\wh s_r^\sym$ symmetrized at $\mu + \eps$ for small $\eps$ has mean $\approx \eps \I_r$ and variance $\approx \I_r$.
\begin{restatable}[Symmetrized Clipped KDE score variance]{lemma}{symkdevariancecor}
\label{cor:sym_kde_variance}
   Let $\wh s_r^\sym$ be the symmetrized clipped Kernel Density Estimate score from $N$ samples, symmetrized around $\mu + \eps$ for $|\eps| \le r/60$, as defined in \eqref{eq:symmetrized_kde_def}. Let $\gamma > C$ be a parameter for large enough constant $C$. Then for any $r \le \sigma$ and $\frac{N}{\log \frac{1}{\delta}} \ge \left(\frac{\gamma^{5/12} \sigma }{r} \right)^{6 + \beta}$ for $\beta > 0$, if $|\eps| \le r^2 \sqrt{\frac{\I_r}{\gamma}}$, with probability $1 - \delta$,
    \[
        |\E_{x \sim f_r}\left[\wh s_r^\sym(x)^2 \right] - \I_r| \lesssim \frac{\I_r}{\sqrt{\gamma}}
    \]
\end{restatable}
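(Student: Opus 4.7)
\begin{proofsketch}
The plan is to bound $|\E_{f_r}[\wh s_r^\sym(x)^2] - \I_r|$ by passing through two intermediate comparisons: first replacing the clipped KDE score $\wh s_r^\clip$ by the true score $s_r$ using Lemma~\ref{lem:clipped_kde_error}, and then comparing the resulting quantity to $\I_r$ via symmetry and a Lipschitz-type estimate.

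I would begin by establishing a structural identity: for any function $g$ antisymmetric about $y = \mu + \eps$, the substitution $u = 2y - x$ on the $x < y$ half, combined with symmetry of $f_r$ about $\mu$ (so that $f_r(2y - u) = f_r(u - 2\eps)$), yields
\[
\E_{f_r}[g(x)^2] = \int_{x \ge y} g(x)^2 \bigl(f_r(x) + f_r(x - 2\eps)\bigr)\,dx.
\]
Applying this to $g = \wh s_r^\sym$, the target rewrites as $\int_{x \ge y} \wh s_r^\clip(x)^2\,d\nu$ where $d\nu := (f_r(x) + f_r(x-2\eps))\mathbbm{1}_{x \ge y}\,dx$ is a probability measure.

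For the first comparison, I would apply Cauchy-Schwarz: $|\E_\nu[(\wh s_r^\clip)^2] - \E_\nu[s_r^2]| \le \|\wh s_r^\clip - s_r\|_{L^2(\nu)} \cdot \|\wh s_r^\clip + s_r\|_{L^2(\nu)}$, with the second factor bounded by $O(\sqrt{\I_r})$ via the triangle inequality. For the first factor, I would split $\nu$ into its $f_r\mathbbm{1}_{x \ge y}$ piece (directly $O(\I_r/\gamma)$ by Lemma~\ref{lem:clipped_kde_error}) and the shifted $f_r(\cdot - 2\eps)\mathbbm{1}_{x \ge y}$ piece. The shifted piece I would handle by re-running the regional decomposition in Lemma~\ref{lem:clipped_kde_error}'s proof: since $|2\eps| \le 2r/\sqrt\gamma \ll r$, the typical large-density region of $f_r(\cdot - 2\eps)$ coincides (up to constants) with that of $f_r$, so the pointwise bound $|\wh s_r^\clip - s_r| \lesssim \sqrt{\I_r/\gamma}$ established there transfers directly, while the tail regions contribute only $O(1/\gamma)$-mass times an $O(\I_r)$-scale integrand. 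Altogether this stage contributes at most $O(\I_r/\sqrt\gamma)$.

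For the second comparison, I would apply the structural identity in reverse to write $\E_\nu[s_r^2] = \E_{f_r}[G^2]$ where $G$ denotes the $y$-antisymmetrization of $s_r$, and then introduce the shifted true score $\bar s_r(x) := s_r(x - \eps)$, which is also antisymmetric about $y$. Using $|G(x) - \bar s_r(x)| = |s_r(x') - s_r(x' - \eps)|$ for $x' \in \{x, 2y-x\}$, together with a Taylor/MVT bound and the Gaussian-smoothing estimate $\int (s_r')^2 f_r \lesssim \I_r/r^2$, I get $\|G - \bar s_r\|_{L^2(f_r)} \lesssim |\eps|\sqrt{\I_r}/r \lesssim \sqrt{\I_r/\gamma}$ using the hypothesis $|\eps| \le r^2\sqrt{\I_r/\gamma}$ and $r^2\I_r \le 1$. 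Cauchy-Schwarz then gives $|\E_{f_r}[G^2] - \E_{f_r}[\bar s_r^2]| \lesssim \I_r/\sqrt\gamma$. Finally, Taylor-expanding $\E_{f_r}[\bar s_r^2] = \int s_r(u)^2 f_r(u + \eps)\,du$ around $\eps = 0$, the linear term $\eps\,\E_{f_r}[s_r^3]$ vanishes by antisymmetry of $s_r$ (odd power), leaving only an $O(\eps^2\,\E_{f_r}[s_r^4])$ quadratic remainder, which is comfortably within the $O(\I_r/\sqrt\gamma)$ budget under the Fisher-information bounds for Gaussian-smoothed distributions.

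\textbf{Main obstacle.} The trickiest step is the shifted-measure piece of the first comparison: Lemma~\ref{lem:clipped_kde_error} only controls the $L^2(f_r)$-error of $\wh s_r^\clip - s_r$, but we need its $L^2(f_r(\cdot - 2\eps))$-error. The plan is to leverage the stronger pointwise $L^\infty$-type bound that the lemma's proof actually establishes on the typical region (which is essentially unchanged under the small shift $|\eps|/r \lesssim 1/\sqrt\gamma$), rather than just the $L^2$-averaged conclusion. Together with the first-order Taylor cancellation in the second comparison, this is what accounts for the $\sqrt\gamma$-loss in the bound (rather than $\gamma$).
\end{proofsketch}
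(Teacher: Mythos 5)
Your proposal is essentially correct in architecture, but it is organized differently from the paper's proof, and it is more careful on one point while leaning on an unproven estimate at another. The paper's route is shorter: it first proves (Lemma~\ref{lem:sym_kde_variance_error}) that $\E_{f_r}[(\wh s_r^\sym - s_r)^2] \lesssim \I_r/\gamma$, by splitting at the symmetrization point, citing Lemma~\ref{lem:clipped_kde_error} on each half (on the reflected half after using symmetry of $f_r$), and absorbing the reflection mismatch via Lemma~\ref{lem:shifted_true_score_error}, i.e.\ $\E_{f_r}[(s_r(x+2\eps)-s_r(x))^2]\lesssim \eps^2/r^4 \le \I_r/\gamma$; it then expands $(\wh s_r^\sym)^2=(s_r+(\wh s_r^\sym-s_r))^2$ and uses Cauchy--Schwarz, which immediately gives the $\I_r/\sqrt{\gamma}$ bound since $\E_{f_r}[s_r^2]=\I_r$ exactly. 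Your folding identity plus the two comparisons reaches the same place, but because your intermediate object is the $y$-antisymmetrization $G$ of the true score rather than $s_r$ itself, you need the extra third stage (shifted score $\bar s_r$, Taylor expansion, odd-moment cancellation) that the paper avoids. On the plus side, your explicit treatment of the shifted measure $f_r(\cdot-2\eps)$ in the first comparison (re-running the regional decomposition and using the $L^\infty$ bound on the typical region, with the small-probability regions handled as in Lemma~\ref{lemma:score_error_bounded_in_small_set}/Lemma~\ref{lem:score_bounded_in_small_set}) is a genuine subtlety that the paper's own proof of Lemma~\ref{lem:sym_kde_variance_error} glosses over; your plan for it is sound since $|2\eps|\ll r$.

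The one step you should not leave as stated is the bound $\|G-\bar s_r\|_{L^2(f_r)} \lesssim |\eps|\sqrt{\I_r}/r$ via ``MVT plus $\E_{f_r}[(s_r')^2]\lesssim \I_r/r^2$'': that smoothing estimate is not proved in the paper (nor obviously in the cited prior work), and the MVT form (a pointwise sup of $s_r'$) does not combine cleanly with an $L^2$ bound anyway. Fortunately you do not need it: the hypothesis $|\eps|\le r^2\sqrt{\I_r/\gamma}$ means the weaker bound $\|G-\bar s_r\|_{L^2(f_r)}\lesssim |\eps|/r^2\le\sqrt{\I_r/\gamma}$ suffices, and this is exactly what the paper's Lemma~\ref{lem:shifted_true_score_error} provides (proved by a conditional-MGF argument, not through $s_r'$); alternatively, the crude bound $\E_{f_r}[(s_r')^2]\lesssim 1/r^4$, which follows from $s_r'(x)=(\mathrm{Var}(Z_r\mid x)-r^2)/r^4$ and $\E_x[\E[Z_r^2\mid x]^2]\le \E[Z_r^4]=3r^4$, also suffices. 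The same remark applies to your final Taylor remainder: the second derivative involves $f_r''=(s_r'+s_r^2)f_r$, so it is not literally $\eps^2\E[s_r^4]$, but with $\E[s_r^4]\lesssim 1/r^4$ (score subgaussianity, Lemma~\ref{lem:score-subgaussian}) and the crude $\E[(s_r')^2]\lesssim 1/r^4$ it is $\lesssim \eps^2/r^4\le\I_r/\gamma$, so your conclusion stands after these patches.
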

\begin{proofsketch}
    First, we show that
    \[
        \E_{x \sim f_r}\left[(\wh s_r^\sym(x) - s_r(x))^2 \right] \lesssim \E_{x \sim f_r}[(\wh s_r^\clip(x) - s_r(x))^2]
    \]
    so that by the previous Lemma~\ref{lem:clipped_kde_error}, it's bounded by $O\left(\frac{\I_r}{\gamma} \right)$. Then, we can show the claim using the triangle inequality in $\ell_2$.
\end{proofsketch}

The Taylor approximation~\eqref{eq:pickg} leads to:

\begin{restatable}[Symmetrized Clipped KDE score mean]{lemma}{symkdeerror}
\label{lem:sym_kde_mean}
    Let $\wh s_r^\sym$ be the symmetrized clipped Kernel Density Estimate score from $N$ samples, symmetrized around $\mu + \eps$ for $|\eps| \le r/60$, as defined in \eqref{eq:symmetrized_kde_def}. Let $\gamma > C$ be a parameter for large enough constant $C$. Then for any $r \le \sigma$ and $\frac{N}{\log \frac{1}{\delta}} \ge \left(\frac{\gamma^{5/12} \sigma }{r} \right)^{6 + \beta}$ for $\beta > 0$, if $|\eps| \le r^2 \sqrt{\frac{\I_r}{\gamma}}$, with probability $1 - \delta$,
    \[
        \left|\E_{x \sim f_r}\left[\wh s_r^\sym(x) \right] -  \eps \I_r\right| \lesssim \frac{\eps \I_r}{\sqrt{\gamma}}
    \]
\end{restatable}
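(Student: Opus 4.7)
The plan is to exploit antisymmetry of $\wh s_r^\sym$ around $\mu + \eps$ and the $L^2(f_r)$-closeness between $\wh s_r^\sym$ and the true score $s_r$ given by Lemma~\ref{lem:clipped_kde_error}. Since $\wh s_r^\sym$ is antisymmetric around $\mu + \eps$ while $f_r(\cdot - \eps)$ is symmetric around the same point, $\int f_r(x-\eps)\,\wh s_r^\sym(x)\,dx = 0$, so
\[
    \E_{x \sim f_r}[\wh s_r^\sym(x)] = \int \bigl(f_r(x) - f_r(x-\eps)\bigr)\,\wh s_r^\sym(x)\,dx.
\]

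By the Fundamental Theorem of Calculus, $f_r(x) - f_r(x-\eps) = \int_0^\eps s_r(x-s) f_r(x-s)\,ds$; swapping the order of integration and substituting $z = x - s$ in the inner integral yields
\[
    \E_{x \sim f_r}[\wh s_r^\sym(x)] = \int_0^\eps \E_{z \sim f_r}\bigl[s_r(z)\,\wh s_r^\sym(z+s)\bigr]\,ds.
\]
Next I would decompose $\wh s_r^\sym(z+s) = s_r(z) + \bigl(s_r(z+s) - s_r(z)\bigr) + \bigl(\wh s_r^\sym(z+s) - s_r(z+s)\bigr)$. The first piece contributes $\int_0^\eps \E_{f_r}[s_r^2]\,ds = \eps \I_r$, which is the desired main term. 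The other two pieces are the errors, and Cauchy--Schwarz bounds each inner expectation by $\sqrt{\I_r}$ times the $L^2(f_r)$-norm of the corresponding difference.

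For the KDE-vs-score piece $\wh s_r^\sym(\cdot+s) - s_r(\cdot+s)$, the change of variables $y = z+s$ gives squared $L^2(f_r)$ norm equal to $\int f_r(y-s)\,(\wh s_r^\sym(y) - s_r(y))^2\,dy$. Because $|s| \le |\eps| \le r/60$, the shifted measure $f_r(\cdot - s)$ is pointwise comparable to $f_r$ in the bulk region, so the bound $\E_{f_r}[(\wh s_r^\sym - s_r)^2] = O(\I_r/\gamma)$ (established in the proof of Lemma~\ref{cor:sym_kde_variance} via Lemma~\ref{lem:clipped_kde_error}) transfers, giving $L^2(f_r)$ norm $O(\sqrt{\I_r/\gamma})$. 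For the Taylor piece $s_r(\cdot+s) - s_r(\cdot) = \int_0^s s_r'(\cdot+t)\,dt$, Cauchy--Schwarz gives squared $L^2(f_r)$ norm $\le s \int_0^s \E_{f_r}[s_r'(z+t)^2]\,dt$; combining a standard bound $\E_{f_r}[s_r'^2] \lesssim 1/r^4$ for Gaussian-smoothed distributions with the same measure-comparability for the tiny shift by $t$ yields $L^2(f_r)$ norm $O(|s|/r^2)$. The hypothesis $|\eps| \le r^2\sqrt{\I_r/\gamma}$ then makes this $O(\sqrt{\I_r/\gamma})$. Both error pieces thus contribute $O(\I_r/\sqrt{\gamma})$ per unit $s$, and integrating over $s \in [0, \eps]$ gives total error $O(\eps \I_r/\sqrt{\gamma})$, matching the claim.

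The main technical obstacle is establishing the Gaussian-smoothing estimate $\E_{f_r}[s_r'^2] \lesssim 1/r^4$ and the measure-comparability $f_r(\cdot - s) \asymp f_r$ for $|s| \le r/60$; both require care in the tails where $s_r$ may be large. These are standard properties of Gaussian-smoothed distributions, parallel to estimates already needed in the proof of Lemma~\ref{lem:clipped_kde_error}. Modulo these auxiliary bounds, the proof is a direct first-order Taylor expansion leveraging the symmetry of $f_r$ and the $L^2(f_r)$-closeness guarantee for the symmetrized clipped KDE score.
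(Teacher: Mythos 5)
Your opening moves are sound and match the paper's starting point: antisymmetry of $\wh s_r^\sym$ about $\mu+\eps$ against the symmetry of $f_r(\cdot-\eps)$ gives $\E_{f_r}[\wh s_r^\sym(x)] = \int (f_r(x)-f_r(x-\eps))\wh s_r^\sym(x)\,dx$, and your Fundamental-Theorem-of-Calculus/Fubini decomposition correctly isolates the main term $\eps\I_r$. The Taylor piece is also fine, and you do not even need your detour through $\E_{f_r}[s_r'^2]\lesssim 1/r^4$: the paper's Lemma~\ref{lem:shifted_true_score_error} already states exactly the bound $\E_{f_r}[(s_r(\cdot+s)-s_r)^2]\lesssim s^2/r^4$ for $|s|\le r/60$, which with $|\eps|\le r^2\sqrt{\I_r/\gamma}$ gives the $O(\I_r/\sqrt\gamma)$ contribution.

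The genuine gap is in the third piece, where you need $\int f_r(y-s)\,(\wh s_r^\sym(y)-s_r(y))^2\,dy \lesssim \I_r/\gamma$ uniformly over the intermediate shifts $s\in[0,\eps]$, and you justify the transfer from the unshifted bound by claiming $f_r(\cdot-s)\asymp f_r$ pointwise ``in the bulk.'' That comparability is not available at the generality needed: by Lemma~\ref{lem:density_close} a constant-factor ratio requires $\frac{|s|}{r}\sqrt{\log\frac{1}{r f_r(y)}}\lesssim 1$, i.e.\ $\log\frac{1}{r f_r(y)}\lesssim\gamma$, and since $\gamma$ may be just a large constant while $\sigma/r$ (hence $\log\frac{1}{r\alpha}$ on the typical region) grows polylogarithmically in $N$, the ratio $f_r(y-s)/f_r(y)$ can be superconstant on a substantial part of the typical region (e.g.\ in low-density valleys of a well-separated mixture with $\I_r\approx 1/r^2$, where $|\eps|$ may be as large as $r/\sqrt\gamma$). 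The correct way to transfer the bound is not via measure comparability but via the structure of the proof of Lemma~\ref{lem:clipped_kde_error}: on the large-density typical region the guarantee is actually an $L^\infty$ bound $|\wh s_r^\clip - s_r|\lesssim\sqrt{\I_r/\gamma}$ (Lemma~\ref{lem:error_estimate_large_density_clipped_kde}), which transfers to any probability measure, while the small-density and atypical regions still have probability $O(1/t^2)$ under the shifted measure and are handled by the clipping threshold together with Lemma~\ref{lem:score_bounded_in_small_set} (which is already stated for shifted scores). So your outline is fixable with the paper's tools, but the step as written would fail. For contrast, the paper avoids shifted-measure $L^2$ bounds altogether: it compares $\E[\wh s_r^\sym]$ to $\eps\,\E[(\wh s_r^\sym)^2]$, writes the discrepancy as $\E[(\Delta_{-\eps}(x)+\eps(s_r-\wh s_r^\sym))\wh s_r^\sym]$, applies Cauchy--Schwarz entirely under $f_r$ using the second-order remainder bound $\E[\Delta_{-\eps}^2]\lesssim\eps^4/r^4$ (Lemma~\ref{lem:Delta_eps_bound}) and Lemma~\ref{lem:sym_kde_variance_error}, and then swaps $\E[(\wh s_r^\sym)^2]$ for $\I_r$ via Lemma~\ref{cor:sym_kde_variance}.
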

% \begin{proofsketch}
% %\snote{Shorten}
%     As described in Section~\ref{sec:proofsketch}, since $f_r$ is symmetric around $\mu$ and $\wh s_r^\sym$ is antisymmetric around $\mu + \eps$, $\E_{x \sim f_r}\left[\wh s_r^\sym(x - \eps) \right] = 0$. Then by a Taylor approximation, we expect $\E_{x \sim f_r}\left[\wh s_r^\sym(x)\right] \approx \eps \E_{x \sim f_r}\left[\wh s_r^{\sym'}(x-\eps) \right] \approx \eps \I_r$.
% \end{proofsketch}

\subsection{Estimating $\I_r$}
\label{sec:Ir}
To perform a step of Newton's method, we need an estimate of the Fisher information $\I_r$. We show that $\wh \I_r = \E_{x \sim \wh f_r}\left[ \wh s_r^{\sym} (x)^2\right]$ is a good estimate whenever $\wh s_r^\sym$ satisfies the conditions above.
\begin{restatable}[Smoothed Fisher information Estimation]{lemma}{fisherinfoestimation}
\label{lem:fisher_information_estimation}
      Let $\gamma \ge C$ for large constant $C \ge 1$ be a parameter. Suppose we have a function $\Tilde s_r$ that satisfies for $r \le \sigma$
    \[
        \left|\E_{x \sim f_r}[\Tilde s_r(x)^2] - \I_r \right| \lesssim \frac{\I_r}{\sqrt{\gamma}}
    \]
    and that $|\Tilde s_r(x)| \le \frac{2}{r} \sqrt{\log \frac{N}{\log \frac{1}{\delta}}}$ for all $x$. Let $\wh f_r$ be the kernel density estimate for $f_r$ from $N$ samples, as defined in \eqref{eq:kde_def}.
    Then, for $\frac{N}{\log \frac{1}{\delta}} \ge \left( \gamma^{5/12} \frac{\sigma}{r}\right)^{6 + \beta}$ for some small constant $\beta > 0$, with probability $1 - \delta$, we have
    \[
        \left|\E_{x \sim \wh f_r}[\Tilde s_r(x)^2] - \I_r \right| \lesssim \frac{\I_r}{\sqrt{\gamma}}
    \]
\end{restatable}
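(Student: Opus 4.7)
The plan is to reduce the claim to an application of Bernstein's inequality on a sum of i.i.d.\ bounded random variables. Writing the KDE definition out, we have
\[
\E_{x \sim \wh f_r}[\tt_r(x)^2] = \int \wh f_r(x)\,\tt_r(x)^2\,\d x = \frac{1}{N}\sum_{i=1}^N h(Y_i), \qquad h(y) := \int w_r(x - y)\,\tt_r(x)^2\,\d x,
\]
where $h(y) = \E_{z \sim \mathcal{N}(y, r^2)}[\tt_r(z)^2]$. The key identity is that $\E_{Y \sim f^*}[h(Y)] = \E_{z \sim f_r}[\tt_r(z)^2]$, which by hypothesis lies within $O(\I_r/\sqrt\gamma)$ of $\I_r$. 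So it suffices to show that $\frac{1}{N}\sum_i h(Y_i)$ concentrates to within $O(\I_r/\sqrt\gamma)$ of its mean with probability $1-\delta$, and then apply the triangle inequality.

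For concentration, I would control the two parameters Bernstein's inequality needs. The range bound follows from the hypothesis $|\tt_r(x)| \le \frac{2}{r}\sqrt{\log\frac{N}{\log(1/\delta)}}$, so $0 \le h(Y_i) \le M := \frac{4}{r^2}\log\frac{N}{\log(1/\delta)}$. For the variance, $\Var(h(Y_i)) \le \E[h(Y_i)^2] \le M \cdot \E[h(Y_i)] \lesssim M \I_r$, using the hypothesis to bound $\E[h]$ by $O(\I_r)$. Bernstein then gives that with probability $1-\delta$,
\[
\left|\frac{1}{N}\sum_i h(Y_i) - \E[h]\right| \lesssim \sqrt{\frac{M \I_r \log(1/\delta)}{N}} + \frac{M \log(1/\delta)}{N}.
\]

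To finish, I would verify that both terms are $O(\I_r/\sqrt{\gamma})$ under the stated lower bound on $N/\log(1/\delta)$. The dominant first term asks for $M \log(1/\delta) / N \lesssim \I_r/\gamma$, i.e.
\[
\frac{N}{\log(1/\delta)} \gtrsim \frac{\gamma \log\frac{N}{\log(1/\delta)}}{r^2 \I_r}.
\]
Using the standard inequality $\I_r \gtrsim 1/\sigma^2$ (since $f_r$ has variance $\sigma^2 + r^2 \le 2\sigma^2$ and Fisher information is at least the reciprocal of the variance), the right-hand side is $\lesssim \gamma \sigma^2 \log(N/\log(1/\delta))/r^2$. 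The hypothesis $\frac{N}{\log(1/\delta)} \ge (\gamma^{5/12}\sigma/r)^{6+\beta}$ supplies a much larger polynomial in $\gamma$ and $\sigma/r$, so the bound holds with room to spare even after accounting for the logarithmic factor. The second Bernstein term is smaller by the same slack.

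The only mildly delicate step is the variance estimate: one has to be comfortable using $\E[h(Y)^2] \le M \E[h(Y)]$ rather than expanding $h(Y)^2$, since expanding would introduce a quartic expression in $\tt_r$ without obvious control. Otherwise the argument is essentially a clean Bernstein concentration, which is why clipping (to enforce the bounded range $M$) was crucial in the earlier lemmas.
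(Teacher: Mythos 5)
Your reduction is internally sound for a \emph{fixed} function $\Tilde s_r$ chosen independently of the $N$ samples that build $\wh f_r$: the identity $\E_{x \sim \wh f_r}[\Tilde s_r(x)^2] = \frac{1}{N}\sum_i h(Y_i)$ with $h(y) = \E_{z \sim \mathcal N(y, r^2)}[\Tilde s_r(z)^2]$, the range bound $0 \le h \le M = \frac{4}{r^2}\log\frac{N}{\log(1/\delta)}$, the variance bound $\Var(h) \le M \E[h] \lesssim M \I_r$, and the parameter check $\frac{N}{\log(1/\delta)} \gtrsim \gamma M/\I_r$ (via $\I_r \ge 1/\sigma_r^2 \gtrsim 1/\sigma^2$) all go through, and this would be a genuinely shorter route than the paper's, which instead writes $\E_{\wh f_r}[\Tilde s_r^2] - \E_{f_r}[\Tilde s_r^2] = \int (\wh f_r - f_r)\Tilde s_r^2$ and bounds the integral over three regions (uniform multiplicative closeness of $\wh f_r$ to $f_r$ on the high-density typical region via binning and a union bound, plus Hoeffding control of $\Pr_{\wh f_r}[S]$ on the two low-probability regions).

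The gap is that the lemma, as the paper uses it, must hold when $\Tilde s_r$ is \emph{not} independent of the KDE samples: in Step 2 of the global algorithm, $\wh s_r^\sym$ and $\wh f_r$ are built from the \emph{same} $N = n/\xi$ samples, and $\wh \I_r = \E_{x \sim \wh f_r}[\wh s_r^\sym(x)^2]$ is exactly the quantity this lemma controls. In that setting your key step fails: $h$ is a data-dependent function, so the $h(Y_i)$ are not independent summands, Bernstein does not apply, and $\E_{Y \sim f^*}[h(Y)]$ is itself random rather than the deterministic centering $\E_{f_r}[\Tilde s_r^2]$ your triangle inequality needs. The paper's proof avoids this because its only probabilistic ingredients concern $\wh f_r$ alone (uniform density closeness on a fixed region; KDE mass of two fixed sets), after which the bound holds deterministically, simultaneously for every function satisfying the clip bound and $\E_{f_r}[\Tilde s_r^2] \lesssim \I_r$. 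To rescue your approach you would need an additional idea — e.g.\ a union bound or chaining over a net of admissible score functions, or sample splitting so that $\Tilde s_r$ and $\wh f_r$ use disjoint batches (which would change the algorithm).
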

\begin{proofsketch}
    %\snote{Shorten significantly by relating to proof of clipped KDE score}
    We have
    \begin{align*}
        \E_{x \sim \wh f_r}\left[\Tilde s_r(x)^2 \right] = \E_{x \sim f_r}[\Tilde s_r(x)^2] + \int_{-\infty}^\infty \left(\wh f_r(x) - f_r(x) \right) \Tilde s_r(x)^2 dx
    \end{align*}
    As in the proof of Lemma~\ref{lem:clipped_kde_error}, we again break up the integral above into $3$ parts and bound each part separately by $O\left(\frac{\I_r}{\sqrt{\gamma}} \right)$. Finally, we make use of our assumption that $\E_{x \sim f_r}[\Tilde s_r(x)^2] \approx \I_r$ along with our bound on the integral to show the claim.
\end{proofsketch}

To conclude, we have with high probability  ($1 - \frac{\delta}{\xi}$) that our KDE satisfies the following:

\begin{restatable}[KDE Estimation Properties]{property}{localestimationassumption}
\label{property:local_estimation_empirical_mean}
Let $f_r$ be the $r$-smoothed version of symmetric distribution $f^*$ in Algorithm~\ref{alg:local_estimation_empirical_mean}, with Fisher information $\I_r$. For parameters $\gamma > C$ for some sufficiently large constant $C$ and $\xi$, $\wh s_r^\sym$ satisfies that for symmetrization point $\mu_1 = \mu + \eps$,
\begin{align*}
    \left|\E_{x \sim f_r}\left[\wh s_r^\sym(x) \right]  - \eps \I_r\right| \lesssim \frac{\eps \I_r}{\sqrt{\gamma}} \quad \text{and} \quad \left|\E_{x \sim f_r}\left[\wh s_r^\sym(x)^2 \right] - \I_r \right| \lesssim \frac{\I_r}{\sqrt{\gamma}}
\end{align*}
   and $|\wh s_r^\sym(x)| \le \frac{2}{r} \sqrt{\log \frac{n}{\xi\log \frac{\xi}{\delta}}}$ for all $x$. Furthermore, the Fisher information estimate $\wh \I_r$ satisfies
   \[
        \left|\wh \I_r - \I_r\right| \lesssim \frac{\I_r}{\sqrt{\gamma}}
   \]
\end{restatable}

\subsection{Local Estimation}
\label{sec:local}
We then show that Property~\ref{property:local_estimation_empirical_mean} implies that Algorithm~\ref{alg:local_estimation_empirical_mean}, which  does one approximate Newton step, gets high accuracy.

% In this section, we describe our local estimation algorithm, which takes $n$ samples, our symmetrized and clipped KDE score function $\wh s_r^\sym$ symmetrized along $\mu_1$, our initial estimate of the mean, along with Fisher information estimate $\wh \I_r$, and returns a refined estimate $\hat \mu$ of the mean.
\begin{algorithm}[H]\caption{Local Estimation}
\label{alg:local_estimation_empirical_mean}
\vspace*{3mm}
\paragraph{Input Parameters:}
\begin{itemize}
\item $n$ samples $x_1, \dots, x_{n} \sim f^*$, the symmetrized and clipped KDE score function $\wh s_r^\sym$, symmetrization point $\mu_1$, Fisher information estimate $\wh \I_r$
\end{itemize}
\begin{enumerate}
\item For each sample $x_i$, compute a perturbed sample $x'_i = x_i + \mathcal N(0,r^2)$ where all the Gaussian noise are drawn independently across all the samples.
\item Compute $\hat \eps = \frac{1}{\wh \I_r n}\sum_{i=1}^{n} \wh s_r^\sym(x_i')$. Return $\hat \mu = \mu_1 - \hat \eps$.
\end{enumerate}
\end{algorithm}

\begin{restatable}[Local Estimation]{lemma}{localestimationlemma}
\label{thm:local_estimation}
    In Algorithm~\ref{alg:local_estimation_empirical_mean}, let $f_r$ be the $r$-smoothed version of symmetric distribution $f^*$, with score function $s_r$ and Fisher information $\I_r$. Suppose for parameters $\gamma, \xi$, and symmetrized clipped KDE score $\wh s^\sym$ symmetrized around $\mu_1$, Property~\ref{property:local_estimation_empirical_mean} is satisfied. Then, with probability $1 - \delta$, the output $\hat \mu$ of Algorithm~\ref{alg:local_estimation_empirical_mean} satisfies
   \[
   \left| \hat \mu - \mu \right| \le \left(1 + O\left(\frac{1}{\sqrt{\gamma}}\right) \right)\sqrt{\frac{2 \log \frac{2}{\delta}}{n \I_r}}  + O\left(\frac{ \sqrt{\log \frac{n}{\xi\log \frac{\xi}{\delta}}}}{r\I_r} \cdot \frac{\log \frac{2}{\delta}}{n}\right)+ O\left(\frac{\eps}{\sqrt{\gamma}}\right)
   \]
   %where the $o(1)$ tends to $0$ as $\left(\frac{\log \frac{1}{\delta}}{n}, \delta \right) \to 0$.
\end{restatable}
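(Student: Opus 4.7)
The algorithm outputs $\hat{\mu} = \mu_1 - \hat\eps$ with $\hat\eps = \frac{1}{\wh\I_r n}\sum_{i=1}^n \wh s_r^\sym(x_i')$, where $x_i' \sim f_r$ by the convolution step. Since $\mu_1 = \mu + \eps$, the final error is $|\hat\mu - \mu| = |\eps - \hat\eps|$, which I will decompose into a \emph{concentration error} (empirical average versus its expectation) and a \emph{bias error} (expectation versus $\eps \wh\I_r$). The whole proof is essentially a careful chase of Property~\ref{property:local_estimation_empirical_mean} through one Bernstein bound.

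\textbf{Step 1: Bias term.} Let $m := \E_{x \sim f_r}[\wh s_r^\sym(x)]$. By Property~\ref{property:local_estimation_empirical_mean}, $|m - \eps\I_r| \lesssim \eps\I_r/\sqrt{\gamma}$ and $|\wh\I_r - \I_r| \lesssim \I_r/\sqrt{\gamma}$. In particular $\wh\I_r = \I_r(1 + O(1/\sqrt{\gamma}))$, so $\wh\I_r$ is within a $(1 \pm O(1/\sqrt\gamma))$ factor of $\I_r$, and
\[
\Bigl|\frac{m}{\wh\I_r} - \eps\Bigr| \le \frac{|m - \eps\I_r|}{\wh\I_r} + |\eps|\cdot\frac{|\I_r - \wh\I_r|}{\wh\I_r} = O\!\left(\frac{\eps}{\sqrt{\gamma}}\right).
\]
This accounts for the $O(\eps/\sqrt{\gamma})$ term in the lemma.

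\textbf{Step 2: Concentration term via Bernstein.} The variables $Z_i := \wh s_r^\sym(x_i')$ are i.i.d.\ with mean $m$, are bounded by $|Z_i| \le \frac{2}{r}\sqrt{\log(n/(\xi\log(\xi/\delta)))} =: T$ (from the clipping in Property~\ref{property:local_estimation_empirical_mean}), and satisfy $\E[Z_i^2] = \I_r(1 + O(1/\sqrt\gamma))$. Bernstein's inequality gives, with probability $1-\delta$,
\[
\Bigl|\frac{1}{n}\sum_{i=1}^n Z_i - m\Bigr| \le \sqrt{\frac{2\,\E[Z_i^2]\,\log(2/\delta)}{n}} + O\!\left(\frac{T\log(2/\delta)}{n}\right).
\]
Dividing by $\wh\I_r$ and using $\E[Z_i^2] = \I_r(1 + O(1/\sqrt\gamma))$ together with $\wh\I_r = \I_r(1+O(1/\sqrt\gamma))$, the leading Gaussian-like term becomes
\[
\frac{1}{\wh\I_r}\sqrt{\frac{2\I_r(1+O(1/\sqrt\gamma))\log(2/\delta)}{n}} = \bigl(1 + O(1/\sqrt\gamma)\bigr)\sqrt{\frac{2\log(2/\delta)}{n\I_r}},
\]
and the Bernstein correction becomes the second stated term $O\!\bigl(\frac{\sqrt{\log(n/(\xi\log(\xi/\delta)))}}{r\I_r}\cdot\frac{\log(2/\delta)}{n}\bigr)$, using $T/\wh\I_r = O(T/\I_r)$.

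\textbf{Step 3: Combine.} Triangle-inequality the two pieces:
\[
|\hat\eps - \eps| \le \Bigl|\hat\eps - \frac{m}{\wh\I_r}\Bigr| + \Bigl|\frac{m}{\wh\I_r} - \eps\Bigr|,
\]
and substitute the bounds from Steps 1 and 2. Since $\hat\mu - \mu = \eps - \hat\eps$, this yields exactly the three-term bound in the lemma statement.

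\textbf{Main obstacle.} The only subtle point is bookkeeping of multiplicative constants so that the leading coefficient collapses cleanly to $(1 + O(1/\sqrt\gamma))$ rather than some larger slack. The Bernstein variance is $\I_r(1 + O(1/\sqrt\gamma))$ (from Lemma~\ref{cor:sym_kde_variance}), and the denominator $\wh\I_r$ is $\I_r(1 + O(1/\sqrt\gamma))$ (from Lemma~\ref{lem:fisher_information_estimation}); the square-root combination of these must be handled so that both relative errors remain additive at order $1/\sqrt\gamma$ and do not inflate the constant in front of $\sqrt{2\log(2/\delta)/(n\I_r)}$. Everything else is a direct application of bounds already established in Sections~\ref{sec:clipped}--\ref{sec:Ir}.
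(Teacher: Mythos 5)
Your proposal is correct and follows essentially the same route as the paper: Bernstein's inequality on the clipped, bounded variables $\wh s_r^\sym(x_i')$ using the second-moment bound from Property~\ref{property:local_estimation_empirical_mean}, plus the bias control $|\E[\wh s_r^\sym(x)] - \eps\I_r| \lesssim \eps\I_r/\sqrt{\gamma}$ and the rescaling $|\wh\I_r - \I_r| \lesssim \I_r/\sqrt{\gamma}$, combined by the triangle inequality. The only cosmetic difference is that you pivot through $m/\wh\I_r$ while the paper pivots through $\frac{1}{\I_r}\E[\wh s_r^\sym(x)]$; the bookkeeping of the $(1+O(1/\sqrt{\gamma}))$ factors works out the same way in both.
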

\begin{proofsketch}
    %\snote{Shorten}
    We bound $\wh{\mu} - \mu$ using~\eqref{eq:pickg} for $g(x) = \wh s_r^\sym(x)$.
    We apply Bernstein's inequality to concentrate $\wh{\E}_{x \sim f_r}[\wh s_r^\sym(x)]$.  The first term in our bound is the variance term and the second is the exponential term.  The final term in our error bound comes from the difference between $\E[\wh{s}_r^\sym(x)]$ and $\eps \I_r$ bounded by Property~\ref{property:local_estimation_empirical_mean}.
        % By property $\wh s_r^\sym$ has variance at most $\I_r + O\left(\frac{\I_r}{\sqrt{\gamma}} \right)$ and is bounded in absolute value by $\frac{2}{r} \sqrt{\log \frac{n}{\xi \log \frac{\xi}{\delta}}}$ so that by Bernstein's inequality, we can obtain a bound on the deviation of the the empirical mean of $\frac{1}{\I_r} \wh s_r^\sym$ from its expectation.
    % Combining this with the property of the expectation, and observing that $\mu = \mu_1 - \eps$ and $\wh \mu = \mu_1 - \wh \eps$ gives the claim.
\end{proofsketch}

\subsection{Global Estimation}
\label{sec:global}
In order to perform our final estimation, we compute an initial estimate $\mu_1$ of $\mu$, and our KDE $\wh f_r$ with associated symmetrized clipped score $\wh s_r^{\sym}$ around $\mu_1$, along with our Fisher information estimate $\wh \I_r$. We combine these with our Local Estimation algorithm to obtain our final estimate $\wh \mu$. Lemma~\ref{lem:global_estimation_empirical_mean} shows our formal guarantee on the performance of our final estimate $\hat \mu$. Then, 

For our initial estimate $\mu_1$, we make use of the median of pairwise means estimator for symmetric distributions, implied by \cite{mintonprice}.

\begin{restatable}[Median of pairwise means estimator]{lemma}{medianofmeanslemma}
\label{lem:median_of_means}
   Let $X_1, X_2, \dotsc, X_n$ be drawn from a symmetric distribution with mean $\mu$ and variance $\sigma^2$.  For every constant $C_1 > 0$ there exists a constant $C_2$ such that
   $
   \wh{\mu} := \median_{i \in [n/2]} \frac{X_{2i-1} + X_{2i}}{2}
   $
   satisfies
   \[
           \abs{\wh{\mu} - \mu} \leq C_2\sigma \cdot \sqrt{\frac{\log \frac{2}{\delta}}{n}}
   \]
   with probability $1-\delta$, for all $\delta$ with $\log \frac{1}{\delta} \leq C_1 n$.
\end{restatable}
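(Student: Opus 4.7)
The plan is to reduce the claim to a subgaussian tail bound for the empirical median of Rademacher sums, for which one can invoke~\cite{mintonprice}. First, define the pairwise means $Y_i := (X_{2i-1}+X_{2i})/2$ for $i \in [N]$ with $N = n/2$. A sum of i.i.d.~symmetric random variables is symmetric about the sum of its means, so each $Y_i$ is i.i.d., symmetric about $\mu$, and has variance $\sigma^2/2$. Thus it suffices to bound the deviation of $\wh\mu = \median_{i \in [N]} Y_i$ from $\mu$ in terms of $\sigma$ and $n$.

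The key step is to exploit the symmetry of the underlying distribution to put each $Y_i$ in Rademacher-sum form. Since $X_j - \mu$ is symmetric about $0$, we can write $X_j - \mu = \epsilon_j |X_j - \mu|$, where $\epsilon_j \in \{\pm 1\}$ is an independent uniform sign independent of $|X_j - \mu|$. This gives
\[
  Y_i - \mu = \frac{1}{2}\left(\epsilon_{2i-1}\, |X_{2i-1}-\mu| \; + \; \epsilon_{2i}\, |X_{2i}-\mu|\right),
\]
which places $Y_i - \mu$ in exactly the two-term Rademacher-sum setting handled by the median concentration result of~\cite{mintonprice}. Applied to $N$ independent copies, that bound yields
\[
  \Pr\!\left[\abs{\wh\mu - \mu} > t\right] \;\le\; 2\exp\!\left(- c \cdot \frac{n t^2}{\sigma^2}\right)
\]
for some absolute constant $c > 0$, valid in the subgaussian range $t \lesssim \sigma$. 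Setting the right-hand side equal to $\delta$ and solving for $t$ gives $\abs{\wh\mu - \mu} \le O(\sigma \sqrt{\log(2/\delta)/n})$ with probability at least $1-\delta$, which is the claimed bound.

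The hypothesis $\log\frac{1}{\delta} \le C_1 n$ in the lemma statement is precisely what keeps $t \lesssim \sigma$, so the Minton--Price bound stays in its subgaussian regime throughout; $C_2$ is then chosen in terms of $C_1$ to absorb both the Minton--Price constant and the boundary case $t \approx \sigma$. A symmetric argument controls the lower tail, and a union bound completes the proof.

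The main obstacle---and the reason pairing into $Y_i$'s is essential---is that the empirical median of the original $X_j$'s need not concentrate at the subgaussian rate: for instance, if each $X_j$ is uniform on $\{\pm 1\}$, then $\median_j X_j \in \{\pm 1\}$ with probability close to $1$, despite $\sigma = 1$. Pairing converts the samples into a two-term Rademacher sum whose law has $\Omega(1/\sigma)$ anti-concentration in a neighborhood of $\mu$. This anti-concentration, established inside~\cite{mintonprice} by a Chebyshev/binning argument combined with Cauchy--Schwarz (most of the mass of $|X-\mu|$ sits in $[0,2\sigma]$, which can be split into $O(\sigma/t)$ width-$2t$ bins, and two independent samples collide in a common bin with probability $\Omega(t/\sigma)$), is precisely what powers the subgaussian median tail bound above.
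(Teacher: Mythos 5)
Your overall route is the same as the paper's: pair the samples into $Y_i = \tfrac12(X_{2i-1}+X_{2i})$, establish $\Omega(t/\sigma)$ anti-concentration of $Y_i$ near $\mu$, and then turn per-sample anti-concentration plus symmetry into a subgaussian tail for the median via \cite{mintonprice}. Your justification of the anti-concentration differs only in flavor: you use the representation $Y_i-\mu=\tfrac12\left(\epsilon_{2i-1}\abs{X_{2i-1}-\mu}+\epsilon_{2i}\abs{X_{2i}-\mu}\right)$ and a Chebyshev/binning/Cauchy--Schwarz argument, which is correct and self-contained; the paper instead notes that, since the $X_j-\mu$ are i.i.d.\ and symmetric, the density of $Y_i-\mu$ has a nonnegative Fourier transform (convolution theorem) and quotes Lemma~3.1 of \cite{mintonprice}, which is stated for exactly that class. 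So your attribution of the binning argument to \cite{mintonprice} is loose, but mathematically harmless.

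The one genuine gap is the large-constant regime. Your claim that $\log\frac{1}{\delta}\le C_1 n$ ``keeps $t\lesssim\sigma$'' is not right: the target radius $t^\ast=C_2\sigma\sqrt{\log\frac{2}{\delta}/n}$ can be a large constant multiple of $\sigma$ (since $C_1$ is an arbitrary constant and $C_2$ must be at least the inverse of the anti-concentration constant), while the bound $\Pr\left[\abs{Y_i-\mu}\le t\right]\gtrsim t/\sigma$, and hence the subgaussian median tail, are only available for $t$ up to about $\sigma$. Beyond that the exponent you can extract from this machinery saturates at $e^{-cn}$ for a fixed absolute constant $c$, which cannot be pushed below $\delta\approx e^{-C_1 n}$ when $C_1>c$, no matter how $C_2$ is chosen; and for $t\gg\sigma$ the median tail is genuinely not subgaussian in general (heavy-tailed $Y_i$ give only a polynomial-in-$t$ bound), so ``absorbing the boundary case into $C_2$'' is not by itself a proof. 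The paper closes this regime with a separate argument: by Chebyshev, $\abs{Y_i-\mu}\le \sigma/\sqrt{2a}$ with probability $1-a$, and the median exceeds this radius only if $n/4$ of the $Y_i$ do, which happens with probability at most $(4a)^{n/4}$; choosing $a=\tfrac14\delta^{4/n}\ge\tfrac14 e^{-4C_1}$ gives radius $\sqrt{2}e^{2C_1}\sigma$, which is at most $C_2\sigma\sqrt{\log\frac{2}{\delta}/n}$ there because $\log\frac{1}{\delta}/n$ is bounded below by a constant in that regime. Adding that argument (with $C_2$ growing like $e^{O(C_1)}$) completes your proof.
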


\begin{algorithm}[H]\caption{Global Estimation}
    \label{alg:global_estimation_empirical_mean} 
    \vspace*{3mm}
    \paragraph{Input parameters:}
    \begin{itemize}
        \item Failure probability $\delta$, Samples $x_1, \dots, x_n \sim f^*$, smoothing parameter $r$, approximation parameter $\xi > 0$.
    \end{itemize}
    \begin{enumerate}
        \item First, use the first $n/\xi$ samples to compute an initial estimate $\mu_1$ of the mean $\mu$ by using the Median-of-pairwise-means estimator in Lemma~\ref{lem:median_of_means}.
        \item Use the next $n/\xi$ samples to compute the kernel density estimate $\wh f_r$ of $f_r$ (as defined in \eqref{eq:kde_def}), along with the associated symmeterized, clipped KDE score $\wh s_r^\sym$ (as defined in \eqref{eq:symmetrized_kde_def}), clipped at $\frac{2}{r} \sqrt{\log \frac{n}{\xi \log \frac{\xi}{\delta}}}$ and symmetrized around the initial estimate $\mu_1$. Compute the Fisher information estimate $\wh \I_r = \E_{x\sim \wh f_r}\left[\wh s_r^\sym(x)^2 \right]$.
        %\item Use the next $n/\xi$ samples to compute a kernel density estimate $\wh f_r$ of $f_r$ as defined in \eqref{eq:kde_def}. Compute the Fisher information estimate $\wh \I_r = \E_{x \sim \wh f_r}[\wh s_r^\sym(x)]$.
        \item Run Algorithm~\ref{alg:local_estimation_empirical_mean} using the remaining $n - \frac{2n}{\xi}$ samples, and return the final estimate $\hat \mu$.
    \end{enumerate}
\end{algorithm}

\begin{restatable}[Global Estimation]{lemma}{globalestimationlemma}
\label{lem:global_estimation_empirical_mean}
Let $ \xi > C$ for large enough constant $C > 3$ be a parameter, and suppose $\xi \le \gamma \le \left(\frac{n}{\xi\log \frac{1}{\delta}} \right)^{2/5-\alpha}$ for constant $\alpha > 0$. For any $r \le \sigma$ and $\frac{n}{\log \frac{1}{\delta}} \ge \xi \left(\frac{\gamma^{5/12} \sigma }{r} \right)^{6+\alpha}$, with probability $1 - \delta$, Algorithm~\ref{alg:global_estimation_empirical_mean} outputs an estimate $\hat \mu$ with
\[
    |\hat \mu - \mu| \le \left(1 + O\left( \frac{1}{\sqrt{\gamma}}\right) + O\left( \frac{1}{\xi}\right) \right) \sqrt{\frac{2\log \frac{2}{\delta}}{n \I_r}}+ O\left(\frac{\sigma}{r \sqrt{\gamma}}\sqrt{\frac{\xi \log \frac{\xi}{\delta}}{n \I_r}}\right)
\]
%where the $o(1)$ term goes to $0$ as $\left( \frac{\log \frac{1}{\delta}}{n}, \delta, \frac{1}{\gamma}, \frac{1}{\xi} \right) \to 0$.
\end{restatable}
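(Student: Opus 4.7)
The plan is to prove Lemma~\ref{lem:global_estimation_empirical_mean} by a three-stage argument that mirrors the three steps of Algorithm~\ref{alg:global_estimation_empirical_mean}, invoking the lemmas already proved and then combining the error terms. I would union bound over three failure events, each of probability $\delta/3$, corresponding to (i) the initial estimate stage, (ii) the KDE stage, and (iii) the local estimation stage. Throughout, I would use the fact that the samples in each stage are independent, so conditioning on success in earlier stages does not change the distribution of later samples.

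First, I would apply Lemma~\ref{lem:median_of_means} to the $n/\xi$ samples used in step~1, concluding that with probability $1-\delta/3$ the initial estimate satisfies $\eps := \mu_1 - \mu$ with $|\eps| \lesssim \sigma \sqrt{\xi \log(\xi/\delta)/n}$. I then need to check that this $\eps$ lies in the regime $|\eps|\le r^2\sqrt{\I_r/\gamma}$ required to invoke the KDE lemmas; this reduces to an algebraic inequality that follows from the hypotheses $n/\log(1/\delta) \geq \xi(\gamma^{5/12}\sigma/r)^{6+\alpha}$, $\sigma\sqrt{\I_r}\le 1$, and $\gamma \leq (n/(\xi\log(1/\delta)))^{2/5-\alpha}$. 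Second, conditional on this event, I would apply Lemmas~\ref{cor:sym_kde_variance}, \ref{lem:sym_kde_mean}, and \ref{lem:fisher_information_estimation} to the next $n/\xi$ samples used to build $\wh f_r$, $\wh s_r^{\sym}$, and $\wh\I_r$. A union bound over these three events (with failure probability rescaled to $\delta/9$ each) shows that Property~\ref{property:local_estimation_empirical_mean} holds with probability at least $1-\delta/3$, with sample size parameter $N = n/\xi$ matching the thresholds in those lemmas. Third, I would apply Lemma~\ref{thm:local_estimation} with $n' = n(1-2/\xi)$ samples (and the same $\gamma$, $\xi$, $\eps$), obtaining the local estimation bound with probability $1-\delta/3$.

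The remaining work is to translate the three-term error bound from Lemma~\ref{thm:local_estimation} into the two-term form in the statement of Lemma~\ref{lem:global_estimation_empirical_mean}. For the leading variance term, the replacement $n'\mapsto n$ costs only a multiplicative $(1+O(1/\xi))$ since $\xi\ge 3$, which folds into the stated $(1+O(1/\sqrt{\gamma})+O(1/\xi))$ prefactor. The last term $O(\eps/\sqrt{\gamma})$ from local estimation becomes $O(\sigma\sqrt{\xi\log(\xi/\delta)/n}/\sqrt{\gamma})$ using the bound on $\eps$; multiplying and dividing by $\sqrt{\I_r}$ and using $r\sqrt{\I_r}\le 1$ (since $\I_r \le 1/r^2$ for Gaussian smoothing by radius $r$) rewrites this as $O(\frac{\sigma}{r\sqrt{\gamma}}\sqrt{\xi\log(\xi/\delta)/(n\I_r)})$, matching the target. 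For the Bernstein exponential term $\frac{\sqrt{\log(n/(\xi\log(\xi/\delta)))}}{r\I_r}\cdot\frac{\log(2/\delta)}{n}$, I would show it is absorbed into the same $\frac{\sigma}{r\sqrt{\gamma}}\sqrt{\xi\log(\xi/\delta)/(n\I_r)}$ term: dividing the two expressions, the ratio is on the order of $\sqrt{\gamma \log n/(\sigma^2 \xi n \I_r)}$, which is bounded by $\sqrt{\gamma \log n/(\xi n)}$ via $\sigma^2\I_r\le 1$, and the parameter constraint $\gamma \leq (n/(\xi\log(1/\delta)))^{2/5-\alpha}$ makes this less than $1$ for large $n$.

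The main obstacle is therefore not conceptual but bookkeeping: verifying that the parameter regime of the statement is compatible with the hypotheses of each of the four lemmas we invoke (in particular that $\eps$ from the median-of-pairwise-means step is small enough to feed into the KDE lemmas, and that $N = n/\xi$ meets the $N/\log(1/\delta)\ge (\gamma^{5/12}\sigma/r)^{6+\beta}$ threshold) and showing that the exponential Bernstein tail and the $\eps/\sqrt{\gamma}$ bias term are both dominated by the explicit $\sigma/(r\sqrt{\gamma})$ term in the final bound. These reduce to a handful of algebraic inequalities that follow directly from the stated constraints on $\gamma$, $\xi$, and $n/\log(1/\delta)$, using only the basic facts $\I_r\le 1/r^2$ and $\sigma^2\I_r\le 1$ about smoothed Fisher information.
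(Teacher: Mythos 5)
Your overall plan is the same as the paper's: Lemma~\ref{lem:median_of_means} for $\mu_1$, verify that $\eps=\mu_1-\mu$ is small enough to invoke Lemmas~\ref{cor:sym_kde_variance}, \ref{lem:sym_kde_mean}, \ref{lem:fisher_information_estimation} with $N=n/\xi$ (establishing Property~\ref{property:local_estimation_empirical_mean}), then Lemma~\ref{thm:local_estimation}, and finally absorb the $O(\eps/\sqrt{\gamma})$ and Bernstein terms. However, two concrete points need fixing. First, your failure-probability bookkeeping breaks the tight constant: allocating $\delta/3$ to the local-estimation stage turns the leading term of Lemma~\ref{thm:local_estimation} into $\left(1+O\left(\frac{1}{\sqrt{\gamma}}\right)\right)\sqrt{\frac{2\log\frac{6}{\delta}}{n'\I_r}}$, and $\log\frac{6}{\delta}=\log\frac{2}{\delta}+\log 3$ is \emph{not} within a $1+O(1/\sqrt{\gamma})+O(1/\xi)$ factor of $\log\frac{2}{\delta}$ when $\delta$ is a constant, nor can the $O(\sqrt{1/(n\I_r)})$ discrepancy be absorbed into the second error term in general (e.g.\ when $r=\sigma$ and $\gamma\gg\xi\log\frac{\xi}{\delta}$). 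The paper avoids this by giving only $\delta/\xi$ to each of the auxiliary stages and $\delta'=\delta(1-3/\xi)$ to the final stage, so the loss in the leading constant is a genuine $1+O(1/\xi)$ factor; your proof needs the same asymmetric allocation.

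Second, the ``basic fact'' you invoke three times, $\sigma\sqrt{\I_r}\le 1$ (equivalently $\sigma^2\I_r\le 1$), is false in general and is stated in the wrong direction for the places you use it. The correct fact is the Cram\'er--Rao-type bound $\I_r\ge \frac{1}{\sigma_r^2}=\frac{1}{\sigma^2+r^2}\ge\frac{1}{2\sigma^2}$ (since $r\le\sigma$), i.e.\ $\sigma^2\I_r\gtrsim 1$; for instance the Laplace distribution has $\sigma^2\I=2$, and the Gaussian-mixture example in the introduction has $\sigma^2\I\gg 1$. In the check that $|\eps|\le r^2\sqrt{\I_r/\gamma}$ you need a \emph{lower} bound on $\I_r$, and in the comparison of the Bernstein term against $\frac{\sigma}{r\sqrt{\gamma}}\sqrt{\frac{\xi\log\frac{\xi}{\delta}}{n\I_r}}$ you again need $\sigma\sqrt{\I_r}\gtrsim 1$, not $\le 1$. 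With the corrected inequality (and also checking the omitted condition $|\eps|\le r/60$ required by Lemmas~\ref{cor:sym_kde_variance} and~\ref{lem:sym_kde_mean}), your algebraic absorptions do go through, essentially as in the paper; the step where you use $\I_r\le 1/r^2$ to rewrite $\eps/\sqrt{\gamma}$ as $\frac{\sigma}{r\sqrt{\gamma}}\sqrt{\frac{\xi\log\frac{\xi}{\delta}}{n\I_r}}$ is correct as stated.
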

\begin{proofsketch}
Combining Lemmas~\ref{lem:median_of_means},~\ref{lem:sym_kde_mean},~\ref{cor:sym_kde_variance},~\ref{lem:fisher_information_estimation}, we know that $\wh{s}_r^\sym(x)$ and $\wh{\I}_r$ computed in Steps 1 and 2 satisfy Property~\ref{property:local_estimation_empirical_mean} with high probability.
Invoking Lemma~\ref{thm:local_estimation} on Step 3 yields the result.
    % For $\mu_1 = \mu + \eps$, we condition on the event that
    %     $|\eps| \lesssim \sigma \cdot \sqrt{\frac{\log \frac{\xi}{\delta}}{n}}$
    % which happens with probability $1 - \frac{\delta}{\xi}$ by Lemma~\ref{lem:median_of_means}. Then, by Lemmas~\ref{lem:sym_kde_mean},~\ref{cor:sym_kde_variance}~and~\ref{lem:fisher_information_estimation}, we can show that the Local Estimation Property (Property~\ref{property:local_estimation_empirical_mean}) is satisfied for our choice of $n$, with probability $1 - \frac{2\delta}{\xi}$. 
    % Conditioned on this, using Lemma~\ref{thm:local_estimation}, we can show that our final estimate $\wh \mu$ satisfies our final guarantee with probability $1 - \delta\left(1 - \frac{3}{\xi} \right)$. Union bounding gives the claim.
\end{proofsketch}
%\maintheorem*

Theorem~\ref{thm:maintheorem} follows by setting $\gamma = \frac{1}{\eta^2}$, $\xi = \frac{1}{\eta}$, and calculation.

% Acknowledgments---Will not appear in anonymized version
\acks{Shivam Gupta and Eric Price are supported by NSF awards CCF-2008868, CCF-1751040 (CAREER),
and the NSF AI Institute for Foundations of Machine Learning (IFML). Some of this work was done while Shivam Gupta was visiting UC Berkeley.
Jasper C.H.~Lee is supported in part by the generous funding of a Croucher Fellowship for Postdoctoral Research, NSF award DMS-2023239, NSF Medium Award CCF-2107079 and NSF AiTF Award CCF-2006206.}

\bibliography{colt2023}

\appendix
\section{Definitions}
Let $f^*$ be an arbitrary symmetric distribution with mean $\mu$ and variance $\sigma^2$, and let $f_r$ be the $r$-smoothed version of $f^*$ with variance $\sigma_r^2 = \sigma^2 + r^2$. Let $s_r$ be the score function of $f_r$, so that $s_r(x) = \frac{f_r'(x)}{f_r(x)}$. Let $\I_r = \E_{x \sim f_r}\left[ s_r(x)^2\right] = -\E_{x \sim f_r}\left[s_r'(x) \right]$ be the Fisher information of $f_r$.

Let $w_r$ be the density function of $\mathcal N(0, r^2)$. We recall the definition of the Kernel Density Estimate (KDE) $\wh f_r$ from $N$ samples $Y_1, \dots Y_N \sim f^*$, from \eqref{eq:kde_def_main_body}.
\begin{equation}
    \label{eq:kde_def} 
    \wh f_r(x) = \frac{1}{N} \sum_{i=1}^N w_r(x - Y_i)
\end{equation}
It has score function $\wh s_r$ with $\wh s_r(x) = \frac{\wh f_r'(x)}{\wh f_r(x)}$. We recall the definition of $\wh s_r^\clip$, the clipped KDE score from $N$ samples with associated failure probability $\delta$, from \eqref{eq:clipped_kde_def_main_body}.
\begin{equation}
    \label{eq:clipped_kde_def}
    \wh s_r^\clip(x) = \sign(\wh s_r(x)) \cdot \min\left(|\wh s_r(x)|, \frac{2}{r} \sqrt{\log \frac{N}{\log \frac{1}{\delta}}} \right)
\end{equation}
We also recall the definition of $\wh s_r^\sym$, the symmetrized clipped KDE score from $N$ samples, symmetrized around $y$, from \eqref{eq:sym_kde_def_main_body}.
\begin{equation}
    \label{eq:sym_kde_def}
    \wh s_r^\sym(x) =  
    \begin{cases}
        \wh s_r^\clip(x) & x \ge y\\
        -\wh s_r^\clip(2y - x) & x < y
    \end{cases}
\end{equation}

\section{Clipped Kernel Density Estimate}
In this section, we will analyze the \emph{clipped Kernel Density Estimate} score function $\wh s_r^\clip$ of a distribution. Our main result in this section is Lemma~\ref{lem:clipped_kde_error}, which says that $\wh s_r^\clip$ is a good approximation to the true score function $s_r$ of the $r$-smoothed distribution in a specific sense.
\subsection{Pointwise guarantees}
In this section, we show that for any individual point $x$, the KDE score $\wh s_r(x)$ approximates $s_r(x)$ well, where $s_r$ is the true score function of our $r$-smoothed distribution $f_r$. We begin by showing that the KDE $\wh f_r$ approximates the true density $f_r$ pointwise.

\begin{lemma}[Pointwise density estimate guarantee]
\label{lem:pointwise_density_estimate}
Let $\wh f_r$ be the kernel density estimate of $f_r$ from $N$ samples $Y_1, \dots, Y_N \sim f^*$, given by $$\wh f_r(x) = \frac{1}{N} \sum_{i=1}^N w_r(x - Y_i)$$
where $w_r$ is the pdf of $\mathcal N(0, r^2)$.
For any fixed $x$, when $N \ge \frac{3 \log \frac{2}{\delta}}{f_r(x) r}$ we have that with probability $1 - \delta$ %$$\Pr\left[\left|\frac{1}{N}\sum_{i=1}^N g(x - Y_i) - \E_{Y \sim D_\lam}[g(x - Y)]\right| \ge \eps\E[g(x - Y)] \right] \le 2 \exp( - 2 N \eps^2 r^{2C + 2})$$
$$\left|\wh f_r(x) - f_r(x) \right| \le \sqrt{ \frac{3 f_r(x) \log \frac{2}{\delta}}{ N r}}$$
This holds even for asymmetric $f^*, f_r$.
\end{lemma}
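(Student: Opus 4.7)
The estimator $\wh f_r(x) = \frac{1}{N} \sum_{i=1}^N w_r(x - Y_i)$ is an empirical average of i.i.d.~bounded random variables, so the natural approach is Bernstein's inequality. First I would verify the mean: since $w_r$ is the density of $\mathcal N(0, r^2)$, we have $\E[w_r(x - Y_i)] = \int w_r(x - y) f^*(y)\, dy = (f^* * w_r)(x) = f_r(x)$, so $\wh f_r(x)$ is unbiased. Next I would control the range and variance of each summand. Since $w_r(t) \leq \frac{1}{r\sqrt{2\pi}}$ pointwise, each term is bounded by $M = \frac{1}{r\sqrt{2\pi}}$, and
\[
\Var(w_r(x - Y_i)) \leq \E[w_r(x - Y_i)^2] \leq \frac{1}{r\sqrt{2\pi}} \cdot \E[w_r(x - Y_i)] = \frac{f_r(x)}{r\sqrt{2\pi}} =: V.
\]
Note that symmetry of $f^*$ is never used here, consistent with the lemma's final remark.

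\textbf{Applying Bernstein.} Bernstein's inequality then gives
\[
\Pr\!\left[\,|\wh f_r(x) - f_r(x)| > t\,\right] \;\leq\; 2\exp\!\left(-\frac{N t^2}{2(V + Mt/3)}\right).
\]
Setting $t = \sqrt{\tfrac{3 f_r(x) \log(2/\delta)}{N r}}$ as in the claim, the plan is to show that under the sample-size assumption $N \geq \tfrac{3 \log(2/\delta)}{f_r(x) r}$, the variance term $V$ dominates the linear term $Mt/3$ in the Bernstein denominator, so the tail behaves subgaussian-like. A direct calculation shows $V \geq Mt/3$ is equivalent to $N \geq \tfrac{\log(2/\delta)}{3 r f_r(x)}$, which is implied (by a factor of $9$) by our hypothesis. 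Consequently $V + Mt/3 \leq 2V$, and the exponent becomes at most $-\tfrac{Nt^2}{4V} = -\tfrac{Nt^2 r\sqrt{2\pi}}{4 f_r(x)}$. Plugging in our choice of $t^2$ yields an exponent of $-\tfrac{3\sqrt{2\pi}}{4}\log(2/\delta) \leq -\log(2/\delta)$ (since $3\sqrt{2\pi}/4 > 1$), so the failure probability is at most $\delta$, as desired.

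\textbf{Main subtlety.} The only nontrivial aspect of the plan is tracking the constants so that the lower bound on $N$ is exactly strong enough to keep us in the Gaussian-tail regime of Bernstein; this is routine but deserves care because the constant $3$ in both the sample-size condition and the error bound must be consistent. Everything else (unbiasedness, boundedness, variance bound) is a one-line calculation using only that $w_r$ is a Gaussian kernel with $\|w_r\|_\infty = 1/(r\sqrt{2\pi})$, so no further structural assumptions on $f^*$ enter.
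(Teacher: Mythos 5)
Your proof is correct and takes essentially the same route as the paper: the paper applies a multiplicative Chernoff bound to the bounded summands $w_r(x-Y_i)\le 1/r$, which encodes exactly the same variance bound $\E[w_r(x-Y)^2]\lesssim f_r(x)/r$ that you make explicit via Bernstein. Your constant-tracking (the hypothesis $N\ge \tfrac{3\log(2/\delta)}{f_r(x)r}$ keeping the deviation in the subgaussian regime, and $3\sqrt{2\pi}/4>1$ absorbing the slack) checks out, so the argument goes through.
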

\begin{proof}
For every $x$, we have $$|w_r(x)| \le \frac{1}{r}$$
So, by multiplicative Chernoff, we have for $0 \le \eps \le 1$ $$\Pr_{Y_i \sim f^*}\left[\left|\frac{1}{N} \sum_{i=1}^N w_r(x - Y_i) - \E_{Y \sim f^*}[w_r(x - Y)]\right| \ge \eps \E_{Y \sim f^*}[w_r(x - Y)]\right] \le 2 \exp\left(- \frac{\eps^2 r N f_r(x)}{3}\right)$$
The claim follows.

%\textbf{Alternatively: XXX JUST AS EXERCISE}
%Let's look at $$\Var_{Y \sim f^*} [w(x - Y)] \le \E[w(x - Y)^2] = \frac{1}{2 \pi r^2} \E_{Y \sim f^*}\left[e^{\frac{-(x - Y)^2}{r^2}}\right] \eqsim \frac{f(x)}{r}$$
%So, by Bernstein's inequality,
%$$\Pr\left[\left|\frac{1}{N} \sum_{i=1}^N w(x - Y_i) - \E[w(x - Y_i)]\right|\ge \eps \E[w(x - Y)] \right] \le \exp\left(-\frac{\eps^2 \mu^2}{\frac{\mu}{Nr} +  \frac{\eps\mu }{N r}} \right)$$
%$$ \le \exp\left(\frac{\eps^2 \alpha r N}{3}\right)$$
%$\Var[w(x - Y)] \le \E[w(x - Y)]^2$. So, by Bernstein's inequality,

%So, 
%since $g(x - Y)$ is maximized at $Y = x$, in which case the density is $\frac{1}{\sqrt{2 \pi} r}$.
%So, each random variable $g(x - Y_i)$ is bounded in $[0, 1/r]$. By Hoeffding's inequality, since $\E[g(x - Y)] \ge r^{C}$ by assumption, we have,
%$$\Pr\left[\left|\frac{1}{N} \sum_{i=1}^N g(x - Y_i) - \E[g(x - Y)] \right| \ge \eps \E[g(x - Y)]\right] \le 2 \exp\left(-2 N \eps^2 r^2 \alpha^2 \right)$$
%The claim follows.
\end{proof}

The next Lemma shows that the derivative of the KDE $\wh f_r'$ approximates the true derivative of the density function $f_r'$ pointwise.
%\textbf{Is the above true uniformly?}
%We want to look at the Rademacher complexity
%$$\E_{Y, \sigma}\left[\sup_x \frac{1}{N} \sum_{i=1}^N \sigma_i w(x - Y_i) \right]$$
\begin{lemma}[Pointwise density derivative estimate guarantee]
\label{lem:pointwise_density_derivative_estimate}
Let $\wh f_r$ be the kernel density estimate of $f_r$ from $N$ samples $Y_1, \dots, Y_N \sim f^*$. For a fixed $x$, letting $N \ge \frac{\log\frac{1}{\delta}}{r f_r(x)}$, we have that with probability $1 - \delta$,
$$\left|\wh f_r'(x) - f_r'(x)\right| \lesssim \sqrt{\frac{f_r(x) \log \frac{1}{\delta}}{N r^3}} $$% \frac{1}{r^2} \sqrt{\frac{2 \log \frac{2}{\delta}}{ N}}$$
This holds even for asymmetric $f^*, f_r$.
\end{lemma}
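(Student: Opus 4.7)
The plan is to apply Bernstein's inequality to the i.i.d.\ sum $\wh f_r'(x) = \frac{1}{N}\sum_{i=1}^N w_r'(x - Y_i)$, whose expectation is exactly $f_r'(x)$ since differentiation and expectation exchange for the Gaussian kernel. To set this up, I first compute $w_r'(u) = -\frac{u}{r^2} w_r(u)$ and establish the pointwise envelope $|w_r'(u)| = O(1/r^2)$, which follows from maximizing $\frac{|u|}{r^3\sqrt{2\pi}} e^{-u^2/(2r^2)}$ at $|u| = r$. This will serve as the uniform bound $M = O(1/r^2)$ on each summand.

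The main step is to bound the variance of $w_r'(x-Y)$ by $O(f_r(x)/r^3)$, which is what allows the final bound to depend on $f_r(x)$ and not just a worst-case constant. The crucial inequality is
\[
  w_r'(u)^2 \;\le\; \frac{C}{r^3}\, w_r(u),
\]
which one verifies by computing $w_r'(u)^2/w_r(u) = \frac{u^2}{r^4 \sqrt{2\pi}} e^{-u^2/(2r^2)}$ and noting that $v \mapsto v e^{-v/2}$ is bounded by $2/e$, applied with $v = u^2/r^2$. Taking expectations over $Y \sim f^*$ then yields $\E[w_r'(x-Y)^2] \le \frac{C}{r^3}\, \E[w_r(x-Y)] = \frac{C f_r(x)}{r^3}$.

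With these two ingredients, Bernstein's inequality gives
\[
  \Pr\!\left[\left|\wh f_r'(x) - f_r'(x)\right| \ge t\right] \le 2 \exp\!\left(-\frac{N t^2 / 2}{C f_r(x)/r^3 + Mt/3}\right).
\]
Setting $t = C' \sqrt{f_r(x) \log(1/\delta)/(N r^3)}$, the variance term dominates the exponential term precisely when $\log(1/\delta)/(Nr) \lesssim f_r(x)$, i.e.\ under the lemma's sample complexity hypothesis $N \ge \log(1/\delta)/(r f_r(x))$. Verifying this threshold is a short algebraic check, and the resulting tail bound matches the claim.

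The main obstacle is the variance computation --- specifically, recognizing that one must dominate $w_r'(u)^2$ by a constant multiple of $w_r(u)/r^3$ rather than naively using the $O(1/r^2)$ pointwise bound together with $w_r'(u)^2 \le M \cdot |w_r'(u)|$. Replacing the variance by the worst-case $M^2$ would give a bound proportional to $1/(Nr^4)$, losing the pointwise density factor $f_r(x)$ and the corresponding pointwise nature of the guarantee. Once the pointwise sub-kernel bound on $w_r'(u)^2$ is in hand, the rest of the argument is essentially a standard Bernstein computation.
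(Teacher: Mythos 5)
Your proposal is correct and follows essentially the same route as the paper: bound each summand $w_r'(x-Y_i)$ by $O(1/r^2)$, dominate the variance via $w_r'(u)^2 \lesssim w_r(u)/r^3$ so that $\E[w_r'(x-Y)^2] \lesssim f_r(x)/r^3$, and apply Bernstein, using $N \ge \log\frac{1}{\delta}/(r f_r(x))$ to absorb the exponential term into the subgaussian term. The only blemish is a harmless typo in your intermediate ratio, which should read $w_r'(u)^2/w_r(u) = \frac{u^2}{\sqrt{2\pi}\,r^5}e^{-u^2/(2r^2)}$ (so that substituting $v = u^2/r^2$ indeed yields the claimed $O(1/r^3)$ bound).
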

%$$\Pr\left[\left|\frac{1}{N} \sum_{i=1}^N g'(x - Y_i) - \E_{Y \sim D_{\lam}}[g'(x - Y)] \right| \ge \eps \right] \le 2 \exp(-2 N \eps^2 r^2)$$
\begin{proof}
%First, $w'(x - Y_i)$ is bounded in $[-1/r^2, 1/r^2]$. Therefore, by Hoeffding's inequality,
%$$\Pr\left[\left|\frac{1}{N} \sum_{i=1}^N w'(x - Y_i) - \E[w'(x - Y)] \right| \ge \eps \right] \le 2 \exp\left(-\frac{N \eps^2 r^4}{2} \right)$$
%The claim follows.

\begin{align*}
&\E_{Y \sim f^*}[w_r'(x - Y_i)^2] = \E_{Y \sim f^*}\left[\left( -w_r(x - Y) \frac{(x - Y)}{r^2}\right)^2\right]\\
&= \frac{1}{r^4} \E_{Y \sim f^*} \left[w_r(x - Y)^2 (x - Y)^2 \right]\\
&\lesssim \frac{1}{r^3} \E_{Y \sim f^*} [w_r(x - Y)] \quad \text{since } w_r(x-Y) (x - Y)^2 \le \frac{r}{\sqrt{2 \pi 
 e}}\\
%&= \frac{1}{r^4} \left(\E_{Y \sim f^*}\left[w_r(x - Y)^2 (x - Y)^2 \1_{|x - Y| \le 10 r\sqrt{\log \frac{1}{r}}}\right] +\E_{Y \sim f^*} \left[w_r(x - Y)^2 (x - Y)^2 \1_{|x - Y| > 10 r\sqrt{\log \frac{1}{r}}} \right] \right)\\
%&\lesssim \frac{\log \frac{1}{r}}{r^2} \E_{Y \sim f^*}\left[w_r(x - Y)^2 \1_{|x - Y| \le 10 r\sqrt{\log \frac{1}{r}}}\right] +\frac{1}{r^4}\E_{Y \sim f^*} \left[w_r(x - Y)^2 (x - Y)^2 \1_{|x - Y| > 10 r\sqrt{\log \frac{1}{r}}} \right]\\
%&\lesssim \frac{\log \frac{1}{r}}{r^3} \E_{Y \sim f^*}\left[\frac{1}{\sqrt{2 \pi} r} e^{-\frac{2 (x - Y)^2}{2 r^2}}\1_{|x - Y| \le 10 r} \right] + \frac{1}{r^5} \E_{Y \sim f^*} \left[\frac{1}{\sqrt{2 \pi}} \right]\\
%&\lesssim \frac{\log \frac{1}{r}}{r^3}\E_{Y \sim f^*}\left[\frac{1}{\sqrt{2 \pi r}} e^{-\frac{(x - Y)^2}{2 r^2}} e^{-100} \1_{|x - Y| \le 10 r} \right]\\
&= \frac{f_r(x)}{r^3}
\end{align*}

Also, $w_r'(x - Y_i) = - \frac{(x - Y_i)}{\sqrt{2 \pi} r^3} e^{-\frac{(x - Y_i)^2}{2 r^2}}$ is bounded in $[-1/r^2, 1/r^2]$. So, by Bernstein's inequality,
\begin{align*}
    \Pr_{Y_i \sim f^*}\left[\left|\frac{1}{N} \sum_{i=1}^N w_r'(x - Y_i) - \E_{Y \sim f^*}[w_r'(x - Y)] \right| \ge \eps \right] &\le 2 \exp\left(-\Omega\left(\frac{\eps^2}{\frac{f_r(x)}{N r^3} + \frac{\eps}{N r^2} }\right) \right)
    %&\le 2 \max\left\{\exp\left(-\Omega\left(\frac{N r^3 \eps^2}{f(x)} \right)\right), \exp\left(-\Omega \left(N r^2 \eps \right) \right) \right\}
\end{align*}
So, 
\begin{align*}
    |\wh f_r'(x) - f_r'(x)| \lesssim \sqrt{\frac{f_r(x) \log \frac{1}{\delta}}{N r^3}} + \frac{\log \frac{1}{\delta}}{N r^2}
\end{align*}
Since $N \ge \frac{\log \frac{1}{\delta}}{f_r(x) r}$, the claim follows.
\end{proof}

Finally, we have the main result of this section, which shows that the KDE score $\wh s_r$ approximates the true score $s_r$ pointwise.
\begin{lemma}[Pointwise Score Estimate Guarantee]
\label{lem:pointwise_score}
Let $\wh f_r$ be the kernel density estimate of $f_r$ from $N$ samples $Y_1, \dots, Y_N \sim f^*$. For fixed $x$, $N \ge \frac{6 \log \frac{4}{\delta}}{r f_r(x)}$, and the KDE score $\wh s_r$ defined in \eqref{eq:kde_def}, given by
$$\wh s_r(x) = \frac{\wh f_r'(x)}{\wh f_r(x)}$$
we have that with probability $1 - \delta$,
%$$\left| \wh s_r(x)- s_r(x) \right|\lesssim \sqrt{ \frac{ \log \frac{1}{\delta}}{\alpha N r^3}} + s_r(x) \sqrt{\frac{\log \frac{1}{\delta}}{\alpha r N}}$$
$$\left| \wh s_r(x)- s_r(x) \right|\lesssim \sqrt{ \frac{ \log \frac{1}{\delta} \log \frac{1}{r f_r(x)}}{ N r^3 f_r(x)}}$$
This holds even for asymmetric $f^*, f_r$.
%$$l'(x) - \frac{1}{r} \sqrt{\frac{\log \frac{4}{\delta}}{N}} - l'(x) \frac{1}{r^{C+1}} \sqrt{ \frac{\log \frac{4}{\delta}}{2N}} \le \hat l'(x) \le l'(x) + \frac{1}{r} \sqrt{\frac{\log \frac{4}{\delta}}{N}} + l'(x) \frac{1}{r^{C+1}} \sqrt{ \frac{\log \frac{4}{\delta}}{2N}}$$
\end{lemma}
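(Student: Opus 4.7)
The plan is to decompose the score error via the identity
\[
\wh{s}_r(x) - s_r(x) \;=\; \frac{\wh{f}_r'(x) - f_r'(x)}{\wh{f}_r(x)} \;-\; s_r(x) \cdot \frac{\wh{f}_r(x) - f_r(x)}{\wh{f}_r(x)},
\]
and then to control each term using Lemmas~\ref{lem:pointwise_density_estimate} and~\ref{lem:pointwise_density_derivative_estimate}, applied at failure probability $\delta/2$ each. The sample-size hypothesis $N \ge 6\log(4/\delta)/(r f_r(x))$ is chosen precisely so that the density estimate is tight to within a multiplicative constant: with probability $1-\delta/2$, $\wh{f}_r(x) \asymp f_r(x)$. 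After a union bound and substitution, the first summand contributes at most $\sqrt{\log(1/\delta)/(N r^3 f_r(x))}$ and the second contributes at most $|s_r(x)|\,\sqrt{\log(1/\delta)/(N r f_r(x))}$.

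What remains, and is the main technical step, is to bound $|s_r(x)|$ in terms of $f_r(x)$, since the extra $\sqrt{\log(1/(r f_r(x)))}$ factor in the stated bound must originate from the score itself. I would use a Tweedie-type representation: because $f_r = f^* * w_r$ with $w_r$ the density of $\mathcal{N}(0,r^2)$,
\[
s_r(x) \;=\; \frac{1}{r^2}\,\E\!\left[\,Y - x \,\middle|\, X = x\,\right],
\]
where $Y$ has posterior density $f^*(y) w_r(x-y)/f_r(x)$. The pointwise bound $w_r(x-y) \le (2\pi r^2)^{-1/2}\, e^{-t^2/(2r^2)}$ for $|x-y| = t$, combined with $\int f^* = 1$, yields the posterior tail
\[
\Pr\!\left[\,|Y-x| > t\,\right] \;\le\; \min\!\left(1,\; \frac{e^{-t^2/(2r^2)}}{\sqrt{2\pi}\, r f_r(x)}\right).
\]
Integrating this tail against $\d t$ (the cutoff between the two branches occurs at $t \asymp r\sqrt{\log(1/(rf_r(x)))}$) gives $\E|Y-x| \lesssim r\sqrt{\log(1/(r f_r(x)))}$, hence $|s_r(x)| \lesssim \frac{1}{r}\sqrt{\log(1/(r f_r(x)))}$. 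Substituting into the second term produces $\sqrt{\log(1/\delta)\,\log(1/(r f_r(x)))/(N r^3 f_r(x))}$, which dominates the first term and matches the claimed bound.

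The main obstacle is the score bound just described; the rest is a mechanical combination of the two prior pointwise lemmas and algebra. One subtlety worth checking is the regime $r f_r(x) \gtrsim 1$, where $\log(1/(r f_r(x)))$ may be small or negative: there the first-term contribution already suffices, and the trivial bound $|s_r(x)| \lesssim 1/r$ (from the posterior being localized within $O(r)$) is enough to close the argument, so the implicit constant in $\lesssim$ absorbs this case.
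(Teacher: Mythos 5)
Your proposal is correct, and its skeleton is the same as the paper's: decompose $\wh s_r - s_r$ into a derivative-error term and a density-error term, invoke Lemmas~\ref{lem:pointwise_density_estimate} and~\ref{lem:pointwise_density_derivative_estimate} with a union bound, use the hypothesis $N \ge \frac{6\log\frac4\delta}{rf_r(x)}$ to get $\wh f_r(x)\asymp f_r(x)$, and then absorb the second term by bounding $|s_r(x)| \lesssim \frac1r\sqrt{\log\frac{1}{rf_r(x)}}$. The only genuine divergence is how that score-magnitude bound is obtained. The paper simply cites its Lemma~\ref{lem:generic_density_large_score_small}, which is proved by writing $s_r(x)$ as a posterior expectation and applying Jensen's inequality to the concave map $z\mapsto z\sqrt{2\log\frac{1}{\sqrt{2\pi}rz}}$. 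You instead re-derive it from the Tweedie representation $s_r(x)=\frac{1}{r^2}\E[Y-x\mid X=x]$ together with the posterior tail bound $\Pr[|Y-x|>t\mid X=x]\le e^{-t^2/(2r^2)}/(\sqrt{2\pi}\,r f_r(x))$ — which is exactly the paper's Lemma~\ref{lem:zcond} — and integrate the tail; this is equally valid and arguably more self-contained, at the cost of the tail-integration bookkeeping. One small remark: the caveat you raise about the regime $rf_r(x)\gtrsim 1$ never actually arises, since $f_r(x)\le\frac{1}{\sqrt{2\pi}r}$ pointwise (the smoothed density is at most the Gaussian kernel's maximum), so $\log\frac{1}{rf_r(x)}\ge\log\sqrt{2\pi}>0$ always; the paper uses this fact repeatedly, and it also confirms that your second term dominates the first, as you claim. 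Your fallback bound $|s_r(x)|\lesssim 1/r$ is harmless but unnecessary.
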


\begin{proof}
    We have, by Lemmas \ref{lem:pointwise_density_estimate} and \ref{lem:pointwise_density_derivative_estimate}, by a union bound, with probability $1 - \delta$, 
    \[
        \frac{\wh f_r'(x)}{\wh f_r(x)} \le \frac{f_r'(x) +  O\left(\sqrt{\frac{f_r(x) \log \frac{1}{\delta}}{N r^3}} \right)}{f_r(x)\left(1 - \sqrt{ \frac{3\log \frac{4}{\delta}}{f_r(x)r N}}\right)}
    \]
    %\jnote{In the above, the $\lesssim$ are all wrong because the inequality implies that we only get a multiplicative approximation to $s_r$, even though it's only the error's multiplicative constant that we don't care about. Not touching the proof yet since Lemma 3.2 isn't fixed yet.}
    Since $N \ge \frac{6 \log \frac{4}{\delta}}{r f_r(x)}$ we have that $\sqrt{\frac{3 \log \frac{4}{\delta}}{f_r(x) r N}} \le 1/\sqrt{2}$. So,
    \begin{align*}
    \wh s_r(x) - s_r(x) &\lesssim \sqrt{ \frac{ \log \frac{1}{\delta}}{f_r(x) N r^3}} + s_r(x) \sqrt{\frac{\log \frac{1}{\delta}}{f(x) r N}}\\
    &\lesssim \sqrt{ \frac{ \log \frac{1}{\delta} \log \frac{1}{r f_r(x)}}{ N r^3 f_r(x)}} \quad \text{by Lemma~\ref{lem:generic_density_large_score_small}}
    \end{align*}
    
    Similarly,
    \begin{align*}
    \wh s_r(x) - s_r(x) \gtrsim\sqrt{ \frac{ \log \frac{1}{\delta} \log \frac{1}{r f_r(x)}}{ N r^3 f_r(x)}}
    \end{align*}
\end{proof}

\subsection{Close-by scores are close}
In this section, we show that for small enough $\eps$, $s_r(x+\eps)$ and $s_r(x)$ are close to each other for the score function $s_r$. We begin with the following utility lemma.
%\begin{lemma}[Lemma A.1. in \cite{finite_sample_mle}]
%\label{lem:score_characterization}
%Let $f^*$ be an arbitrary distribution and let $f_r$ be the $r$-smoothed version of $f^*$. That is, $f_r = \E_{y \sim f} \left[\frac{1}{\sqrt{2 \pi} r} e^{-\frac{(x - y)^2}{2 r^2}} \right]$. Let $s_r$ be the score function of $f_r$. Let $(X, Y, Z_r)$ be the joint distribution such that $Y \sim f$, $Z_r \sim \mathcal N(0, r^2)$ are independent, and $X = Y + Z_r \sim f_r$. We have, for every $\eps > 0$,
% \[
%    \frac{f_r(x+\eps)}{f_r(x)} = \E_{Z_r \mid x}\left[e^{\frac{2 \eps Z_r - \eps^2}{2 r^2}}\right]
%    \quad
%  \text{and in particular}
%  \quad
%    s_r(x) = \E_{Z_r \mid x}\left[\frac{Z_r}{r^2}\right]
%  \]
%\end{lemma}

\begin{lemma}\label{lem:zcond}
 Let $(X, Y, Z_r)$ be the joint distribution such that $Y \sim f^*$, $Z_r \sim \mathcal N(0, r^2)$ are independent, and $X = Y+Z_r \sim f_r$. For any $x$, and $t > 0$, we have $$\Pr_{Z_r} [\abs{Z_r} > rt | X = x] \leq  \frac{e^{-t^2/2}}{\sqrt{2 \pi} r f_r(x)}$$
This holds even for asymmetric $f^*, f_r$.
\end{lemma}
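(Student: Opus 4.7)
The plan is to use Bayes' rule to write down the conditional density of $Z_r$ given $X = x$, and then bound the tail integral by pulling out the maximum of the Gaussian kernel on the tail region.

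First I would note that since $Y$ and $Z_r$ are independent with $Y \sim f^*$ and $Z_r$ having density $w_r$, the joint density of $(Z_r, X)$ equals $w_r(z) f^*(x - z)$ (using $Y = X - Z_r$), and the marginal density of $X$ at $x$ is $f_r(x)$. Therefore the conditional density of $Z_r$ given $X = x$ is
\[
p_{Z_r \mid X}(z \mid x) = \frac{w_r(z)\, f^*(x - z)}{f_r(x)}.
\]
Hence
\[
\Pr_{Z_r}[|Z_r| > rt \mid X = x] = \frac{1}{f_r(x)} \int_{|z| > rt} w_r(z)\, f^*(x - z)\, dz.
\]

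Next I would bound $w_r(z)$ uniformly on the tail region $\{|z| > rt\}$. Since $w_r(z) = \frac{1}{\sqrt{2\pi}\, r} e^{-z^2/(2r^2)}$ is decreasing in $|z|$, on this region it is at most $\frac{1}{\sqrt{2\pi}\, r} e^{-t^2/2}$. Pulling this constant out of the integral gives
\[
\Pr_{Z_r}[|Z_r| > rt \mid X = x] \le \frac{e^{-t^2/2}}{\sqrt{2\pi}\, r\, f_r(x)} \int_{|z| > rt} f^*(x - z)\, dz.
\]
The remaining integral is at most $\int_{\R} f^*(x - z)\, dz = 1$ since $f^*$ is a probability density, which yields the claimed bound $\frac{e^{-t^2/2}}{\sqrt{2\pi}\, r\, f_r(x)}$.

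There is no real obstacle here: the whole argument is a two-line calculation once Bayes' rule is applied, and symmetry of $f^*$ is never invoked, which matches the ``This holds even for asymmetric $f^*, f_r$'' remark in the statement. The only care required is being explicit about which Gaussian density expression for $w_r$ is being used and noting that bounding $w_r$ by its value at $|z| = rt$ on the tail is tight up to the constant in the numerator.
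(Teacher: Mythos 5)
Your proof is correct and essentially the same as the paper's: the paper writes the conditional probability as $\E_{Y\sim f^*}[w_r(x-Y)\1_{|x-Y|>rt}]/f_r(x)$, which after the substitution $z = x-Y$ is exactly your integral, and it applies the same pointwise bound $w_r \le \frac{1}{\sqrt{2\pi}r}e^{-t^2/2}$ on the tail region followed by bounding the remaining $f^*$-mass by $1$. Your version just makes the Bayes-rule/conditional-density step slightly more explicit.
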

\begin{proof}
Recall that
  $$f_r(x) = \E_{Y \sim f^*} \left[w_r(x -Y)\right]$$
  where $w_r$ is the pdf of $\mathcal N(0, r^2)$.  So, we have
  \[
    \Pr_{X, Z_r} (\{X = x\} \cap \{\abs{Z_r} > rt\}) = \E_{Y \sim f^*}\left[ w_r(x - Y) 1_{\abs{x - Y} >
      rt}\right] \leq \frac{1}{\sqrt{2\pi}r}e^{-t^2/2}
  \]
  Since $w_r(x - Y) = \frac{1}{\sqrt{2 \pi} r} e^{-\frac{(x - Y)^2}{2 r^2}}$.
  Thus
  \[
    \Pr_{Z_r}[\abs{Z_r} > rt | X = x] = \frac{\Pr_{X, Z_r}\left[\{X =  x\} \cap \{\abs{Z_r} > rt\}\right]}{\Pr_{X \sim f_r}(X = x)} \leq \frac{e^{-t^2/2}}{f_r(x)\sqrt{2\pi}r}.
  \]
\end{proof}

The next lemma shows that $f_r(x+\eps)$ is close to $f_r(x)$ for small $\eps$.
\begin{lemma}
\label{lem:density_close}
Let $\eps, x$ be such that $\frac{8|\eps|}{r} \sqrt{\log \frac{1}{r f_r(x)}} \le 1$. We have,
  \[
    \frac{f_r(x + \eps)}{f_r(x)} \leq 1 + \frac{10|\eps|}{r} \sqrt{\log \frac{1}{f_r(x) r}} 
  \]
This holds even for asymmetric $f^*, f_r$.
  %\jnote{Do you mean $f_r(x+\eps)/f_r(x)$ on the left hand side??}
\end{lemma}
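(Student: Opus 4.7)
My plan is to use the identity
\[
\frac{f_r(x+\eps)}{f_r(x)} = \E_{Z_r \mid X = x}\left[\exp\left(-\frac{Z_r \eps}{r^2} - \frac{\eps^2}{2 r^2}\right)\right],
\]
which follows from writing $f_r(x+\eps) = \int f^*(y)\,w_r(x+\eps-y)\,dy$, using the Gaussian ratio $w_r(u+\eps)/w_r(u) = \exp(-u\eps/r^2 - \eps^2/(2r^2))$, and recognizing that under $Y \mid X=x$, the variable $Z_r = x - Y$ has density proportional to $f^*(x-z)\,w_r(z)$. Taking $\eps > 0$ without loss of generality, the exponent is non-positive whenever $Z_r \ge 0$ and the factor $e^{-\eps^2/(2r^2)} \le 1$ can be dropped, giving
\[
\frac{f_r(x+\eps)}{f_r(x)} \le 1 + \E_{Z_r \mid X=x}\left[\1\{Z_r < 0\}\bigl(e^{|Z_r|\eps/r^2}-1\bigr)\right].
\]

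Next, I would rewrite the remaining expectation by Fubini as a tail-probability integral,
\[
\E\left[\1\{Z_r<0\}\bigl(e^{|Z_r|\eps/r^2}-1\bigr)\right] = \int_0^\infty \frac{\eps}{r^2}\,e^{u\eps/r^2}\,\Pr[Z_r < -u \mid X=x]\,du,
\]
and invoke Lemma~\ref{lem:zcond} to bound $\Pr[Z_r < -u \mid X=x] \le \min\bigl(1,\, e^{-u^2/(2r^2)}/(\sqrt{2\pi}\,r\,f_r(x))\bigr)$. I split the integral at the crossover $u^\star = r\sqrt{2\log\tfrac{1}{\sqrt{2\pi}\,r\,f_r(x)}}$. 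On $[0, u^\star]$ the trivial bound of $1$ yields a contribution of $e^{u^\star\eps/r^2}-1$. On $[u^\star, \infty)$, completing the square via $u\eps/r^2 - u^2/(2r^2) = -(u-\eps)^2/(2r^2) + \eps^2/(2r^2)$ reduces the integral to a standard Gaussian tail, which evaluates to at most $\tfrac{\sqrt{2\pi}\,\eps}{2r}\,e^{u^\star\eps/r^2}$.

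Finally, I would invoke the hypothesis $\tfrac{8|\eps|}{r}\sqrt{\log\tfrac{1}{rf_r(x)}} \le 1$ to conclude $u^\star\eps/r^2 \le \sqrt{2}/8$, so by the linearization $e^y - 1 \le 2y$ for $y \in [0,1]$, the first piece is at most $\tfrac{2\sqrt{2}\,|\eps|}{r}\sqrt{\log\tfrac{1}{rf_r(x)}}$. The Gaussian-tail piece is $O(|\eps|/r)$, and since $f_r(x) \le 1/(r\sqrt{2\pi})$ forces $\log\tfrac{1}{rf_r(x)} \ge \tfrac{1}{2}\log(2\pi) > 0$, this is itself a constant multiple of $\tfrac{|\eps|}{r}\sqrt{\log\tfrac{1}{rf_r(x)}}$. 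Summing gives the claim with absolute constant comfortably below $10$.

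The main obstacle is getting all constants to combine to at most $10$: the split threshold $u^\star$ is delicate because making it smaller leaves a tail contribution that does not vanish with $\eps$, while making it larger blows up $e^{u^\star\eps/r^2}$; taking it at the crossover of the two bounds in Lemma~\ref{lem:zcond} is the natural choice that makes both pieces scale like $|\eps|/r$. An alternative would be to use $(\log f_r)' = s_r$ together with a pointwise score bound $|s_r(y)| \lesssim \tfrac{1}{r}\sqrt{\log\tfrac{1}{rf_r(y)}}$ and a bootstrap ensuring $f_r$ stays within a constant factor of $f_r(x)$ over $[x, x+\eps]$; this is conceptually cleaner but relies on an external score-magnitude lemma.
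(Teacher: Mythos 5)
Your proposal is correct and takes essentially the same route as the paper's proof: the exponential-tilting identity for $f_r(x+\eps)/f_r(x)$ in terms of $Z_r$ conditioned on $X=x$, a tail-integral (layer-cake) bound on the resulting expectation using Lemma~\ref{lem:zcond}, completing the square to reduce to a Gaussian tail, and the hypothesis $\frac{8|\eps|}{r}\sqrt{\log\frac{1}{rf_r(x)}}\le 1$ together with $rf_r(x)\le\frac{1}{\sqrt{2\pi}}$ to control constants, the only differences being bookkeeping (you split at the crossover $u^\star$ of the two tail bounds rather than thresholding the integrand at $1+\frac{8|\eps|}{r}\sqrt{\log\frac{1}{rf_r(x)}}$, and you rederive the identity rather than citing Lemma~\ref{lem:shifted_score_characterization}). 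One small patch: your bound $\frac{\sqrt{2\pi}\,|\eps|}{2r}e^{u^\star\eps/r^2}$ for the second piece uses a Gaussian tail at $(u^\star-\eps)/r$ and so implicitly needs $u^\star\ge|\eps|$; in the corner case where $f_r(x)$ is so close to $\frac{1}{\sqrt{2\pi}r}$ that $u^\star<|\eps|$, the trivial bound $\Pr[Z_r<-u\mid X=x]\le 1$ still yields an $O(|\eps|/r)$ contribution, so the conclusion and the constant budget of $10$ are unaffected.
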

\begin{proof}
Let $(X, Y, Z_r)$ be the joint distribution such that $Y \sim f^*$, $Z_r \sim \mathcal N(0, r^2)$ are independent, and $X = Y+Z_r \sim f_r$.
    For every $x$, by Lemma~\ref{lem:shifted_score_characterization}, we have
  \[
    \frac{f_r(x+\eps)}{f_r(x)} = \E_{Z_r \mid x} \left[e^{ \frac{2\eps Z_r - \eps^2}{2r^2}}\right] \le \E_{Z_r | x}\left[e^{\frac{\eps Z_r}{r^2}} \right]
  \]
  \allowdisplaybreaks
  Without loss of generality, we assume that $\eps > 0$.
  Now, since $\frac{8 |\eps|}{r} \sqrt{\log \frac{1}{r f_r(x)}} \le  1$,
  \begin{align*}
    \E_{Z_r \mid x}\left[ e^{ \eps Z_r/r^2}\right] &\le \left(1 + \frac{8 \eps}{r}\sqrt{\log\frac{1}{f_r(x) r}} \right) + \int_{1+\frac{8\eps}{r} \sqrt{\log \frac{1}{f_r(x) r}}}^\infty \Pr_{Z_r | x}[e^{ \eps z/r^2} \geq u] du\\
      &= 1 + \frac{8\eps}{r} \sqrt{\log \frac{1}{f_r(x) r}}+  \int_{1 + \frac{8\eps}{r} \sqrt{\log \frac{1}{f_r(x) r}}}^\infty \Pr_{Z_r|x}\left[Z_r \geq \frac{r^2\log u}{ \eps}\right] du\\
      &= 1 + \frac{8 \eps}{r} \sqrt{ \log \frac{1}{r f_r(x)}} + \int_{\log\left(1 + \frac{8 \eps}{r} \sqrt{\log \frac{1}{r f_r(x)}} \right)}^\infty \Pr_{Z_r|x}\left[Z_r \ge r v \right] \frac{\eps}{r} e^{\eps v/r} dv\\
      &\quad \left(\text{Substituting } v = \frac{r \log u}{\eps}\right)\\
      &\le 1 + \frac{8\eps}{r} \sqrt{\log \frac{1}{f_r(x) r}} + \int_{4\sqrt{\log \frac{1}{f_r(x) r}}}^\infty \Pr_{Z_r | x}[Z_r \geq rv] \frac{|\eps|}{r}e^{\eps v/r} dv \\ %\text{\jnote{What happened here? What is the variable substitution?}}\\
      %&\hspace*{-3cm} \text{\jnote{You should use $\log 1+x \ge x/2$ for small $x$ to get an explicit constant in the $\Theta$, as discussed below}} \\
      &\leq 1 + \frac{8\eps}{r} \sqrt{\log \frac{1}{f_r(x) r}}+  \frac{|\eps|}{\sqrt{2 \pi}}\int_{4\sqrt{\log \frac{1}{f_r(x) r}}}^\infty \frac{e^{-v^2/2 + \eps v/r} }{r^2 f_r(x)} dv \quad \text{by Lemma~\ref{lem:zcond}}\\
      &\le 1 + \frac{8\eps}{r} \sqrt{\log \frac{1}{f_r(x) r}} + \frac{|\eps| e^{\frac{2\eps^2}{r^2}}}{r^2 f_r(x)} \int_{4\sqrt{\log \frac{1}{f_r(x) r}}}^\infty \frac{e^{-\frac{\left(v - \frac{2\eps}{r}\right)^2}{2}}}{\sqrt{2 \pi}} dv \\ 
      &\le 1 + \frac{8\eps}{r} \sqrt{\log \frac{1}{f_r(x) r}} + \frac{|\eps| e^{\frac{2\eps^2}{r^2}}}{r^2 f_r(x)} \Pr_{W \sim \mathcal N(0, 1)}\left[W \geq 4\sqrt{\log \frac{1}{f_r(x) r}} - \frac{2 \eps}{r}\right]\\ 
      &\leq 1 + \frac{8\eps}{r} \sqrt{\log \frac{1}{f_r(x) r}} + \frac{11 |\eps|}{10 r^2 f_r(x)} \Pr_{W \sim \mathcal N(0, 1)}\left[W \ge \sqrt{2 \log \frac{1}{r f_r(x)}} \right] \\
      &\quad \text{since $f_r(x) \le \frac{1}{\sqrt{2 \pi} r}$, so that $\frac{2 |\eps|}{r} \le \frac{1}{4 \sqrt{\log \frac{1}{r f_r(x)}}} \le \frac{1}{4 \log \sqrt{2 \pi} } \sqrt{\log \frac{1}{r f_r(x)}}$}\\
        &\quad \text{and since $|\eps|/r \le \frac{1}{8\sqrt{\log \frac{1}{rf_r(x)}}} \le \frac{1}{8 \sqrt{\log \sqrt{ 2\pi}}} $ so that $e^{2 \frac{\eps^2}{r^2}} \le 11/10$}\\
      &\le 1 + \frac{8 \eps}{r} \sqrt{\log \frac{1}{r f_r(x)}} + \frac{11|\eps|}{10r}\\
      &\le 1 + \frac{10 |\eps|}{r}\sqrt{\log \frac{1}{r f_r(x)}} \quad \text{since $\frac{11|\eps|}{10} \le \frac{2|\eps|}{r} \sqrt{\log \sqrt{2 \pi}} \le \frac{2|\eps|}{r} \sqrt{\log \frac{1}{r f_r(x)}}$} % $\sqrt{\log \frac{1}{f_r(x) r}} \ge \frac{\log \sqrt{2 \pi}}{\sqrt{ \log \frac{1}{f_r(x) r}}} \ge \frac{(8 \log \sqrt{2 \pi}) |\eps|}{r}$}
  \end{align*}
  %\jnote{The last step seems to rely on explicit constants hidden in the big-$\Theta$ (which is constrained by previous steps, including the variable substitution that I cannot follow). If so, the constant needs to be made explicit?}
  giving the result.
\end{proof}

The next lemma shows that $f_r'(x+\eps)$ is close to $f_r'(x)$ for small $\eps$.
\begin{lemma}[Close-by density derivatives are close]
\label{lem:density_derivative_add_close}
    %Let $f^*$ be an arbitrary distribution, and let $f_r$ be the $r$-smoothed version of $f^*$. 
    Let $\eps, x$ be such that $\frac{20 |\eps|}{r} \sqrt{\log \frac{1}{f_r(x) r}} < 1$. We have $$|f_r'(x+\eps) - f_r'(x)| \lesssim \frac{|\eps|}{r^2} f_r(x) \log \left(\frac{1}{r f_r(x)}\right)$$
This holds even for asymmetric $f^*, f_r$.
\end{lemma}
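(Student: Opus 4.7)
The plan is to reduce to a pointwise bound on the second derivative $f_r''$ and then integrate. Since $f_r = f^* \ast w_r$ is smooth, write $f_r'(x+\eps) - f_r'(x) = \int_0^\eps f_r''(x+s)\,ds$, so it suffices to establish the uniform pointwise estimate $|f_r''(x+s)| \lesssim \frac{f_r(x)}{r^2}\log\frac{1}{rf_r(x)}$ for all $|s|\le|\eps|$; integrating this over $[0,\eps]$ then yields the claim.

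For the pointwise bound, I would use the same conditioning device as in Lemma~\ref{lem:density_close}. Combining $w_r''(u) = \bigl(u^2/r^4 - 1/r^2\bigr)\,w_r(u)$ with the Gaussian shift identity $w_r(x+s-Y) = w_r(x-Y)\,e^{-s(x-Y)/r^2 - s^2/(2r^2)}$, one obtains
\[
f_r''(x+s) = f_r(x)\,\E_{Z_r \mid X = x}\!\left[\left(\frac{(Z_r+s)^2}{r^4} - \frac{1}{r^2}\right)e^{-sZ_r/r^2 - s^2/(2r^2)}\right],
\]
where $(Y, Z_r, X)$ is the joint distribution of Lemma~\ref{lem:zcond}. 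The key point is that this expression already carries the desired $f_r(x)$ factor outside, with the expectation taken in the conditional distribution at $X=x$; this sidesteps any need to bootstrap Lemma~\ref{lem:density_close} to the shifted point $x+s$ (which would be somewhat delicate, since Lemma~\ref{lem:density_close} provides only an upper bound on $f_r(x+s)/f_r(x)$).

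To bound the expectation, I would use $|s| \le |\eps| \ll r$ from the hypothesis to estimate $\bigl|(Z_r+s)^2/r^4 - 1/r^2\bigr| \lesssim (Z_r^2 + r^2)/r^4$, then apply Cauchy--Schwarz to get a bound of $\frac{1}{r^4}\sqrt{\E[(Z_r^2+r^2)^2\mid X=x]\,\E[e^{-2sZ_r/r^2-s^2/r^2}\mid X=x]}$. The second factor equals $e^{s^2/r^2}\,f_r(x-2s)/f_r(x)$ via the shift identity, which is $O(1)$ by Lemma~\ref{lem:density_close} applied at shift $-2s$; the constant $20$ in the hypothesis (versus the $8$ of Lemma~\ref{lem:density_close}) provides the slack needed here. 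The first factor is controlled by the moment bound $\E[Z_r^4\mid X=x] \lesssim r^4 \log^2(1/(rf_r(x)))$, obtained from Lemma~\ref{lem:zcond} via $\E[Z_r^4\mid X=x] = 4r^4\int_0^\infty t^3 \Pr[|Z_r|>rt\mid X=x]\,dt$ and splitting at $t^\star = \sqrt{2\log(1/(rf_r(x)))}$: the piece $t\le t^\star$ contributes $\asymp r^4\log^2(1/(rf_r(x)))$ under the trivial bound $\Pr\le 1$, while the Gaussian tail on $[t^\star,\infty)$ contributes only $O(r^4\log(1/(rf_r(x))))$ by the same cancellation as in the analogous $\E[Z_r^2\mid X=x]$ computation. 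Putting these pieces together gives $|f_r''(x+s)| \lesssim \frac{f_r(x)}{r^2}\log(1/(rf_r(x)))$ uniformly in $s$, and integrating yields the bound. The main obstacle I foresee is careful bookkeeping of the constants so that the shifts of size up to $2|s|\le 2|\eps|$ used inside Cauchy--Schwarz still fall within the hypothesis of Lemma~\ref{lem:density_close}; the factor of $20$ in the lemma's stated hypothesis is exactly there to provide this margin.
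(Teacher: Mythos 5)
Your proposal is correct, and it shares the paper's overall skeleton (bound the second derivative of $f_r$ near $x$ by $\frac{f_r(x)}{r^2}\log\frac{1}{rf_r(x)}$, then pick up a factor $|\eps|$ by Taylor/integration), but the engine behind the key pointwise bound is genuinely different. The paper writes $w_r''(u) = \frac{-1+2\log(1/(\sqrt{2\pi}r\,w_r(u)))}{r^2}\,w_r(u)$, uses concavity of $z\mapsto z\log\frac{1}{\sqrt{2\pi}rz}$ and Jensen's inequality to bound $\E_{Y\sim f^*}[w_r''(x+\zeta-Y)]$ by $\frac{f_r(x+\zeta)}{r^2}\log\frac{1}{rf_r(x+\zeta)}$ at the Taylor intermediate point $\zeta$, and then invokes Lemma~\ref{lem:density_close} to replace $f_r(x+\zeta)$ by $f_r(x)$. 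You instead factor $f_r(x)$ out exactly via the conditional representation in the spirit of Lemmas~\ref{lem:zcond} and~\ref{lem:shifted_score_characterization}, and control the remaining conditional expectation by Cauchy--Schwarz: the exponential factor is identified as $e^{s^2/r^2}f_r(x-2s)/f_r(x)=O(1)$ (here Lemma~\ref{lem:density_close} still enters, but only as an upper bound on a density ratio, which is exactly what it gives, and the margin between the constants $16$ and $20$ is indeed sufficient), while the polynomial factor is handled by the conditional fourth-moment bound $\E[Z_r^4\mid X=x]\lesssim r^4\log^2\frac{1}{rf_r(x)}$ obtained from the tail bound of Lemma~\ref{lem:zcond}; both computations check out, including the one-sidedness issue you flag and the $|s|\lesssim r$ slack needed for $(Z_r+s)^2\lesssim Z_r^2+r^2$ and $e^{s^2/r^2}=O(1)$. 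What your route buys is avoiding the Jensen/concavity step (and the attendant monotonicity bookkeeping when transferring $f_r(x+\zeta)\log\frac{1}{rf_r(x+\zeta)}$ back to $x$), at the mild cost of an extra Cauchy--Schwarz and a fourth- rather than first-moment computation; it also parallels the technique the paper itself uses later in Lemma~\ref{lem:shifted_true_score_error}, so it fits naturally into the framework.
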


\begin{proof}
Let $w_r$ be the pdf of $\mathcal N(0, r^2)$. We have that $$w_r''(x) = \frac{-r^2 + x^2}{\sqrt{2 \pi} r^5} e^{-\frac{x^2}{2 r^2}} = \frac{-r^2 + x^2}{r^4} w_r(x) = \frac{-1 + 2 \log \left(\frac{1}{\sqrt{2 \pi} r \cdot w_r(x)}\right)}{r^2} w_r(x)$$
%\jnote{Why keep the $-1/r^2$ around?}
since $x^2 = 2 r^2 \log \left(\frac{1}{\sqrt{2 \pi} r \cdot w_r(x)}\right)$. So, since $g(z) = z \log \left(\frac{1}{\sqrt{2 \pi} r \cdot z} \right)$ is concave on $[0, 1]$, we have
\begin{align*}
&\E_{Y \sim f^*}[w_r''(x - Y)]\\
&= \frac{1}{r^2} \E_{Y \sim f^*} \left[w_r(x - Y)\left(-1 + 2 \log \left(\frac{1}{\sqrt{2 \pi} r \cdot w_r(x - Y)} \right)\right)\right]\\
&\le \frac{1}{r^2}\E_{Y \sim f^*}[w_r(x - Y)]\left(-1 + 2 \log \left(\frac{1}{\sqrt{2 \pi} r \E_{Y \sim f^*}[w_r(x - Y)]} \right)  \right) \quad \text{by Jensen's inequality}\\
&\lesssim \frac{f_r(x) \log \left(\frac{1}{f_r(x)\sqrt{2 \pi} r}\right)}{r^2}
\end{align*}
%XXX TODO justify $f_{r/\sqrt{2}}$ and $f_r$ relationship.
So, by Taylor's theorem, for some $|\zeta| < |\eps|$
\begin{align*}
w_r'(x + \eps - Y) = w_r'(x - Y) + \eps w_r''(x + \zeta - Y)
\end{align*}
Now, by the above
\begin{align*}
    \E_{Y \sim f^*}\left[w''_r(x + \zeta - Y)\right] &\lesssim \frac{f_r(x+\zeta) \log \left(\frac{1}{f_r(x+\zeta) \sqrt{2 \pi} r}\right)}{r^2}\\
    &\lesssim \frac{f_r(x) \log\left(O\left(\frac{1}{f_r(x) r}\right) \right)}{r^2}\quad \text{Using Lemma~\ref{lem:density_close}}\\ %\jnote{What happened here? Need to deal with $y\log\frac{1}{\sqrt{2\pi}y}$ directly?}}\\
    &\lesssim \frac{f_r(x) \log \frac{1}{r f_r(x)}}{r^2} \quad \text{since $r f_r(x) \le \frac{1}{\sqrt{2 \pi}}$}
\end{align*}
%Also, by definition of smoothing, for any $x$,
%$$f_r'(x) = \E_{Y \sim f^*}[w_r'(x - Y)]$$
So, by the above, 
\begin{align*}
    f_r'(x + \eps) - f_r'(x) &\lesssim  \eps\E_{Y \sim f^*}\left[ w_r''(x + \zeta - Y) \right]\\
    &\lesssim  \frac{\eps}{r^2} f_r(x) \log \left(\frac{1}{r f_r(x)} \right)
\end{align*}
Similarly, by Taylor's theorem, for some $|\zeta| < |\eps|$,
$$w_r'(x - Y) = w_r'(x + \eps - Y) - \eps w_r''(x + \zeta - Y)$$
so that
\begin{align*}
    f_r'(x) - f_r'(x + \eps) \lesssim -\frac{\eps}{r^2} f_r(x) \log \left(\frac{1}{r f_r(x)} \right)
\end{align*}
The claim follows.
\end{proof}

Finally, the main result of this section shows that $s_r(x+\eps)$ is close to $s_r(x)$ for small $\eps$.
\begin{lemma}[Close-by scores are close]
\label{lem:close_by_scores_close}
    Let $\eps, x$ be such that $\frac{20|\eps|}{r} \sqrt{\log \frac{1}{r f_r(x)}} < 1$. We have $$\left|s_r(x+\eps) - s_r(x) \right| \lesssim \frac{|\eps|}{ r^{2}} \log \left(\frac{1}{r f_r(x)}\right)$$
    This holds even for asymmetric $f^*, f_r$.
\end{lemma}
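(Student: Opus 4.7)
The plan is to bound the difference of scores by applying the quotient-rule identity
\[
s_r(x+\epsilon) - s_r(x) \;=\; \frac{f_r'(x+\epsilon)-f_r'(x)}{f_r(x+\epsilon)} \;-\; s_r(x)\cdot\frac{f_r(x+\epsilon)-f_r(x)}{f_r(x+\epsilon)},
\]
which follows from writing the two ratios over a common denominator. Each of the two summands will then be bounded separately using the lemmas that immediately precede the statement.

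For the first summand, Lemma~\ref{lem:density_derivative_add_close} gives $|f_r'(x+\epsilon)-f_r'(x)| \lesssim \frac{|\epsilon|}{r^2} f_r(x) \log\frac{1}{rf_r(x)}$, so the first summand is at most $\frac{|\epsilon|}{r^2}\log\frac{1}{rf_r(x)} \cdot \frac{f_r(x)}{f_r(x+\epsilon)}$. For the second summand, I will combine the score bound $|s_r(x)| \lesssim \frac{1}{r}\sqrt{\log\frac{1}{rf_r(x)}}$ from Lemma~\ref{lem:generic_density_large_score_small} with the Lemma~\ref{lem:density_close} estimate $|f_r(x+\epsilon)-f_r(x)|/f_r(x) \lesssim \frac{|\epsilon|}{r}\sqrt{\log\frac{1}{rf_r(x)}}$, yielding a product bound of $\frac{|\epsilon|}{r^2}\log\frac{1}{rf_r(x)} \cdot \frac{f_r(x)}{f_r(x+\epsilon)}$. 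Summing, both contributions are of the desired form $\frac{|\epsilon|}{r^2}\log\frac{1}{rf_r(x)}$, up to a constant factor of $f_r(x)/f_r(x+\epsilon)$.

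The main obstacle is therefore to show that this ratio is bounded by a constant, i.e.\ that $f_r(x+\epsilon) \gtrsim f_r(x)$, since Lemma~\ref{lem:density_close} only provides an upper bound on $f_r(x+\epsilon)/f_r(x)$. Two natural routes are available. The first is to re-apply Lemma~\ref{lem:density_close} with the roles swapped (base point $x+\epsilon$, shift $-\epsilon$), after verifying its hypothesis $\frac{8|\epsilon|}{r}\sqrt{\log\frac{1}{rf_r(x+\epsilon)}}\leq 1$; this hypothesis follows from the already-established upper bound $f_r(x+\epsilon) \leq 2f_r(x)$ combined with the input assumption $\frac{20|\epsilon|}{r}\sqrt{\log\frac{1}{rf_r(x)}} < 1$ (which gives slack by the factor $20/8$). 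The second route is to note that Lemma~\ref{lem:shifted_score_characterization} gives the identity $\frac{f_r(x+\epsilon)}{f_r(x)} = \E_{Z_r\mid x}[e^{(2\epsilon Z_r - \epsilon^2)/(2r^2)}]$, and then apply Jensen's inequality to obtain the lower bound $e^{\epsilon s_r(x) - \epsilon^2/(2r^2)}$, which is $\geq e^{-O(1)}$ under the stated hypothesis on $\epsilon$ together with the score tail bound.

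Once $f_r(x+\epsilon) \asymp f_r(x)$ is established, the two pieces add to give the stated bound $|s_r(x+\epsilon) - s_r(x)| \lesssim \frac{|\epsilon|}{r^2}\log\frac{1}{rf_r(x)}$, and the argument works equally well for asymmetric $f^*,f_r$ since all the underlying lemmas do. I do not expect any further subtleties beyond this density-lower-bound step, as the decomposition mirrors the style of reasoning already used throughout Section~\ref{sec:clipped}.
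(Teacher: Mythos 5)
Your decomposition is essentially the paper's proof: the paper also combines Lemma~\ref{lem:density_derivative_add_close} for the numerator, Lemma~\ref{lem:density_close} for the denominator, and then absorbs the $s_r(x)\cdot\frac{|\eps|}{r}\sqrt{\log\frac{1}{rf_r(x)}}$ cross term via Lemma~\ref{lem:generic_density_large_score_small}, exactly as your two summands do; writing it as an exact quotient identity rather than a ratio expansion is a cosmetic difference. You are in fact more careful than the paper on the one delicate point: the paper silently uses the lower bound $f_r(x+\eps)\ge f_r(x)\bigl(1-\tfrac{10|\eps|}{r}\sqrt{\log\tfrac{1}{rf_r(x)}}\bigr)$ while Lemma~\ref{lem:density_close} as stated only gives the upper direction. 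Two caveats on how you close that hole. First, your route~1 is circular as written: knowing $f_r(x+\eps)\le 2f_r(x)$ lower-bounds $\log\frac{1}{rf_r(x+\eps)}$, whereas verifying the hypothesis of Lemma~\ref{lem:density_close} at base point $x+\eps$ requires an \emph{upper} bound on that logarithm, i.e.\ a lower bound on $f_r(x+\eps)$ --- which is the very thing you are trying to prove. Second, a constant-factor bound $f_r(x+\eps)\gtrsim f_r(x)$ is enough for the denominators but not for the second summand's numerator: there you need the two-sided estimate $|f_r(x+\eps)-f_r(x)|\lesssim \frac{|\eps|}{r}f_r(x)\sqrt{\log\frac{1}{rf_r(x)}}$, whose lower direction is not supplied by Lemma~\ref{lem:density_close} and is not implied by a constant ratio bound when $\frac{|\eps|}{r}\sqrt{\log\frac{1}{rf_r(x)}}$ is small. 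Fortunately your route~2 fixes both at once: Jensen applied to the identity of Lemma~\ref{lem:shifted_score_characterization} gives $\frac{f_r(x+\eps)}{f_r(x)}\ge e^{\eps s_r(x)-\eps^2/(2r^2)}\ge 1-O\bigl(\tfrac{|\eps|}{r}\sqrt{\log\tfrac{1}{rf_r(x)}}\bigr)$, using the pointwise bound of Lemma~\ref{lem:generic_density_large_score_small} (the deterministic score-versus-density bound, not the probabilistic subgaussian tail) together with the hypothesis on $\eps$ and the fact that $rf_r(x)\le\frac{1}{\sqrt{2\pi}}$ makes the logarithm bounded below by a constant. With that quantitative lower bound in hand, both summands are controlled and the argument goes through, for asymmetric $f^*$ as well.
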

\begin{proof}
  By Lemma~\ref{lem:density_close}~and~\ref{lem:density_derivative_add_close}, we have $$ \frac{f'_r(x+\eps)}{f_r(x+\eps)} \le \frac{f_r'(x) + O\left(\frac{|\eps|}{r^2} f_r(x) \log\left( \frac{1}{r f_r(x)}\right)\right)}{f_r(x)\left(1 - \frac{10|\eps|}{r} \sqrt{ \log \frac{1}{r f_r(x)}}\right) }$$
  Since $\frac{10 |\eps|}{r} \sqrt{\log \frac{1}{r \alpha}} < 1/2$, we have 
  \begin{align*}
  \frac{f_r'(x+\eps)}{f_r(x+\eps)} \le \frac{f_r'(x)}{f_r(x)} + O\left(\frac{|\eps|}{r^2}\log \left(\frac{1}{r f_r(x)} \right)\right) + O\left(\frac{f_r'(x)}{f_r(x)} \frac{|\eps|}{r} \sqrt{\log \frac{1}{rf_r(x)}}\right)
  \end{align*}
  So, 
  \begin{align*}
  s_r(x+\eps) - s_r(x) &\lesssim + \frac{|\eps|}{r^{2}}\log \left(\frac{1}{r f_r(x)}\right) + s_r(x)  \frac{|\eps|}{r}\sqrt{ \log \frac{1}{rf_r(x)}}\\
  &\lesssim \frac{|\eps|}{r^2} \log \left(\frac{1}{r f_r(x)}\right) \quad \text{by Lemma~\ref{lem:generic_density_large_score_small}}
  \end{align*}
 
    We can get the lower bound in the same way.
\end{proof}
\subsection{Bounding the clipped KDE error}
In this section, we show that the clipped KDE score function $\wh s_r^\clip$ approximates the true score function $\wh s_r^\sym$ in a specific sense. We begin by showing that sets that have small density under $f_r$ have small expected score.
\begin{lemma}
    \label{lem:score_bounded_in_small_set}
  Let $S$ be any set with $\Pr_{f_r}[S] \leq \beta$. Let $|\eps| \le r/2$ and $|\eps| \le \frac{r}{\sqrt{\log \frac{1}{r^2 \I_r}}}$. Then, for the score function $s_r$ of $f_r$, we have
  \[
  \E_{x \sim f_r}[s_r(x+\eps)^2 1_{x \in S}] \lesssim \frac{\beta}{r^2}\log \frac{1}{\beta}
  \]
    This holds even for asymmetric $f^*, f_r$.
\end{lemma}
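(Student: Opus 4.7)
The plan is to transfer the subgaussianity of the score $s_r$ under $f_r$ to the shifted distribution $f_r(\cdot-\eps)$ via a bounded $\chi^2$-divergence, then apply a standard layer-cake argument. This avoids the pointwise/regional splitting used in Lemma~\ref{lem:clipped_kde_error} and works globally.

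\textit{Step 1: $s_r$ is $\frac{1}{r}$-subgaussian under $f_r$.} Decomposing $X \sim f_r$ as $X = Y + Z$ with $Y \sim f^*$ and $Z \sim \mathcal{N}(0, r^2)$ independent, Tweedie's formula (obtained by differentiating the convolution representation of $f_r$) gives $s_r(X) = -\E[Z\mid X]/r^2$. Jensen's inequality applied to the MGF, followed by the Gaussian MGF of $Z$, yields $\E_{f_r}[\exp(\lambda s_r(X))] \le \exp(\lambda^2/(2r^2))$.

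\textit{Step 2: $\chi^2$-divergence is bounded.} Since $f_r = f^* * \mathcal{N}(0, r^2)$ and $f_r(\cdot-\eps) = f^* * \mathcal{N}(\eps, r^2)$, and since convolution with the fixed density $f^*$ is a Markov kernel, the data-processing inequality for $\chi^2$ gives
\[
\chi^2\bigl(f_r(\cdot-\eps)\,\bigm\|\,f_r\bigr) \le \chi^2\bigl(\mathcal{N}(\eps, r^2)\,\bigm\|\,\mathcal{N}(0, r^2)\bigr) = e^{\eps^2/r^2} - 1 \le e^{1/4}-1 < \tfrac{1}{2},
\]
using $|\eps| \le r/2$.

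\textit{Step 3: Transfer and layer-cake.} Change variables $y = x + \eps$ to rewrite the target expectation as $\E_{y \sim f_r(\cdot-\eps)}[s_r(y)^2\,\mathbf{1}_{y-\eps \in S}]$, where the indicator event has probability exactly $\beta$ under $f_r(\cdot-\eps)$. Writing the MGF under $f_r(\cdot-\eps)$ as $\E_{X \sim f_r}[R(X) e^{\lambda s_r(X)}]$ with Radon–Nikodym factor $R(X) = f_r(X-\eps)/f_r(X)$, and applying Cauchy--Schwarz together with Steps~1--2 ($\E_{f_r}[R^2] = 1 + \chi^2 \le 3/2$) yields $\E_{y\sim f_r(\cdot-\eps)}[\exp(\lambda s_r(y))] \le \sqrt{3/2}\exp(\lambda^2/r^2)$. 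Chernoff then gives $\Pr_{y\sim f_r(\cdot-\eps)}[|s_r(y)| > T] \lesssim e^{-T^2 r^2/4}$, and splitting the layer-cake integral $\int_0^\infty \Pr[s_r(y)^2 > t,\ y{-}\eps\in S]\,dt$ at threshold $t^* = (4/r^2)\log(1/\beta)$ produces the claimed $(\beta/r^2)\log(1/\beta)$ bound.

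The main conceptual step is identifying the convolution-DPI bound on $\chi^2$; the rest is routine. Notably, this argument uses only $|\eps| \le r/2$ and not the second hypothesis involving $\log(1/(r^2\I_r))$, which suggests the authors may instead prove the lemma via a pointwise bound using Lemma~\ref{lem:close_by_scores_close} (where $(20|\eps|/r)\sqrt{\log(1/(rf_r(x)))}<1$ is required), combined with a partition of $S$ into a ``bulk'' region where that close-by-scores bound applies and a ``small-density'' region of exponentially tiny measure that can be absorbed into the final estimate.
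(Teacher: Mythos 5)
Your proof is correct, and the final truncation step coincides with the paper's: the paper also writes \(\E_{x \sim f_r}[s_r^2(x+\eps)\1_{x\in S}]\) as a layer-cake integral, bounds the integrand by \(\min(\beta, e^{-Ctr^2})\), and splits at \(t^\ast \approx \frac{1}{r^2}\log\frac1\beta\). Where you genuinely diverge is in how the tail bound for the \emph{shifted} score is obtained. The paper simply invokes its subgaussianity result (Lemma~\ref{lem:score-subgaussian}, proved via the first-moment comparison in Lemma~\ref{lem:FirstMomentMixturePortion}), which is stated for \(s_r(x)\) with \(x \sim f_r\), and applies it to \(s_r(x+\eps)\) without spelling out the transfer to the shifted measure; the role of the extra hypothesis \(|\eps| \le r/\sqrt{\log\frac{1}{r^2\I_r}}\) is left implicit there as well. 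You make this transfer explicit and quantitative: Tweedie plus Jensen gives the clean MGF bound \(\E_{f_r}[e^{\lambda s_r}] \le e^{\lambda^2/(2r^2)}\), and the data-processing bound \(\chi^2(f_r(\cdot-\eps)\,\|\,f_r) \le e^{\eps^2/r^2}-1\) together with Cauchy--Schwarz yields a subgaussian tail under \(f_r(\cdot-\eps)\) using only \(|\eps|\le r/2\) --- so your argument is more self-contained and shows the second hypothesis is not needed for this lemma. Your closing guess about the paper's route (a bulk/small-density partition via Lemma~\ref{lem:close_by_scores_close}) is not what the paper does, but that is immaterial to correctness. Two cosmetic points: the probability of the shifted event is ``at most \(\beta\)'', not exactly \(\beta\); and, exactly as in the paper's own proof, the stated bound \(\lesssim \frac{\beta}{r^2}\log\frac1\beta\) really carries an implicit \(+\frac{\beta}{r^2}\) term, harmless in the regime \(\beta\) bounded away from \(1\) where the lemma is applied.
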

\begin{proof}
  \begin{align*}
      \E_{x \sim f_r} [s_r^2(x+\eps) \1_{x \in S}]
      &= \int_0^\infty \Pr_{x \sim f_r}[s_r^2(x+\eps) 
      \1_{x \in S} \ge t] dt\\
      &\leq \int_0^\infty \min(\Pr_{x \sim f_r}[x \in S], \Pr_{x \sim f_r}[s_r^2(x+\eps) \ge t]) dt\\
      &\leq \int_0^\infty \min(\beta, \Pr_{x \sim f_r}[|s_r(x+\eps)| \ge \sqrt{t}]) dt\\
    \end{align*}
    %\textcolor{red}{Wrong?? Lemma~\ref{cor:shifted_score_subgaussian} shows that $s_r(x+\eps)$ is sub-Gaussian with parameter $O(1/r^2)$, explicitly bounding $\Pr_{x \sim f_r}[|s_r(x+\eps)| \ge \sqrt{t}]$.
    So, by Lemma~\ref{lem:score-subgaussian}, for some explicit constant $C > 0$, we have:
    % So, using Lemma~\ref{cor:shifted_score_subgaussian} to bound $\Pr_{x \sim f_r}[|s_r(x+\eps)| \ge \sqrt{t}]$, we have for constant $C > 0$
    \begin{align*}
      \E_{x \sim f_r} [s_r^2(x+\eps) \1_{x \in S}]&\lesssim \int_0^\infty \min(\beta, e^{-C t r^2} )dt\\
      &\lesssim \beta B + \int_B^\infty e^{-C t r^2} dt\\
      %&\lesssim \beta B + \int_{\sqrt{B}}^\infty 2 ue^{-C  r u}  du \quad \text{Substituting $u = \sqrt{t}$}\\
      &\lesssim \beta B + \frac{ e^{-CBr^2}}{Cr^2}\\
  \end{align*}
  Thus, setting $B = \frac{\log \frac{1}{\beta}}{Cr^2}$ gives 
  \begin{align*}
      \E[s_r^2(x+\eps) 1_{x \in S}]\lesssim \frac{\beta}{r^2} \log \frac{1}{\beta}
  \end{align*}
 \end{proof}
 The next lemma shows that for every small width region, the KDE score $\wh s$ approximates the true score well, as long as the density of that region is large.
\begin{lemma}[Generic Error estimate within bin]
\label{lem:generic_error_estimate_within_bin}
Let $f^*$ be an arbitrary distribution and let $f_r$ be the $r$-smoothed version of $f^*$. Let $\wh f_r$ be the kernel density estimate of $f_r$ from $N$ samples $Y_1, \dots, Y_N \sim f^*$. Let $x, \eps, N$ be such that $N \ge \frac{6 \log \frac{4}{\delta}}{ r f_r(x)}$, and $\frac{20|\eps|}{r} \sqrt{\log \frac{1}{r f_r(x) }} < 1$. Then, for the KDE score $\wh s_r$ defined in \eqref{eq:kde_def}, with probability $1 - \delta$, we have that for all $|\zeta| \le |\eps|$ (simultaneously),
$$|\wh s_r(x + \zeta) - s_r(x + \zeta)| \lesssim\sqrt{ \frac{ \log \frac{1}{\delta} \log \frac{1}{r f_r(x)}}{ N r^3 f_r(x)}} + \frac{|\eps| }{r^2} \log \left(\frac{1}{r f_r(x)} \right)$$
%$$\E_{x \sim f}[(\wh u(x) - u(x))^2 1_{|x - x'| \le \eps}] \lesssim \frac{\log \frac{1}{\delta}}{\alpha r^3 N}  + \frac{\eps^2}{\alpha^2 r^6} $$
\end{lemma}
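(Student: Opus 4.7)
The plan is to reduce the uniform-in-$\zeta$ bound to a pointwise score estimate at $x$ together with Lipschitz-type control on both $s_r$ and $\hat s_r$ over $[x-|\eps|,\,x+|\eps|]$. Specifically, for every $|\zeta|\le|\eps|$ I would apply the triangle inequality
\[
|\hat s_r(x+\zeta)-s_r(x+\zeta)|\le |\hat s_r(x+\zeta)-\hat s_r(x)|+|\hat s_r(x)-s_r(x)|+|s_r(x+\zeta)-s_r(x)|
\]
and bound each of the three terms separately. The middle term, the pointwise score error at the single point $x$, is exactly what Lemma~\ref{lem:pointwise_score} controls: the hypothesis $N\ge \frac{6\log(4/\delta)}{rf_r(x)}$ is precisely its sample-complexity requirement, so with probability at least $1-\delta/2$ this term is $O\bigl(\sqrt{\log(1/\delta)\log(1/(rf_r(x)))/(Nr^3 f_r(x))}\bigr)$, matching the first term of the claim. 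The third term is handled by Lemma~\ref{lem:close_by_scores_close} applied to $f_r$ with perturbation $\zeta$: the hypothesis $\frac{20|\zeta|}{r}\sqrt{\log(1/(rf_r(x)))}<1$ holds uniformly in $|\zeta|\le|\eps|$ by assumption, so the third term is $O\bigl(\frac{|\eps|}{r^2}\log(1/(rf_r(x)))\bigr)$, matching the second term of the claim.

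The main obstacle is the first term, since the hypotheses give no \emph{direct} control on how $\hat s_r$ varies. The key observation I would use is that $\hat f_r$ is itself an $r$-smoothed distribution---specifically, the empirical distribution over the $N$ samples convolved with $\mathcal N(0,r^2)$---so Lemma~\ref{lem:close_by_scores_close}, which was proved for an arbitrary $r$-smoothed distribution, applies to $\hat s_r$ with $\hat f_r(x)$ in place of $f_r(x)$. To invoke it I need (i) $\hat f_r(x)$ comparable to $f_r(x)$, so the smallness condition on $\eps$ transfers up to a constant factor, and (ii) $\log(1/(r\hat f_r(x)))\lesssim \log(1/(rf_r(x)))$, so the resulting Lipschitz bound matches the claimed shape. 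Both follow from Lemma~\ref{lem:pointwise_density_estimate} at $x$: the sample-size hypothesis is exactly what that lemma requires to give $\hat f_r(x)\ge f_r(x)/2$ (say) with probability at least $1-\delta/2$, losing only an additive $\log 2$ inside the logarithm---harmless because $rf_r(x)\le 1/\sqrt{2\pi}$ bounds $\log(1/(rf_r(x)))$ below by an absolute constant, so the smallness constant is preserved up to a factor that can be absorbed.

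Finally, union-bounding the two high-probability events (pointwise density concentration and pointwise score approximation, both at the single point $x$) and combining the three bounds via the triangle inequality yields the claim with probability at least $1-\delta$: the middle term contributes the first summand in the conclusion, and the two Lipschitz-type terms jointly contribute the second.
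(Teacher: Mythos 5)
Your proposal is correct and follows essentially the same route as the paper: the same three-term triangle inequality, with the middle term from Lemma~\ref{lem:pointwise_score}, the true-score increment from Lemma~\ref{lem:close_by_scores_close}, and the KDE-score increment handled by observing that $\wh f_r$ is itself an $r$-smoothed (empirical) distribution so that Lemma~\ref{lem:close_by_scores_close} applies with $\wh f_r(x)$, which Lemma~\ref{lem:pointwise_density_estimate} then relates back to $f_r(x)$ since $\log\frac{1}{r f_r(x)} = \Omega(1)$. This matches the paper's argument, including the final union bound.
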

\begin{proof}
First, by Lemma \ref{lem:pointwise_score}, we have that with probability $1 - \delta$, $$|\wh s_r(x) - s_r(x)| \lesssim \sqrt{\frac{\log \frac{1}{\delta} \log \frac{1}{r f_r(x)}}{N r^3 f_r(x)}}$$
Now, by Lemma~\ref{lem:close_by_scores_close}, since $|\zeta| \le |\eps|$,
$$|s_r(x+\zeta) - s_r(x)| \lesssim \frac{|\eps|}{r^2} \log\left( \frac{1}{r f_r(x)}\right)$$
and
\begin{align*}
|\wh s_r(x+\zeta) - \wh s_r(x)| &\lesssim \frac{|\eps|}{r^2} \log\left(\frac{1}{r \wh f_r(x)} \right)\\
&= \frac{|\eps|}{r^2} \left(\log\left(\frac{1}{r f_r(x)}\right) + \log\left(\frac{f_r(x)}{\wh f_r(x)}\right)\right)\\
&\le \frac{|\eps|}{r^2} \left(\log\left(\frac{1}{r f_r(x)}\right) + \log\left(1+\frac{1}{\sqrt{2}}\right)\right) \quad \text{by Lemma~\ref{lem:pointwise_density_estimate} and since $N \ge \frac{6 \log \frac{4}{\delta}}{r f_r(x)}$}\\
&\lesssim \frac{|\eps|}{r^2} \log \left(\frac{1}{r f_r(x)} \right) \quad \text{since $r f_r(x) \le \frac{1}{\sqrt{2\pi}}$, so $\log\frac{1}{r f_r(x)} = \Omega(1)$}
\end{align*}
%\jnote{Why $\frac{4}{\delta}$?}
Putting everything together, with probability $1 - \delta$, for all $|\zeta| \le |\eps|$,
\begin{align*}
    |\wh s_r(x+\zeta) - s_r(x+\zeta)| &\le |\wh s_r(x+\zeta) - \wh s_r(x)| + |\wh s_r(x) - s_r(x)| + |s_r(x) - s_r(x+\zeta)|\\
    &\lesssim \sqrt{\frac{\log \frac{1}{\delta} \log \frac{1}{r f_r(x)}}{N r^3 f_r(x)}} + \frac{|\eps|}{r^2} \log \left(\frac{1}{r f_r(x)} \right)
\end{align*}
\end{proof}

%\textbf{Tightness of above for Gaussian:} For a Gaussian, let's look at estimate at the center. With probability $p = \frac{1}{\sqrt{2\pi}}e^{-x^2/2}$, we will see a sample with score larger than $x/r^2$. In expectation, we will see $pn$ such samples. With probability $1 - \delta$, we will see $pn \log (1/\delta)$ such samples.
%
%\begin{corollary}[XXX THIS IS NOT TRUE. When density is large, $u(x)$ is small]
%\label{cor:density_large_score_small}
%For $\alpha = \frac{1}{t^2 (t \sigma)}$ and $t = \left( \frac{\log \frac{1}{\I r^2}}{\sqrt{\I}r} \right)^{1+\gamma}$, when $f(x) \ge \alpha$, we have $$|u(x)| \le \frac{2}{r} \log t$$
%\end{corollary}
The next lemma shows that for all points with density larger than $\alpha$ within a $t\sigma$ radius around the true mean, the KDE score approximates the true score well.
\begin{lemma}[Generic Error estimate over large density region]
\label{lem:generic_error_estimate_large_density}
Let $f^*$ be an arbitrary distribution with mean $\mu$ and variance $\sigma^2$, and let $f_r$ be the $r$-smoothed version of $f^*$, with variance $\sigma_r^2 = \sigma^2 + r^2$. Let $\wh s_r$ be the score of the kernel density estimate of $f_r$ from $N$ samples $Y_1, \dots, Y_N \sim f^*$, as defined in \eqref{eq:kde_def}.
Let $\alpha > 0$ and let $N \ge \frac{6\log \left(\frac{4}{\delta} \left(\frac{2\sqrt{\alpha N}t \sigma_r}{\sqrt{r} } +1 \right)\right) + 400 \log \frac{1}{\alpha r}}{\alpha r}$. Then with probability $1 - \delta$, we have that,
\begin{align*}
    \E_{x \sim f_r}\left[(\wh s_r(x) - s_r(x))^2 \1_{\{|x - \mu| \le t \sigma_r \text{ and } f_r(x) \ge \alpha\}}\right] \lesssim \frac{\log\left(\frac{1}{\delta} \left(\frac{2t \sigma_r\sqrt{\alpha N} }{\sqrt{r} } + 1\right)\right)}{\alpha N r^3} \log^2 \frac{1}{\alpha r}
\end{align*}
%\begin{align*}

%    \E_{x \sim f}[(\wh u(x) - u(x))^2 1_{|x - \mu| \le t \sigma \text{ and } f(x) \ge \alpha}] \lesssim t \sigma \sqrt{\frac{\log \frac{1}{\delta}}{\alpha^3 r^9 N}}
%\end{align*}
\end{lemma}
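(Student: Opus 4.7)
The plan is to convert the per-bin guarantee of Lemma~\ref{lem:generic_error_estimate_within_bin} into a uniform pointwise bound on the set $R := \{x : |x-\mu|\le t\sigma_r,\ f_r(x)\ge \alpha\}$ by a net-plus-union-bound argument, and then integrate trivially against $f_r \1_{x\in R}$.

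First I would choose a bin width $\eps$ satisfying the hypothesis of Lemma~\ref{lem:generic_error_estimate_within_bin} at density $\alpha/2$, i.e.\ $\eps \lesssim r/\sqrt{\log(1/(r\alpha))}$, and additionally small enough that $\eps^2 \lesssim \frac{r\log(B/\delta)}{N\alpha}$, where $B$ is the number of bins. Place grid centers $x_1,\ldots,x_B$ spaced by $\eps$ along $[\mu - t\sigma_r,\,\mu + t\sigma_r]$, so $B \le 2t\sigma_r/\eps + 1$. Up to logs this gives $B \lesssim \frac{t\sigma_r\sqrt{\alpha N}}{\sqrt{r}}$, which is precisely what produces the $\frac{2t\sigma_r\sqrt{\alpha N}}{\sqrt{r}}+1$ factor appearing inside the final logarithm of the target bound.

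Next I would apply Lemma~\ref{lem:generic_error_estimate_within_bin} at each center $x_j$ with failure probability $\delta/B$, union bounding over only those (at most $B$) centers for which $f_r(x_j) \ge \alpha/2$. The sample-complexity hypothesis of that lemma at density $\alpha/2$ and failure probability $\delta/B$ is exactly what the hypothesis on $N$ in the present lemma guarantees after substituting $B$. With probability $1-\delta$, then, for every such $x_j$ and every $|\zeta|\le \eps$,
\[
(\wh s_r(x_j+\zeta) - s_r(x_j+\zeta))^2 \;\lesssim\; \frac{\log(B/\delta)\,\log(1/(r\alpha))}{N r^3\alpha} \;+\; \frac{\eps^2\,\log^2(1/(r\alpha))}{r^4}.
\]
To cover $R$, I would observe that every $x\in R$ lies within $\eps$ of some grid point $x_j$, and Lemma~\ref{lem:density_close} (applicable because $\eps$ satisfies its hypothesis) forces $f_r(x_j) \ge \alpha/2$, so the displayed bound applies pointwise on all of $R$. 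Integrating against $f_r\,\1_{x\in R}$ and using $\Pr_{f_r}[R]\le 1$ preserves the bound; our choice of $\eps$ then makes the second term at most a $\log(1/(r\alpha))$ factor larger than the first, collapsing the sum into $\frac{\log(B/\delta)\,\log^2(1/(r\alpha))}{N r^3\alpha}$. Substituting the value of $B$ into $\log(B/\delta)$ yields the stated expression.

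The main obstacle is the self-consistency of the three coupled quantities $\eps$, $B$, and the per-bin failure probability $\delta/B$: the required $\eps$ depends on $\log(B/\delta)$ through the error-balance constraint, $B$ depends inversely on $\eps$ through the grid count, and the lemma's sample-complexity hypothesis at failure probability $\delta/B$ has to be entailed by the assumption on $N$ in the present statement. Verifying that the assumed $N \gtrsim \frac{\log((4/\delta)(2t\sigma_r\sqrt{\alpha N}/\sqrt{r}+1)) + \log(1/(\alpha r))}{\alpha r}$ closes this implicit system is the place where the calculation has to be done carefully; everything else is routine bookkeeping.
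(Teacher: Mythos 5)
Your overall strategy (cover the high-density typical region by $O(t\sigma_r/\eps)$ bins of width $\eps\approx\sqrt{r/(\alpha N)}$, apply Lemma~\ref{lem:generic_error_estimate_within_bin} in each bin with failure probability $\delta/B$, union bound, and integrate the resulting uniform pointwise bound) is exactly the paper's strategy. The gap is in your covering step. You anchor the per-bin lemma at \emph{fixed grid points} $x_j$ and claim that Lemma~\ref{lem:density_close} ``forces'' $f_r(x_j)\ge\alpha/2$ whenever some $x\in R$ lies within $\eps$ of $x_j$. That lemma does not give this: it only upper bounds $f_r(x+\eps)/f_r(x)$ in terms of the density at the \emph{base} point (it prevents the density from increasing too fast, not from dropping), and applying it with base point $x_j$ to get a lower bound on $f_r(x_j)$ requires the hypothesis $\frac{8|\eps|}{r}\sqrt{\log\frac{1}{r f_r(x_j)}}\le 1$, which involves the very quantity you are trying to lower bound and cannot be verified a priori (if $f_r(x_j)$ were tiny the hypothesis fails and the lemma says nothing). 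A density lower bound at the grid point is in fact true and can be recovered, e.g.\ by a continuity/bootstrap argument using the score bound of Lemma~\ref{lem:generic_density_large_score_small}, but that argument is missing, and as written the uniform bound on all of $R$ does not follow.

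The paper avoids this issue entirely by a different choice of anchors: within each length-$\eps$ interval it takes the \emph{smallest point $y$ with $f_r(y)\ge\alpha$} (when one exists). Every $x\in R$ then lies within $\eps$ of such an anchor, and every anchor has density at least $\alpha$ \emph{by construction}, so Lemma~\ref{lem:generic_error_estimate_within_bin} applies at density exactly $\alpha$ --- which is also what the numeric hypothesis on $N$ is calibrated to: the $6\log(\cdot)$ term matches the per-bin sample requirement at density $\alpha$ with failure probability $\delta/|S|$, and the $400\log\frac{1}{\alpha r}$ term handles the condition $\frac{20\eps}{r}\sqrt{\log\frac{1}{rf_r(y)}}\le 1$. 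This also exposes a secondary problem with your route: running the per-bin lemma at density $\alpha/2$ needs $N\alpha r\gtrsim 12\log\frac{4B}{\delta}$ rather than $6\log\frac{4B}{\delta}$, and the stated hypothesis does not supply that extra factor of $2$ in regimes where $\log\frac1\delta$ dominates $\log\frac{1}{\alpha r}$. Switching to the paper's anchor choice (or supplying the bootstrap density lower bound with a constant much closer to $1$ than $1/2$) repairs both points.
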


\begin{proof}
Consider contiguous intervals of length $\eps$ starting from $\mu - t\sigma_r$ so that the last interval covers $\mu + t\sigma_r$, and let $S$ be the set of the smallest $y$ such that $f(y)\ge \alpha$ in each of these intervals, if one exists. Note that $|S| \le \frac{2 t \sigma_r}{\eps} + 1$. Then, we have that $$\left\{x : |x - \mu| \le t \sigma_r \text{ and } f_r(x) \ge \alpha \right\} \subseteq \left\{[y - \eps, y + \eps] | y \in S \right\}$$
   Now, for $\eps = \sqrt{\frac{r}{\alpha N}}$ and $y \in S$, since $N \ge \frac{6\log \left(\frac{4}{\delta} \left(\frac{2\sqrt{\alpha N}t \sigma_r}{\sqrt{r} } +1 \right)\right)}{\alpha r} \ge \frac{6 \log \frac{4 |S|}{\delta}}{ r f_r(y)}$ and $\frac{20 |\eps|}{r} \sqrt{ \log \frac{1}{r f_r(y)}} \le \frac{20}{\sqrt{ \alpha r N}}\sqrt{\log \frac{1}{\alpha r}} \le 1$, by Lemma~\ref{lem:generic_error_estimate_within_bin}, we have that with probability $1 - \frac{\delta}{|S|}$, for all $|\zeta| \le \eps$ (simultaneously),
   \begin{align*}
        |\wh s_r(y + \zeta) - s_r(y + \zeta)| &\lesssim \sqrt{\frac{\log \frac{|S|}{\delta} \log \frac{1}{r f_r(y)}}{N r^3 f_r(y)}} + \sqrt{\frac{1}{\alpha N r^3}} \log\frac{1}{r f_r(y)}\\
        &\lesssim \sqrt{\frac{\log\left(\frac{1}{\delta} \left(\frac{2t \sigma_r\sqrt{\alpha N} }{\sqrt{r} } + 1\right)\right)}{\alpha N r^3}} \log \frac{1}{\alpha r}
    \end{align*}
   So by a union bound, with probability $1 - \delta$, for all $x$ such that $|x - \mu| \le t \sigma_r$ and $f(x) \ge \alpha$ simultaneously,
   $$|\wh s_r(x) - s_r(x)| \lesssim \sqrt{\frac{\log\left(\frac{1}{\delta} \left(\frac{2t \sigma_r\sqrt{\alpha N} }{\sqrt{r} } + 1\right)\right)}{\alpha N r^3}} \log \frac{1}{\alpha r}$$
   So, $$\E_{x \sim f_r}[(\wh s_r(x) - s_r(x))^2 \1_{|x - \mu| \le t \sigma_r \text{ and } f_r(x) \ge \alpha}] \lesssim \frac{\log\left(\frac{1}{\delta} \left(\frac{2t \sigma_r\sqrt{\alpha N} }{\sqrt{r} } + 1\right)\right)}{\alpha N r^3} \log^2 \frac{1}{\alpha r}$$
   %Choosing $\eps = r/\sqrt{N}$ gives the claim.
\end{proof}
The next lemma instantiates the previous one with a particular value of $t$ and $\alpha$ based on our desired failure probability and the number of samples.
%\begin{lemma}
%   \label{lem:1/alphar_bound} 
%   Let $f^*$ be an arbitrary distribution with mean $\mu$ and variance $\sigma^2$, and let $f_r$ be the $r$-smoothed version of $f^*$, with variance $\sigma_r^2 = \sigma^2 + r^2$ and Fisher information $\I_r$. Let $\gamma \ge C$ for large enough constant $C > 1$ be a parameter. Let $t = \sqrt{\frac{\gamma \log \frac{\gamma}{\I_r r^2}}{\I_r r^2}}$, $\alpha = \frac{1}{t^3\sigma_r}$. Then for any $r \le \sigma$ and $N \ge \left(\frac{\gamma^{5/12} \sigma \log\frac{1}{\delta}}{r} \right)^{6 + \beta}$ for $\beta > 0$. Then,
%   \[
%        \frac{1}{\alpha r} \le \frac{N^{4/6}}{\gamma^{1/10} \log^2(N) \log^4 \frac{1}{\delta}}
%   \]
%\end{lemma}
%\begin{proof}
%     First, note that since $r\le \sigma$,
%    \begin{align*}
%        \sigma_r^2 = \sigma^2 + r^2 \le 2 \sigma^2
%    \end{align*}
%    Also, our setting of $N$ implies that WLOG
%    \[
%        N \ge \left( \frac{\gamma^{5/12} \sigma \log \frac{1}{\delta} \log N}{r}\right)^{6+\beta/2}
%    \]
%    %So, for our setting of $N$,
%\end{proof}
\begin{lemma}[Error estimate over large density region (instantiated)]
\label{lem:error_estimate_large_density_instantiated}
   Let $f^*$ be an arbitrary distribution with mean $\mu$ and variance $\sigma^2$, and let $f_r$ be the $r$-smoothed version of $f^*$, with variance $\sigma_r^2 = \sigma^2 + r^2$ and Fisher information $\I_r$. Let $\wh s_r$ be the score of the kernel density estimate of $f_r$ from $N$ samples $Y_1, \dots, Y_N \sim f^*$, as defined in \eqref{eq:clipped_kde_def}. Let $\gamma \ge C$ for large enough constant $C > 1$ be a parameter. Let $t = \sqrt{\frac{\gamma \log \frac{N}{\I_r r^2\log \frac{1}{\delta}}}{\I_r r^2}}$, $\alpha = \frac{1}{t^3\sigma_r}$. Then for any $r \le \sigma$ and $\frac{N}{\log \frac{1}{\delta} } \ge \left(\gamma^{5/12}\frac{\sigma }{r} \right)^{6 + \beta}$ for any constant $\beta > 0$, with probability $1 - \delta$, we have that,
    \begin{align*}
    \E_{x \sim f_r}\left[(\wh s_r(x) - s_r(x))^2 \1_{\{|x - \mu| \le t \sigma_r \text{ and } f_r(x) \ge \alpha\}}\right] \lesssim \frac{\I_r}{\gamma}
\end{align*}
\end{lemma}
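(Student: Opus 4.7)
The plan is to apply Lemma~\ref{lem:generic_error_estimate_large_density} directly with the choices $t = \sqrt{\gamma \log(N/(\I_r r^2 \log\frac{1}{\delta}))/(\I_r r^2)}$ and $\alpha = 1/(t^3 \sigma_r)$, and then simplify the resulting bound using the sample complexity assumption $N/\log\frac{1}{\delta} \ge (\gamma^{5/12}\sigma/r)^{6+\beta}$. So the proof has two main pieces: (i) checking the hypothesis on $N$ of the generic lemma, and (ii) algebraically reducing the conclusion to $\I_r/\gamma$.

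For step (i), I need to verify that $N \gtrsim \frac{\log\bigl(\frac{1}{\delta}(1 + t\sigma_r\sqrt{\alpha N/r})\bigr) + \log\frac{1}{\alpha r}}{\alpha r}$. Here $\alpha r = r/(t^3 \sigma_r)$, so $1/(\alpha r) = t^3 \sigma_r/r$. Using $\I_r \lesssim 1/r^2$ (which holds for any $r$-smoothed distribution) together with $r \le \sigma$, we get $t \lesssim \sqrt{\gamma L}/r \cdot r = \sqrt{\gamma L}$ for $L = \log(N/(\I_r r^2 \log\frac{1}{\delta}))$, so $t^3 \sigma_r/r \lesssim \gamma^{3/2} L^{3/2} \sigma/r$; the required inequality becomes roughly $N \gtrsim (\gamma^{3/2} \sigma/r) \cdot \mathrm{polylog}$, which is comfortably implied by the hypothesis $N/\log\frac{1}{\delta} \ge (\gamma^{5/12}\sigma/r)^{6+\beta}$ since $6 \cdot 5/12 = 5/2 > 3/2$ and the extra $\beta$ factor absorbs all the polylog terms.

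For step (ii), the generic lemma outputs
\[
  \frac{\log\bigl(\tfrac{1}{\delta}(1 + \tfrac{2 t\sigma_r \sqrt{\alpha N}}{\sqrt{r}})\bigr)}{\alpha N r^3} \log^2\tfrac{1}{\alpha r}
  \;=\; \frac{t^3 \sigma_r}{N r^3}\cdot \log(\cdots)\cdot \log^2\!\bigl(t^3 \sigma_r/r\bigr).
\]
Substituting $t^3 = t \cdot \gamma L /(\I_r r^2)$ and using $t \lesssim \sqrt{\gamma L}$, this is at most
\[
  \frac{\I_r}{\gamma}\cdot \frac{\gamma^{5/2} L^{3/2} \sigma_r \cdot \log(\cdots)\cdot \log^2(t^3\sigma_r/r)}{N r \I_r^2}.
\]
To show the second factor is $O(1)$, observe again that $1/(\I_r r^2)\ge 1$, so the denominator is at least $N r/\sigma \cdot 1/(\I_r r^2)^2 \cdot \sigma/r$, and the polylogarithmic factors (including $\log(\cdots)$ and $\log^2(t^3\sigma_r/r) \lesssim L^2$) are dominated by any polynomial in $\sigma/r$ once multiplied through. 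The hypothesis $N/\log\frac{1}{\delta} \ge (\gamma^{5/12}\sigma/r)^{6+\beta}$ is calibrated exactly so that $N r^3 \I_r^2/\sigma_r \gtrsim \gamma^{5/2}\cdot \mathrm{polylog}$: since $\I_r \lesssim 1/r^2$, the worst case is $\I_r \approx 1/r^2$, where the requirement reduces to $N r/\sigma \gtrsim \gamma^{5/2}\cdot\mathrm{polylog}$, and the exponent $6 + \beta$ on $\sigma/r$ gives $(\sigma/r)^{5/2}$ extra slack beyond the $\gamma^{5/2}$ requirement plus room for $\log$ factors.

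The main obstacle I expect is the bookkeeping in step (ii): tracking the logarithmic factors $L$ and $\log(1/(\alpha r))$ carefully and confirming that the slack $\beta$ in the sample complexity exponent genuinely suffices to absorb them uniformly over all admissible $\I_r, r, \sigma$. The reason the exponent is $6+\beta$ rather than simply $6$ is precisely to give polynomial room to absorb these unbounded-but-logarithmic factors; I would encode this by showing that $L, \log(1/(\alpha r)) \lesssim \log(N\sigma/r)$, which under the hypothesis is at most $O(\log(\gamma \sigma/r))$, and then verifying that $(\gamma \sigma/r)^\beta$ dominates any fixed polynomial in this logarithm.
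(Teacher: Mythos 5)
There is a genuine gap, and it sits at the heart of the calculation. Your strategy (invoke Lemma~\ref{lem:generic_error_estimate_large_density} with the stated $t,\alpha$, verify its hypothesis on $N$, then simplify using the assumption on $N/\log\frac{1}{\delta}$) is the same as the paper's, but the inequality you use to control $t$ points the wrong way. You write that $\I_r \lesssim 1/r^2$ gives $t \lesssim \sqrt{\gamma L}$; in fact $\I_r \le 1/r^2$ gives $1/(\I_r r^2) \ge 1$, i.e.\ $t \ge \sqrt{\gamma L}$ --- a \emph{lower} bound. The upper bound on $t$ that the proof needs comes from the opposite-side estimate $\I_r \ge 1/\sigma_r^2$, which yields $t \le \sqrt{\gamma L}\,\sigma_r/r$, hence $1/(\alpha r) = t^3\sigma_r/r \lesssim \gamma^{3/2}L^{3/2}(\sigma_r/r)^4$ and
\[
\frac{1}{\alpha N r^3} \;=\; \frac{\I_r}{\gamma}\cdot\frac{\gamma^{5/2}L^{3/2}\sigma_r}{N r^6\,\I_r^{5/2}} \;\le\; \frac{\I_r}{\gamma}\cdot\frac{\gamma^{5/2}L^{3/2}}{N}\left(\frac{\sigma_r}{r}\right)^{6},
\]
which is exactly why the hypothesis carries the exponent $6+\beta$ (it must supply $(\sigma/r)^6$, a factor $\gamma^{5/2}$, and room for the logs). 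Your intermediate expression with denominator $N r\,\I_r^2$ is not what the substitution gives (it is off by a power of $r^4$, and is dimensionally inconsistent: the correct denominator $N r^6 \I_r^{5/2}$ makes the ratio dimensionless), and your identification of ``the worst case $\I_r \approx 1/r^2$'' inverts the true extremal case: the bound is tightest when $\I_r$ is as small as possible, $\I_r \approx 1/\sigma_r^2$, which is where the requirement $N/\log\frac{1}{\delta} \gtrsim \gamma^{5/2}(\sigma/r)^6\cdot\mathrm{polylog}$ actually arises, not the much weaker $N r/\sigma \gtrsim \gamma^{5/2}\cdot\mathrm{polylog}$ your argument suggests.

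The same wrong-direction bound infects step (i): your claim $t^3\sigma_r/r \lesssim \gamma^{3/2}L^{3/2}\sigma/r$ should be $\lesssim \gamma^{3/2}L^{3/2}(\sigma/r)^4$; the hypothesis on $N$ is still strong enough to verify the generic lemma's condition with the corrected bound, but as written your verification is unsound rather than merely loose. Your instinct about the role of $\beta$ (absorbing the polylogarithmic factors, as in the paper's ``WLOG'' reduction to $(\gamma^{5/12}(\sigma/r)\log\frac{N}{\log\frac{1}{\delta}})^6 \le N/\log\frac{1}{\delta}$) is right, though you should also track the $\log\frac{1}{\delta}$ inside the term $\log\bigl(\tfrac{1}{\delta}(\cdots)\bigr)$ from the generic lemma; in the paper it is cancelled by the $1/\log\frac{1}{\delta}$ gained when the $(\sigma/r)^6$ is traded for $N/\log\frac{1}{\delta}$. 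To repair the proposal, replace every use of $\I_r \lesssim 1/r^2$ in these estimates by $\I_r \ge 1/\sigma_r^2$ (keeping $\sigma_r^2\le 2\sigma^2$), redo the algebra with the $(\sigma_r/r)^4$ and $(\sigma_r/r)^6$ factors, and then the stated hypothesis closes the argument exactly as in the paper.
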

\begin{proof}
    First, note that since $r\le \sigma$,
    \begin{align*}
        \sigma_r^2 = \sigma^2 + r^2 \le 2 \sigma^2
    \end{align*}
    Also, our setting of $N$ implies that WLOG
    \[
        \frac{N}{\log \frac{1}{\delta}} \ge \left( \frac{\gamma^{5/12} \sigma \log \frac{N}{\log \frac{1}{\delta}}}{r}\right)^{6}
    \]
    since $N \ge C \log \frac{1}{\delta}$.
    So,
    \begin{equation}
    \label{eq:sigma_over_r_bound}
        \frac{\sigma}{r} \le\left(\frac{N}{\log \frac{1}{\delta}} \right)^{1/6}\cdot \frac{1}{\gamma^{5/12} \log \frac{N}{\log \frac{1}{\delta}}} \le \left(\frac{N}{\log\frac{1}{\delta}}\right)^{1/6}
    \end{equation}
    %So, for our setting of $N$,
    %Then, by Lemma~\ref{lem:1/alphar_bound},
    %\begin{align*}
    %    \frac{1}{\alpha r} \le \frac{N^{4/6}}{\gamma^{1/10} \log^2 (N) \log^4 \frac{1}{\delta}} 
    %\end{align*}
    We will first check that this $N$ satisfies the condition required to invoke Lemma~\ref{lem:generic_error_estimate_large_density} that $N \ge \frac{6\log \left(\frac{4}{\delta} \left(\frac{2\sqrt{\alpha N}t \sigma_r}{\sqrt{r} } +1 \right)\right) + 400 \log \frac{1}{\alpha r}}{\alpha r}$. To do this, we will individually upper bound $\frac{1}{\alpha r}$ and $\frac{2 t \sigma_r \sqrt{\alpha N}}{\sqrt{r}}$.

    We have,
    \begin{align*}
        \frac{1}{\alpha r} &= \frac{\sigma_r}{r} \left(\frac{\gamma \log \frac{N}{\I_r r^2\log \frac{1}{\delta}}}{\I_r r^2} \right)^{3/2} \quad \text{since $\alpha = \frac{1}{t^3 \sigma_r}$ and $t =  \sqrt{\frac{\gamma \log \frac{N}{\I_r r^2\log \frac{1}{\delta}}}{\I_r r^2}}$}\\
        &\le \left(\frac{\sigma_r}{r}\right)^4 \gamma^{3/2} \log^{3/2} \frac{N \sigma_r^2}{r^2 \log \frac{1}{\delta}} \quad \text{since $\I_r \ge \frac{1}{\sigma_r^2}$}\\
        &\le \left(\frac{2\sigma}{r}\right)^4 \gamma^{3/2} \log^{3/2} \left(\frac{2 N \sigma^2}{r^2\log \frac{1}{\delta}}\right)\quad \text{since $\sigma_r^2 \le 2 \sigma^2$}\\
        &\le 16 \frac{N^{4/6}}{\gamma^{5/3} \log^4(\frac{N}{\log \frac{1}{\delta}}) \log^{4/6} \frac{1}{\delta}} \gamma^{3/2} \log^{3/2} \left(2\left(\frac{N}{\log \frac{1}{\delta}}\right)^{4/3}\right) \quad \text{by \eqref{eq:sigma_over_r_bound}}\\
        &\le \frac{N}{\gamma^{1/10}\log (\frac{1}{\delta}) \log^2 \left(\frac{N}{\log \frac{1}{\delta}}\right)} \quad \text{since $\gamma \ge C$ for $C$ large enough constant, and $\frac{\log \frac{1}{\delta}}{N} \le 1$} %\jnote{This also seems like some multistep thing that isn't tight so I wasn't sure what you did}}
    \end{align*}
    To further justify the last line above, observe that $\frac{\gamma^{3/2}}{\gamma^{5/3}} \le \frac{1}{\gamma^{1/6}} = \frac{1}{\gamma^{1/10} \cdot \gamma^{1/15}}$, and that for large enough constant $C$, since $\gamma > C$, $\gamma^{1/15}$ can be made larger than any fixed constant. Also note that $\log^{3/2}(2 (\frac{N}{\log \frac{1}{\delta}})^{4/3}) \le \log^{2} \frac{N}{\log \frac{1}{\delta}}$ for large enough $C$ since $\frac{N}{\log \frac{1}{\delta}} \ge \gamma\ge C$. So, the inequality follows.

    Next, we bound $\frac{2 t \sigma_r \sqrt{\alpha N}}{\sqrt{r}}$.
\begin{align*}
    \frac{2 t \sigma_r \sqrt{\alpha N}}{\sqrt{r}} &= 2\sqrt{ \frac{N \sigma_r}{t r}} \quad \text{since $\alpha = \frac{1}{t^3 \sigma_r}$}\\
    &= 2 \sqrt{\frac{N \sigma_r}{\sqrt{\frac{\gamma \log \frac{N}{\I_r r^2\log\frac{1}{\delta}}}{\I_r r^2}} r}} \quad \text{since $t =  \sqrt{\frac{\gamma \log \frac{N}{\I_r r^2\log \frac{1}{\delta}}}{\I_r r^2}}$}\\
    &\le 4 \sqrt{\frac{N \sigma}{r \sqrt{ \gamma \log \frac{N}{\log \frac{1}{\delta}}} }} \quad \text{since $\I_r \le \frac{1}{r^2}$ and $\sigma_r^2 \le 2 \sigma^2$}\\
    &\le 4 \sqrt{\frac{N}{\sqrt{\gamma \log \frac{N}{\log \frac{1}{\delta}}}} \cdot \left(\frac{N}{\log \frac{1}{\delta}}\right)^{1/6}} \quad \text{by \eqref{eq:sigma_over_r_bound}}\\
    &\le 4 N \cdot \left(\frac{N}{\log \frac{1}{\delta}} \right)^{1/12} \quad \text{since $\gamma \ge 1$, $\frac{N}{\log \frac{1}{\delta}} \ge 1$}
    %&\le 2 \sqrt{\frac{N}{\sqrt{\gamma \log \frac{\gamma\sigma_r^2}{r^2}} }}
\end{align*}
So, we can now check the condition required to invoke Lemma~\ref{lem:generic_error_estimate_large_density}. We have,
\begin{align*}
    &\frac{6 \log\left( \frac{4}{\delta} \left(\frac{2 \sqrt{\alpha N} t \sigma_r}{\sqrt{r}} + 1 \right)\right) + 400 \log \frac{1}{\alpha r}}{\alpha r}\\
    %&\le \left(6 \log \left(\frac{4}{\delta}\left(4 N \cdot \left(\frac{N}{\log \frac{1}{\delta}} \right)^{1/12} + 1 \right) \right) + 400 \log N^{4/6} \right)\left(\frac{N^{4/6}}{\gamma^{1/10} \log^2 \left(\frac{N}{\log \frac{1}{\delta}}\right)\log^{4/6}\frac{1}{\delta}} \right)\\
    &\le \left(6 \log \left(\frac{4}{\delta}\left(4 N \cdot \left(\frac{N}{\log \frac{1}{\delta}} \right)^{1/12} + 1 \right) \right) + 400 \log \frac{N}{\log \frac{1}{\delta}} \right)\left(\frac{N}{\gamma^{1/10} \log^2 \left( \frac{N}{\log \frac{1}{\delta}}\right)\log\frac{1}{\delta}} \right)\\%\quad \text{since $\frac{\log \frac{1}{\delta}}{N} \le 1$} \\
    &\le N \quad \text{since $\gamma \ge C$}
\end{align*}
So, by Lemma~\ref{lem:generic_error_estimate_large_density}, we have
\begin{align*}
     \E_{x \sim f_r}\left[(\wh s_r(x) - s_r(x))^2 \1_{\{ |x - \mu| \le t \sigma_r \text{ and } f_r(x) \ge \alpha \}} \right] &\lesssim \frac{\log \left( \frac{1}{\delta} \left(\frac{2 \sigma_r \sqrt{\alpha N}}{\sqrt{r}} + 1 \right)\right)}{\alpha N r^3} \log^2 \frac{1}{\alpha r}\\
\end{align*}
To bound the RHS above by $O\left(\I_r/\gamma\right)$ as required, we will first bound $\frac{1}{\alpha N r^3}$.
\begin{align*}
    \frac{1}{\alpha N r^3} &= \frac{\sigma_r}{N r^3} \left(\frac{\gamma \log \frac{N}{\I_r r^2\log \frac{1}{\delta}}}{\I_r r^2} \right)^{3/2}\quad \text{since $\alpha = \frac{1}{t^3 \sigma_r}$ and $t =  \sqrt{\frac{\gamma \log \frac{N}{\I_r r^2\log \frac{1}{\delta}}}{\I_r r^2}}$}\\
    &= \frac{\I_r}{\gamma} \left(\frac{\sigma_r}{N r^6} \frac{\gamma^{5/2}}{\I_r^{5/2}} \log^{3/2} \frac{N}{ \I_r r^2\log \frac{1}{\delta}}\right)\\
    &\le\frac{\I_r}{\gamma} \left( \frac{\sigma_r^6\gamma^{5/2} \log^{3/2} \frac{N \sigma_r^2}{r^2 \log \frac{1}{\delta}}}{N r^6}\right) \quad \text{since $\I_r \ge \frac{1}{\sigma_r^2}$}\\
    &\lesssim \frac{\I_r}{\gamma} \left( \frac{\sigma^6\gamma^{5/2} \log^{3/2} \frac{4N \sigma^2}{r^2 \log \frac{1}{\delta}}}{N r^6}\right) \quad \text{since $\sigma_r^2 \le 2 \sigma^2$}\\
    &\le \frac{\I_r}{\gamma} \left(\frac{\log^{3/2} (4 \left(\frac{N}{\log \frac{1}{\delta}}\right)^{4/3} )}{\log^6 \left(\frac{N}{\log \frac{1}{\delta}}\right) \log \frac{1}{\delta} } \right) \quad \text{by \eqref{eq:sigma_over_r_bound}}\\
    &\lesssim \frac{\I_r}{\gamma \log^4 \left(\frac{N}{\log \frac{1}{\delta}} \right)\log \frac{1}{\delta}}
    %&\le \frac{\I_r}{\gamma \log^4\frac{1}{\delta}} \quad \text{for our choice of $N$ and since $\gamma \ge C$ for large constant $C$}
\end{align*}
So, plugging in everything,
\begin{align*}
    &\E_{x \sim f_r}\left[(\wh s_r(x) - s_r(x))^2 \1_{\{ |x - \mu| \le t \sigma_r \text{ and } f_r(x) \ge \alpha \}} \right]\\
    &\lesssim \frac{\log \left( \frac{1}{\delta} \left(\frac{2 \sigma_r \sqrt{\alpha N}}{\sqrt{r}} + 1 \right)\right)}{\alpha N r^3} \log^2 \frac{1}{\alpha r}\\
    &\lesssim \frac{\I_r}{\gamma \log^4 \left(\frac{N}{\log \frac{1}{\delta}}\right) \log\frac{1}{\delta}} \log\left(\frac{4N \cdot \left(\frac{N}{ \log \frac{1}{\delta}} \right)^{1/12}+ 1}{\delta} \right) \log^2 \left(\left(\frac{N}{\log \frac{1}{\delta}}\right)^{4/6}\right)\\
    &\lesssim \frac{\I_r}{\gamma}
\end{align*}
\end{proof}
The lemmas so far have shown that the KDE score $\wh s_r$ approximates the true score $s_r$ well in large denstiy regions in the typical $t \sigma$ radius around the true mean. The next lemma shows that the same guarantee holds for the clipped KDE score $\wh s_r^\clip$.
\begin{lemma}[Error estimate over large density region for clipped KDE]
\label{lem:error_estimate_large_density_clipped_kde}
   Let $f^*$ be an arbitrary distribution with mean $\mu$ and variance $\sigma^2$, and let $f_r$ be the $r$-smoothed version of $f^*$, with variance $\sigma_r^2 = \sigma^2 + r^2$. Let $\wh s_r^\clip$ be the clipped kernel density estimate score from $N$ samples, as defined in \eqref{eq:kde_def}. Let $\gamma \ge C$ for large enough constant $C > 1$ be a parameter. Let $t = \sqrt{\frac{\gamma \log \frac{N}{\I_r r^2\log \frac{1}{\delta}}}{\I_r r^2}}$, $\alpha = \frac{1}{t^3\sigma_r}$.Then for any $r \le \sigma$ and $\frac{N}{\log \frac{1}{\delta}} \ge \left(\frac{\gamma^{5/12} \sigma}{r} \right)^{6 + \beta}$ for $\beta > 0$, with probability $1 - \delta$, we have that,
    \begin{align*}
    \E_{x \sim f_r}\left[(\wh s_r^\clip(x) - s_r(x))^2 \1_{\{|x - \mu| \le t \sigma_r \text{ and } f_r(x) \ge \alpha\}}\right] \lesssim \frac{\I_r}{\gamma}
\end{align*}
\end{lemma}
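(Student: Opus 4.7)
}

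The plan is to reduce to the unclipped analogue, Lemma~\ref{lem:error_estimate_large_density_instantiated}, by showing that clipping is essentially a no-op on the relevant region. Concretely, write the clipping threshold as $T := \frac{2}{r}\sqrt{\log \frac{N}{\log (1/\delta)}}$. I want to establish two things: first, that the true score $s_r(x)$ lies well within $[-T,T]$ for every $x$ in the ``large density, typical'' region; and second, that this implies pointwise $|\wh s_r^\clip(x) - s_r(x)| \lesssim |\wh s_r(x) - s_r(x)|$. Plugging these in and applying Lemma~\ref{lem:error_estimate_large_density_instantiated} then yields the claim.

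For the first point, I would use the standard score bound for a smoothed density (the ``large density implies small score'' fact referenced as Lemma~\ref{lem:generic_density_large_score_small}), which gives $|s_r(x)| \lesssim \frac{1}{r}\sqrt{\log \frac{1}{r f_r(x)}}$. On the large density region we have $f_r(x) \ge \alpha = \frac{1}{t^3 \sigma_r}$, so $\log \frac{1}{r f_r(x)} \le \log \frac{t^3 \sigma_r}{r} = \log \frac{1}{\alpha r}$. The computation of $\frac{1}{\alpha r}$ already carried out inside the proof of Lemma~\ref{lem:error_estimate_large_density_instantiated} shows $\frac{1}{\alpha r} \le \frac{N}{\gamma^{1/10}\, \log(1/\delta)\, \log^2(N/\log(1/\delta))}$, so $\log \frac{1}{\alpha r} \le \log \frac{N}{\log(1/\delta)}$ up to a universal constant. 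Combining, $|s_r(x)| \le T$ on the large-density region, provided $\gamma$ is larger than a sufficiently large absolute constant (exactly the hypothesis $\gamma \ge C$ already in the statement).

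For the second point, recall the elementary fact that clipping to $[-T,T]$ is the projection onto a convex set, hence a contraction around any point in $[-T,T]$: whenever $|b| \le T$, $|\clip_T(a) - b| \le |a - b|$ for every $a$. Applying this with $a = \wh s_r(x)$ and $b = s_r(x)$, we get $(\wh s_r^\clip(x) - s_r(x))^2 \le (\wh s_r(x) - s_r(x))^2$ pointwise on the event $\{|x-\mu| \le t\sigma_r,\, f_r(x) \ge \alpha\}$. Taking expectations over $x \sim f_r$ restricted to that event and invoking Lemma~\ref{lem:error_estimate_large_density_instantiated} immediately delivers the bound $\lesssim \I_r/\gamma$ with the same failure probability $\delta$.

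I expect the main obstacle to be the bookkeeping in the first step: pinning down that $\log \frac{1}{\alpha r}$ really is dominated by $\log \frac{N}{\log(1/\delta)}$ with a constant small enough that $|s_r(x)| \le T$ (not just $|s_r(x)| \lesssim T$). This is where the hypothesis $\frac{N}{\log(1/\delta)} \ge (\gamma^{5/12}\sigma/r)^{6+\beta}$ and $\gamma \ge C$ come in; they give an extra polynomial slack in $\gamma$ that absorbs the implicit constant from the score bound, just as they do in the proof of Lemma~\ref{lem:error_estimate_large_density_instantiated}. Everything else is either a direct reuse of that earlier lemma or the one-line contraction argument above.
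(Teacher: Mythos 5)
Your proposal is correct and follows essentially the same route as the paper: bound the true score by the clipping threshold on the large-density typical region via Lemma~\ref{lem:generic_density_large_score_small} together with the assumptions $\sigma/r \le (N/\log\frac{1}{\delta})^{1/6}$ and $\frac{N}{\log\frac{1}{\delta}} \ge \gamma^{5/2}$, and then observe that clipping can only move $\wh s_r(x)$ closer to $s_r(x)$, so the unclipped bound of Lemma~\ref{lem:error_estimate_large_density_instantiated} carries over. The only difference is cosmetic: you make the nonexpansiveness of clipping explicit, while the paper leaves it implicit, and the constant bookkeeping you flag works out exactly as in the paper's displayed computation.
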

\begin{proof}
    Note that our condition that $\frac{N}{\log \frac{1}{\delta}} \ge \left(\frac{\gamma^{5/12} \sigma }{r} \right)^{6 + \beta}$ implies
    \[
        \frac{\sigma}{r} \le \left(\frac{N}{\log \frac{1}{\delta}}\right)^{1/6}
    \]
    Also, since $r \le \sigma$, $\frac{N}{\log \frac{1}{\delta}} \ge (\gamma^{5/12})^{6 + \beta} \ge \gamma^{5/2}$.
    
    So, by Lemma~\ref{lem:generic_density_large_score_small}, for $x$ such that $f_r(x) \ge \alpha$,
    \begin{align*}
        |s_r(x)| &\le \frac{1}{r} \sqrt{2 \log \frac{1}{\sqrt{2 \pi} r \alpha}}\\
        &= \frac{1}{r} \sqrt{2 \log \left(\frac{\sigma_r}{\sqrt{2 \pi} r} \left(\frac{\gamma \log \frac{N}{\I_r r^2 \log \frac{1}{\delta}}}{\I_r r^2} \right)^{3/2} \right)} \quad \text{since $\alpha = \frac{1}{t^3 \sigma_r}$ and $t =  \sqrt{\frac{\gamma \log \frac{N}{\I_r r^2\log \frac{1}{\delta}}}{\I_r r^2}}$}\\
        &\le \frac{1}{r} \sqrt{2 \log \left(\frac{\sigma_r^4}{\sqrt{2 \pi} r^4} \left(\gamma \log \frac{\sigma_r^2 N}{r^2\log \frac{1}{\delta}}\right)^{3/2} \right)} \quad \text{since $\I_r \ge \frac{1}{\sigma_r^2}$}\\
        &\le \frac{1}{r} \sqrt{2 \log\left(\frac{16}{\sqrt{2 \pi}} \left(\frac{N}{\log \frac{1}{\delta}} \right)^{4/6} \left(\gamma \log \left(4\left(\frac{N}{\log \frac{1}{\delta}}\right)^{4/3}\right)\right)^{3/2}\right)}\\
        &\quad \text{since $\sigma_r^2 \le 2\sigma^2$ and using $\sigma/r \le \left(\frac{N}{\log \frac{1}{\delta}}\right)^{1/6}$}\\
        &\le \frac{2}{r} \sqrt{ \log \frac{N}{\log \frac{1}{\delta}}} \quad \text{since $\frac{N}{\log \frac{1}{\delta}} \ge \gamma^{5/2} \ge C^{5/2}$ for a sufficiently large constant $C$}
    \end{align*}
    Then, since $\wh s_r^\clip$ is clipped at $\frac{2}{r} \sqrt{\log \frac{N}{\log \frac{1}{\delta}}}$ by definition in \eqref{eq:clipped_kde_def}, by Lemma~\ref{lem:error_estimate_large_density_instantiated}, we have the claim.
\end{proof}

The next lemma shows that for small density sets, for any function that is clipped appropriately, the error incurred relative to the true score function is small.
\begin{lemma}
\label{lemma:score_error_bounded_in_small_set}
Let $f^*$ be an arbitrary distribution, and let $f_r$ be the $r$-smoothed version of $f^*$. Let $s_r$ be the score function of $f_r$, and let $\I_r$ be the Fisher information. Let $\gamma \ge C$ for large enough constant $C$, $\frac{N}{\log \frac{1}{\delta}} \ge \gamma$, and let $\Tilde s$ be any function with $|\Tilde s_r(x)| \le \frac{2}{r} \sqrt{\log \frac{N}{\log \frac{1}{\delta}}}$ for all $x$. Let $S$ be a set with $\Pr[S] \le \frac{1}{t^2}$ for $t = \sqrt{\frac{\gamma \log \frac{N}{\I_r r^2\log \frac{1}{\delta}}}{\I_r r^2} }$. Then, we have $$\E_{x \sim f_r}[(\Tilde s_r(x) - s_r(x))^2 \1_{x \in S}] \lesssim \frac{\I_r}{\gamma} $$

%\jnote{Phrase as $\max(\E[\hat{s}^2], \E[s^2]) \lesssim \I_r/\gamma^2$, because of how the lemma is called. Alternatively, later on we seem to need ``for any function $s$ with $|s| \le 2/r\sqrt{\log \gamma}$", so phrase it like that here instead.}
\end{lemma}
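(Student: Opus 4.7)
The plan is to apply the elementary inequality $(\tilde s_r(x) - s_r(x))^2 \le 2\tilde s_r(x)^2 + 2 s_r(x)^2$ and bound the two resulting expectations on $S$ separately, one via the clipping bound on $\tilde s_r$ and the other via Lemma~\ref{lem:score_bounded_in_small_set}.

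For the $\tilde s_r^2$ contribution, the uniform clip $|\tilde s_r(x)| \le \frac{2}{r}\sqrt{\log \frac{N}{\log \frac{1}{\delta}}}$ combined with $\Pr[S] \le 1/t^2$ immediately gives
\[
    \E_{x \sim f_r}[\tilde s_r(x)^2 \1_{x \in S}] \le \frac{4 \log \frac{N}{\log \frac{1}{\delta}}}{r^2 t^2}.
\]
Substituting $t^2 = \frac{\gamma \log \frac{N}{\I_r r^2 \log \frac{1}{\delta}}}{\I_r r^2}$ turns this into $\frac{4 \I_r \log \frac{N}{\log \frac{1}{\delta}}}{\gamma \log \frac{N}{\I_r r^2 \log \frac{1}{\delta}}}$. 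Since Gaussian smoothing implies $\I_r \le 1/r^2$, the denominator dominates the numerator, yielding $\lesssim \I_r/\gamma$.

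For the $s_r^2$ contribution, I would invoke Lemma~\ref{lem:score_bounded_in_small_set} at $\eps = 0$ (so the preconditions on $\eps$ are satisfied trivially) with $\beta = 1/t^2$, obtaining $\E_{x \sim f_r}[s_r(x)^2 \1_{x \in S}] \lesssim \frac{\log t^2}{t^2 r^2}$. Plugging in $\frac{1}{t^2 r^2} = \frac{\I_r}{\gamma \log \frac{N}{\I_r r^2 \log \frac{1}{\delta}}}$ reduces the job to showing $\log t^2 = O(\log \frac{N}{\I_r r^2 \log \frac{1}{\delta}})$. Writing $\log t^2 = \log \gamma + \log \frac{1}{\I_r r^2} + \log\log \frac{N}{\I_r r^2 \log \frac{1}{\delta}}$, the hypothesis $\gamma \le N/\log \frac{1}{\delta}$ gives $\log \gamma \le \log \frac{N}{\log \frac{1}{\delta}} \le \log \frac{N}{\I_r r^2 \log \frac{1}{\delta}}$ (using $\I_r r^2 \le 1$ again), while the iterated log is lower order. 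So this term is also $\lesssim \I_r/\gamma$, and summing the two bounds completes the proof.

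The main obstacle is purely notational: keeping straight the several nested logarithms when verifying $\log t^2 = O(\log \frac{N}{\I_r r^2 \log \frac{1}{\delta}})$. The one non-trivial fact used twice is $\I_r \le 1/r^2$, which holds because Fisher information is decreasing under convolution and $N(0, r^2)$ has Fisher information $1/r^2$; everything else is direct algebra and a single appeal to the previously-proved Lemma~\ref{lem:score_bounded_in_small_set}.
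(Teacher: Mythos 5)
Your proposal is correct and follows essentially the same route as the paper: split $(\tilde s_r - s_r)^2$ into $\tilde s_r^2 + s_r^2$, bound the clipped part by the clip level times $\Pr[S]\le 1/t^2$, bound the true-score part via Lemma~\ref{lem:score_bounded_in_small_set} (with $\eps=0$), and finish with the same log algebra using $\I_r \le 1/r^2$ and $\gamma \le N/\log\frac{1}{\delta}$. The only nitpick is that in bounding $\log t^2$ you should also note $\log\frac{1}{\I_r r^2} \le \log\frac{N}{\I_r r^2\log\frac{1}{\delta}}$ (which holds since $N/\log\frac{1}{\delta}\ge 1$); with that one-line addition the argument matches the paper's.
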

\begin{proof}
First, by Lemma~\ref{lem:score_bounded_in_small_set},
\begin{align*}
    \E_{x \sim f_r}\left[s_r(x)^2 \1_{x \in S} \right] &\lesssim \frac{1}{t^2 r^2} \log t^2\\
    & \le \frac{\I_r}{\gamma \log \frac{N}{\I_r r^2 \log \frac{1}{\delta}}} \log \left(\frac{\gamma}{\I_r r^2} \log \frac{N}{\I_r r^2\log \frac{1}{\delta}} \right)\\
    &\lesssim \frac{\I_r}{\gamma} \quad \text{since $\gamma \ge 1$, $\gamma \le \frac{N}{\log \frac{1}{\delta}}$ and $\I_r \le \frac{1}{r^2}$}
\end{align*}
Now, by assumption, 
$$|\Tilde s_r(x)| \le \frac{2}{r} \sqrt{\log \frac{N}{\log \frac{1}{\delta}}}$$
So,
\begin{align*}
    \E_{x \sim f_r}\left[ \Tilde s_r(x)^2 \1_{x \in S} \right] &\le \frac{4}{t^2 r^2} \log \frac{N}{\log \frac{1}{\delta}} \\
    &= \frac{\I_r}{\gamma \log \frac{N}{\I_r r^2\log \frac{1}{\delta}}} \log\left( \frac{N}{\log \frac{1}{\delta}}  \right)\\
    &\lesssim \frac{\I_r}{\gamma \log \frac{N}{\I_r r^2\log \frac{1}{\delta}}} \log\left( \frac{N}{\I_r r^2\log \frac{1}{\delta}}  \right) \quad \text{since $\I_r \le \frac{1}{r^2}$}\\
    &\lesssim \frac{\I_r}{\gamma}
\end{align*}
Thus, we have
\begin{align*}
    \E_{x \sim f_r}\left[(\Tilde s_r(x) - s_r(x))^2 \1_{x \in S} \right] &\lesssim \E_{x \sim f_r}\left[\Tilde s_r(x)^2 \1_{x \in S} \right] + \E_{x \sim f_r}\left[s_r(x)^2 \1_{x \in S} \right]\\
    &\lesssim \frac{\I_r}{\gamma}
\end{align*}
\end{proof}
The next lemma shows that within the typical $t \sigma$ radius around the mean, for the set of points with small density, the clipped KDE score $\wh s_r^\clip$ approximates the true score $s_r$ well.
\begin{lemma}[Error estimate over small density regions within typical region]
\label{lem:clipped_kde_small_density_typical}
Let $f^*$ be an arbitrary distribution with mean $\mu$ and variance $\sigma^2$, and let $f_r$ be the $r$-smoothed version of $f^*$, with variance $\sigma_r^2 = \sigma^2 + r^2$. Let $s_r$ be the score function of $f_r$, and let $\I_r$ be the Fisher information. Let $\Tilde s_r$ be any score function with $|\Tilde s_r(x)| \le \frac{2}{r} \sqrt{\log \frac{N}{\log \frac{1}{\delta}}}$ for all $x$. Let $t =\sqrt{\frac{\gamma \log \frac{N}{\I_r r^2 \log \frac{1}{\delta}}}{\I_r r^2} } $ for $\gamma \ge 1$, and let $\alpha = \frac{1}{t^3 \sigma_r}$. Then,
    \begin{align*}
        \E_{x \sim f_r}[(\Tilde s_r(x) - s_r(x))^2 \1_{\{|x - \mu| \le t \sigma_r \text{ and } f(x) < \alpha\}}] \lesssim \frac{\I_r}{\gamma}
    \end{align*}
\end{lemma}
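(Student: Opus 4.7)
The plan is to reduce this lemma to the already-established Lemma~\ref{lemma:score_error_bounded_in_small_set}, which bounds $\E[(\tilde s_r(x) - s_r(x))^2 \1_{x \in S}]$ for any set $S$ of small probability and any score function $\tilde s_r$ satisfying the specified clipping bound. The only hypothesis of that lemma we need to verify is the probability upper bound $\Pr_{x \sim f_r}[x \in S] \lesssim 1/t^2$. Everything else (the clipping bound on $\tilde s_r$ and the definition of $t$) is given to us verbatim.

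The first step is the probability computation. Let $S = \{x : |x - \mu| \le t\sigma_r \text{ and } f_r(x) < \alpha\}$. Because $S$ is contained in the interval $[\mu - t\sigma_r, \mu + t\sigma_r]$ of length $2t\sigma_r$, and on all of $S$ the density $f_r(x)$ is strictly less than $\alpha$, integrating $f_r$ over $S$ gives
\[
\Pr_{x \sim f_r}[x \in S] \;=\; \int_S f_r(x)\, dx \;\le\; \alpha \cdot 2t\sigma_r \;=\; \frac{2t\sigma_r}{t^3\sigma_r} \;=\; \frac{2}{t^2},
\]
where the final equality uses $\alpha = 1/(t^3 \sigma_r)$.

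The second step applies Lemma~\ref{lemma:score_error_bounded_in_small_set} with this set $S$. The bound $\Pr[S] \le 2/t^2$ differs from that lemma's hypothesis $\Pr[S] \le 1/t^2$ only by a constant factor, so I would either re-invoke the lemma with $t' = t/\sqrt{2}$ (which rescales $\gamma$ to $\gamma/2$ inside the target bound, absorbed by $\lesssim$) or simply observe that every step of that lemma's proof is linear in $\Pr[S]$ and hence absorbs the constant $2$. The conclusion is exactly
\[
\E_{x \sim f_r}\bigl[(\tilde s_r(x) - s_r(x))^2 \1_{x \in S}\bigr] \;\lesssim\; \frac{\I_r}{\gamma}.
\]

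I do not expect any real obstacle here; this lemma is essentially a packaging statement that pairs the geometric bound on $\Pr[S]$ with the already-proven control of scores on low-probability sets. The only subtle point is that Lemma~\ref{lemma:score_error_bounded_in_small_set} is stated under $\gamma \ge C$ and $N/\log(1/\delta) \ge \gamma$, whereas the lemma here only explicitly says $\gamma \ge 1$; these stronger preconditions are inherited from the surrounding context in which this lemma is used (matching the other lemmas in Section~\ref{sec:clipped}), so the invocation is legitimate.
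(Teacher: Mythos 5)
Your proposal is correct and is essentially identical to the paper's proof: bound $\Pr_{x\sim f_r}[S]$ by $\alpha$ times the length of the typical interval, giving $\lesssim 1/t^2$, and then invoke Lemma~\ref{lemma:score_error_bounded_in_small_set}. You are in fact slightly more careful than the paper (which writes $\alpha t\sigma_r$ rather than $2\alpha t\sigma_r$ for the interval of length $2t\sigma_r$, a harmless constant), and your observation that the hypotheses $\gamma \ge C$ and $N/\log\frac{1}{\delta} \ge \gamma$ of Lemma~\ref{lemma:score_error_bounded_in_small_set} must be inherited from the surrounding context is a fair point about the lemma statement as written.
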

\begin{proof}
By our choice of $\alpha$,
    $$\Pr_{x \sim f_r}[|x - \mu| \le t \sigma_r \text{ and } f(x) < \alpha] \le \alpha t \sigma_r = \frac{1}{t^2}$$
    So, by Lemma~\ref{lemma:score_error_bounded_in_small_set}, the claim follows
\end{proof}

The next lemma shows that in the region outside the typical region of radius $t \sigma$ around the true mean, the clipped KDE score $\wh s_r^\clip$ approximates the true score well.
\begin{lemma}[Error estimate over atypical region]
\label{lem:clipped_kde_atypical}
Let $f^*$ be an arbitrary distribution with mean $\mu$ and variance $\sigma^2$, and let $f_r$ be the $r$-smoothed version of $f^*$, with variance $\sigma_r^2 = \sigma^2 + r^2$. Let $s_r$ be the score function of $f_r$, and let $\I_r$ be the Fisher information. Let $\Tilde s_r$ be any score function with $|\Tilde s_r(x)| \le \frac{2}{r} \sqrt{\log \frac{N}{\log \frac{1}{\delta}}}$ for all $x$. Let $t =\sqrt{\frac{\gamma \log \frac{N}{\I_r r^2\log \frac{1}{\delta}}}{\I_r r^2} } $ for $\gamma \ge  1$, and let $\alpha = \frac{1}{t^3 \sigma}$. Then,
\begin{align*}
    \E_{x \sim f_r}[(\Tilde s_r(x) - s_r(x))^2 \1_{|x - \mu| > t\sigma_r}] \lesssim \frac{\I_r}{\gamma}
\end{align*}
\end{lemma}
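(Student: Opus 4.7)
\begin{proofsketch}
The plan is to reduce this to Lemma~\ref{lemma:score_error_bounded_in_small_set}, which exactly handles expectations of the squared error against a clipped estimator over sets of probability at most $1/t^2$.

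First, I would observe that since $f_r = f^* * \mathcal{N}(0,r^2)$ and the added Gaussian is zero-mean, $f_r$ has mean $\mu$ and variance $\sigma_r^2 = \sigma^2 + r^2$. Thus, by Chebyshev's inequality applied to $x \sim f_r$,
\[
  \Pr_{x \sim f_r}\!\bigl[|x - \mu| > t\sigma_r\bigr] \le \frac{1}{t^2}.
\]
So the indicator set $S := \{x : |x - \mu| > t\sigma_r\}$ satisfies the hypothesis $\Pr[S] \le 1/t^2$ of Lemma~\ref{lemma:score_error_bounded_in_small_set}.

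Next, I would verify the remaining hypotheses of Lemma~\ref{lemma:score_error_bounded_in_small_set}: the clipping bound $|\Tilde s_r(x)| \le \frac{2}{r}\sqrt{\log (N/\log(1/\delta))}$ is assumed, and the quantitative assumptions on $\gamma$ and $N/\log(1/\delta)$ hold in the regime of interest (the same setting used by the other lemmas in this section, e.g.\ Lemma~\ref{lem:error_estimate_large_density_clipped_kde}). With $S$ as above, Lemma~\ref{lemma:score_error_bounded_in_small_set} immediately yields
\[
  \E_{x \sim f_r}\!\bigl[(\Tilde s_r(x) - s_r(x))^2 \1_{x \in S}\bigr] \lesssim \frac{\I_r}{\gamma},
\]
which is exactly the claim.

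There is essentially no obstacle here: the lemma is a direct corollary of the earlier small-set bound, because the only property of the atypical region being used is that it has probability at most $1/t^2$ under $f_r$. The only minor bookkeeping point is matching the lower bound on $\gamma$ between the two statements; since Lemma~\ref{lem:clipped_kde_atypical} is ultimately invoked in settings where $\gamma \ge C$ (as in the companion Lemma~\ref{lem:clipped_kde_small_density_typical} and Lemma~\ref{lem:error_estimate_large_density_clipped_kde}), this is without loss of generality.
\end{proofsketch}
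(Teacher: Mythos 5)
Your proposal matches the paper's own proof exactly: Chebyshev's inequality shows the atypical region has probability at most $1/t^2$ under $f_r$, and then Lemma~\ref{lemma:score_error_bounded_in_small_set} applied to this set gives the bound (your remark about the $\gamma \ge C$ and $N/\log\frac{1}{\delta} \ge \gamma$ hypotheses being inherited from the ambient setting is the same implicit convention the paper uses). Correct, and no further changes needed.
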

\begin{proof}
    By Chebyshev's inequality,
    $$\Pr[|x - \mu| > t \sigma_r] \le \frac{1}{t^2}$$
    So, by Lemma~\ref{lemma:score_error_bounded_in_small_set}, the claim follows.
\end{proof}
The main result of this section as follows shows that the clipped KDE score approximates the true score well.
\clippedkdeerror*
%\begin{lemma}[Clipped KDE error]
\begin{proof}
    Let $\mu, \sigma_r^2$ be the mean and variance of $f_r$. Let $t = \sqrt{\frac{\gamma \log \frac{N}{\I_r r^2\log \frac{1}{\delta}}}{\I_r r^2}}$, $\alpha = \frac{1}{t^3\sigma_r}$. Now,
    \begin{align*}
        \E_{x \sim f_r}\left[(\wh s_r^\clip(x) - s_r(x))^2 \right] &= \E_{x \sim f_r}\left[(\wh s_r^\clip(x) - s_r(x))^2 \1_{\{|x - \mu| \le t \sigma_r \text{ and } f_r(x) \ge \alpha\}}\right] \\
        &\quad + \E_{x \sim f_r}\left[(\wh s_r^\clip(x) - s_r(x))^2 \1_{\{|x - \mu| \le t \sigma_r \text{ and } f_r(x) < \alpha\}}\right]\\
        &\quad + \E_{x \sim f_r}[(\wh s_r^\clip(x) - s_r(x))^2 \1_{|x - \mu| > t\sigma_r}]
    \end{align*}
    So, by Lemmas~\ref{lem:error_estimate_large_density_clipped_kde},~\ref{lem:clipped_kde_small_density_typical}~and~\ref{lem:clipped_kde_atypical}, we have the claim.
\end{proof}

\section{Symmetrization}
In this section we show that the expectation of our symmetrized, clipped KDE score function $\wh s_r^\sym$, symmetrized around a point $\mu + \eps$ for small $\eps$ has expectation close to $\eps \I_r$, where $\I_r$ is the Fisher information of the $r$-smoothed distribution. We also show that the second moment of $\wh s_r^\sym$ is close to $\I_r$. We begin by recalling the definition of $\wh s_r^\sym$ from \eqref{eq:sym_kde_def_main_body}.
\paragraph{Definition}
Let the symmetrized, clipped KDE score, symmetrized around a point $y$ from $N$ samples be given by
\begin{equation}
\label{eq:symmetrized_kde_def}
    \wh s_r^\sym(x) = \begin{cases}
        \wh s_r^\clip(x) & x \ge y\\
        -\wh s_r^\clip(2y - x) & x < y
    \end{cases}
\end{equation}
where $\wh s_r^\clip$ is the clipped KDE score from $N$ samples, as defined in \eqref{eq:clipped_kde_def}.

First we show that the true score function centered at $-\eps$ is close to the true score centered at $0$ in $\ell_2$ distance.
\begin{lemma}
\label{lem:shifted_true_score_error}
    Let $f_r$ be an $r$-smoothed distribution with score function $s_r$ and Fisher information $\I_r$. Then, for $|\eps| \le r/60$,
    \[
        \E_{x \sim f_r}\left[(s_r(x+\eps) - s_r(x))^2 \right] \lesssim \frac{\eps^2}{r^4}
    \]
\end{lemma}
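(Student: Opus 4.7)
The plan is to represent the score difference as an integral of $s_r'$, apply Cauchy--Schwarz in two stages, and bound the chi-squared divergence between $f_r$ and its translate. Writing $s_r(x+\eps) - s_r(x) = \int_0^\eps s_r'(x+t)\,dt$ and applying Cauchy--Schwarz pointwise yields
\[
(s_r(x+\eps) - s_r(x))^2 \le |\eps| \int_0^\eps (s_r'(x+t))^2 \, dt,
\]
so by Fubini it suffices to show $\E_{f_r}[(s_r'(X+t))^2] \lesssim 1/r^4$ uniformly for $|t| \le r/60$.

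Toward this, I will use the standard identity $s_r'(x) = -1/r^2 + \Var(Z \mid X = x)/r^4$, derived by differentiating $f_r'/f_r$ with $f_r = f^* * w_r$, where $X = Y + Z$ with $Y \sim f^*$ and $Z \sim \mathcal{N}(0, r^2)$ independent. Combined with Jensen's inequality $\Var(Z \mid X)^4 \le \E[Z^8 \mid X]$ and the fact that $\E[Z^8] \lesssim r^8$, this gives $\E_{f_r}[(s_r'(X))^4] \lesssim 1/r^8$. To lift this to a bound on $\E_{f_r}[(s_r'(X+t))^2]$, change variables $v = x+t$ and apply Cauchy--Schwarz a second time:
\[
\E_{f_r}[(s_r'(X+t))^2] = \int f_r(v-t)(s_r'(v))^2 \, dv \le \sqrt{\E_{f_r}[(s_r')^4]} \cdot \sqrt{\int \frac{f_r(v-t)^2}{f_r(v)} \, dv}.
\]

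The key technical step is bounding the translated chi-squared integral $\int f_r(v-t)^2/f_r(v)\,dv \le e^{t^2/r^2}$. I will establish this by applying Cauchy--Schwarz once more to the convolution $f_r(v-t) = \int f^*(y) w_r(v-t-y)\,dy$ with the pairing $h_1(y) = \sqrt{f^*(y) w_r(v-y)}$ and $h_2(y) = \sqrt{f^*(y)}\, w_r(v-t-y)/\sqrt{w_r(v-y)}$, obtaining $f_r(v-t)^2/f_r(v) \le \int f^*(y)\, w_r(v-t-y)^2/w_r(v-y)\,dy$. Integrating first in $v$ via Fubini and substituting $u = v-y$ reduces the outer integral to the purely Gaussian quantity $\int w_r(u-t)^2/w_r(u)\,du = e^{t^2/r^2}$, which for $|t| \le r/60$ is bounded by a universal constant. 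Combining all three Cauchy--Schwarz steps gives $\E_{f_r}[(s_r(x+\eps) - s_r(x))^2] \lesssim \eps^2/r^4$.

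The main obstacle is avoiding spurious logarithmic factors: a pointwise approach through Lemma~\ref{lem:close_by_scores_close} would yield only $|s_r'(x)| \lesssim \log(1/(rf_r(x)))/r^2$, introducing an $\E_{f_r}[\log^2(1/(rf_r))]$ factor that can grow with $\sigma/r$. The double Cauchy--Schwarz chain sidesteps this by working with averaged moments of $s_r'$ and the Gaussian chi-squared throughout, which are both tight at the $1/r^4$ and $O(1)$ scales respectively.
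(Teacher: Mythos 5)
Your proof is correct, but it takes a genuinely different route from the paper's. The paper works directly with the exponentially tilted representation of the shifted score from Lemma~\ref{lem:shifted_score_characterization}: it writes $s_r(x+\eps)-s_r(x) = -\eps/r^2$ plus a tilted centered term, bounds the eighth moment of $e^{\eps Z_r/r^2}-1$ via a Taylor expansion on $\{|\eps Z_r|\le r^2\}$ and a Gaussian tail estimate off it, and controls the tilting denominator $\E_{Z_r|x}[e^{\eps Z_r/r^2}]^{-4}$ using the subgaussianity of the score (Lemma~\ref{lem:score-subgaussian}), combining everything with Cauchy--Schwarz. You instead write the difference as $\int_0^\eps s_r'(x+t)\,dt$, use the identity $s_r'(x) = -1/r^2 + \Var(Z_r\mid X=x)/r^4$ (which follows from the same conditional-expectation viewpoint as Lemma~\ref{lem:shifted_score_characterization}, though the paper never states the derivative version) together with Jensen and $\E[Z_r^8]\lesssim r^8$ to get $\E_{f_r}[(s_r')^4]\lesssim r^{-8}$, and then transfer to the $t$-shifted expectation via a second Cauchy--Schwarz against the translated chi-squared-type integral $\int f_r(v-t)^2/f_r(v)\,dv \le e^{t^2/r^2}$, which you compute exactly for the Gaussian kernel after a convolution-level Cauchy--Schwarz. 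All three steps check out: the derivative identity is valid since $f_r$ is a strictly positive smooth Gaussian convolution, the fourth-moment bound is tight at scale $r^{-8}$, and $e^{t^2/r^2}=O(1)$ for $|t|\le r/60$, so the conclusion $\eps^2/r^4$ follows with no spurious logarithms. What your approach buys is modularity: you never have to control the tilting denominator (the step where the paper invokes score subgaussianity and the restriction $|\eps|\le r/60$ enters through $e^{8\eps^2/r^2}\lesssim 1$), and the translated chi-squared bound is a clean reusable fact about $r$-smoothed densities; what the paper's route buys is that it reuses machinery it has already set up for shifted scores and needs no differentiability-of-score bookkeeping beyond Lemma~\ref{lem:shifted_score_characterization}.
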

\begin{proof}
By Lemma~\ref{lem:shifted_score_characterization},
\begin{align}
\label{eq:shifted_score_error_bound}
\begin{split}
    s_r(x+\eps) - s_r(x) &= \frac{\E_{Z_r|x}\left[e^{\frac{\eps Z_r }{r^2}} \frac{Z_r - \eps}{r^2} \right]}{\E_{Z_r | x} \left[e^{\frac{\eps Z_r }{r^2}} \right]}-  \E_{Z_r | x} \left[\frac{Z_r}{r^2}\right]\\
    &= -\frac{\eps}{r^2} + \frac{\E_{Z_r | x}\left[e^{\frac{\eps Z_r}{ r^2}} (Z_r - \E_{Z_r|x}[Z_r]) \right]}{r^2 \E_{Z_r|x}\left[ e^{\frac{\eps Z_r}{r^2}} \right]}
\end{split}
\end{align}
Now,
\begin{align*}
\E_{Z_r | x}\left[e^{\frac{\eps Z_r}{ r^2}} (Z_r - \E_{Z_r|x}[Z_r])\right] &= \E_{Z_r | x}\left[(e^{\frac{\eps Z_r}{r^2}} - 1) Z_r \right] - \E_{Z_r|x}\left[e^{\frac{\eps Z_r}{r^2}} - 1\right]\E_{Z_r|x}[Z_r]\\
\end{align*}
So,
\begin{align*}
   \E_{Z_r | x}\left[e^{\frac{\eps Z_r}{ r^2}} (Z_r - \E_{Z_r|x}[Z_r])\right]^4 &\lesssim \E_{Z_r | x}\left[(e^{\frac{\eps Z_r}{r^2}} - 1) Z_r \right]^4 + \E_{Z_r|x}\left[e^{\frac{\eps Z_r}{r^2}} - 1\right]^4\E_{Z_r|x}[Z_r]^4\\
    &\lesssim \E_{Z_r|x}\left[(e^{\frac{\eps Z_r}{r^2}} - 1)^4 \right] \E_{Z_r|x}[Z_r^4] \quad \text{by Cauchy-Schwarz and Jensen's inequalities}
\end{align*}
So, we have, by Cauchy-Schwarz and Jensen's inequalities,
\begin{align}
\label{eq:shifted_error_numerator_bound}
\E_x\left[\E_{Z_r | x}\left[e^{\frac{\eps Z_r}{ r^2}} (Z_r - \E_{Z_r|x}[Z_r])\right]^4\right] &\lesssim \sqrt{\E_{Z_r}\left[(e^{\frac{\eps Z_r}{r^2}} - 1)^8 \right] \E_{Z_r}\left[ Z_r^8\right]}
\end{align}
We will now bound $\E_{Z_r}\left[(e^{\frac{\eps Z_r}{r^2}} - 1)^8 \right]$. By a Taylor expansion, when $|\eps z| \le r^2$
\begin{align*}
    e^{\frac{\eps z}{r^2}} - 1 = \frac{\eps z}{r^2} + O\left( \left( \frac{\eps z}{r^2}\right)^2\right)
\end{align*}
so that
\[
    \E_{Z_r \sim \mathcal N(0,r^2)}\left[(e^{\frac{\eps Z_r}{r^2}} - 1)^8 \1_{|\eps Z_r| \le r^2} \right] \lesssim \frac{\eps^8}{r^{16}} \E_{Z_r}\left[Z_r^8 \right] \lesssim \frac{\eps^8}{r^8}
\]
On the other hand, when $|\eps z| \ge r^2$, we have $(e^{\frac{\eps z}{r^2}} - 1)^8 \lesssim e^{\frac{8|\eps z|}{r^2}}$, meaning that
\begin{align*}
    \E_{Z_r \sim \mathcal N(0, r^2)}\left[(e^{\frac{\eps Z_r}{r^2}} - 1)^8 \1_{|\eps Z_r| > r^2} \right] &\lesssim \int_{|r^2/\eps|}^\infty \frac{1}{\sqrt{2 \pi} r} e^{ \frac{8|\eps z|}{r^2}} e^{-\frac{z^2}{2 r^2}} dz\\
    &= e^{\frac{32 \eps^2}{r^2}}\int_{|r^2/\eps|}^\infty \frac{1}{\sqrt{ 2\pi} r} e^{-\frac{(z - 8 |\eps|)^2}{2 r^2}} dz\\
    &\lesssim \Pr_{Z_r\sim \mathcal N(0,r^2)}\left[Z_r \ge r^2/|\eps| - 8 |\eps| \right]\\
    &\lesssim e^{-\frac{(|r^2/\eps| - 8 |\eps|)^2}{2 r^2}}\\
    &\lesssim \frac{\eps^8}{r^8}
\end{align*}
Also, $\E_{Z_r} [Z_r^8] \lesssim r^8$
So, we have shown in \eqref{eq:shifted_error_numerator_bound},
\begin{align*}
    \E_x\left[\E_{Z_r | x}\left[e^{\frac{\eps Z_r}{ r^2}} (Z_r - \E_{Z_r|x}[Z_r])\right]^4\right] \lesssim \frac{\eps^4}{r^4} \sqrt{\E_{Z_r} [Z_r^8]} \lesssim \eps^4
\end{align*}
Also, using Jensen's inequality
\begin{align*}
    \E_x\left[\frac{1}{\E_{Z_r | x}\left[e^{\frac{\eps Z_r}{r^2}} \right]^4} \right] &\le \E_{x}\left[e^{-4 \eps \E_{Z_r|x}[Z_r/r^2]} \right]\\
    &= \E_{x}\left[e^{-4 \eps s_r(x)} \right] \quad \text{since $s_r(x) = \E_{Z_r | x}[Z_r]/r^2$ by Lemma~\ref{lem:shifted_score_characterization}}\\
    %&\le e^{8 \eps^2 \I_r} \quad \text{since $s_r(x) \in \Gamma(\I_r, 15/r)$ by Lemma~\ref{lem:shifted_score-moments}, and $|\eps| \le r/60$}\\
    &\le e^{8 \eps^2/r^2} \lesssim 1 \quad \text{by Lemma~\ref{lem:score-subgaussian} and since $|\eps| \le r/60$}
\end{align*}
Then, using \eqref{eq:shifted_score_error_bound},
\begin{align*}
    \E_{x \sim f_r}\left[(s_r(x+\eps) - s_r(x))^2 \right] &\lesssim \frac{\eps^2}{r^4} + \frac{1}{r^4}\E_{x}\left[\frac{\E_{Z_r | x}\left[e^{\frac{\eps Z_r}{r^2}}(Z_r - \E_{Z_r |x} [Z_r])\right]^2}{\E_{Z_r | x}[e^{\frac{\eps Z_r}{r^2}}]^2} \right]\\
    &\le \frac{\eps^2}{r^4} + \frac{1}{r^4} \sqrt{\E_x\left[\E_{Z_r | x}\left[e^{\frac{\eps Z_r}{r^2}} (Z_r - \E_{Z_r | x}[Z_r])\right]^4 \right] \E_x\left[\frac{1}{\E_{Z_r | x}\left[e^{\frac{\eps Z_r}{r^2}} \right]^4} \right]}\\
    &\lesssim \frac{\eps^2}{r^4}
\end{align*}
\end{proof}
The  next lemma shows that $\wh s_r^\sym$ approximates $s_r$ in a certain sense. Using this, we obtain that the second moment of $\wh s_r^\sym$ is close the $\I_r$.
\begin{lemma}
\label{lem:sym_kde_variance_error}
    Let $f^*$ be an arbitrary symmetric distribution with mean $\mu$, and let $f_r$ be the $r$-smoothed version of $f^*$. Let $s_r$ be the score function of $f_r$, and let $\I_r$ be the Fisher information. Let $\wh s_r^\sym$ be the symmetrized clipped Kernel Density Estimate score from $N$ samples, symmetrized around $\mu + \eps$ for $|\eps| \le r/60$, as defined in \eqref{eq:symmetrized_kde_def}. Let $\gamma > C$ be a parameter for large enough constant $C$. Then for any $r \le \sigma$ and $\frac{N}{\log \frac{1}{\delta}} \ge \left(\frac{\gamma^{5/12} \sigma}{r} \right)^{6 + \beta}$ for $\beta > 0$, if $|\eps| \le r^2 \sqrt{\frac{\I_r}{\gamma}}$, with probability $1 - \delta$,
    \[
        \E_{x \sim f_r}\left[(\wh s_r^\sym(x) - s_r(x))^2 \right]\lesssim \frac{\I_r}{\gamma} 
    \]
\end{lemma}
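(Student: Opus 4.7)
The goal is to reduce the symmetrized-score error to (a) the clipped-score error of Lemma~\ref{lem:clipped_kde_error}, plus (b) a small shift-in-score term controlled by Lemma~\ref{lem:shifted_true_score_error}. The natural split is by side of the symmetrization point $\mu+\eps$, exploiting the symmetry of $f_r$ about $\mu$ to convert the ``mirrored'' side back to an unmirrored integral with a shifted argument.

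\textbf{Step 1 (split by side).} I would write $\E_{x \sim f_r}[(\wh s_r^\sym(x) - s_r(x))^2] = A + B$, where $A$ integrates over $\{x \ge \mu+\eps\}$ and $B$ over $\{x < \mu+\eps\}$. On the right half $\wh s_r^\sym(x) = \wh s_r^\clip(x)$ by definition, so $A \le \E_{x\sim f_r}[(\wh s_r^\clip(x)-s_r(x))^2] \lesssim \I_r/\gamma$ directly from Lemma~\ref{lem:clipped_kde_error}.

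\textbf{Step 2 (use symmetry of $f_r$).} Because $f^*$ is symmetric about $\mu$, so is $f_r$, giving $f_r(2\mu - w) = f_r(w)$ and $s_r(2\mu-w) = -s_r(w)$. Substituting $y = 2(\mu+\eps) - x$ and then $z = y - 2\eps$ in $B$ turns $(-\wh s_r^\clip(2(\mu+\eps)-x) - s_r(x))^2 f_r(x)$ into
\[
B = \int_{z > \mu-\eps}(\wh s_r^\clip(z+2\eps) - s_r(z))^2 f_r(z)\,dz.
\]
Then inserting $\pm s_r(z+2\eps)$ and applying $(a+b)^2 \le 2a^2 + 2b^2$ splits $B \le 2B_1 + 2B_2$, where
\[
B_1 = \int_{z>\mu-\eps}(\wh s_r^\clip(z+2\eps) - s_r(z+2\eps))^2 f_r(z)\,dz, \quad B_2 = \int_{z>\mu-\eps}(s_r(z+2\eps)-s_r(z))^2 f_r(z)\,dz.
\]
The term $B_2$ is bounded by Lemma~\ref{lem:shifted_true_score_error} applied with shift $2\eps$ (whose magnitude still meets the hypothesis up to an absorbable constant), yielding $B_2 \lesssim \eps^2/r^4 \lesssim \I_r/\gamma$ once we invoke $|\eps| \le r^2\sqrt{\I_r/\gamma}$.

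\textbf{Step 3 (handle $B_1$, the main obstacle).} After substituting $u = z+2\eps$,
\[
B_1 = \int_{u>\mu+\eps}(\wh s_r^\clip(u)-s_r(u))^2 f_r(u-2\eps)\,du,
\]
which differs from the quantity bounded by Lemma~\ref{lem:clipped_kde_error} only in that the integrating density is $f_r(u-2\eps)$ rather than $f_r(u)$. This mismatch is the main technical obstacle. Since $|\eps|/r \lesssim 1/\sqrt{\gamma}$ (using $\I_r \le 1/r^2$), the shift is tiny at scale $r$, and the plan is to rerun the three-region decomposition underlying Lemma~\ref{lem:clipped_kde_error}: (i) in the large-density typical region, Lemma~\ref{lem:density_close} gives $f_r(u-2\eps) \le 2f_r(u)$, costing only a constant factor; (ii) in the small-density typical and atypical regions, the probabilities under $f_r(\cdot-2\eps)$ differ from those under $f_r$ by a $(1+o(1))$ factor because they are controlled by Chebyshev and typicality-radius arguments that are insensitive to an $O(r)$ mean shift, so Lemmas~\ref{lem:score_bounded_in_small_set} and \ref{lemma:score_error_bounded_in_small_set} still deliver an $\I_r/\gamma$ bound. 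Summing the three regions gives $B_1 \lesssim \I_r/\gamma$, and combining $A$, $B_1$, and $B_2$ yields the claim.
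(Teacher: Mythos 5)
Your Steps 1 and 2 coincide with the paper's own proof: the paper splits at $\mu+\eps$, bounds the half $\{x\ge\mu+\eps\}$ directly by Lemma~\ref{lem:clipped_kde_error}, and on the other half writes $\wh s_r^{\sym}(x)-s_r(x)=-\wh s_r^{\clip}\bigl((2\mu-x)+2\eps\bigr)+s_r(2\mu-x)$, inserts $s_r\bigl((2\mu-x)+2\eps\bigr)$, and bounds the two resulting terms by Lemma~\ref{lem:clipped_kde_error} and Lemma~\ref{lem:shifted_true_score_error} (the latter applied with shift $2\eps$, with the same absorbable-constant remark you make), finishing with $\eps^2/r^4\le \I_r/\gamma$. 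Where you genuinely differ is your Step 3: the paper simply cites Lemma~\ref{lem:clipped_kde_error} for the mirrored term, even though, exactly as you observe, the reflection is about $\mu+\eps$ rather than $\mu$, so the integrating density becomes $f_r(\cdot-2\eps)$ and the lemma does not literally apply. Identifying that change of measure is a real contribution; the paper passes over it silently.

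However, the patch you propose for Step 3 has a gap. In the large-density region, Lemma~\ref{lem:density_close} yields $f_r(u-2\eps)\le 2f_r(u)$ only under the hypothesis $\frac{16|\eps|}{r}\sqrt{\log\frac{1}{rf_r(u)}}\le 1$; with $f_r(u)\ge\alpha=\frac{1}{t^3\sigma_r}$ one has $\log\frac{1}{r\alpha}=\Theta\bigl(\log\frac{N}{\log(1/\delta)}\bigr)$ in regimes such as $\I_r r^2=\Theta(1)$ with $\sigma/r$ polynomially large, while the lemma's hypotheses only give $|\eps|\le r^2\sqrt{\I_r/\gamma}\le r/\sqrt{\gamma}$ with $\gamma$ possibly a fixed constant; so the condition can fail, and in fact the density ratio over a shift of $2|\eps|$ can be as large as $e^{\Theta(\sqrt{\log(N/\log(1/\delta))}/\sqrt{\gamma})}$, not a constant. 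Likewise, for the small-density typical region the shifted-measure probability is $\Pr_{z\sim f_r}\bigl[f_r(z+2\eps)<\alpha,\ |z+2\eps-\mu|\le t\sigma_r\bigr]$, which is not bounded by $\alpha\cdot 2t\sigma_r$ because the density bound holds at $z+2\eps$ rather than at the point where the measure sits; the ``insensitive to an $O(r)$ shift'' argument covers only the Chebyshev (atypical) region. A repair that stays inside the paper's toolkit is to keep the expectation in the unshifted variable $z\sim f_r$ (i.e.\ bound $\E_{z\sim f_r}[(\wh s_r^{\clip}(z+2\eps)-s_r(z+2\eps))^2]$ directly), define the three regions in terms of $z$ so the small-probability bounds are under the correct measure, control the true score at the shifted point via Lemma~\ref{lem:score_bounded_in_small_set} (which is stated with a shift, and whose shift conditions do follow from $|\eps|\le r^2\sqrt{\I_r/\gamma}$) together with the clipping threshold for $\wh s_r^{\clip}$, and absorb the offset $2\eps$ into the within-bin tolerance of Lemmas~\ref{lem:generic_error_estimate_within_bin} and~\ref{lem:generic_error_estimate_large_density}, checking that the extra drift term of order $\frac{|\eps|}{r^2}\log\frac{1}{r\alpha}$ stays below the target $\sqrt{\I_r/\gamma}$. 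As written, though, your constant-factor density comparison is the step that would fail.
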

\begin{proof}
    By definition of $\wh s_r^\sym$, and using Lemma~\ref{lem:clipped_kde_error},
    \begin{align*}
        \E_{x \sim f_r}\left[(\wh s_r^\sym(x) - s_r(x))^2 \1_{x \ge \mu + \eps}\right] = \E_{x \sim f_r}\left[(\wh s_r^\clip(x) - s_r(x))^2 \1_{x \ge \mu + \eps}\right] \lesssim \frac{\I_r}{\gamma}
    \end{align*}
    On the other hand, by Lemmas~\ref{lem:clipped_kde_error}~and~\ref{lem:shifted_true_score_error}
    \begin{align*}
        &\E_{x \sim f_r}\left[(\wh s_r^\sym(x) - s_r(x))^2 \1_{x < \mu + \eps}\right]\\
        &= \E_{x \sim f_r}\left[(-\wh s_r^\clip(2(\mu + \eps) - x) + s_r(2 \mu - x))^2 \1_{x < \mu + \eps}\right]\\
        &\le \E_{x \sim f_r}\left[(-\wh s_r^\clip(2(\mu + \eps)- x) + s_r(2(\mu + \eps) - x))^2 \1_{x < \mu + \eps}\right] + \E_{x \sim f_r}\left[( s_r((2\mu - x) + 2 \eps) - s_r(2\mu - x))^2 \right]\\
        &\lesssim \frac{\I_r}{\gamma} + \frac{\eps^2}{r^4}
    \end{align*}
    The claim follows since $\frac{\eps^2}{r^4} \le \frac{\I_r}{\gamma}$.
\end{proof}
%\symkdevariancecor
\symkdevariancecor*
\begin{proof}
    We have
    \begin{align*}
        \E_{x\sim f_r}\left[\wh s_r^\sym(x)^2 \right] &= \E_{x \sim f_r}\left[(s_r(x) + \wh s_r^\sym(x) - s_r(x))^2 \right]\\
        &= \E_{x \sim f_r}\left[s_r(x)^2 \right] + 2 \E_{x \sim f_r}\left[ s_r(x) (\wh s_r^\sym(x) - s_r(x))\right] + \E_{x \sim f_r}\left[(\wh s_r^\sym(x) - s_r(x))^2 \right]\\
        &\le \E_{x \sim f_r} [s_r(x)^2] + 2 \sqrt{\E_{x \sim f_r}[s_r(x)^2] \E_{x \sim f_r}[(\wh s^\sym(x) - s_r(x))^2]} + \E_{x \sim f_r}[(\wh s_r^\sym(x) - s_r(x))^2]\\
    \end{align*}
    So, by Lemma~\ref{lem:sym_kde_variance_error},
    \begin{align*}
        \E_{x \sim f_r}\left[ \wh s_r^\sym(x)^2 \right] - \I_r &\lesssim \frac{\I_r}{\sqrt{\gamma}} + \frac{\I_r}{\gamma} \lesssim \frac{\I_r}{\sqrt{\gamma}}\\
    \end{align*}
    Similarly,
    \begin{align*}
        \E_{x\sim f_r}\left[s_r(x)^2 \right] &\le \E_{x \sim f_r}\left[ \wh s_r^\sym(x)^2\right] + 2 \sqrt{\E_{x \sim f_r}[\wh s_r^\sym(x)^2] \E_{x \sim f_r}\left[(\wh s_r^\sym(x) - s_r(x))^2 \right] } + \E_{x \sim f_r}\left[ (\wh s_r^\sym(x) - s_r(x))^2\right]\\
    \end{align*}
    So, since we showed $\E_{x \sim f_r}[\wh s_r^\sym(x)^2] \lesssim \I_r$, we have
    \begin{align*}
        \E_{x \sim f_r}[s_r(x)^2] \le \E_{x \sim f_r}\left[\wh s_r^\sym(x)^2 \right] + O\left(\frac{\I_r}{\sqrt{\gamma}} \right)\\
    \end{align*}
    so that
    \begin{align*}
        \E_{x \sim f_r}\left[\wh s_r^\sym(x)^2 \right] - \I_r \gtrsim \frac{\I_r}{\sqrt{\gamma}}
    \end{align*}
    The claim follows.
\end{proof}

Finally, we show that the expectation of the symmetrized, clipped KDE score function $\wh s_r^\sym$ symmetrized around $\mu + \eps$ for small $\eps$ is close to $\eps \I_r$.
\symkdeerror*
\begin{proof}
    Since $f_r$ is symmetric around $\mu$, $f_r(x) = f_r(2 \mu - x)$. So using the definition of $\wh s_r^\sym$, we have
    \begin{align*}
        \int_{-\infty}^{\mu + \eps} f_r(x-\eps) \wh s^\sym_r(x)dx &= - \int_{-\infty}^{\mu + \eps} f_r(2 \mu - x + \eps) s_r^\clip(2(\mu+\eps) - x) dx\\
        &=\int_{\infty}^{\mu + \eps} f_r(y -  \eps) s_r^\clip(y) dy \quad \text{Substituting $y = 2(\mu + \eps) - x$}\\
        &= -\int_{\mu + \eps}^\infty f_r(y-\eps) s_r^\sym(y) dy \quad \text{since $s_r^\sym(x) = s_r^\clip(x)$ for $x \ge \mu + \eps$}
    \end{align*}
    So, we have
    \begin{align*}
        \E_{x \sim f_r}\left[\frac{f_r(x-\eps)}{f_r(x)}\wh s_r^\sym(x) \right] = \int_{-\infty}^{\mu + \eps} f_r(x-\eps) \wh s_r^\sym(x) dx + \int_{\mu+\eps}^\infty f_r(x-\eps) s_r^\sym(x) dx = 0
    \end{align*}
    So,
    \begin{align*}
        \E_{x \sim f_r}\left[\wh s_r^\sym(x) \right] = \E_{x \sim f_r}\left[\frac{f_r(x)-f_r(x-\eps)}{f_r(x)} \wh s_r^\sym(x)\right]
    \end{align*}
    Thus,
    \begin{align*}
        &\left(\E_{x \sim f_r}\left[\wh s_r^\sym(x) \right] - \eps  \E_{x \sim f_r}\left[ \wh s_r^\sym(x)^2 \right]\right)^2\\
        &= \E_{x \sim f_r}\left[\left(\frac{f_r(x) - f_r(x-\eps)}{f_r(x)} - \eps \wh s_r^\sym(x) \right) \wh s_r^\sym(x)\right]^2\\
        &= \E_{x \sim f_r}\left[\left(\frac{f_r(x) - f_r(x - \eps) - \eps f_r'(x)}{f_r(x)} + \eps(s_r(x) - \wh s_r^\sym(x))\right) \wh s_r^\sym(x)\right]^2\quad \text{since $s_r(x)= \frac{f_r'(x)}{f_r(x)}$}\\ 
        &\le \E_{x \sim f_r}\left[\left(\frac{f_r(x) - f_r(x - \eps) - \eps f_r'(x)}{f_r(x)} + \eps(s_r(x) - \wh s_r^\sym(x))\right)^2 \right]\E_{x \sim f_r}\left[\wh s_r^\sym(x)^2 \right]\\
        &\lesssim \left(\E_{x \sim f_r}\left[\left(\frac{f_r(x) - f_r(x - \eps) - \eps f_r'(x)}{f_r(x)}\right)^2 \right] + \eps^2 \E_{x \sim f_r}\left[(s_r(x) - \wh s_r^\sym(x))^2 \right] \right) \E_{x \sim f_r}[\wh s_r^\sym(x)^2]\\
        &= \left(\E_{x \sim f_r}\left[ \Delta_{-\eps}(x)^2\right] + \eps^2 \E_{x \sim f_r}\left[(s_r(x) - \wh s_r^\sym(x))^2 \right]\right)\E_{x \sim f_r}[\wh s_r^\sym(x)^2]
    \end{align*}
    where $\Delta_\eps(x) = \frac{f_r(x+\eps) - f_r(x) - \eps f_r'(x)}{f_r(x)}$. But by Lemma~\ref{lem:Delta_eps_bound}, $\E_{x \sim f_r}\left[\Delta_{-\eps}(x)^2 \right] \lesssim \frac{\eps^4}{r^4} \lesssim \eps^2 \frac{\I_r}{\gamma}$ (since $|\eps| \le r^2 \sqrt{\frac{\I_r}{\gamma}}$). So, by Lemma~\ref{lem:sym_kde_variance_error} and Lemma~\ref{cor:sym_kde_variance}, we have
    \begin{align*}
        \left|\E_{x \sim f_r}[\wh s_r^\sym(x)] - \eps\E_{x \sim f_r}[\wh s_r^\sym(x)^2]\right| &\lesssim \sqrt{\left(\eps^2 \frac{\I_r}{\gamma} + \eps^2 \frac{\I_r}{\gamma}\right) \I_r}\\
        &\lesssim \frac{\eps \I_r}{\sqrt{\gamma}}
    \end{align*}
    Then, using Lemma~\ref{cor:sym_kde_variance} once again on $\E_{x \sim f_r}[s_r^\sym(x)^2]$ on the LHS, the claim follows.
\end{proof}
\section{Estimating $\I_r$}
In this section, we show that $\wh \I_r = \E_{x \sim \wh f_r}[\wh s_r^\sym(x)^2]$ provides a good estimate of $\I_r$.
\begin{lemma}
\label{lem:wh_f_bin_close}
    Let $\wh f_r$ be the kernel density estimate of $f_r$ from $N$ samples $Y_1, \dots, Y_N \sim f^*$ as defined in \eqref{eq:kde_def}. Let $x, \eps, N$ be such that $N \ge \frac{12 \log \frac{2}{\delta}}{f_r(x) r}$ and $\frac{16|\eps|}{r} \sqrt{\log \frac{1}{r f_r(x)}} \le 1$. We have that with probability $1-\delta$, for all $|\zeta| \le |\eps| $ simultaneously,
    \begin{align*}
        |\wh f_r(x+\zeta) - f_r(x+\zeta)| \lesssim \sqrt{\frac{f_r(x+\zeta) \log \frac{2}{\delta}}{N r}} + \frac{|\eps| f_r(x+\zeta)}{r} \sqrt{\log \frac{2}{f_r(x+\zeta) r}}
    \end{align*}
\end{lemma}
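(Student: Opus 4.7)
The plan is to reduce to a single-point application of Lemma~\ref{lem:pointwise_density_estimate} at the anchor $x$, then propagate uniformly over $|\zeta|\le|\eps|$ by invoking Lemma~\ref{lem:density_close} (``close-by densities are close'') separately on $f_r$ and on $\wh f_r$. This parallels the structure of Lemma~\ref{lem:generic_error_estimate_within_bin}, which handled the score function analogously. The key observation is that $\wh f_r$ is itself a legitimate $r$-smoothed density, namely the $r$-smoothing of the empirical distribution of $Y_1,\dotsc,Y_N$, so Lemma~\ref{lem:density_close}, which is stated for arbitrary (possibly asymmetric) source distributions, applies to $\wh f_r$ just as it does to $f_r$.

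First, since $N \ge 12\log(2/\delta)/(f_r(x)r) \ge 3\log(2/\delta)/(f_r(x)r)$, Lemma~\ref{lem:pointwise_density_estimate} gives, with probability $1-\delta$, the bound $|\wh f_r(x) - f_r(x)| \lesssim \sqrt{f_r(x)\log(2/\delta)/(Nr)}$. Under our sample-size assumption this error is at most a small constant fraction of $f_r(x)$, so $\wh f_r(x) \in [f_r(x)/2,\, 2 f_r(x)]$. Next, the hypothesis $\frac{16|\eps|}{r}\sqrt{\log(1/(rf_r(x)))} \le 1$ directly verifies the premise of Lemma~\ref{lem:density_close} for $f_r$, and the comparison $\wh f_r(x) \asymp f_r(x)$ together with $rf_r(x) \le 1/\sqrt{2\pi}$ gives $\sqrt{\log(1/(r\wh f_r(x)))} \le \sqrt{2}\,\sqrt{\log(1/(rf_r(x)))}$, so the premise also holds for $\wh f_r$. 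Applying Lemma~\ref{lem:density_close} to each yields
\[
|f_r(x+\zeta)-f_r(x)| \lesssim \tfrac{|\zeta|}{r} f_r(x) \sqrt{\log \tfrac{1}{rf_r(x)}}, \qquad |\wh f_r(x+\zeta)-\wh f_r(x)| \lesssim \tfrac{|\zeta|}{r} f_r(x)\sqrt{\log \tfrac{1}{rf_r(x)}},
\]
simultaneously for all $|\zeta|\le|\eps|$ (the bound is purely deterministic in $\zeta$ once we condition on the single event from step one).

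Combining via the triangle inequality, $|\wh f_r(x+\zeta)-f_r(x+\zeta)|$ is at most the pointwise-at-$x$ error plus the two smoothness terms, which gives a bound in terms of $f_r(x)$. To express this in terms of $f_r(x+\zeta)$ as required, one last application of Lemma~\ref{lem:density_close} shows $f_r(x) = \Theta(f_r(x+\zeta))$, so we may substitute $f_r(x+\zeta)$ for $f_r(x)$ throughout (in particular inside the logarithm, using again $rf_r \le 1/\sqrt{2\pi}$) with only a constant-factor loss, and use $|\zeta|\le|\eps|$ to obtain the stated bound. The main thing requiring care is the bookkeeping around $\wh f_r$ versus $f_r$: Lemma~\ref{lem:density_close} has an $8|\eps|/r$ hypothesis threshold whereas we are given $16|\eps|/r$, and this factor-of-two slack is precisely what absorbs the transition between $\log(1/(rf_r(x)))$ and $\log(1/(r\wh f_r(x)))$ and the final swap to $f_r(x+\zeta)$.
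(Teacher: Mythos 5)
Your proof follows essentially the same route as the paper's: a single application of Lemma~\ref{lem:pointwise_density_estimate} at the anchor $x$, the observation that $\wh f_r$ is itself an $r$-smoothed distribution so Lemma~\ref{lem:density_close} controls its modulus of continuity in exactly the same way it controls that of $f_r$, a three-term triangle inequality, and one last invocation of Lemma~\ref{lem:density_close} to trade $f_r(x)$ for $f_r(x+\zeta)$. The specific accounting you flag --- the factor-of-two slack in the $16|\eps|/r$ hypothesis absorbing the $\log 2$ bump from $f_r(x)\leftrightarrow\wh f_r(x)$ and the final swap --- is precisely what the paper's proof uses.
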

\begin{proof}
    By Lemma~\ref{lem:pointwise_density_estimate}, with probability $1 - \delta$,
    \[
        |\wh f_r(x) - f_r(x)| \le \sqrt{\frac{3 f_r(x) \log \frac{2}{\delta}}{N r}}
    \]
    Note that since $N \ge \frac{12 \log \frac{2}{\delta}}{f_r(x) r}$, the RHS above is at most $\frac{f_r(x)}{2}$.
    
    Also, by Lemma~\ref{lem:density_close},
    \begin{align*}
        |f_r(x+\zeta) - f_r(x)| \le \frac{10 |\eps| f_r(x)}{r} \sqrt{ \log \frac{1}{f_r(x) r}}
    \end{align*}
    Now, since $\frac{8 |\eps|}{r} \sqrt{ \log \frac{1}{r \wh f_r(x)}} \le \frac{8|\eps|}{r} \sqrt{\log \frac{2}{r f_r(x)}} \le \frac{16 |\eps|}{r} \sqrt{\log \frac{1}{r f_r(x)}} < 1$, by Lemma~\ref{lem:density_close},
    \begin{align*}
        |\wh f_r(x+\zeta) - \wh f_r(x)| &\le \frac{10|\eps| \wh f_r(x)}{r} \sqrt{ \log \frac{1}{\wh f_r(x) r}}\\
        &\le \frac{15 |\eps| f_r(x)}{r} \sqrt{\log \frac{2}{f_r(x) r}}
    \end{align*}
    So, putting everything together
    \begin{align*}
        |\wh f_r(x+\zeta) - f_r(x+\zeta)| &\le |\wh f_r(x+\zeta) - \wh f_r(x)| + |\wh f_r(x) - f_r(x)| + |f_r(x) - f_r(x+\zeta)|\\
        &\le \sqrt{\frac{3f_r(x) \log \frac{2}{\delta}}{N r}} + \frac{30|\eps| f_r(x)}{r} \sqrt{\log \frac{2}{f_r(x) r}}
    \end{align*}
    Now, since $\frac{16 |\eps|}{r} \sqrt{\log \frac{1}{r f_r(x)}} \le 1$ so that by Lemma~\ref{lem:density_close}
    \[
        |f_r(x+\zeta) - f_r(x)| \le \frac{5}{8} f_r(x)
    \]
    The claim follows.
    %\jnote{and furthermore we have $f_r(x) r \le \frac{1}{\sqrt{2\pi}}$, meaning that $\sqrt{\log\frac{2}{f_r(x)r}} = \Omega(1)$.}
    %\jnote{The claim follows.}
\end{proof}

\begin{lemma}
\label{lem:generic_kde_density_close_large}
    Let $f^*$ be an arbitrary distribution with mean $\mu$ and variance $\sigma^2$ and let $f_r$ be the $r$-smoothed version of $f^*$, with variance $\sigma_r^2 = \sigma^2 + r^2$. Let $\wh f_r$ be the kernel density estimate of $f_r$ from $N$ samples $Y_1, \dots, Y_N \sim f^*$ as defined in \eqref{eq:kde_def}. Let $0<\alpha\le \frac{1}{\sqrt{2 \pi} r} $ and let $N \ge \frac{12 \log \left(\frac{2}{\delta}\left(\frac{2 t \sigma_r \sqrt{\alpha N}}{\sqrt{r}} + 1 \right)\right) + 400 \log \frac{1}{\alpha r}}{\alpha r}$. Then, with probability $1 - \delta$, for all $x$ such that $|x - \mu| \le t\sigma_r$ and $f_r(x) \ge \alpha$ simultaneously,
    \begin{align*}
        |\wh f_r(x) - f_r(x)|\lesssim f_r(x)\sqrt{\frac{1}{\alpha r N} \log \left(\frac{2}{\delta} \left(\frac{2 t \sigma_r \sqrt{\alpha N}}{\sqrt{r}} + 1 \right) \right) \log \frac{2}{\alpha r}} 
    \end{align*}
\end{lemma}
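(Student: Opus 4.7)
The plan is to carry out a binning (union-bound) argument completely analogous to Lemma~\ref{lem:generic_error_estimate_large_density}, but with the pointwise density guarantee Lemma~\ref{lem:wh_f_bin_close} playing the role that the pointwise score guarantee Lemma~\ref{lem:generic_error_estimate_within_bin} played there. The key observation is that Lemma~\ref{lem:wh_f_bin_close} already delivers a uniform bound on a small interval $[y-\epsilon, y+\epsilon]$; all we need to do is tile the large-density typical region with such intervals and take a union bound over the anchor points.

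Concretely, I would cover $[\mu - t\sigma_r, \mu + t\sigma_r]$ by contiguous intervals of width $\epsilon = \sqrt{r/(\alpha N)}$ and let $S$ be the set of representatives consisting of the smallest point $y$ in each interval with $f_r(y) \ge \alpha$ (when such a point exists). Then $|S| \le \frac{2 t \sigma_r}{\epsilon} + 1 = \frac{2 t \sigma_r \sqrt{\alpha N}}{\sqrt{r}} + 1$, and by construction
\[
\{x : |x - \mu| \le t\sigma_r \text{ and } f_r(x) \ge \alpha\} \subseteq \bigcup_{y \in S} [y - \epsilon, y + \epsilon].
\]
For each $y \in S$, I would apply Lemma~\ref{lem:wh_f_bin_close} with failure probability $\delta/|S|$. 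The two hypotheses of that lemma — namely $N \ge 12 \log(2|S|/\delta)/(f_r(y) r)$ and $\frac{16\epsilon}{r}\sqrt{\log(1/(r f_r(y)))} \le 1$ — both reduce, after using $f_r(y) \ge \alpha$ and substituting our chosen $\epsilon$, to inequalities implied by the assumed sample-size lower bound (the $12 \log(\tfrac{2}{\delta}(\tfrac{2 t \sigma_r \sqrt{\alpha N}}{\sqrt r}+1))$ term handles the first, and the $400 \log(1/(\alpha r))$ term handles the second since $\frac{16\epsilon}{r}\sqrt{\log(1/(\alpha r))} = 16\sqrt{\log(1/(\alpha r))/(\alpha r N)}$).

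Taking the union bound over $S$ then yields, with probability $1-\delta$, for every $x$ in the large-density typical region (writing $x = y+\zeta$ for the corresponding representative),
\[
|\wh f_r(x) - f_r(x)| \lesssim \sqrt{\frac{f_r(x)\,\log(2|S|/\delta)}{N r}} \;+\; \frac{\epsilon f_r(x)}{r}\sqrt{\log \frac{2}{f_r(x) r}}.
\]
To convert this to the multiplicative form in the statement, I factor $f_r(x)$ out of both terms and use $f_r(x) \ge \alpha$: the first becomes $f_r(x)\sqrt{\log(2|S|/\delta)/(\alpha r N)}$, and the second becomes $f_r(x)\sqrt{\log(2/(\alpha r))/(\alpha r N)}$. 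Summing and absorbing into a single square root via $\sqrt{A}+\sqrt{B} \le 2\sqrt{AB}$ (valid since each of the logarithmic factors is at least $1$) gives exactly the RHS of the lemma after substituting $|S|$.

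The main obstacle, as usual in this proof style, is purely accounting: making sure the assumed lower bound on $N$ is strong enough to simultaneously (a) satisfy the hypothesis of Lemma~\ref{lem:wh_f_bin_close} at $f_r(y)=\alpha$ \emph{after} paying the $\log|S|$ price from the union bound, and (b) keep $\frac{16\epsilon}{r}\sqrt{\log(1/(\alpha r))} \le 1$ for $\epsilon = \sqrt{r/(\alpha N)}$. Both of these correspond precisely to the two additive terms inside the hypothesized bound on $N$, so the verification should be routine but needs to be written out carefully.
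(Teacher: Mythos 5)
Your proposal is correct and follows essentially the same route as the paper's own proof: tile the typical large-density region with intervals of width $\sqrt{r/(\alpha N)}$, anchor at the smallest point of density $\ge \alpha$ in each, apply Lemma~\ref{lem:wh_f_bin_close} with failure probability $\delta/|S|$ (checking its two hypotheses exactly as you do, via the two additive terms in the assumed bound on $N$), union bound, and use $f_r(x) \ge \alpha$ together with $\log\frac{2}{\alpha r} \gtrsim 1$ to combine the two error terms into the single multiplicative bound.
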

\begin{proof}
    Consider contiguous intervals of length $\eps$ starting from $\mu - t \sigma_r$ so that the last interval covers $\mu + t \sigma_r$, and let $S$ be the set of the smallest $y$ such that $f_r(y) \ge \alpha$ in each of these intervals, if one exists. Note that $|S| \le \frac{2 t \sigma_r}{\eps} + 1$. Then, we have that
    \begin{align*}
        \{x : |x - \mu| \le t \sigma_r \text{ and } f_r(x) \ge \alpha\} \subseteq \{[y-\eps, y+\eps] | y \in S\}
    \end{align*}
    Now for $\eps = \sqrt{\frac{r}{\alpha N}}$ and $y \in S$, since $N\ge \frac{12 \log \left(\frac{2}{\delta}\left(\frac{2 t \sigma_r \sqrt{\alpha N}}{\sqrt{r}} + 1 \right)\right)}{\alpha r}\ge \frac{12 \log\frac{2|S|}{\delta}}{f_r(y) r}$ and $\frac{16|\eps|}{r} \sqrt{\log \frac{1}{r f_r(y)}} = \frac{16}{\sqrt{\alpha r N}}\sqrt{\log \frac{1}{f_r(y) r}} \le \frac{16}{\sqrt{\alpha r N}}\sqrt{\log \frac{1}{\alpha r}} \le 1$, we have by Lemma~\ref{lem:wh_f_bin_close} that with probability $1 - \frac{\delta}{|S|}$, for all $|\zeta| \le |\eps|$ simultaneously,
    \begin{align*}
        \left| \wh f_r(y + \zeta ) - f_r(y+\zeta)\right| &\lesssim \sqrt{\frac{ f_r(y+\zeta)\log \frac{2|S|}{\delta}}{Nr}} + f_r(y+\zeta)\sqrt{\frac{1}{\alpha Nr} \log \frac{2}{\alpha r}}\\
        &\lesssim f_r(y+\zeta) \sqrt{\frac{1}{\alpha N r} \log \left(\frac{2}{\delta} \left(\frac{2 t \sigma_r \sqrt{\alpha N}}{\sqrt{r}} + 1 \right) \right) \log \frac{2}{\alpha r}} \\
        &\quad \text{since $f_r(y+\zeta) \gtrsim \alpha$ and $\alpha \le \frac{1}{\sqrt{2 \pi} r}$ so that $\log \frac{2}{\alpha r} > 1$}
    \end{align*} 
    So by a union bound, with probability $1 - \delta$, for all $x$ such that $|x - \mu| \le t \sigma_r$ and $f_r(x) \ge \alpha$ simultaneously,
    \begin{align*}
        |\wh f_r(x) - f_r(x)|\lesssim f_r(x) \sqrt{\frac{1}{\alpha N r} \log \left(\frac{2}{\delta} \left(\frac{2 t \sigma_r \sqrt{\alpha N}}{\sqrt{r}} + 1 \right) \right) \log \frac{2}{\alpha r}} 
    \end{align*}
\end{proof}
\begin{lemma}
\label{lem:kde_density_hoeffding}
    Let $S$ be a set and let $\wh f_r$ be the kernel density estimate of $f_r$ from $N$ samples $Y_1, \dots, Y_N \sim f^*$. Then, with probability $1 - \delta$,
    \[
        \left|\Pr_{\wh f_r}[S] - \Pr_{f_r}[S] \right| \lesssim \sqrt{\frac{\log \frac{2}{\delta}}{N}}
    \] 
\end{lemma}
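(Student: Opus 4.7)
\textbf{Proof plan for Lemma~\ref{lem:kde_density_hoeffding}.}
The plan is to write $\Pr_{\wh f_r}[S]$ as an empirical mean of i.i.d.\ $[0,1]$-valued random variables whose expectation equals $\Pr_{f_r}[S]$, and then invoke Hoeffding's inequality.  Concretely, for each sample $Y_i$, define
\[
g(Y_i) \;:=\; \int_S w_r(x - Y_i)\,dx \;=\; \Pr_{Z \sim \mathcal N(0,r^2)}[\,Y_i + Z \in S\,].
\]
Since $w_r(\cdot - Y_i)$ is a probability density, we have $g(Y_i) \in [0,1]$.  By the definition of $\wh f_r$ in \eqref{eq:kde_def} and linearity of integration,
\[
\Pr_{\wh f_r}[S] \;=\; \int_S \wh f_r(x)\,dx \;=\; \frac{1}{N}\sum_{i=1}^N g(Y_i),
\]
which is an average of $N$ i.i.d.\ random variables.

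Next I would compute the mean.  By Fubini,
\[
\E_{Y \sim f^*}[g(Y)] \;=\; \int_S \E_{Y \sim f^*}[w_r(x-Y)]\,dx \;=\; \int_S f_r(x)\,dx \;=\; \Pr_{f_r}[S],
\]
since the convolution $\E_{Y \sim f^*}[w_r(x-Y)]$ is exactly the $r$-smoothed density $f_r(x)$.

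Finally, applying Hoeffding's inequality to the bounded i.i.d.\ sum gives
\[
\Pr\!\left[\,\Bigl|\tfrac{1}{N}\textstyle\sum_{i=1}^N g(Y_i) - \Pr_{f_r}[S]\Bigr| \ge \varepsilon\,\right] \;\le\; 2\exp(-2N\varepsilon^2).
\]
Choosing $\varepsilon = \sqrt{\log(2/\delta)/(2N)}$ yields the stated bound with probability $1 - \delta$.  There is no real obstacle here: once one observes that integrating the KDE over $S$ is a linear functional of the data, and that the per-sample contribution is automatically bounded in $[0,1]$ because $w_r$ is a density, the result is immediate from Hoeffding.  Note this argument is distribution-free in $f^*$; it does not need symmetry, nor any pointwise lower bound on $f_r$, in contrast with the pointwise KDE estimates (Lemmas~\ref{lem:pointwise_density_estimate}--\ref{lem:generic_kde_density_close_large}) used elsewhere.
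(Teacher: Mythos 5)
Your proposal is correct and follows essentially the same route as the paper: write $\Pr_{\wh f_r}[S]$ as an average of i.i.d.\ $[0,1]$-valued contributions $\int_S w_r(x-Y_i)\,dx$, check the expectation equals $\Pr_{f_r}[S]$ by Fubini, and apply Hoeffding. If anything, your version spells out the per-sample boundedness more explicitly than the paper's terse statement that $0 \le \Pr_{\wh f_r}[S] \le 1$, but the argument is identical in substance.
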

\begin{proof}
    \begin{align*}
        \E_{Y_i}\left[\left[\Pr_{\wh f_r}[S]\right]\right] = \E_{Y_i}\left[\int_S \wh f_r(x) dx \right] = \int_S\E_{Y_i}\left[\wh f_r(x) \right] dx = \int_S f_r(x) dx = \Pr_{f_r}[S]
    \end{align*}
    Furthermore $0 \le \Pr_{\wh f_r}\left[S \right]\le 1$. So, by Hoeffding's inequality, with probability $1 - \delta$,
    \begin{align*}
        \left| \Pr_{\wh f_r}[S] - \Pr_{f_r}[S] \right|\lesssim \sqrt{\frac{\log \frac{2}{\delta}}{N}}
    \end{align*}
\end{proof}

\begin{lemma}
\label{lem:small_prob_sets_kde_density}
    Let $\gamma \ge C$ for large enough constant $C$, and let $\frac{N}{\log \frac{1}{\delta}} \ge \left(\frac{\gamma}{r^4 \I_r^2}\right)^{1 + \alpha}$ for small constant $\alpha > 0$. Let $\Tilde s$ be any function with $|\Tilde s_r(x)| \le \frac{2}{r} \sqrt{\log \frac{N}{\log \frac{1}{\delta}}}$ for all $x$. Let $S$ be a set with $\Pr_{f_r}[S] \lesssim \frac{1}{t^2}$ for $t= \gamma^{1/4}\sqrt{\frac{\log \frac{N}{\I_r r^2 \log \frac{1}{\delta}}}{\I_r r^2}}$. Let $\wh f_r$ be the Kernel density estimate of $f_r$ as defined in \eqref{eq:kde_def} from $N$ samples. Then, we have
    \begin{align*}
       \int_S \left(\wh f_r(x) - f_r(x)\right) \Tilde s_r(x)^2 dx \lesssim \frac{\I_r}{\sqrt{\gamma}}
    \end{align*}
\end{lemma}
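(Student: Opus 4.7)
The plan is to split the integral into its two natural pieces and bound each using the clipping assumption on $\Tilde s_r$ together with Lemma~\ref{lem:kde_density_hoeffding}. Let $M := \tfrac{4}{r^2}\log\tfrac{N}{\log(1/\delta)}$ be the uniform upper bound on $\Tilde s_r(x)^2$. Writing
\[
  \int_S (\wh f_r(x) - f_r(x))\,\Tilde s_r(x)^2\,dx = \int_S \wh f_r(x)\,\Tilde s_r(x)^2\,dx - \int_S f_r(x)\,\Tilde s_r(x)^2\,dx,
\]
I would bound the first integral by $M\cdot\Pr_{\wh f_r}[S]$ and the second by $M\cdot\Pr_{f_r}[S]$, which gives
\[
  \left|\int_S (\wh f_r - f_r)\,\Tilde s_r^2\,dx\right| \le M\bigl(\Pr_{\wh f_r}[S] + \Pr_{f_r}[S]\bigr).
\]

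Next, I would invoke Lemma~\ref{lem:kde_density_hoeffding} to get $\Pr_{\wh f_r}[S] \le \Pr_{f_r}[S] + O\bigl(\sqrt{\log(2/\delta)/N}\bigr)$ with probability $1-\delta$, yielding
\[
  \left|\int_S (\wh f_r - f_r)\,\Tilde s_r^2\,dx\right| \lesssim \frac{\log\frac{N}{\log(1/\delta)}}{r^2}\cdot\left(\frac{1}{t^2} + \sqrt{\frac{\log(2/\delta)}{N}}\right).
\]
The first term is the ``typical'' contribution: substituting $t^2 = \sqrt{\gamma}\,\log(N/(\I_r r^2 \log\tfrac{1}{\delta}))/(\I_r r^2)$ and using $\I_r r^2 \le 1$ so that $\log\tfrac{N}{\log(1/\delta)} \le \log\tfrac{N}{\I_r r^2 \log(1/\delta)}$, this simplifies to $O(\I_r/\sqrt{\gamma})$ cleanly.

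The bulk of the work is on the second (Hoeffding) term: I need
\[
  \frac{\log\frac{N}{\log(1/\delta)}}{r^2}\sqrt{\frac{\log(2/\delta)}{N}} \lesssim \frac{\I_r}{\sqrt{\gamma}},
\]
which rearranges to $\tfrac{N}{\log(1/\delta)} \gtrsim \tfrac{\gamma}{\I_r^2 r^4}\,\log^2\tfrac{N}{\log(1/\delta)}$. This is exactly where the extra slack $(1+\alpha)$ in the sample-size hypothesis $\tfrac{N}{\log(1/\delta)} \ge (\gamma/(r^4 \I_r^2))^{1+\alpha}$ is used: the polynomial margin absorbs the polylogarithmic factor, which I would verify by a short direct calculation (taking logs of both sides and using $\gamma \ge C$ together with $\I_r r^2 \le 1$ to dominate the log terms).

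The main obstacle, and really the only subtlety, is the bookkeeping in this last step --- confirming that the assumed $(1+\alpha)$ polynomial gap between $N/\log(1/\delta)$ and $\gamma/(r^4\I_r^2)$ is enough to swallow the $\log^2\tfrac{N}{\log(1/\delta)}$ factor uniformly in the parameter regime. Everything else is a direct application of the pointwise clipping bound and the Hoeffding-style density estimate of Lemma~\ref{lem:kde_density_hoeffding}; no binning or smoothness of $\Tilde s_r$ is required because the only structural property being used is the clipping bound.
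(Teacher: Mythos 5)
Your proposal is correct and follows essentially the same route as the paper's proof: bound $\Tilde s_r(x)^2$ by the clipping threshold $\frac{4}{r^2}\log\frac{N}{\log(1/\delta)}$, invoke Lemma~\ref{lem:kde_density_hoeffding} to control $\Pr_{\wh f_r}[S]$, handle the $1/t^2$ term via $\I_r r^2 \le 1$, and absorb the Hoeffding term using the $(1+\alpha)$ slack in the sample-size hypothesis (the paper does this by bounding $\log\frac{N}{\log(1/\delta)} \le (\frac{N}{\log(1/\delta)})^{\alpha/4}$). The only cosmetic difference is that you bound the two-sided quantity with both $\Pr_{\wh f_r}[S]$ and $\Pr_{f_r}[S]$, while the paper drops the $\int_S f_r \Tilde s_r^2$ term by nonnegativity; this changes nothing.
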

\begin{proof}
    Since $|\Tilde s_r(x)|\le \frac{2}{r} \sqrt{\log \frac{N}{\log \frac{1}{\delta}}}$ and $\Pr_{f_r}[S] \le \frac{1}{t^2}$, we have
    \begin{align*}
         \int_S \left(\wh f_r(x) - f_r(x)\right) \Tilde s_r(x)^2 dx &\le \int_S \wh f_r(x) \Tilde s_r(x)^2 dx\\
         &\le \frac{4}{r^2} \log \left(\frac{N}{\log \frac{1}{\delta}}\right) \Pr_{\wh f_r} [S]\\
         &\lesssim \frac{4}{r^2} \log \left(\frac{N}{\log \frac{1}{\delta}}\right) \left(\Pr_{f_r}[S] + \sqrt{\frac{\log \frac{2}{\delta}}{N}} \right)\quad \text{by Lemma~\ref{lem:kde_density_hoeffding}}\\
         &\lesssim \frac{1}{t^2 r^2} \log \left(\frac{N}{\log \frac{1}{\delta}} \right) + \frac{1}{r^2} \sqrt{\frac{\log \frac{1}{\delta}}{N}} \log \left(\frac{N}{\log \frac{1}{\delta}} \right)\\
    \end{align*}
    Now,
    \begin{align*}
         \frac{1}{t^2 r^2} \log \frac{N}{\log \frac{1}{\delta}}
        &= \frac{\I_r}{\sqrt{\gamma} \log \frac{N}{\I_r r^2 \log \frac{1}{\delta}}} \log \left( \frac{N}{\log \frac{1}{\delta}} \right) \quad \text{by our setting of $t$}\\
        &\lesssim \frac{\I_r}{\sqrt{\gamma} \log \frac{N}{\I_r r^2\log \frac{1}{\delta}}} \log \left(\frac{N}{\I_r r^2 \log \frac{1}{\delta}} \right) \quad \text{since $\I_r \le \frac{1}{r^2}$}\\
        &\lesssim \frac{\I_r}{\sqrt{\gamma}}
    \end{align*}
    Also,
    \begin{align*}
        \frac{1}{r^2}\sqrt{\frac{\log \frac{2}{\delta}}{N}} \log \left( \frac{N}{\log \frac{1}{\delta}}\right)
        &\le \frac{1}{r^2} \left( \frac{\log \frac{2}{\delta}}{N}\right)^{\frac{1}{2} - \frac{\alpha}{4}}\\
        &\lesssim \frac{\I_r}{\sqrt{\gamma}}\quad \text{since $\frac{N}{\log \frac{2}{\delta}} \ge \left(\frac{\gamma}{r^4 \I_r^2} \right)^{1 + \alpha}$}
    \end{align*}
   % \textcolor{red}{where the last line follows (not) from $\frac{N}{\log \frac{2}{\delta}} \ge \left(\frac{\gamma}{r^4 \I_r^2} \right)^{1 + \alpha}$, and hence $\left( \frac{\log \frac{2}{\delta}}{N}\right)^{\frac{1}{2} - \frac{\alpha}{2}} \le \left(\frac{r^4 \I_r^2}{\gamma}\right)^{(1-\alpha^2)/2}$ but the last term is less than 1, so the last term is \emph{larger} than $\left(\frac{r^4 \I_r^2}{\gamma}\right)^{1/2}$.}
    So, the claim follows.
\end{proof}

\begin{lemma}
\label{lem:large_density_kde_instantiated}
Let $\wh f_r$ be the kernel density estimate of $f_r$ from $N$ samples, as defined in \eqref{eq:kde_def}. Let $\gamma \ge C$ for large constant $C \ge 1$ be a parameter. Suppose $\Tilde s_r$ is a function such that $\E_{x \sim f_r}[\Tilde s_r(x)] \lesssim \I_r$.
Let $t = \gamma^{1/4} \sqrt{\frac{\log \frac{N}{\I_r r^2 \log \frac{1}{\delta}}}{\I_r r^2}}$, $\alpha = \frac{1}{t^3 \sigma_r}$. Then for any $r \le \sigma$ and $\frac{N}{\log \frac{1}{\delta}} \ge \left(\gamma^{5/12} \frac{\sigma}{r} \right)^{6+\beta} $ for some small constant $\beta > 0$, with probability $1 - \delta$, we have
\begin{align*}
    \int_{-\infty}^\infty \left|\wh f_r(x) - f_r(x) \right|\Tilde s_r(x)^2 \1_{\{|x - \mu| \le t \sigma_r \text{ and } f_r(x)\ge \alpha\}} dx \lesssim \frac{\I_r}{\sqrt{\gamma}}
\end{align*}
\end{lemma}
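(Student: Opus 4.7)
The plan is to reduce the integral to the product $\varepsilon_N \cdot \E_{f_r}[\Tilde s_r^2]$ by establishing a pointwise multiplicative control $|\wh{f}_r(x) - f_r(x)| \lesssim \varepsilon_N f_r(x)$ on the region $R := \{|x-\mu|\le t\sigma_r\}\cap\{f_r(x)\ge\alpha\}$, and then arguing $\varepsilon_N \lesssim 1/\sqrt{\gamma}$. (I read the hypothesis on $\Tilde s_r$ as $\E_{f_r}[\Tilde s_r(x)^2] \lesssim \I_r$, which is clearly the intended meaning since $\Tilde s_r^2$ appears in the integrand.)

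The pointwise bound is exactly what Lemma~\ref{lem:generic_kde_density_close_large} delivers, with
\[
\varepsilon_N \;=\; \sqrt{\tfrac{1}{\alpha rN}\log\!\Big(\tfrac{2}{\delta}\big(\tfrac{2t\sigma_r\sqrt{\alpha N}}{\sqrt r}+1\big)\Big)\log\tfrac{2}{\alpha r}}.
\]
To apply that lemma, I need to verify its sample-size prerequisite. This is the same calculation as in the first half of the proof of Lemma~\ref{lem:error_estimate_large_density_instantiated}: individually bound $1/(\alpha r)$ and $2t\sigma_r\sqrt{\alpha N}/\sqrt r$ using $r \le \sigma$, $\I_r \ge 1/\sigma_r^2$, $\sigma_r^2 \le 2\sigma^2$, and the derived inequality $\sigma/r \le (N/\log\tfrac{1}{\delta})^{1/6}$ that follows from the current hypothesis on $N$. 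The only change from the earlier proof is that $t$ now carries a $\gamma^{1/4}$ prefactor instead of $\gamma^{1/2}$, so $1/(\alpha r)$ is actually \emph{smaller} by a factor of $\gamma^{3/4}$, making the prerequisite strictly easier than before.

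Once the pointwise bound is in hand, the target integral is at most
\[
\varepsilon_N \int_R f_r(x)\,\Tilde s_r(x)^2\,dx \;\le\; \varepsilon_N\,\E_{x \sim f_r}[\Tilde s_r(x)^2] \;\lesssim\; \varepsilon_N\,\I_r,
\]
so it remains to show $\varepsilon_N \lesssim 1/\sqrt{\gamma}$. Expanding $1/(\alpha r N) = t^3 \sigma_r/(rN)$ and using $\I_r \ge 1/\sigma_r^2$ gives $1/(\alpha r N) \lesssim \gamma^{3/4}(\sigma/r)^4(\log\cdot)^{3/2}/N$. The sample-size hypothesis $N \ge \gamma^{5(6+\beta)/12}(\sigma/r)^{6+\beta}\log\tfrac{1}{\delta}$ then yields
\[
\frac{\gamma^{3/4}(\sigma/r)^4}{N} \;\le\; \frac{\gamma^{3/4 - 5(6+\beta)/12}}{(\sigma/r)^{2+\beta}\log\tfrac{1}{\delta}} \;=\; \frac{\gamma^{-7/4 - 5\beta/12}}{(\sigma/r)^{2+\beta}\log\tfrac{1}{\delta}},
\]
which beats the target $\gamma^{-1}$ by a factor of $\gamma^{-3/4 - 5\beta/12}$, with the extra $(\sigma/r)^{-(2+\beta)}/\log\tfrac{1}{\delta}$ slack absorbing the remaining polylogarithmic factors in $\varepsilon_N^2$.

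The main (really the only) obstacle is the polynomial bookkeeping of the $\gamma$-exponents. Conceptually this lemma is a straightforward adaptation of Lemma~\ref{lem:error_estimate_large_density_instantiated}, but the reason the conclusion here is $\I_r/\sqrt{\gamma}$ instead of $\I_r/\gamma$ is precisely that $t$ has a $\gamma^{1/4}$ prefactor in place of $\gamma^{1/2}$; one has to propagate this through the expansion consistently. This weaker rate is nonetheless exactly what is needed to plug into Lemma~\ref{lem:fisher_information_estimation} for estimating $\I_r$.
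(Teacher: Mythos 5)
Your overall route is exactly the paper's: read the hypothesis on $\Tilde s_r$ as a second-moment bound $\E_{f_r}[\Tilde s_r(x)^2]\lesssim \I_r$ (correct -- the statement has a typo), invoke Lemma~\ref{lem:generic_kde_density_close_large} to get a pointwise multiplicative bound $|\wh f_r(x)-f_r(x)|\lesssim \varepsilon_N f_r(x)$ on the region, verify its sample-size prerequisite by the same $\frac{1}{\alpha r}$ and $\frac{2t\sigma_r\sqrt{\alpha N}}{\sqrt r}$ calculation as in Lemma~\ref{lem:error_estimate_large_density_instantiated}, and then pull out $\E_{f_r}[\Tilde s_r^2]\lesssim\I_r$. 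The gap is in your final bookkeeping showing $\varepsilon_N\lesssim 1/\sqrt{\gamma}$. You substitute the lower bound on $N$ into the factor $\gamma^{3/4}(\sigma/r)^4/N$ and then claim the leftover factors of $\varepsilon_N^2$, namely $\log^{3/2}\bigl(\tfrac{N\sigma_r^2}{r^2\log\frac1\delta}\bigr)\cdot\log\bigl(\tfrac{2}{\delta}\bigl(\tfrac{2t\sigma_r\sqrt{\alpha N}}{\sqrt r}+1\bigr)\bigr)\cdot\log\tfrac{2}{\alpha r}$, are absorbed by the slack $\gamma^{-3/4-5\beta/12}(\sigma/r)^{-(2+\beta)}/\log\tfrac1\delta$. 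That slack is a \emph{fixed} quantity (at most $O(1)$), while those log factors grow with $N$: taking $r=\sigma$, $\gamma=C$, $\delta$ a constant, and $N\to\infty$ (all allowed by the hypotheses, which only lower-bound $N$), the slack is $O(1)$ but the product of logs is of order $\log^{7/2}\tfrac{N}{\log\frac1\delta}\to\infty$, so the chain of inequalities as written does not give $\varepsilon_N^2\lesssim 1/\gamma$. The loss occurs precisely when you discard the full $1/N$ in favor of its lower bound while the numerator still references $N$.

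The fix is the one the paper uses: exploit the $\beta$-slack in the exponent of the sample-size condition against $N/\log\tfrac1\delta$ itself, not against $\gamma$, $\sigma/r$, or $\log\tfrac1\delta$. Concretely, since $\tfrac{N}{\log\frac1\delta}\ge(\gamma^{5/12}\tfrac{\sigma}{r})^{6+\beta}$, one may assume WLOG $\tfrac{N}{\log\frac1\delta}\ge\bigl(\gamma^{5/12}\tfrac{\sigma}{r}\log\tfrac{N}{\log\frac1\delta}\bigr)^{6}$, which upgrades \eqref{eq:sigma_over_r_density} and yields $\tfrac{1}{\alpha r}\le \tfrac{N}{\gamma^{3/2}\log^{2}\left(\frac{N}{\log\frac1\delta}\right)\log\frac1\delta}$; the retained $\log^{2}\left(\frac{N}{\log\frac1\delta}\right)\log\frac1\delta$ in the denominator of $\tfrac{1}{\alpha r N}$ is exactly what cancels the three logarithmic factors of $\varepsilon_N^2$, leaving $\varepsilon_N^2\lesssim \gamma^{-3/2}\le\gamma^{-1}$. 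Equivalently, you could keep a residual power $(N/\log\tfrac1\delta)^{-\beta/(6+\beta)}$ when substituting the lower bound and use it to dominate the polylogs. With that repair your argument coincides with the paper's; the rest of your outline (including the observation that the $\gamma^{1/4}$ prefactor in $t$ makes the prerequisite easier but degrades the final rate to $\I_r/\sqrt\gamma$) is sound.
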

\begin{proof}
    This proof is similar to the proof of Lemma~\ref{lem:error_estimate_large_density_instantiated}. First note that since $r \le \sigma$,
    \[
        \sigma_r^2 = \sigma^2 + r^2 \le 2 \sigma^2
    \]
    Note also that $\frac{N}{\log \frac{1}{\delta}} \ge 1$ since $\gamma \ge 1$ and $r \le \sigma$. So our setting of $N$ implies WLOG
    \begin{align*}
        \frac{N}{\log \frac{1}{\delta}} \ge \left(\frac{\gamma^{5/12} \sigma \log \frac{N}{\log \frac{1}{\delta}}}{r} \right)^6
    \end{align*}
    or
    \begin{align}
    \label{eq:sigma_over_r_density}
        \frac{\sigma}{r} \le \left(\frac{N}{\log \frac{1}{\delta}} \right)^{1/6}\cdot  \frac{1}{\gamma^{5/12} \log \frac{N}{\log \frac{1}{\delta}}} \le \left(\frac{N}{\log \frac{1}{\delta}} \right)^{1/6}
    \end{align}
    We will first check that this $N$ satisfies the condition required to invoke Lemma~\ref{lem:generic_kde_density_close_large} that $N \ge\frac{12 \log \left(\frac{2}{\delta}\left(\frac{2 t \sigma_r \sqrt{\alpha N}}{\sqrt{r}} + 1 \right)\right) + 400 \log \frac{1}{\alpha r}}{\alpha r} $. To do this, we individually upper bound $\frac{1}{\alpha r}$ and $\frac{2 t\sigma_r \sqrt{\alpha N}}{\sqrt{r}}$. We have,
    \begin{align*}
        \frac{1}{\alpha r} &= \frac{\sigma_r\gamma^{3/4}}{r} \left(\frac{\log \frac{N}{\I_r r^2 \log \frac{1}{\delta}}}{\I_r r^2} \right)^{3/2} \quad \text{since $\alpha = \frac{1}{t^3 \sigma_r}$ and $t = \gamma^{1/4}\sqrt{\frac{\log \frac{N}{\I_r r^2 \log \frac{1}{\delta}}}{\I_r r^2}}$}\\
        &\le \left( \frac{\sigma_r}{r}\right)^4 \gamma^{3/4} \log^{3/2} \frac{N \sigma_r^2}{r^2 \log \frac{1}{\delta}} \quad \text{since $\I_r \ge \frac{1}{\sigma_r^2}$}\\
        &\le \left(\frac{2 \sigma}{r}\right)^4 \gamma^{3/4} \log^{3/2} \frac{2 N \sigma^2}{r^2 \log \frac{1}{\delta}} \quad \text{since $\sigma_r^2 \le 2 \sigma^2$}\\
        &\le 16 \frac{N^{4/6}}{\gamma^{5/3} \log^4 \frac{N}{\log \frac{1}{\delta}} \log^{4/6} \frac{1}{\delta}} \cdot \gamma^{3/4} \log^{3/2}\left( 2\left(\frac{N}{\log \frac{1}{\delta}}\right)^{4/3}\right) \quad \text{by \eqref{eq:sigma_over_r_density}}\\
        &\le \frac{16N}{\gamma^{5/2} \log^4 \frac{N}{\log \frac{1}{\delta}} \log\frac{1}{\delta}}\gamma^{3/4} \log^{3/2}\left( 2\left(\frac{N}{\log \frac{1}{\delta}}\right)^{4/3}\right) \quad \text{since $\frac{N}{\log \frac{1}{\delta}} \ge \gamma^{5/2}$}\\
        &\le \frac{N}{\gamma^{3/2} \log^2 \frac{N}{\log \frac{1}{\delta}} \log \frac{1}{\delta}} \quad \text{since $\gamma \ge C$ for large enough constant $C$}
    \end{align*}
    To further justify the last line above, observe that $\frac{\gamma^{3/4}}{\gamma^{5/2}} \le \frac{1}{\gamma^{3/2} \cdot \gamma^{1/4}}$, and that for large enough constant $C$, since $\gamma \ge C$, $\gamma^{1/4}$ can be made larger than any fixed constant. Also note that $\log^{3/2}\left(2 \left(\frac{N}{\log \frac{1}{\delta}} \right)^{4/3} \right) \le \log^2 \frac{N}{\log \frac{1}{\delta}}$ for large enough constant $C$ since $\frac{N}{\log \frac{1}{\delta}} \ge \gamma \ge C$. So the inequality follows.
    Next we bound $\frac{2 t \sigma_r \sqrt{\alpha N}}{\sqrt{r}}$.
    \begin{align*}
        \frac{2 t \sigma_r\sqrt{\alpha N}}{\sqrt{r}} &= 2\sqrt{\frac{N \sigma_r}{t r}} \quad \text{since $\alpha = \frac{1}{t^3 \sigma_r}$}\\
        &= 2 \sqrt{\frac{N \sigma_r}{r \gamma^{1/4} \sqrt{\frac{\log \frac{N}{\I_r r^2 \log \frac{1}{\delta}}}{\I_r r^2}}}} \quad \text{since $t=\gamma^{1/4} \sqrt{\frac{\log \frac{N}{\I_r r^2 \log \frac{1}{\delta}}}{\I_r r^2}}$}\\
        &\le 4 \sqrt{\frac{N \sigma}{r \gamma^{1/4} \sqrt{\log 
 \frac{N}{\log \frac{1}{\delta}}}}} \quad \text{since $\I_r \le \frac{1}{r^2}$ and $\sigma_r^2 \le 2 \sigma^2$}\\
        &\le 4 \sqrt{\frac{N}{\gamma^{1/4} \sqrt{\log \frac{N}{\log \frac{1}{\delta}}}} \cdot \left(\frac{N}{\log \frac{1}{\delta}} \right)^{1/6}} \quad \text{by \eqref{eq:sigma_over_r_density}}\\
        &\le 4 N \cdot \left(\frac{N}{\log \frac{1}{\delta}} \right)^{1/12} \quad \text{since $\gamma \ge 1$, $\frac{N}{\log \frac{1}{\delta}} \ge 1$}
    \end{align*}
    So, we can now check the condition required to invoke Lemma~\ref{lem:generic_kde_density_close_large}. We have
    \begin{align*}
        &\frac{12 \log \left(\frac{2}{\delta}\left(\frac{2 t \sigma_r \sqrt{\alpha N}}{\sqrt{r}} + 1 \right)\right) + 400 \log \frac{1}{\alpha r}}{\alpha r}\\
        &\le\left(12 \log \left(\frac{2}{\delta}\left(4 N \cdot \left(\frac{N}{\log \frac{1}{\delta}} \right)^{1/12} + 1 \right)\right) + 400 \log \frac{N}{\log \frac{1}{\delta}}\right)\left( \frac{N}{\gamma^{3/2} \log^2\frac{N}{\log \frac{1}{\delta}} \log \frac{1}{\delta}}\right)\\
        &\le N \quad \text{since $\gamma \ge C$}
    \end{align*}
    So, by Lemma~\ref{lem:generic_kde_density_close_large}, we have
    \begin{align*}
         |\wh f_r(x) - f_r(x)|\lesssim f_r(x)\sqrt{\frac{1}{\alpha r N} \log \left(\frac{2}{\delta} \left(\frac{2 t \sigma_r \sqrt{\alpha N}}{\sqrt{r}} + 1 \right) \right) \log \frac{2}{\alpha r}}
    \end{align*}
    We will show that the RHS above is bounded by $O\left(\frac{f_r(x)}{\sqrt{\gamma}}\right)$.
    Since we showed that $\frac{1}{\alpha r} \le \frac{N}{\gamma^{3/2} \log^2 \frac{N}{\log \frac{1}{\delta} \log \frac{1}{\delta}}}$, we have that
    \[
        \frac{1}{r N} \le \frac{\alpha}{\gamma^{3/2} \log^2 \frac{N}{\log \frac{1}{\delta}} \log \frac{1}{\delta}}
    \]
    So, plugging this into the RHS above, along with our bounds for $\frac{2 t \sigma_r \sqrt{\alpha N}}{\sqrt{r}}$ and $\frac{1}{\alpha r}$, we have,
    \begin{align*}
        &|\wh f_r(x) - f_r(x)| \\
        &\lesssim \frac{f_r(x)}{\sqrt{\gamma}} \cdot \sqrt{\frac{1}{\gamma^{1/2} \log^2 \frac{N}{\log \frac{1}{\delta}} \log \frac{1}{\delta}} \log\left(\frac{2}{\delta}\left(4 N \cdot \left(\frac{N}{\log \frac{1}{\delta}} \right)^{1/12} \right) + 1 \right) \log \left(\frac{N}{\gamma^{3/2} \log^2 \frac{N}{\log \frac{1}{\delta}} \log \frac{1}{\delta}} \right) }\\
        &\lesssim \frac{f_r(x)}{\sqrt{\gamma}}\\
        %&\le \frac{f_r(x)}{\sqrt{\gamma}} \quad \text{since $f_r(x) \ge \alpha$}
    \end{align*}
    So finally,
    \begin{align*}
        &\int_{-\infty}^\infty \left|\wh f_r(x) - f_r(x) \right|\Tilde s_r(x)^2 \1_{\{|x - \mu| \le t \sigma_r \text{ and } f_r(x)\ge \alpha\}} dx\\
        &\lesssim \frac{1}{\sqrt{\gamma}}\int_{-\infty}^\infty f_r(x) \Tilde s_r(x)^2 \1_{\{|x - \mu| \le t \sigma_r \text{ and } f_r(x)\ge \alpha\}} dx\\
        &\le \frac{1}{\sqrt{\gamma}} \E_{x \sim f_r}[\Tilde s_r(x)^2]\\
        &\lesssim\frac{\I_r}{\sqrt{\gamma}} \quad \text{since $\E_{x \sim f_r}[\Tilde s_r(x)^2]\lesssim \I_r$ by assumption}
    \end{align*}
\end{proof}
\fisherinfoestimation*
\begin{proof}
    We have
    \begin{align*}
        \E_{x \sim \wh f_r}\left[\Tilde s_r(x)^2 \right] = \E_{x \sim f_r}\left[\Tilde s_r(x)^2 \right] + \int_{-\infty}^\infty \left(\wh f_r(x) - f_r(x)\right) \Tilde s_r(x)^2 dx
    \end{align*}
    So, by our assumption,
    \begin{align*}
        \left|\E_{x \sim \wh f_r}[\Tilde s_r(x)^2] - \I_r \right| \lesssim \frac{\I_r}{\sqrt{\gamma}} + \int_{-\infty}^\infty \left(\wh f_r(x) - f_r(x)\right) \Tilde s_r(x)^2 dx
    \end{align*}
    It remains to bound the integral in the RHS above by $O\left(\frac{\I_r}{\sqrt{\gamma}} \right)$. Let $t = \gamma^{1/4} \sqrt{\frac{\log \frac{N}{\I_r r^2 \log \frac{1}{\delta}}}{\I_r r^2}}$, $\alpha = \frac{1}{t^3 \sigma_r}$. We have
    \begin{align*}
        \int_{-\infty}^\infty \left(\wh f_r(x) - f_r(x) \right)  \Tilde s_r(x)^2 dx &= \int_{-\infty}^\infty \left(\wh f_r(x) - f_r(x) \right)  s_r(x)^2 \1_{\{|x - \mu| \le t \sigma_r \text{ and } f_r(x) \ge \alpha\}}dx \\
        &\quad+ \int_{-\infty}^\infty \left(\wh f_r(x) - f_r(x) \right)  \Tilde s_r(x)^2 \1_{\{|x - \mu| \le t \sigma_r \text{ and } f_r(x) < \alpha\}}dx\\
        &\quad + \int_{-\infty}^\infty \left(\wh f_r(x) - f_r(x) \right) \Tilde s_r(x)^2 \1_{\{|x - \mu| > t \sigma_r \}}dx \\
    \end{align*}
    By Lemma~\ref{lem:large_density_kde_instantiated}, the first term in the RHS above is bounded by $\frac{\I_r}{\sqrt{\gamma}}$. To bound the other two terms, note that
    \begin{align*}
        \Pr_{x \sim f_r}\left[|x - \mu| \le t \sigma_r \text{ and } f_r(x) < \alpha\right] \le \alpha t \sigma_r \lesssim \frac{1}{t^2}
    \end{align*}
    and by Chebyshev's inequality,
    \begin{align*}
        \Pr_{x \sim f_r}\left[|x - \mu| > t \sigma_r \right] \le \frac{1}{t^2}
    \end{align*}
    Also, for small constant $\beta > 0$,
    \begin{align*}
        \left( \frac{\gamma}{r^4 \I_r^2}\right)^{1+\beta} &\le \left(\frac{\gamma \sigma_r^4}{r^4} \right)^{1+\beta} \quad \text{since $\I_r \ge \frac{1}{\sigma_r^2}$}\\
        &\le \left(\frac{4\gamma \sigma^4}{r^4} \right)^{1+\beta} \quad \text{since $\sigma_r^2 = \sigma^2 + r^2 \le 2 \sigma^2$}\\
        & \le \left(\gamma^{5/12} \frac{\sigma}{r} \right)^{6+\beta} \quad \text{since $\gamma \ge C$ for large enough constant $C$, and $r \le \sigma$}\\
        &\le \frac{N}{\log \frac{1}{\delta}}
    \end{align*}
    So, the conditions of Lemma~\ref{lem:small_prob_sets_kde_density} hold, and applying it to the second and third term in the RHS above gives the claim.
\end{proof}

\section{Local Estimation}
In this section, we describe our local estimation procedure, which takes a symmetrized and clipped KDE score function along with symmetrization point $\mu_1 = \mu + \eps$, and produces a refined estimate $\hat \mu$ of the mean.

\setcounter{algorithm}{0}
\begin{algorithm}[H]\caption{Local Estimation}
%\label{alg:local_estimation_empirical_mean}
\vspace*{3mm}
\paragraph{Input Parameters:}
\begin{itemize}
\item $n$ samples $x_1, \dots, x_{n} \sim f^*$, the symmetrized and clipped KDE score function $\wh s_r^\sym$, symmetrization point $\mu_1$, Fisher information estimate $\wh \I_r$
\end{itemize}
\begin{enumerate}
\item For each sample $x_i$, compute a perturbed sample $x'_i = x_i + \mathcal N(0,r^2)$ where all the Gaussian noise are drawn independently across all the samples.
\item Compute $\hat \eps = \frac{1}{\wh \I_r n}\sum_{i=1}^{n} \wh s_r^\sym(x_i')$. Return $\hat \mu = \mu_1 - \hat \eps$.
\end{enumerate}
\end{algorithm}
\localestimationassumption*
\localestimationlemma*
\begin{proof}
    Let $\mu_1 = \mu+\eps$. First, since by Property~\ref{property:local_estimation_empirical_mean},
    \[
    \frac{1}{\wh \I_r}|\wh s_r^\sym(x)| \le \frac{2}{r \wh \I_r} \sqrt{\log \frac{n}{\xi \log\frac{\xi}{\delta}}} \le \left(1 + O\left( \frac{1}{\sqrt{\gamma}}\right) \right)\frac{2}{r \I_r} \sqrt{\log \frac{n}{\xi \log\frac{\xi}{\delta}}}
    \] for all $x$, by Bernstein's inequality, the estimate $\hat \eps$ satisfies that with probability $1-\delta$,
    
    \begin{align*}
        \left|\hat \eps -\frac{1}{\wh \I_r} \E_{x \sim f_r}\left[ \wh s_r^\sym(x)\right] \right| \le \frac{1}{\wh \I_r}\sqrt{\E_{x\sim f_r}\left[\wh s_r^\sym(x)^2 \right]} \sqrt{\frac{2 \log \frac{2}{\delta}}{n}} +O\left(\frac{ \sqrt{\log \frac{n}{\xi\log \frac{\xi}{\delta}}}}{r\I_r} \cdot \frac{\log \frac{2}{\delta}}{n}\right)
    \end{align*}
    %\jnote{Is that the right constant for Bernstein's for the subexponential error term?}
    Since $|\wh \I_r - \I_r| \lesssim \frac{\I_r}{\sqrt\gamma}$, for $\gamma > C$ for sufficiently large constant $C$, we have
    \begin{align*}
        \left|\frac{1}{\I_r} \E_{x \sim f_r}\left[ \wh s_r^\sym(x) \right] - \frac{1}{\wh\I_r} \E_{x \sim f_r}\left[ \wh s_r^\sym(x) \right] \right| \lesssim \frac{1}{ \I_r\sqrt{\gamma}} \left|\E_{x \sim f_r}\left[ \wh s_r^\sym(x)\right] \right| \lesssim\frac{\eps}{\sqrt{\gamma}}
    \end{align*}
    Combining this with the above and the fact that $|\wh \I_r - \I_r| \lesssim \frac{\I_r}{\sqrt\gamma}$ yields
    \begin{align*}
    &\left|\hat \eps -\frac{1}{\I_r} \E_{x \sim f_r}\left[ \wh s_r^\sym(x)\right] \right|\\
    &\le\left(1 + O\left(\frac{1}{\sqrt{\gamma}} \right)\right) \frac{1}{\I_r}\sqrt{\E_{x\sim f_r}\left[\wh s_r^\sym(x)^2 \right]} \sqrt{\frac{2 \log \frac{2}{\delta}}{n}} + O\left(\frac{ \sqrt{\log \frac{n}{\xi\log \frac{\xi}{\delta}}}}{r\I_r} \cdot \frac{\log \frac{2}{\delta}}{n}\right) + O\left(\frac{\eps}{\sqrt{\gamma}}\right)
    \end{align*}
    Then, combined with Property~\ref{property:local_estimation_empirical_mean} and the triangle inequality, this implies that with probability $1 - \delta$,
    \begin{align*}
        \left| \hat \eps - \eps\right| \le \left(1 + O\left(\frac{1}{\sqrt{\gamma}} \right)\right)\sqrt{\frac{2 \log \frac{2}{\delta}}{n \I_r}}  + O\left(\frac{ \sqrt{\log \frac{n}{\xi\log \frac{\xi}{\delta}}}}{r\I_r} \cdot \frac{\log \frac{2}{\delta}}{n}\right)+ O\left(\frac{\eps}{\sqrt{\gamma}}\right)
    \end{align*}
    %\begin{align*}
    %    \log \frac{1}{\delta'} = 1/\xi + \log \frac{1}{\delta}
    %\end{align*}
    %\begin{align*}
    %    \frac{\log \frac{1}{\delta'}}{n'} \le \left(1 + O\left(\frac{1}{\xi} \right) \right) \frac{\log \frac{1}{\delta'}}{n} \le \left(1 + O\left(\frac{1}{\xi} \right) \right) \left( \frac{1}{\xi n} + \frac{\log\frac{1}{\delta}}{n} \right) \le \left(1 + O\left(\frac{1}{\xi } \right) \right) \frac{\log \frac{1}{\delta}}{n}
    %\end{align*}
    %\begin{align*}
    %    \left( \frac{1}{\xi n} + \frac{\log\frac{1}{\delta}}{n} \right) &= \left(1+ \frac{1}{\xi \log\frac{1}{\delta}} \right)\frac{\log \frac{1}{\delta}}{n}
    %\end{align*}
    %Finally, using the fact that $n' = n\left(1 - \frac{3}{\xi} \right)$ and $\delta' = \delta\left(1 - \frac{3}{\xi} \right)$ yields
    %\[
    %     \left| \hat \eps - \eps\right| \le \left(1 + O\left(\frac{1}{\sqrt{\gamma}}\right) + O\left(\frac{1}{\xi} \right)\right)\left(\sqrt{\frac{2 \log \frac{2}{\delta}}{n \I_r}}  + 2\frac{ \sqrt{\log \frac{n}{\xi\log \frac{\xi}{\delta}}}}{r\I_r} \cdot \frac{\log \frac{2}{\delta}}{n}\right)+ O\left(\frac{\eps}{\sqrt{\gamma}}\right)
    %\]
    So, since $\mu_1 = \mu + \eps$ and $\hat \mu = \mu + \hat \eps$, we have the claim.
\end{proof}

\section{Global Estimation}
In this section, we describe our global estimation procedure and show that it provides a good estimate of the mean. It uses a small number of samples to compute an initial estimate $\mu_1$ of $\mu$, and uses another small set of samples to compute the symmetrized, clipped KDE score function $\wh s_r^\sym$ symmetrized around $\mu_1$. It then uses our local estimation procedure to produce the final estimate $\hat \mu$.
\setcounter{algorithm}{1}
\begin{algorithm}[H]\caption{Global Estimation}
    %\label{alg:global_estimation_empirical_mean} 
    \vspace*{3mm}
    \paragraph{Input parameters:}
    \begin{itemize}
        \item Failure probability $\delta$, Samples $x_1, \dots, x_n \sim f^*$, smoothing parameter $r$, approximation parameter $\xi > 0$.
    \end{itemize}
    \begin{enumerate}
        \item First, use the first $n/\xi$ samples to compute an initial estimate $\mu_1$ of the mean $\mu$ by using the Median-of-pairwise-means estimator in Lemma~\ref{lem:median_of_means}.
        \item Use the next $n/\xi$ samples to compute the kernel density estimate $\wh f_r$ of $f_r$ (as defined in \eqref{eq:kde_def}), along with the associated symmeterized, clipped KDE score $\wh s_r^\sym$ (as defined in \eqref{eq:symmetrized_kde_def}), clipped at $\frac{2}{r} \sqrt{\log \frac{n}{\xi \log \frac{\xi}{\delta}}}$ and symmetrized around the initial estimate $\mu_1$. Compute the Fisher information estimate $\wh \I_r = \E_{x\sim \wh f_r}\left[\wh s_r^\sym(x)^2 \right]$.
        %\item Use the next $n/\xi$ samples to compute a kernel density estimate $\wh f_r$ of $f_r$ as defined in \eqref{eq:kde_def}. Compute the Fisher information estimate $\wh \I_r = \E_{x \sim \wh f_r}[\wh s_r^\sym(x)]$.
        \item Run Algorithm~\ref{alg:local_estimation_empirical_mean} using the remaining $n - \frac{2n}{\xi}$ samples, and return the final estimate $\hat \mu$.
    \end{enumerate}
\end{algorithm}

\globalestimationlemma*
\begin{proof}
    Let $\eps = \mu_1 - \mu$ where $\mu_1$ is our median of means estimate in Step 1. First, note that by Lemma~\ref{lem:median_of_means}, $\eps$ satisfies that with probability $1 - \delta/\xi$,
    \begin{align*}
        |\eps| &\lesssim \sigma \cdot \sqrt{\frac{ \xi \log \frac{\xi}{\delta}}{n}}\\
        %&\lesssim \sigma \cdot \xi^{1/2 + \alpha} \sqrt{\frac{\log \frac{1}{\delta}}{n}}
    \end{align*}
    We condition on Step $1$ succeeding so that the above holds. To obtain bounds on the expectation and variance of $\wh s^\sym$, we will now check that the following conditions required to invoke Lemma~\ref{cor:sym_kde_variance} and Lemma~\ref{lem:sym_kde_mean} hold:
    \begin{itemize}
        \item $|\eps| \le r/60$
        \item $|\eps| \le r^2 \sqrt{\frac{\I_r}{\gamma}}$
    \end{itemize}
    Note that $\frac{\log \frac{1}{\delta}}{n} \le 1$ and $\gamma \ge 1$. Note also that $\xi \le \gamma$. So, by our setting of $n$, 
    \[
        r \ge \sigma \gamma^{5/12} \xi^{1/6} \left(\frac{ \log \frac{1}{\delta}}{n}\right)^{1/6} \gg O\left(\sigma\cdot \sqrt{\frac{\xi \log \frac{\xi}{\delta}}{n}} \right)\ge |\eps|
    \]
    so that $|\eps| \le r/60$.
    Similarly
    \begin{align*}
        r^2 \sqrt{\frac{\I_r}{\gamma}} &\ge \sigma^2 \gamma^{5/6} \xi^{1/3} \left(\frac{\log \frac{1}{\delta}}{n} \right)^{1/3} \sqrt{\frac{\I_r}{\gamma}}\\
        &\ge \frac{\sigma^2\gamma^{5/6} \xi^{1/3}}{\sqrt{\sigma^2 + r^2}} \left(\frac{\log \frac{1}{\delta}}{n} \right)^{1/3} \quad \text{since $\I_r \ge \frac{1}{\sigma^2 + r^2}$}\\
        &\ge \frac{\sigma}{2} \gamma^{5/6} \xi^{1/3} \left(\frac{\log \frac{1}{\delta}}{n} \right)^{1/3} \quad \text{since $r \le \sigma$}\\
        &\gg O\left(\sigma \cdot   \sqrt{\frac{\xi\log \frac{\xi}{\delta}}{n}}\right) \quad \text{since $\frac{\log \frac{1}{\delta}}{n} \le 1$, and $\xi \ge \gamma \ge 1$}\\
        &\ge |\eps|
    \end{align*}
    Also, our choice of $n$ implies that
    \begin{align*}
        \frac{n}{\xi} \ge \left(\frac{\gamma^{5/12} \sigma}{r} \right)^{6 + \alpha/2} \log \frac{\xi}{\delta}
    \end{align*}
    So, we can invoke Lemma~\ref{cor:sym_kde_variance} and Lemma~\ref{lem:sym_kde_mean} to obtain the following bounds on the mean and second moment of $\wh s^{\sym}$ in Step 2, which hold with probability $1 - \frac{\delta}{\xi}$.
    \begin{align*}
        \left|\E_{x \sim f_r}\left[\wh s_r^\sym(x) \right] - \eps \I_r \right| \lesssim \frac{\eps \I_r}{\sqrt{\gamma}} \quad \text{and} \quad \left|\E_{x \sim f_r}\left[\wh s_r^\sym(x)^2 \right] - \I_r \right|\lesssim \frac{\I_r}{\sqrt{\gamma}}
    \end{align*}
    Also, for $\wh s_r^\sym$ that satisfies the above, since $\wh s_r^\sym$ is clipped at $\frac{2}{r} \sqrt{\log \frac{n}{\xi \log \frac{\xi}{\delta}}}$, and $\frac{n}{\xi} \ge \left(\frac{\gamma^{5/12} \sigma}{r} \right)^{6+\alpha/2} \log \frac{\xi}{\delta}$, the assumptions of Lemma~\ref{lem:fisher_information_estimation} are satisfied for $N = \frac{n}{\xi}$ and failure probability $\frac{\delta}{\xi}$. So conditioned on the success of Step 1, by a union bound, with probability $1 - \frac{2\delta}{\xi}$, Property~\ref{property:local_estimation_empirical_mean} is satisfied, and simultaneously, by Lemma~\ref{lem:fisher_information_estimation}, the Fisher information estimate $\wh \I_r$ in Step 2 satisfies
    \[
    \left|\wh \I_r - \I_r \right| \lesssim \frac{\I_r}{\sqrt{\gamma}}
    \]
    Conditioned on the above, since Property~\ref{property:local_estimation_empirical_mean} is satisfied, by Lemma~\ref{thm:local_estimation}, our final estimate $\hat \mu$ satisfies that for $n' = n\left(1 - \frac{2}{\xi} \right)$ and $\delta' = \delta\left(1 - \frac{3}{\xi} \right)$, with probability $1 - \delta'$,
    
    \begin{align*}
        |\hat \mu - \mu|  &\le \left(1 + O\left(\frac{1}{\sqrt{\gamma}}\right) \right)\sqrt{\frac{2 \log \frac{2}{\delta'}}{n' \I_r}}  + O\left(\frac{ \sqrt{\log \frac{n'}{\xi\log \frac{\xi}{\delta'}}}}{r\I_r} \cdot \frac{\log \frac{2}{\delta'}}{n'}\right)+ O\left(\frac{\eps}{\sqrt{\gamma}}\right)\\
        &\le \left(1 + O\left( \frac{1}{\sqrt{\gamma}}\right) + O\left( \frac{1}{\xi}\right) \right) \sqrt{\frac{2\log \frac{2}{\delta}}{n \I_r}} +  O\left(\frac{ \sqrt{\log \frac{n}{\xi\log \frac{\xi}{\delta}}}}{r\I_r} \cdot \frac{\log \frac{2}{\delta}}{n}\right)+ O\left(\frac{\eps}{\sqrt{\gamma}}\right)
        %&=\left(1 + o(1) + O\left(\frac{1}{\sqrt{\gamma}} \right) \right) \sqrt{\frac{2 \log \frac{1}{\delta}}{n \I_r}} + O\left(\frac{\eps}{\sqrt{\gamma}} \right)\\
    \end{align*}
    Now, we bound $\frac{\eps}{\sqrt{\gamma}}$. We have,
    \begin{align*}
        \frac{\eps}{\sqrt{\gamma}} &\lesssim \frac{\sigma \sqrt{\I_r}}{\sqrt{\gamma}}  \sqrt{\frac{\xi \log \frac{\xi}{\delta}}{n \I_r}}\\
        &\le \frac{\sigma}{r \sqrt{\gamma}}\sqrt{\frac{\xi \log \frac{\xi}{\delta}}{n \I_r}} \quad \text{since $\I_r \le \frac{1}{r^2}$}
        %&\le \frac{\xi^{2/3 + \alpha}}{\gamma^{11/12}} \left(\frac{n}{\log \frac{1}{\delta}} \right)^{1/6}
    \end{align*}
    %\begin{align*}
    %    \frac{\eps}{\sqrt{\gamma}} &\lesssim \frac{\sigma}{\sqrt{\gamma}} \xi^{1/2 + \alpha}\sqrt{\frac{\log\frac{1}{\delta}}{n}}\\
    %    &\le \frac{\sigma^2}{\sqrt{\gamma(\sigma^2 + r^2)}} \xi^{1/2 + \alpha}  \cdot \sqrt{\frac{ \log \frac{1}{\delta}}{n}}\\
    %    &\le \frac{\sigma^2 \sqrt{\I_r}}{\sqrt{\gamma}} \xi^{1/2 + \alpha} \cdot \sqrt{\frac{ \log \frac{1}{\delta}}{n}}\quad \text{since $\I_r \ge \frac{1}{\sigma^2 + r^2}$}\\
    %    &= \left(\sigma^2 \gamma^{5/6} \xi^{1/3} \left(\frac{\log \frac{1}{\delta}}{n} \right)^{1/3} \right) \frac{\sqrt{\I_r}\xi^{1/6 + \alpha}}{\gamma^{4/3}}\cdot \left(\frac{ \log \frac{1}{\delta}}{n} \right)^{1/6}\\
    %    &\le \frac{r^2 \sqrt{\I_r}\xi^{1/6 + \alpha}}{\gamma^{4/3}} \cdot \left(\frac{ \log \frac{1}{\delta}}{n} \right)^{1/6} \quad \text{by setting of $n$}\\
    %    &\le \frac{\xi^{1/6 + \alpha}}{\sqrt{\I_r} \gamma^{4/3}}  \cdot \left(\frac{ \log \frac{1}{\delta}}{n} \right)^{1/6} \quad \text{since $\I_r \le \frac{1}{r^2}$}\\
    %    &\lesssim\left(\frac{\xi^{1/6 + \alpha}}{\gamma^{4/3}} \left(\frac{n}{\log \frac{1}{\delta}} \right)^{1/3}\right) \sqrt{\frac{2 \log \frac{1}{\delta}}{n \I_r}} %\quad \text{since $\gamma \ge \left(\sqrt{\xi} \frac{n}{\log \frac{1}{\delta}} \right)^{1/4 + \alpha}$}
    %    %&\le \frac{1}{\sqrt{\I_r} \gamma} \quad \text{since $\I_r \le \frac{1}{r^2}$}\\
    %\end{align*}
    Finally, we bound $\frac{\sqrt{\log \frac{n}{\xi\log \frac{\xi}{\delta}}} \log \frac{1}{\delta}}{nr \I_r}$.
    First note that
    \begin{align*}
        r^2 &\ge \sigma^2 \gamma^{5/6} \xi^{1/3} \left(\frac{\log \frac{1}{\delta}}{n} \right)^{1/3}\\
        &\gtrsim \frac{1}{\I_r} \gamma^{5/6} \xi^{1/3} \left(\frac{\log \frac{1}{\delta}}{n} \right)^{1/3} \quad \text{since $\I_r \ge \frac{1}{\sigma^2 + r^2} \gtrsim \frac{1}{\sigma^2} $ since $r \le \sigma$}\\
        %&\ge \frac{1}{\I_r} \xi^{\frac{21}{48} + \frac{5 \alpha}{3}} \left(\frac{\log \frac{1}{\delta}}{n} \right)^{\frac{13}{24} + \frac{5 \alpha}{6}} \quad \text{since $\gamma \ge \left(\sqrt{\xi} \frac{n}{\log \frac{1}{\delta}} \right)^{1/4 + \alpha}$}
    \end{align*}
    So, we have
    \begin{align*}
        \frac{\sqrt{\log \frac{n}{\xi\log \frac{\xi}{\delta}}} \log \frac{1}{\delta}}{r\I_r n} &\le \frac{1}{r\I_r} \left(\frac{\log \frac{1}{\delta}}{n} \right)^{1 - \alpha} \quad \text{since $\frac{n}{\log\frac{1}{\delta}} \ge 1$}\\
        &\lesssim \frac{1}{\sqrt{\I_r} \gamma^{5/12} \xi^{1/3}} \left(\frac{\log \frac{1}{\delta}}{n} \right)^{5/6 - \alpha}\\
        &= \frac{\gamma^{1/12}}{\sqrt{\I_r \gamma} \xi^{1/3}} \left(\frac{\log \frac{1}{\delta}}{n} \right)^{5/6 - \alpha}\\
        &\lesssim \frac{1}{\sqrt{\I_r \gamma} \xi^{1/3}} \left(\frac{\log \frac{1}{\delta}}{n} \right)^{4/5 - \alpha} \quad \text{since $\gamma \le \left(\frac{n}{\xi \log \frac{1}{\delta}} \right)^{2/5 - \alpha}\le \left(\frac{n}{\log \frac{1}{\delta}}\right)^{2/5}$}\\
        &\lesssim \frac{1}{\sqrt{\gamma}} \sqrt{\frac{\log \frac{1}{\delta}}{n\I_r}}
    \end{align*}
    So we have shown that 
    \[
        |\hat \mu - \mu| \le \left(1 + O\left( \frac{1}{\sqrt{\gamma}}\right) + O\left( \frac{1}{\xi}\right) \right) \sqrt{\frac{2\log \frac{2}{\delta}}{n \I_r}}+ \frac{\sigma}{r \sqrt{\gamma}}\sqrt{\frac{\xi \log \frac{\xi}{\delta}}{n \I_r}}
    \]
    Thus, by a union bound, with probability $1 - \delta$ in total, the claim follows.
\end{proof}

\maintheorem*
\begin{proof}
    Set $\xi = \frac{1}{\eta}$, and let $\gamma = \frac{1}{\eta^2}$. First we check the conditions of Lemma~\ref{lem:global_estimation_empirical_mean} that
    \begin{itemize}
        \item $\xi > C_1$ for sufficiently large constant $C_1$
        \item $\xi \le \gamma \le \left(\frac{n}{\xi \log \frac{1}{\delta}} \right)^{2/5 - \alpha}$ for constant $\alpha>0$
        \item $\frac{n}{\log \frac{1}{\delta}} \ge \xi \left( \frac{\gamma^{5/12} \sigma}{r}\right)^{6+\alpha}$ for constant $\alpha > 0$
    \end{itemize}
    We have, 
    \[
        \xi = \frac{1}{\eta} = \left(\frac{n}{\log \frac{1}{\delta}}\right)^{\frac{1}{13} - \beta} \ge C^{\frac{1}{13} - \beta} \ge C_1
    \]
    for large enough constant $C$, by assumption. We also have,
    \begin{align*}
        \gamma = \frac{1}{\eta^2} \ge \frac{1}{\eta} = \xi
    \end{align*}
    and
    \begin{align*}
        \gamma = \frac{1}{\eta^2} = \left(\frac{ n}{\log \frac{1}{\delta}} \right)^{\frac{2}{13} - 2 \beta} = \eta^{\frac{2}{5} - \beta} \left(\frac{n}{\log \frac{1}{\delta}} \right)^{\frac{8}{65} - \Omega(\beta)}\le \left(\frac{n}{\xi \log \frac{1}{\delta}} \right)^{2/5-\alpha}
    \end{align*}
    Finally, we have
    \begin{align*}
        \xi\left(\frac{\gamma^{5/12} \sigma}{r} \right)^{6+\alpha} &\le \frac{1}{\eta}\left(\frac{1}{\eta^{11/6}}\right)^{6+\alpha} \quad \text{since $\xi = \frac{1}{\eta}$, $\gamma = \frac{1}{\eta^2}$, and $r \ge \eta \sigma$}\\
        &= \frac{1}{\eta^{12 + \frac{11}{6}\alpha}}\\
        &\le \frac{n}{\log \frac{1}{\delta}} \quad \text{since $\eta = \left(\frac{\log \frac{1}{\delta}}{n} \right)^{1/13}$}
    \end{align*}
    So we have verified the above conditions, which, along with the fact that $r \le \sigma$, allow us to invoke Lemma~\ref{lem:global_estimation_empirical_mean}. The Lemma gives that with probability $1 - \delta$,
    \begin{align*}
        |\wh \mu - \mu| \le \left(1 + O\left( \frac{1}{\sqrt{\gamma}}\right) + O\left( \frac{1}{\xi}\right) \right) \sqrt{\frac{2\log \frac{2}{\delta}}{n \I_r}}+ \frac{\sigma}{r \sqrt{\gamma}}\sqrt{\frac{\xi \log \frac{\xi}{\delta}}{n \I_r}}
    \end{align*}
    We bound the last term above.
    \begin{align*}
        \frac{\sigma}{r \sqrt{\gamma}} \sqrt{\frac{\xi \log \frac{\xi}{\delta}}{n \I_r}} &\le \frac{\eta}{\sqrt{\gamma}}\sqrt{\frac{\xi \log \frac{\xi}{\delta}}{n \I_r}} \quad \text{since $r \ge \eta \sigma$}\\
        &= \eta^2 \sqrt{\frac{\xi \log \frac{\xi}{\delta}}{n \I_r}} \quad \text{substituting $\gamma$}\\
        &\le \eta \sqrt{\frac{\log \frac{1}{\delta}}{n}} \quad \text{since $\xi = \frac{1}{\eta}$}
    \end{align*}
    %Also we have, $\xi = \frac{1}{\eta}$, and $\gamma = \left( \frac{\eta n}{\log \frac{1}{\delta}}\right)^{3/10} \gg \frac{1}{\eta^2}$. So, putting this together with the above gives that with probability $1 - \delta$,
    Along with the above and the setting of $\gamma$ and $\xi$, we have
    \[
        |\wh \mu - \mu| \le \left(1 + O(\eta) \right) \sqrt{\frac{2 \log \frac{2}{\delta}}{n \I_r}}
    \]
    Reparametrizing $\eta$ gives the claim.
\end{proof}

\section{Properties of $r$-smoothed distributions}
\subsection{Score bound in terms of density}
The next lemma shows that $s_r$ is bounded in terms of $f_r$ and $r$.
%$$|\E[w'(x' - Y)] - \E[w'(x - Y)]| \le \eps \E[w''(x' - Y)] \lesssim \frac{\eps}{r^2} \E[w(x' - Y)]$$

%So, since $$f'(x) = \E_{Y \sim f^*}[w'(x - Y)]$$ 
%and $$f(x) = \E_{Y \sim f^*}[w(x - Y)]$$
%the claim follows.
\begin{lemma}
\label{lem:generic_density_large_score_small}
Let $f^*$ be an arbitrary distribution, and let $f_r$ be the $r$-smoothed version of $f^*$. Let $s_r$ be the score function of $f_r$. We have
$$|s_r(x)| \le \frac{1}{r} \sqrt{2 \log \frac{1}{\sqrt{2 \pi} r f_r(x)}}$$
\end{lemma}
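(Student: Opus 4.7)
My plan is to reduce the score bound to a conditional second-moment bound on the Gaussian noise, and then invoke Gibbs' inequality to close it out. Let $(X, Y, Z_r)$ be the joint distribution from Lemma~\ref{lem:zcond}, with $Y \sim f^*$ and $Z_r \sim \mathcal{N}(0, r^2)$ independent and $X = Y + Z_r \sim f_r$. Differentiating $f_r(x) = \E_{Y \sim f^*}[w_r(x-Y)]$ under the integral and using $w_r'(u) = -u\, w_r(u)/r^2$ gives
\[
s_r(x) \;=\; \frac{f_r'(x)}{f_r(x)} \;=\; -\frac{\E[Z_r \mid X=x]}{r^2},
\]
so by Jensen's inequality ($(\E Z)^2 \le \E Z^2$),
\[
s_r(x)^2 \;\le\; \frac{\E[Z_r^2 \mid X=x]}{r^4}.
\]
It therefore suffices to prove the conditional second-moment bound $\E[Z_r^2 \mid X=x] \le 2r^2 \log \tfrac{1}{\sqrt{2\pi}\, r\, f_r(x)}$.

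For this, I will use the algebraic identity $z^2 = -2r^2 \log\!\left(\sqrt{2\pi}\, r\, w_r(z)\right)$, which is simply the Gaussian density $w_r(z) = (\sqrt{2\pi}\,r)^{-1} e^{-z^2/(2r^2)}$ rearranged. Taking the conditional expectation of both sides yields
\[
\E[Z_r^2 \mid X=x] \;=\; -2r^2 \log(\sqrt{2\pi}\, r) \;-\; 2r^2\, \E[\log w_r(Z_r) \mid X=x],
\]
so the required bound reduces to the inequality $\E[\log w_r(Z_r) \mid X=x] \ge \log f_r(x)$.

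The key observation is that this last inequality is exactly the non-negativity of a KL divergence. By Bayes' rule, the posterior of $Y$ given $X=x$ has density $p(y \mid x) = f^*(y)\, w_r(x-y)/f_r(x)$, so its Radon--Nikodym derivative with respect to the prior $f^*$ equals $w_r(x-y)/f_r(x)$. Gibbs' inequality then gives
\[
0 \;\le\; D_{KL}(p(\cdot \mid x) \,\|\, f^*) \;=\; \E_{Y \mid X=x}\!\left[\log \frac{w_r(x-Y)}{f_r(x)}\right] \;=\; \E[\log w_r(Z_r) \mid X=x] \;-\; \log f_r(x),
\]
using $Z_r = X - Y$. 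Chaining the three displays gives $\E[Z_r^2 \mid X=x] \le 2r^2 \log \tfrac{1}{\sqrt{2\pi}\, r\, f_r(x)}$, and substituting back yields the claimed bound on $|s_r(x)|$. I expect the routine pieces (the Jensen reduction and the Gaussian algebraic identity) to be straightforward; the one substantive step is recognizing that the needed inequality on $\E[\log w_r(Z_r) \mid X=x]$ is precisely Gibbs' inequality applied to the posterior-vs-prior pair for $Y$. A more naive tail-integration approach using Lemma~\ref{lem:zcond} would give only $|s_r(x)| \lesssim \tfrac{1}{r}\sqrt{2 \log \tfrac{1}{\sqrt{2\pi}\, r\, f_r(x)} + 2}$, so the KL argument is essential for the tight constant stated in the lemma.
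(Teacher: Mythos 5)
Your proof is correct, and it takes a genuinely different route from the paper's. The paper also starts from the posterior representation $s_r(x) = \pm\frac{1}{r^2}\E[Z_r \mid X = x]$, but it then bounds the posterior \emph{first} absolute moment directly: it rewrites $|x-Y| = r\sqrt{2\log\frac{1}{\sqrt{2\pi}\,r\,w_r(x-Y)}}$, expresses the posterior expectation as a prior expectation of $\frac{w_r(x-Y)}{f_r(x)}\sqrt{2\log\frac{1}{\sqrt{2\pi}\,r\,w_r(x-Y)}}$, and applies Jensen's inequality to the concave map $z \mapsto z\sqrt{2\log\frac{1}{\sqrt{2\pi}\,r\,z}}$ over $Y \sim f^*$, which collapses to the stated bound since $\E_{Y}[w_r(x-Y)] = f_r(x)$. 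You instead pass to the posterior \emph{second} moment via $(\E Z)^2 \le \E Z^2$, use the exact Gaussian identity $z^2 = -2r^2\log(\sqrt{2\pi}\,r\,w_r(z))$, and close with Gibbs' inequality, i.e.\ non-negativity of $D_{KL}$ between the posterior $p(\cdot\mid x)$ and the prior $f^*$, whose Radon--Nikodym derivative is $w_r(x-y)/f_r(x)$; this chain is valid (and, like the paper's, works for arbitrary $f^*$, including distributions with atoms), and it delivers the same sharp constant. What each buys: the paper's argument is a single Jensen step but requires checking concavity of a somewhat ad hoc function; yours trades that verification for a clean information-theoretic inequality, at the mild cost of going through the second moment rather than the first absolute moment (which happens to lose nothing here). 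One small remark: your sign $s_r(x) = -\E[Z_r\mid X=x]/r^2$ (with $Z_r = X - Y$) is the correct one under the paper's definition $s_r = f_r'/f_r$, and since your derivation is self-contained via differentiation under the integral, the discrepancy with the sign convention displayed in Lemma~\ref{lem:shifted_score_characterization} is immaterial to your argument, which only uses $|s_r(x)|$.
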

\begin{proof}
Let $w_r$ be the pdf of $\mathcal N(0, r^2)$. By definition of $r$-smoothing, we have that when $X \sim f_r$, $X = Y + Z_r$ where $Y \sim f^*$ and $Z_r \sim w_r$ for independent $Y, Z_r$. So,
\begin{align*}
    f_r(x) = \Pr_{X \sim f_r}[X = x] = \int_{-\infty}^\infty \Pr_{Y \sim f^*}[Y = y] \Pr_{Z_r \sim w_r}[Z = Y - x] dy =  \E_{Y \sim f^*}[w_r(x - Y)]
\end{align*}
So, we have
\begin{align*}
    \Pr[X = x | Y = y] = \E[w_r(x - Y) | Y = y] = w_r(x - y)
\end{align*}
Now, since $w_r(x) = \frac{1}{\sqrt{2 \pi}r} e^{-\frac{-x^2}{2 r^2}}$
$$(x - Y) = r \sqrt{2 \log \frac{1}{ \sqrt{ 2 \pi} r \cdot w_r(x - Y) }}$$
So, by Lemma~\ref{lem:shifted_score_characterization},
\begin{align*}
    s_r(x) &= \E\left[\frac{Z_r}{r^2} \Big| X = x \right]\\
    &= \frac{1}{r^2} \E\left[x - Y | X = x \right] \quad \text{since } X = Y + Z_r\\
    &= \frac{1}{r} \E\left[ \sqrt{2 \log \frac{1}{\sqrt{2 \pi} r \cdot w_r(x - Y)}} \Big| X = x\right]\\
    &= \frac{1}{r} \int_{-\infty}^\infty \sqrt{2 \log \frac{1}{\sqrt{2 \pi} r \cdot w_r(x - y)}} \Pr[Y = y | X = x] dy\\
    &= \frac{1}{r} \int_{-\infty}^\infty \sqrt{2 \log \frac{1}{\sqrt{2 \pi} r \cdot w_r(x - y)}} \frac{\Pr[Y = y] \Pr[X = x | Y = y]}{\Pr[X = x]} dy \quad \text{(by Bayes' Theorem)}\\
    &= \frac{1}{r} \int_{-\infty}^\infty \frac{w_r(x - y)}{f_r(x)} \sqrt{2 \log \frac{1}{\sqrt{2 \pi} r \cdot w_r(x - y)}} \Pr[Y = y] dy\\
    &= \frac{1}{r} \E\left[\frac{w_r(x - Y)}{f_r(x)} \sqrt{2 \log \frac{1}{\sqrt{2 \pi} r \cdot w_r(x - Y)} }\right]
    %&= \frac{1}{r}\frac{\E_Y\left[w(x - Y) \sqrt{2\log \frac{1}{\sqrt{2 \pi} r \cdot w(x- Y)}} \right]}{\E_Y[w(x - Y)]}
\end{align*}

Now, $g(z) = z \sqrt{2 \log \frac{1}{\sqrt{2 \pi} r \cdot z}} $ is concave on $[0, 1]$. So, by Jensen's inequality,
\begin{align*}
    s_r(x) &\le \frac{\E_{Y \sim f^*}[w_r(x - Y)]}{rf_r(x)} \sqrt{2 \log \frac{1}{\sqrt{2 \pi} r \cdot \E[w_r(x - Y)]}}& \\
    &= \frac{1}{r} \sqrt{2 \log \frac{1}{\sqrt{2 \pi} r \cdot f_r(x)}} \quad \text{since } f_r(x) = \E[w_r(x - Y)]\\
    %&\le \frac{1}{r} \sqrt{2 \log \frac{1}{\sqrt{2 \pi} r \cdot \alpha }} \quad \text{since } f_r(x) \ge \alpha
\end{align*}
\end{proof}

\subsection{$r$-smoothed score is $O(1/r)$-subgaussian}
%Can we estimate $\I$ to within a constant factor?
The next two lemmas together show that the score function of an $r$-smoothed distribution is $O\left( \frac{1}{r}\right)$-subgaussian.
\begin{lemma}
\label{lem:FirstMomentMixturePortion}
Consider the distribution $f_r$ which is the $r$-smoothed version of distribution $f$.
That is, $f_r$ has density $f_r(x) = \E_{y \from h}[\frac{1}{\sqrt{2\pi r^2}} e^{-\frac{(x-y)^2}{2r^2}}]$.
Then, with probability at least $1-(1+\tau)\delta$, 	we sample a point $x \sim f_r$ such that
\begin{align*}  	
&\E_{y \sim f}\left[\1\left[(x-y)^2 > 2r^2\log\frac{1}{\delta}\right] \frac{|x-y|}{r^2} \frac{1}{\sqrt{2\pi r^2}} e^{-\frac{(x-y)^2}{2r^2}}\right]\\
\le\;&\frac{1}{\tau} \E_{y \sim f}\left[\1\left[(x-y)^2 \le 2r^2\log\frac{1}{\delta}\right] \frac{|x-y|}{r^2} \frac{1}{\sqrt{2\pi r^2}} e^{-\frac{(x-y)^2}{2r^2}}\right]
\end{align*}
\end{lemma}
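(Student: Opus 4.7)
The plan is to apply Markov's inequality together with a Gaussian tail bound, combined via a union bound. Denote the LHS expectation by $T(x)$ and the RHS expectation (before the $1/\tau$) by $B(x)$, so the claim reads $\Pr_{x \sim f_r}[\tau T(x) > B(x)] \le (1+\tau)\delta$. The first step is to compute $\int T(x)\,\mathrm{d}x$ exactly: by Fubini and the substitution $z = x - y$, this reduces to $\int_{|z| > r\sqrt{2\log(1/\delta)}} \frac{|z|}{r^2} \frac{1}{\sqrt{2\pi}\,r} e^{-z^2/(2r^2)}\,\mathrm{d}z$, and the further change of variables $u = z^2/(2r^2)$ gives
\[
  \int T(x)\,\mathrm{d}x \;=\; \frac{2}{\sqrt{2\pi}\,r}\int_{\log(1/\delta)}^{\infty} e^{-u}\,\mathrm{d}u \;=\; \frac{2\delta}{\sqrt{2\pi}\,r}.
\]

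Next, apply Markov's inequality to the ratio $T(x)/f_r(x)$ under the $f_r$-measure, using $\mathbb{E}_{x \sim f_r}[T(x)/f_r(x)] = \int T(x)\,\mathrm{d}x$: for every $c > 0$,
\[
  \Pr_{x \sim f_r}\!\left[T(x) > c\, f_r(x)\right] \;\le\; \frac{2\delta}{c\sqrt{2\pi}\,r}.
\]
Choosing $c = \frac{2}{\tau\sqrt{2\pi}\,r}$ gives $\Pr_{x \sim f_r}[\,\tau T(x) > \tfrac{2 f_r(x)}{\sqrt{2\pi}\,r}\,] \le \tau\delta$. Separately, I would show that with probability at least $1-\delta$ over $x \sim f_r$, the bulk term satisfies $B(x) \ge \tfrac{2 f_r(x)}{\sqrt{2\pi}\,r}$, via the coupling $x = y + z_r$ with $y \sim f$ and $z_r \sim \mathcal{N}(0, r^2)$ independent: the Mills-ratio bound yields $\Pr[|z_r| > r\sqrt{2\log(1/\delta)}] \le \delta$ (for $\delta$ small enough), and on the complementary event the realized $y$ lies in the bulk of the integrand defining $B(x)$, contributing Gaussian weight near $\phi_r(0)$.

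Combining the two events by union bound: with probability $\ge 1-(1+\tau)\delta$, both $\tau T(x) \le \tfrac{2 f_r(x)}{\sqrt{2\pi}\,r}$ and $B(x) \ge \tfrac{2 f_r(x)}{\sqrt{2\pi}\,r}$ hold, so $\tau T(x) \le B(x)$, i.e., the claimed inequality $T(x) \le B(x)/\tau$.

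The main obstacle is the lower bound on $B(x)$: the coupling only gives information about a single realized $y$, whereas $B(x)$ is an expectation over all $y$'s. For distributions $f$ with sharply concentrated posterior given $x$ (e.g., a point mass), $B(x)/f_r(x)$ can indeed be small, and the naive pointwise bound breaks down. A cleaner argument likely replaces the pointwise bound with a direct case split — showing that whenever $B(x)$ fails to meet the threshold, the Markov bound on $T(x)$ already controls the bad event — or lifts the single-$y$ coupling to a quantitative bound on $B/f_r$ via an $L^1$ estimate on $B^\ast(x) = \int \mathbf{1}[|x-y| \le rt] f(y) \phi_r(x-y)\,\mathrm{d}y$, which satisfies $\int B^\ast(x)\,\mathrm{d}x \ge 1-\delta$.
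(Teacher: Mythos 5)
Your Markov half is fine (and your computation $\int T(x)\,\d x=\frac{2\delta}{\sqrt{2\pi}\,r}$ is correct), but the complementary claim you need --- that with probability $1-\delta$ over $x\sim f_r$ one has $B(x)\ge \frac{2f_r(x)}{\sqrt{2\pi}\,r}$ --- is not merely an ``obstacle'': it is false, for exactly the reason you flag. Take $f=\delta_0$; then for $|x|\le r\sqrt{2\log\frac1\delta}$ one has $B(x)=\frac{|x|}{r^2}f_r(x)$, so the required lower bound fails whenever $|x|<\frac{2r}{\sqrt{2\pi}}\approx 0.8r$, an event of constant probability under $f_r=\mathcal N(0,r^2)$. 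Since neither of your two suggested repairs is actually carried out, the proposal does not prove the lemma. The missing content is the crux of the statement: wherever $B(x)$ is small (the posterior of $|x-y|$ given $x$ concentrates near $0$), one must show $T(x)$ is correspondingly small, i.e.\ one needs a comparison between $B$ and $T$ directly; no threshold expressed through $f_r(x)$ alone can mediate between the two events in a union bound.

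For comparison, the paper's proof is the ``direct case split'' you gesture at, in its simplest form, and it never lower-bounds $B$ at all: on the violating set $S=\{x:\tau T(x)>B(x)\}$ one has $B(x)+T(x)\le(1+\tau)T(x)$ pointwise, and the probability of $S$ is bounded by $\int_S\bigl(B(x)+T(x)\bigr)\d x\le(1+\tau)\int T(x)\,\d x$, the last integral being controlled by Fubini and the Gaussian tail --- no second high-probability event, no union bound. Be aware, though, that this route substitutes the weighted moment $B(x)+T(x)$ for the density $f_r(x)$ on $S$ (the paper's display writes this as an equality), and your point-mass concern applies verbatim to that substitution, since $f_r(x)$ can greatly exceed $B(x)+T(x)$; moreover, as you computed, $\int T\,\d x=\frac{2\delta}{\sqrt{2\pi}r}$ rather than $\delta$, so an $r$-dependence must be absorbed somewhere. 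Indeed a two-atom example, $f=\frac12\delta_0+\frac12\delta_a$ with $a=r\sqrt{2\log\frac1\delta}$ and $x$ slightly below $0$, makes the violating set carry $f_r$-mass of order $\delta\sqrt{\log\frac1\delta}$, so some loss in the failure probability or the truncation radius is unavoidable in any airtight version (this is harmless for the downstream use in Lemma~\ref{lem:score-subgaussian}, which tolerates constant-factor reparametrization). In short: you correctly located the delicate step, but your write-up neither closes it nor reproduces the paper's shortcut of working only on the violating set, so as written it is incomplete.
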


\begin{proof}
	Observe that, at any point $x$ violating the above inequality, we have
	\begin{align*}
		f_r(x) &= \E_{y \from f}\left[\1\left[(x-y)^2 \le 2r^2\log\frac{1}{\delta}\right] \frac{|x-y|}{r^2} \frac{1}{\sqrt{2\pi r^2}} e^{-\frac{(x-y)^2}{2r^2}}\right]\\
		&\;\;+ \E_{y \from f}\left[\1\left[(x-y)^2 > 2r^2\log\frac{1}{\delta}\right] \frac{|x-y|}{r^2} \frac{1}{\sqrt{2\pi r^2}} e^{-\frac{(x-y)^2}{2r^2}}\right]\\
		&\le (1+\tau)\E_{y \from f}\left[\1\left[(x-y)^2 > 2r^2\log\frac{1}{\delta}\right] \frac{|x-y|}{r^2}  \frac{1}{\sqrt{2\pi r^2}} e^{-\frac{(x-y)^2}{2r^2}}\right]
	\end{align*}
    We wish to bound the probability of sampling $x$ violating the lemma inequality, which is bounded by the integral of the above right hand side.
    We can further bound it using the following:
	\begin{align*}
		&\int \E_{y \from f}\left[\1\left[(x-y)^2 > 2r^2\log\frac{1}{\delta}\right] \frac{|x-y|}{r^2}  \frac{1}{\sqrt{2\pi r^2}} e^{-\frac{(x-y)^2}{2r^2}}\right] \, \d x\\
		=& \E_{y \from f}\left[\int \1\left[(x-y)^2 > 2r^2\log\frac{1}{\delta}\right] \frac{|x-y|}{r^2}  \frac{1}{\sqrt{2\pi r^2}} e^{-\frac{(x-y)^2}{2r^2}}\, \d x\right]\\
		\le& \E_{y\from f}[\delta] = \delta
	\end{align*}
	
	Thus the probability of sampling a point $x$ violating the lemma inequality is upper bounded by the integral of $f_r(x)$ over those points, which is in turn upper bounded by $(1+\tau)\delta$.
\end{proof}
\begin{lemma}[Score is $O(1/r)$-subgaussian]
\label{lem:score-subgaussian}
Let $s_r$ be the score function of an $r$-smoothed distribution $f_r$. We have that for $x \sim f_r$, with probability $1-\delta$, $\abs{s_r(x)} \lesssim \frac{1}{r} \sqrt{\log \frac{2}{\delta}}$.
\end{lemma}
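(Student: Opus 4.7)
The plan is to reduce the tail bound on $s_r$ to the concentration statement already provided by Lemma~\ref{lem:FirstMomentMixturePortion}. The starting point is the representation $s_r(x) = \E[Z_r/r^2 \mid X = x]$ (where $X = Y + Z_r$ with $Y \sim f^*$, $Z_r \sim \mathcal N(0,r^2)$ independent), which, by Bayes' rule, can be rewritten as
\[
s_r(x) \;=\; -\frac{1}{f_r(x)} \E_{y \sim f^*}\!\left[\frac{x-y}{r^2}\, w_r(x-y)\right],
\]
so $|s_r(x)| \le \frac{1}{f_r(x)} \E_{y \sim f^*}\!\left[\frac{|x-y|}{r^2}\, w_r(x-y)\right]$.

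First, I would split this integrand at the threshold $(x-y)^2 \le 2r^2 \log\frac{1}{\delta}$ versus its complement. On the ``typical'' side, the pointwise bound $\frac{|x-y|}{r^2} \le \frac{1}{r}\sqrt{2 \log \frac{1}{\delta}}$ holds, so the contribution is at most $\frac{1}{r}\sqrt{2 \log \frac{1}{\delta}} \cdot \E_{y \sim f^*}[w_r(x-y)] = \frac{1}{r}\sqrt{2 \log \frac{1}{\delta}} \cdot f_r(x)$.

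Next, I would invoke Lemma~\ref{lem:FirstMomentMixturePortion} with $\tau = 1$: with probability at least $1 - 2\delta$ over $x \sim f_r$, the ``atypical'' contribution is bounded above by the ``typical'' contribution. Combining these two observations yields
\[
|s_r(x)| \;\le\; 2 \cdot \frac{1}{r}\sqrt{2 \log \tfrac{1}{\delta}}
\]
with probability $1 - 2\delta$. Rescaling $\delta \mapsto \delta/2$ gives the claimed $|s_r(x)| \lesssim \frac{1}{r}\sqrt{\log \frac{2}{\delta}}$ bound with probability $1 - \delta$.

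The only substantive work is already packaged inside Lemma~\ref{lem:FirstMomentMixturePortion}, so there is no real obstacle here beyond careful bookkeeping of constants and the $\delta \leftrightarrow 2\delta$ rescaling; the main ideas are (i) recognizing the score as the conditional expectation of $(x-y)/r^2$ weighted by $w_r$, and (ii) using the threshold split to trade the $|x-y|/r^2$ factor for the subgaussian $\frac{1}{r}\sqrt{\log(1/\delta)}$ factor without losing a $f_r(x)$ in the denominator.
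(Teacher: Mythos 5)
Your proposal is correct and follows essentially the same route as the paper's own proof: both express $s_r(x)$ as the $w_r$-weighted first moment of $(x-y)/r^2$ normalized by $f_r(x)$, split at the threshold $(x-y)^2 \le 2r^2\log\frac{1}{\delta}$, bound the typical part pointwise by $\frac{\sqrt{2}}{r}\sqrt{\log\frac{1}{\delta}}$, and control the atypical part by invoking Lemma~\ref{lem:FirstMomentMixturePortion} with $\tau=1$, followed by the same $2\delta \to \delta$ reparametrization.
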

\begin{proof}
% Follows from Lemma~\ref{lem:score-moments} since $\I \le \frac{1}{r^2}$ (by Lemma~\ref{lem:I_atmost_1/r^2}).
%   %$\E_x[u(x)^k] \leq \E_z[(\frac{z}{r^2})^k]$ and plug in
%   %$k = \log \frac{1}{\delta}$ or so.
%   %Follows from Lemma \ref{lem:score-moments}
% \jnote{Lemma~\ref{lem:score-moments} gives a sub-Gamma bound whereas this lemma requires a sub-Gaussian bound right? This lemma is still true, but the proof is more of an ``accounting" method thing. See Proposition 15.7 in the ``main.tex" scratch thing in the overall project. That proposition is slightly more complicated with the max but the proof is the same.}

By Lemma~\ref{lem:FirstMomentMixturePortion} using $\tau = 1$, with probability $1-2\delta$ over sampling a single point $x \from f_r$, the point $x$ satisfies 
	\begin{align*}  	
&\E_{y \from f}\left[\1\left[(x-y)^2 > 2r^2\log\frac{1}{\delta}\right] \frac{|x-y|}{r^2}  \frac{1}{\sqrt{2\pi r^2}} e^{-\frac{(x-y)^2}{2r^2}}\right]\\
\le\;& \E_{y \from f}\left[\1\left[(x-y)^2 \le 2r^2\log\frac{1}{\delta}\right] \frac{|x-y|}{r^2}  \frac{1}{\sqrt{2\pi r^2}} e^{-\frac{(x-y)^2}{2r^2}}\right]
	\end{align*}
	
	And so,
	\begin{align*}
    s_r(x) = \frac{f'_r(x)}{f_r(x)}
    =\;& \frac{\E_{y \from f}\left[\frac{y-x}{r^2}\frac{1}{\sqrt{2\pi r^2}}e^{-\frac{(x-y)^2}{2r^2}}\right]}{f_r(x)}\\
	\le\;&\frac{\E_{y \from f}\left[\frac{|x-y|}{r^2}\frac{1}{\sqrt{2\pi r^2}}e^{-\frac{(x-y)^2}{2r^2}}\right]}{f_r(x)}\\
	=\;& \frac{\E_{y \from f}\left[\1\left[(x-y)^2 \le 2r^2\log\frac{1}{\delta}\right] \frac{|x-y|}{r^2}  \frac{1}{\sqrt{2\pi r^2}} e^{-\frac{(x-y)^2}{2r^2}}\right]}{f_r(x)}\\
	+& \frac{\E_{y \from f}\left[\1\left[(x-y)^2 > 2r^2\log\frac{1}{\delta}\right] \frac{|x-y|}{r^2}  \frac{1}{\sqrt{2\pi r^2}} e^{-\frac{(x-y)^2}{2r^2}}\right]}{f_r(x)}\\
	\le\;& 2\cdot \frac{\E_{y \from f}\left[\1\left[(x-y)^2 \le 2r^2\log\frac{1}{\delta}\right] \frac{|x-y|}{r^2}  \frac{1}{\sqrt{2\pi r^2}} e^{-\frac{(x-y)^2}{2r^2}}\right]}{f_r(x)}\\
	\le\;& \frac{2\sqrt{2}}{r}\sqrt{\log\frac{1}{\delta}}\frac{\E_{y \from f}\left[\1\left[(x-y)^2 \le 2r^2\log\frac{1}{\delta}\right] \frac{1}{\sqrt{2\pi r^2}} e^{-\frac{(x-y)^2}{2r^2}}\right]}{f_r(x)}\\
	\le\;& \frac{2\sqrt{2}}{r}\sqrt{\log\frac{1}{\delta}}
	\end{align*}
 Reparameterizing from $2\delta$ to $\delta$ gives the lemma result.
\end{proof}
\subsection{Lemmas from \cite{finite_sample_mle}}
Here, we recall some of the properties of $r$-smoothed distributions shown in \cite{finite_sample_mle}
\begin{lemma}[From \cite{finite_sample_mle}]
\label{lem:shifted_score_characterization}
    Let $s_r$ be the score function of $r$-smoothed distribution $f_r$. Then,
    \begin{align*}
        \frac{f_r(x+\eps)}{f_r(x)} = \E_{Z_r | x} \left[e^{\frac{2 \eps Z_r - \eps^2}{2 r^2}} \right] \quad \text{and in particular} \quad s_r(x) = \frac{1}{r^2}\E_{Z_r | x} [Z_r]
    \end{align*}
    and
    \begin{align*}
        s_r(x+\eps) = \frac{\E_{Z_r|x}\left[e^{\frac{\eps Z_r }{r^2}} \frac{Z_r - \eps}{r^2} \right]}{\E_{Z_r | x} \left[e^{\frac{\eps Z_r }{r^2}} \right]}
    \end{align*}
    
\end{lemma}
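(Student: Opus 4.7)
The plan is to derive all three identities from a single Bayes-rule computation, exploiting the Gaussian structure of the smoothing kernel. The starting observation is that $f_r(x) = \E_{Y \sim f^*}[w_r(x-Y)]$, so by Bayes' rule the conditional density of $Y$ given $X = x$ is $f^*(y) w_r(x-y)/f_r(x)$, and correspondingly the conditional density of $Z_r = X - Y$ given $X = x$ equals $f^*(x-z) w_r(z)/f_r(x)$. Every formula in the lemma will be obtained by rewriting an integral against $f^*$ as a conditional expectation under this posterior.

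For the first identity, I will write $f_r(x+\eps) = \int f^*(y) w_r(x+\eps-y)\,dy$ and then multiply and divide the integrand by $w_r(x-y)$. The factor $f^*(y) w_r(x-y)/f_r(x)$ is precisely the posterior of $Y$ given $X=x$, so the integral turns into a conditional expectation of $w_r(x+\eps-y)/w_r(x-y)$. Expanding the square $(x+\eps-y)^2 = (x-y)^2 + 2\eps (x-y) + \eps^2$ and substituting $Z_r$ for $x-y$ (up to the sign convention) shows that this Gaussian ratio equals $\exp((2\eps Z_r - \eps^2)/(2r^2))$, which is exactly the claimed formula.

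The second identity follows by differentiating the first at $\eps = 0$: the left-hand side becomes $f_r'(x)/f_r(x) = s_r(x)$, while interchanging differentiation and expectation on the right (justified by the Gaussian-like tails of the posterior of $Z_r$, e.g.~via Lemma~\ref{lem:zcond}) yields $\E_{Z_r \mid x}[Z_r]/r^2$. For the third identity, I will compute $s_r(x+\eps)$ as the logarithmic derivative $(d/d\eta)\log f_r(x+\eps+\eta)\bigm|_{\eta=0}$, and apply identity (1) to the ratio $f_r(x+\eps+\eta)/f_r(x)$. Differentiating under the expectation produces the factor $(Z_r-\eps)/r^2$ times the exponential. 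The clean form in the statement arises because the exponent $(2(\eps+\eta)Z_r - (\eps+\eta)^2)/(2r^2)$, at $\eta = 0$, splits as $\eps Z_r/r^2 - \eps^2/(2r^2)$; the $e^{-\eps^2/(2r^2)}$ piece is independent of $Z_r$, so it cancels between numerator and denominator, leaving only $e^{\eps Z_r/r^2}$ in both.

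There is no real conceptual obstacle here---everything is a direct computation. The one place where care is needed is the sign convention for $Z_r$ (whether $Z_r = X-Y$ or $Y-X$), which controls whether the mixed term comes out $+2\eps Z_r$ or $-2\eps Z_r$; and one must remember to pull out the $Z_r$-independent factor $e^{-\eps^2/(2r^2)}$ before taking the ratio in the third identity, as this is what reduces the exponent to the simpler form $\eps Z_r/r^2$ displayed in the lemma.
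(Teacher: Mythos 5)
This lemma is not proved in the paper at all---it is imported verbatim from \cite{finite_sample_mle}---so there is no in-paper proof to compare against. Judged on its own, your derivation is the standard and correct one: use Bayes' rule to identify the posterior of the noise given $X=x$, write $f_r(x+\eps)/f_r(x)$ as a conditional expectation of the Gaussian ratio $w_r(x+\eps-y)/w_r(x-y)$, and obtain the two score identities by differentiating; the cancellation of the $Z_r$-independent factor $e^{-\eps^2/(2r^2)}$ that you point out is exactly what yields the displayed form of the third identity (which also contains the second as its $\eps=0$ case).

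The one point you leave unresolved---the sign convention---is worth pinning down, because the convention you initially commit to is the wrong one for the statement as displayed. With $Z_r = X-Y$ (the convention $X = Y + Z_r$ of Lemma~\ref{lem:zcond}), your expansion gives $w_r(x+\eps-y)/w_r(x-y) = \exp\bigl(-(2\eps(x-y)+\eps^2)/(2r^2)\bigr)$, i.e.~the mixed term is $-2\eps Z_r$, and correspondingly $s_r(x) = -\E[Z_r \mid x]/r^2$. The identities hold exactly as stated only if $Z_r$ denotes $Y-X$, so that its posterior given $X=x$ has density proportional to $f^*(x+z)w_r(z)$; a quick sanity check is $f^*=\delta_a$, where $s_r(x) = (a-x)/r^2 = \E[Y-x \mid X=x]/r^2$, matching the sign used in the proof of Lemma~\ref{lem:score-subgaussian}, where $f_r'(x) = \E_{y\sim f}\bigl[\tfrac{y-x}{r^2}w_r(x-y)\bigr]$. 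With $Z_r := Y-X$ fixed, your argument goes through verbatim, and the interchanges of differentiation and expectation you invoke are legitimate since the conditional moment generating function $\E[e^{tZ_r}\mid x]$ is finite for every $t$ (it is at most $e^{t^2r^2/2}/(\sqrt{2\pi}\,r f_r(x))$), so dominated convergence applies.
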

%\begin{lemma}[From \cite{finite_sample_mle}] \label{lem:score-moments}
%  Let $s_r$ be the score function of an $r$-smoothed distribution $f_r$ with Fisher information $\I_r$. Then, for any $k \geq 3$,
%  \[
%    \E_{x\sim f_r}[\abs{s_r(x)}^k] \leq (1.6/r)^{k-2} k^{k/2}\I_r
%  \]
%  That is, $s_r$ is sub-Gamma.
%  \[
%    s_r(x) \in \Gamma(\I_r, 1.6/r ).
%  \]
%  That is to say,
%  \[
%    \Pr[\abs{\sum_{i=1}^n s_r(x)} > \sqrt{2 \I_r t/n} + \frac{3}{nr} t ] < 2e^{-t}
%  \]
%  for any $t$.
%\end{lemma}
%\begin{lemma}[From \cite{finite_sample_mle}] \label{lem:shifted_score-moments}
%  Let $s_r$ be the score function of an $r$-smoothed distribution $f_r$ with Fisher information $\I_r$. Then, for any $k \geq 3$,
%  \[
%    \E_{x\sim f_r}[\abs{s_r(x+\eps)}^k] \leq \frac{k!}{2} (15/r)^{k-2} \max(\E_{x \sim f_r}[s_r^2(x+\eps)], \I_r)
%  \]
%  That is, $s_r(x+\eps)$ is sub-Gamma.
%  \[
%    s_r(x+\eps) \in \Gamma\left(\max(\E_{x \sim f_r}\left[s_r^2(x+\eps) \right], \I_r), 15/r \right).
%  \]
%\end{lemma}
\begin{lemma}[From \cite{finite_sample_mle}]
\label{lem:shifted_score_expectation}
    Let $s_r$ be the score function of an $r$-smoothed distribution $f_r$ with Fisher information $\I_r$. Then for any $|\eps| \le r/2$,
    \[
        \E_{x \sim f_r}[s_r(x+\eps)] = -\I_r \eps + \Theta\left(\sqrt{\I_r} \frac{\eps^2}{r^2} \right)
    \]
\end{lemma}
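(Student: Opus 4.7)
The plan is to set $F(\eps) := \E_{x \sim f_r}[s_r(x+\eps)]$ and analyze it by a first-order Taylor expansion in $\eps$ around the origin: compute $F'(0)$ for the leading term, then control the Lipschitz variation of $F'$ on the interval $[-r/2, r/2]$ to get the quadratic error. First, the base point is trivial: $F(0) = \int f_r(x) s_r(x)\,dx = \int f_r'(x)\,dx = 0$ by the standard score identity (the boundary terms vanish since the $r$-smoothing kernel forces $f_r$ to have Gaussian tails). For $F'$, I would change variables to write $F(\eps) = \int f_r(y-\eps) s_r(y)\,dy$, differentiate under the integral, and apply $f_r'(z) = f_r(z) s_r(z)$:
\[
F'(\eps) = -\int f_r'(y-\eps) s_r(y)\,dy = -\int f_r(y-\eps) s_r(y-\eps) s_r(y)\,dy = -\E_{u \sim f_r}[s_r(u)\, s_r(u+\eps)],
\]
where the last step is the substitution $u = y - \eps$. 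At $\eps = 0$ this yields $F'(0) = -\E[s_r(u)^2] = -\I_r$, the desired linear coefficient.

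The heart of the proof is then bounding $|F'(\eps) - F'(0)|$. Writing $F'(\eps) - F'(0) = -\E_{u \sim f_r}[s_r(u)(s_r(u+\eps) - s_r(u))]$ and applying Cauchy-Schwarz gives
\[
|F'(\eps) - F'(0)| \le \sqrt{\E[s_r(u)^2]}\cdot\sqrt{\E[(s_r(u+\eps) - s_r(u))^2]} = \sqrt{\I_r}\cdot\sqrt{\E[(s_r(u+\eps) - s_r(u))^2]}.
\]
The inner $\ell_2$ difference of shifted scores is exactly the quantity controlled by Lemma~\ref{lem:shifted_true_score_error}, which gives $O(\eps^2/r^4)$ for $|\eps|\le r/60$. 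That lemma is proven earlier in the excerpt using only Lemmas~\ref{lem:shifted_score_characterization} and~\ref{lem:score-subgaussian}, so invoking it here does not create a circular dependency. Hence $|F'(\eps) - F'(0)| \lesssim \sqrt{\I_r}\,|\eps|/r^2$ uniformly for $|\eps|\le r/2$.

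Integrating finishes the argument:
\[
F(\eps) = F(0) + \int_0^\eps F'(t)\,dt = -\eps\I_r + \int_0^\eps [F'(t) + \I_r]\,dt = -\eps\,\I_r + O\!\left(\sqrt{\I_r}\,\eps^2/r^2\right).
\]
The main obstacle I anticipate is the sharper $\Theta(\cdot)$ (rather than just $O(\cdot)$) form of the error claimed in the statement. Recovering the matching lower bound requires tracking the sign of the Taylor remainder, which by a short integration-by-parts calculation equals $\tfrac{1}{2}\E[s_r(x)^3]$; pinning down its sign demands a case analysis reflecting the skewness of $f_r$. Since all downstream uses of this lemma in the paper only invoke an upper bound on the quadratic error, I would not pursue the sign refinement unless explicitly needed, and the $O$-bound above is the cleanest route to what the paper actually needs.
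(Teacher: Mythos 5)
This lemma is quoted from \cite{finite_sample_mle} and the paper you were given never proves it, so there is no in-paper argument to compare against; what follows is an assessment of your derivation on its own terms. Your core computation is sound and non-circular: writing $F(\eps)=\E_{x\sim f_r}[s_r(x+\eps)]$, you correctly get $F(0)=0$, $F'(\eps)=-\E_{u\sim f_r}[s_r(u)s_r(u+\eps)]$, hence $F'(0)=-\I_r$, and the Cauchy--Schwarz step combined with Lemma~\ref{lem:shifted_true_score_error} (whose proof indeed rests only on Lemmas~\ref{lem:shifted_score_characterization} and~\ref{lem:score-subgaussian}) plus integration gives $|F(\eps)+\I_r\eps|\lesssim \sqrt{\I_r}\,\eps^2/r^2$. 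This is essentially the intended content of the statement.

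There is, however, a concrete gap in range: you assert the derivative bound ``uniformly for $|\eps|\le r/2$,'' but Lemma~\ref{lem:shifted_true_score_error} is stated only for $|\eps|\le r/60$, so as written your argument proves the conclusion only on that smaller interval, while the lemma claims $|\eps|\le r/2$. To close this you must either rework the constants in the proof of Lemma~\ref{lem:shifted_true_score_error} (its steps — the sub-Gaussian MGF bound $\E_x[e^{-4\eps s_r(x)}]$ and the tail estimate involving $r^2/|\eps|-8|\eps|$, which becomes negative near $\eps=r/2$ — do survive up to $|\eps|\le r/2$ with larger universal constants, but this needs to be checked and said, not assumed), or give a separate argument on $[r/60,\,r/2]$, where $\eps^2/r^2=\Omega(1)$ and $\I_r|\eps|\le\sqrt{\I_r}/2$ by $\I_r\le 1/r^2$, leaving only a bound of the form $|\E[s_r(x+\eps)]|\lesssim\sqrt{\I_r}$ to establish. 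On the $\Theta$ issue your instinct is right, and in fact a matching lower bound is impossible in general: for Gaussian $f_r$ the score is linear and $F(\eps)=-\I_r\eps$ exactly, so the quadratic term vanishes (consistent with your remainder coefficient, which is proportional to $\E[s_r^3]=0$ there). The $\Theta$ must therefore be read as ``a term of magnitude at most this order,'' which is all that is ever needed — indeed this lemma is only restated here for reference and is not invoked elsewhere in the paper.
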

\begin{lemma}[From \cite{finite_sample_mle}]
\label{lem:shifted_score_second_moment}
    Let $s_r$ be the score function of an $r$-smoothed distribution $f_r$ with Fisher information $\I_r$. Then, for any $|\eps| \le r/2$,
    \[
        \E_{x \sim f_r} [s_r^2(x+\eps)] \le \I_r + O\left(\frac{\eps}{r} \I_r \sqrt{\log \frac{1}{r^2 \I_r}} \right) 
    \]
\end{lemma}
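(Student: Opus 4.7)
The plan is to change variables to a common reference point, Taylor-expand, and bound the resulting first-order correction using the subgaussian tail of the score. Substituting $y = x + \eps$ gives
\[
\E_{x \sim f_r}[s_r^2(x+\eps)] - \I_r = \int \bigl(f_r(y-\eps) - f_r(y)\bigr) \, s_r^2(y) \, dy.
\]
A first-order Taylor expansion $f_r(y-\eps) = f_r(y) - \eps f_r'(y) + R(y,\eps)$, combined with the score identity $f_r'(y) = f_r(y) s_r(y)$, produces a linear term $-\eps \E_{y \sim f_r}[s_r^3(y)]$ plus the remainder $\int R(y,\eps) s_r^2(y)\,dy$.

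The key step is to bound $|\E[s_r^3]|$ so that, after multiplication by $\eps$, it matches the target. I would use that $s_r$ is $O(1/r)$-subgaussian under $f_r$ (Lemma~\ref{lem:score-subgaussian}): splitting $\E[|s_r|^3]$ at the threshold $T = C \sqrt{\log(1/(r^2 \I_r))}/r$, the in-distribution part is bounded by $T \cdot \E[s_r^2] = T\I_r$ via the pointwise inequality $|s_r|^3 \1_{|s_r|\le T} \le T\, s_r^2$, while the tail $\E[|s_r|^3 \1_{|s_r|>T}]$ is controlled by integrating $\Pr[|s_r| > u] \lesssim e^{-\Omega(u^2 r^2)}$ against $u^2\,du$, which for $C$ a large enough constant contributes a strictly lower-order term since $r^2 \I_r \le 1$. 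This yields $|\E[s_r^3]| \lesssim \I_r \sqrt{\log(1/(r^2 \I_r))}/r$, exactly the target rate.

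The main obstacle is the second-order remainder. A naive analysis gives $\int R(y,\eps) s_r^2(y)\,dy = O(\eps^2 \E[s_r^4])$ after integration by parts, and the analogous splitting-at-$T$ argument only yields $\E[s_r^4] \lesssim T^2 \I_r \lesssim \I_r \log(1/(r^2\I_r))/r^2$, which just barely exceeds the target when $|\eps|/r$ is a constant close to $1/2$. To close this gap, I would avoid the generic second-order bound and instead use the explicit expression
\[
\frac{f_r(y-\eps)}{f_r(y)} = e^{-\eps^2/(2r^2)} \, \E_{Z_r \mid y}\!\left[e^{-\eps Z_r/r^2}\right]
\]
implicit in Lemma~\ref{lem:shifted_score_characterization}, Taylor-expand the inner expectation in $\eps/r^2$, and use the identity $\E_{Z_r \mid y}[Z_r]/r^2 = s_r(y)$ to cleanly identify and subtract the $-\eps s_r(y)$ piece (which reproduces $-\eps \E[s_r^3]$ after integration). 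The remaining genuine $O(\eps^2)$ term can then be bounded via Cauchy--Schwarz against the Gaussian conditional moments of $Z_r \mid y$, controlled using Lemma~\ref{lem:zcond}, together with the subgaussianity of $s_r$. This structured expansion is the natural route to avoid the logarithmic looseness of the crude $\E[s_r^4]$ bound.
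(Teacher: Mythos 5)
First, note that this paper does not actually prove Lemma~\ref{lem:shifted_score_second_moment}: it is imported verbatim from \cite{finite_sample_mle}, so there is no in-paper proof to compare your argument against; I can only judge the proposal on its own terms. The skeleton you set up is sound and is the right kind of argument: the substitution $y=x+\eps$, the extraction of the linear term $-\eps\E_{f_r}[s_r^3]$ via $f_r'=f_r s_r$, and the truncation bound $|\E_{f_r}[s_r^3]|\lesssim \frac{\I_r}{r}\sqrt{\log\frac{1}{r^2\I_r}}$ obtained by splitting at $T\asymp\frac{1}{r}\sqrt{\log\frac{1}{r^2\I_r}}$ and using the subgaussian tail of Lemma~\ref{lem:score-subgaussian} together with $r^2\I_r\le 1$ (Lemma~\ref{lem:I_atmost_1/r^2}) all check out. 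You are also right that a cancellation-aware argument is genuinely needed here: the softer route of writing $s_r(x+\eps)=s_r(x)+\Delta$ and applying Cauchy--Schwarz with $\E[\Delta^2]\lesssim \eps^2/r^4$ (as in Lemma~\ref{lem:shifted_true_score_error}) produces an excess of order $\frac{|\eps|}{r^2}\sqrt{\I_r}$, which is \emph{not} $\lesssim \frac{|\eps|}{r}\I_r\sqrt{\log\frac{1}{r^2\I_r}}$ when $r^2\I_r$ is small, so isolating the third-moment term is the correct instinct.

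The gap is exactly where you flagged it: the second-order remainder is never actually bounded; it is deferred to an unexecuted sketch (``Cauchy--Schwarz against the Gaussian conditional moments of $Z_r\mid y$, controlled using Lemma~\ref{lem:zcond}, together with the subgaussianity of $s_r$''), and this is the step where the claimed rate is won or lost. Concretely, Taylor with integral remainder gives
\[
\int \bigl(f_r(y-\eps)-f_r(y)+\eps f_r'(y)\bigr)s_r^2(y)\,dy \;=\; \int_0^{\eps}(\eps-t)\,\E_{u\sim f_r}\!\left[\bigl(s_r(u)^2+s_r'(u)\bigr)\,s_r^2(u+t)\right]dt ,
\]
using $f_r''/f_r=s_r^2+s_r'$, and the natural Cauchy--Schwarz/conditional-moment estimates for the inner expectation (fourth moments of the score and second moments of $f_r''/f_r$, each carrying a truncation at scale $\frac{1}{r}\sqrt{\log\frac{1}{r^2\I_r}}$) yield $O\bigl(\frac{\eps^2}{r^2}\I_r\log\frac{1}{r^2\I_r}\bigr)$ --- which is precisely the bound you already rejected as insufficient, since under the sole hypothesis $|\eps|\le r/2$ it is not dominated by $\frac{|\eps|}{r}\I_r\sqrt{\log\frac{1}{r^2\I_r}}$; domination holds only when $|\eps|\lesssim r/\sqrt{\log\frac{1}{r^2\I_r}}$, a restriction the paper imposes elsewhere (e.g.\ Lemma~\ref{lem:score_bounded_in_small_set}) but not in this statement. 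What would suffice is a remainder bound of order $\frac{\eps^2}{r^2}\I_r$ (up to at most a $\sqrt{\log}$ factor), and nothing in the proposal explains how the structured expansion sheds the extra $\sqrt{\log\frac{1}{r^2\I_r}}$; indeed the pure-Gaussian case $f^*=\delta_0$ (where the excess is exactly $\frac{\eps^2}{r^2}\I_r$ while the stated error term vanishes because $r^2\I_r=1$) shows the second-order contribution genuinely survives and has to be played off against the logarithm with some care, presumably with the log bounded below by a constant. So the proposal correctly identifies the crux, but as written it only establishes the lemma in the smaller range $|\eps|\lesssim r/\sqrt{\log\frac{1}{r^2\I_r}}$, not for all $|\eps|\le r/2$.
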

%\begin{lemma}
%\label{lem:shifted_score_subexponential}
%Let $f_r$ be an $r$-smoothed distribution with score function $s_r$. For $|\eps| \le \frac{r}{\sqrt{\log \frac{1}{r^2 \I_r}}}$ and $|\eps| \le r/2$, $s_r(x+\eps)$ is subexponential with scale parameter $C/r$ for constant $C > 0$. That is
%$$\Pr_{x \sim f_r}\left[|s_r| \ge t \right] \le e^{-\Omega(t r)}$$
%\end{lemma}
%\begin{proof}
%    By Lemma~\ref{lem:shifted_score_second_moment}, for our $\eps$,
%    \[
%        \E_{x \sim f_r}[s_r^2(x+\eps)] \lesssim \I_r
%    \]
%    So, by Lemma~\ref{lem:shifted_score-moments},
%    \[
%        s_r(x+\eps) \in \Gamma(O(\I_r), 15/r)
%    \]
%    By Lemma~\ref{lem:I_atmost_1/r^2}, we also have that $\I_r \le 1/r^2$.
%    So, $s_r(x+\eps) \in \Gamma(O(1/r^2), 15/r)$, and hence subexponential with scale $O(1/r)$
%    %\textcolor{red}{Lemma is wrong??}
%    %Using the standard fact that a $\Gamma(\sigma, c)$ random variable is also sub-Gaussian with parameter $O(c^2)$ as long as $\sigma \le O(c)$, the claim follows.
%    %Follows from the Lemma and Lemma~\ref{lem:I_atmost_1/r^2}.
%\end{proof}
\begin{lemma}[From \cite{finite_sample_mle}]
\label{lem:I_atmost_1/r^2}
    Let $\I_r$ be the Fisher information of an $r$-smoothed distribution $f_r$. Then $\I_r \le 1/r^2$.
\end{lemma}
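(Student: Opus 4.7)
The plan is to bound $\I_r$ directly via the conditional-expectation representation of the smoothed score provided by Lemma~\ref{lem:shifted_score_characterization}. Recall the joint sampling description: if $X \sim f_r$ is generated as $X = Y + Z_r$ with $Y \sim f^*$ and $Z_r \sim \mathcal N(0,r^2)$ independent, then
\[
s_r(x) \;=\; \frac{1}{r^2}\,\E[\,Z_r \mid X = x\,].
\]
This turns the analytic object $\I_r$ into a statement purely about second moments of a conditional expectation of the Gaussian noise.

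First I would square the identity and apply the conditional Jensen's inequality (equivalently, Cauchy--Schwarz applied to $\E[Z_r \mid X = x]$):
\[
s_r(x)^2 \;=\; \frac{1}{r^4}\bigl(\E[\,Z_r \mid X = x\,]\bigr)^2 \;\le\; \frac{1}{r^4}\,\E[\,Z_r^2 \mid X = x\,].
\]
Next I would take expectation over $X \sim f_r$ on both sides and use the tower property of conditional expectation to collapse the outer expectation with the conditional one:
\[
\I_r \;=\; \E_{X \sim f_r}[\,s_r(X)^2\,] \;\le\; \frac{1}{r^4}\,\E_{X \sim f_r}\bigl[\E[\,Z_r^2 \mid X\,]\bigr] \;=\; \frac{1}{r^4}\,\E[\,Z_r^2\,].
\]
Since $Z_r \sim \mathcal N(0,r^2)$ marginally, $\E[Z_r^2] = r^2$, which yields $\I_r \le 1/r^2$ as claimed.

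There is really no serious obstacle here; the whole proof is essentially one application of Jensen plus the tower property once the identity of Lemma~\ref{lem:shifted_score_characterization} is in hand. If desired, one could equivalently derive the bound from the convolution/data-processing inequality for Fisher information (Stam's inequality), noting that the Gaussian factor $\mathcal N(0,r^2)$ has Fisher information exactly $1/r^2$ and that convolving with another independent distribution cannot increase it; but the Jensen route above is the shortest and uses only tools already developed in the paper.
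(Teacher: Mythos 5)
Your proof is correct. The paper itself does not prove this lemma --- it is imported verbatim from \cite{finite_sample_mle} --- so there is no in-paper argument to compare against; your derivation fills that gap cleanly using only Lemma~\ref{lem:shifted_score_characterization} (the representation $s_r(x) = \frac{1}{r^2}\E[Z_r \mid X = x]$), conditional Jensen, and the tower property, giving $\I_r \le \frac{1}{r^4}\E[Z_r^2] = \frac{1}{r^2}$. This is the standard argument for this fact (and is essentially equivalent to the data-processing/Stam route you mention, since conditional Jensen is exactly where the ``convolution cannot increase Fisher information beyond that of the Gaussian factor'' statement comes from), so there is nothing to flag: every tool you invoke is already available in the paper, the independence of $Z_r$ from $Y$ justifies $\E[Z_r^2]=r^2$, and no regularity issues arise because the smoothed density $f_r$ is everywhere positive and smooth.
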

\begin{lemma}[From \cite{finite_sample_mle}]
    \label{lem:Delta_eps_bound}
    Let $f^*$ be an arbitrary distribution, and let $f_r$ be the $r$-smoothed version of $f^*$. Define
    \[
        \Delta_\eps(x) := \frac{f_r(x + \eps) - f_r(x) - \eps f_r'(x)}{f_r(x)}
    \]
    Then, for any $|\eps| \le r/2 $,
    \[
        \E_{x \sim f_r}\left[\Delta_\eps(x)^2 \right] \lesssim \frac{\eps^4}{r^4}
    \]
\end{lemma}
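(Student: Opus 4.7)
The plan is to express $\Delta_\eps(x)$ via Taylor's theorem as
\[
\Delta_\eps(x) = \frac{1}{f_r(x)}\int_0^\eps (\eps-t)\, f_r''(x+t)\, dt,
\]
and then bound its $L^2(f_r)$ norm by reducing the problem to controlling $\E_{f_r}[(f_r''/f_r)^2]$. First I would apply Cauchy--Schwarz in $t$ to get $\Delta_\eps(x)^2 \le \frac{\eps^3}{3}\int_0^\eps (f_r''(x+t)/f_r(x))^2\, dt$. Taking expectation under $x \sim f_r$ and swapping the two integrals, the change of variables $y = x+t$ turns each slice into $\int f_r''(y)^2 / f_r(y-t)\, dy$.

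For $|t| \le |\eps| \le r/2$, Lemma~\ref{lem:density_close} implies $f_r(y-t) \gtrsim f_r(y)$ on the bulk region where the log-density factor is well-behaved; the complementary small-density region (with $f_r(y)$ exponentially small) contributes negligibly since $f_r''(y)$ also decays there via the Gaussian kernel. This reduces the task to showing $\int f_r''(y)^2/f_r(y)\, dy \lesssim 1/r^4$. Using the classical identity $(\log f_r)'' = f_r''/f_r - s_r^2$, which gives $f_r''/f_r = s_r' + s_r^2$, this integral equals $\E_{f_r}[(s_r' + s_r^2)^2]$, so by AM--GM it suffices to bound $\E_{f_r}[s_r^4]$ and $\E_{f_r}[(s_r')^2]$ each by $O(1/r^4)$.

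The first bound is immediate: by Lemma~\ref{lem:score-subgaussian}, $s_r$ is $O(1/r)$-subgaussian under $f_r$, hence $\E_{f_r}[s_r^4] \lesssim 1/r^4$. For the second, I would differentiate the identity $s_r(x) = \E[Z_r|x]/r^2$ from Lemma~\ref{lem:shifted_score_characterization} to obtain the representation $s_r'(x) = (\Var(Z_r|x) - r^2)/r^4$. Then
\[
\E_{f_r}[(s_r')^2] \le \frac{2\E_{f_r}[\Var(Z_r|x)^2]}{r^8} + \frac{2}{r^4},
\]
and by Jensen's inequality together with the tower property (with $Z_r \sim N(0,r^2)$ marginally), $\E_{f_r}[\Var(Z_r|x)^2] \le \E_{f_r}[\E[Z_r^4|x]] = \E[Z_r^4] = 3r^4$, giving $\E_{f_r}[(s_r')^2] \lesssim 1/r^4$.

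Combining everything yields $\E_{f_r}[\Delta_\eps^2] \lesssim \eps^3 \cdot \eps \cdot r^{-4} = \eps^4/r^4$ as desired. The main technical obstacle will be the atypical-region contribution after the $y=x+t$ change of variables: wherever $f_r(y-t) \ll f_r(y)$, the integrand $f_r''(y)^2/f_r(y-t)$ could blow up. Handling this requires a careful split based on the magnitude of $f_r(y)$ (with Lemma~\ref{lem:density_close} ensuring the bad set has tiny mass), or alternatively a direct pointwise bound on $f_r''$ via the Gaussian kernel structure $w_r''(z) = ((z^2-r^2)/r^4) w_r(z)$, paralleling the atypical-region arguments developed in Sections~\ref{sec:clipped} and~\ref{sec:Ir}.
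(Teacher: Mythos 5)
The paper does not actually prove this lemma---it is imported verbatim from \cite{finite_sample_mle}---so your attempt can only be judged on its own merits. Most of your ingredients are correct and nicely chosen: the integral-form Taylor remainder plus Cauchy--Schwarz in $t$, the identity $f_r''(x)/f_r(x)=s_r'(x)+s_r(x)^2=(\E[Z_r^2\mid x]-r^2)/r^4$, the bound $\E_{f_r}[s_r^4]\lesssim 1/r^4$ from Lemma~\ref{lem:score-subgaussian}, and the conditional-Jensen/tower argument giving $\E_{f_r}[(s_r')^2]\lesssim 1/r^4$ are all sound, and the final arithmetic $\eps^3\cdot|\eps|\cdot r^{-4}$ is fine.

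The genuine gap is exactly the step you flag and then wave through: controlling $\int f_r''(y)^2/f_r(y-t)\,dy$ for all $|t|\le|\eps|\le r/2$. Your proposed fix via Lemma~\ref{lem:density_close} does not work as stated, for two reasons. First, that lemma only upper-bounds a ratio of densities under the hypothesis $\frac{8|t|}{r}\sqrt{\log\frac{1}{rf_r(\cdot)}}\le 1$; since the present lemma must hold for $|t|$ as large as $r/2$, this hypothesis forces $f_r\gtrsim 1/r$ up to a constant, i.e.\ the ``bulk'' where it applies is essentially only the near-maximal-density region, so almost the entire line falls into the region you dismiss as negligible. Second, the dismissal itself is not an argument: $f_r''(y)$ being small does not control $f_r''(y)^2/f_r(y-t)$, because $f_r(y-t)$ can be exponentially smaller than $f_r(y)$ (e.g.\ near a density minimum between two far-apart modes), so you need a quantitative bound on $\E_{y\sim f_r}\bigl[(f_r''(y)/f_r(y))^2\,f_r(y)/f_r(y-t)\bigr]$, not a pointwise heuristic. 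The clean repair stays within the paper's own machinery: use Lemma~\ref{lem:shifted_score_characterization} to write, after substituting $u=y-t$, $f_r''(u+t)/f_r(u)=\E_{Z_r\mid u}\bigl[e^{\frac{2tZ_r-t^2}{2r^2}}\cdot\frac{(Z_r+t)^2-r^2}{r^4}\bigr]$ (or, avoiding the Taylor-in-$x$ step altogether, $\Delta_\eps(x)=\E_{Z_r\mid x}\bigl[e^{\frac{2\eps Z_r-\eps^2}{2r^2}}-1-\frac{\eps Z_r}{r^2}\bigr]$), then apply conditional Cauchy--Schwarz and the tower property so that all moments and exponential moments are taken under the \emph{marginal} $Z_r\sim\mathcal N(0,r^2)$, where they are $O(1)\cdot r^{-8}$ and $e^{O(t^2/r^2)}=O(1)$ for $|t|\le r/2$. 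This is precisely the style of argument the paper uses for the companion bound Lemma~\ref{lem:shifted_true_score_error}, and it closes the hole; as written, however, your proof of the key step is incomplete.
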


\section{Median of Pairwise Means Estimator}
Using results in~\cite{mintonprice}, we show that the median of pairwise means is a good estimator for symmetric random variables.  In particular, it matches the convergence of the median-of-means estimator for all $(\eps, \delta)$ without needing to specify $\eps$ and $\delta$.
\medianofmeanslemma*
\begin{proof}
   Let $Y_i = \frac{1}{2}(X_{2i -1} + X_{2i})$ for $i \in [n/2]$.  Let $p$ be the pdf of $X - \mu$, and $q$ be the pdf of $Y - \mu$.  Since $X-\mu$ is symmetric about $0$, the Fourier transform $\wh{p}$ of $p$ is real-valued.  By the Fourier convolution theorem, $q$ has nonnegative Fourier transform.  Then by Lemma~3.1 of~\cite{mintonprice}, for any $\eps < 1$,
   \[
   \Pr[\abs{Y_i} < \eps\sigma] \geq C_3\eps
   \]
   for a universal constant $C_3$. Then it is easy to show (e.g., Lemma~3.3 of~\cite{mintonprice}):
   \[
   \Pr[\abs{\wh{\mu} - \mu} > \eps \sigma] \leq 2 e^{-\frac{C_3^2}{4} \eps^2 n}.
   \]
   Setting $\eps = \frac{2}{C_3}\sqrt{\frac{\log \frac{2}{\delta}}{n}}$ gives the result, as long as $n > \frac{4}{C_3^2}\log \frac{2}{\delta}$ so this $\eps < 1$.

   There's a remaining regime of $\frac{C_3^2}{4} \leq \frac{\log \frac{1}{\delta}}{n} \leq C_1$ for which we need to prove a $\Theta(\sigma)$ bound on $\abs{\wh{\mu} - \mu}$.  Note that $Y_i$ has variance $\sigma^2/2$, so for any $a > 0$, with probability $1-a$ we have $\abs{Y_i-\mu} \leq \frac{\sigma}{\sqrt{2a}}$.  Let $E_i$ denote the event that $\abs{Y_i-\mu} > \frac{\sigma}{\sqrt{2a}}$.  Then
   \[
   \Pr[\abs{\wh{\mu} - \mu} > \frac{\sigma}{\sqrt{2a}}] \leq \Pr[\sum_{i=1}^{n/2} E_i \geq \frac{n}{4}] \leq \binom{n/2}{n/4} a^{n/4} \leq (4a)^{n/4}.
   \]
   which is $\delta$ for $a = \frac{1}{4}e^{-\frac{4}{n}\log \frac{1}{\delta}} \geq \frac{1}{4} e^{-4 C_1}$.  Thus with probability $1-\delta$, $\abs{\wh{\mu} - \mu} \leq \sqrt{2} e^{2 C_1} \sigma \eqsim \sigma$.
\end{proof}
\end{document}